\renewcommand\normalsize{%
    \@setfontsize\normalsize{11.7}{14pt plus .3pt minus .3pt}%
    \abovedisplayskip 10\p@ \@plus4\p@ \@minus4\p@
    \abovedisplayshortskip 6\p@ \@plus2\p@
    \belowdisplayshortskip 6\p@ \@plus2\p@
    \belowdisplayskip \abovedisplayskip}
\renewcommand\small{%
    \@setfontsize\small{9.5}{12\p@ plus .2\p@ minus .2\p@}%
    \abovedisplayskip 8.5\p@ \@plus4\p@ \@minus1\p@
    \belowdisplayskip \abovedisplayskip
    \abovedisplayshortskip \abovedisplayskip
    \belowdisplayshortskip \abovedisplayskip}
\renewcommand\footnotesize{%
    \@setfontsize\footnotesize{8.5}{9.25\p@ plus .1pt minus .1pt}%%
    \abovedisplayskip 6\p@ \@plus4\p@ \@minus1\p@
    \belowdisplayskip \abovedisplayskip
    \abovedisplayshortskip \abovedisplayskip
    \belowdisplayshortskip \abovedisplayskip}
\setlist[1]{itemsep=3pt}
\newcommand{\beq}{\begin{equation}}
\newcommand{\eeq}{\end{equation}}
\theoremstyle{plain}
\newtheorem{teorema}{teorema}[section]
\newtheorem{theorem}[teorema]{Theorem}
\newtheorem{lemma}[teorema]{Lemma}
\newtheorem{proposition}[teorema]{Proposition}
\newtheorem{corollary}[teorema]{Corollary}
\theoremstyle{definition}
\newtheorem{definition}[teorema]{Definition}
\newtheorem{remark}[teorema]{Remark}
\numberwithin{equation}{section}
\let\oldtocsection=\tocsection
\let\oldtocsubsection=\tocsubsection
\let\oldtocsubsubsection=\tocsubsubsection
\renewcommand{\tocsection}[2]{\hspace{-1.2em}\oldtocsection{#1}{#2}}
\renewcommand{\tocsubsection}[2]{\hspace{-.2em}\oldtocsubsection{#1}{#2}}
\renewcommand{\tocsubsubsection}[2]{\hspace{0.8em}\oldtocsubsubsection{#1}{#2}}
\DeclareRobustCommand{\gobblefive}[5]{}
\renewcommand\subsubsection{\@startsection{subsubsection}{3}%
  \z@{.5\linespacing\@plus.7\linespacing}{-.5em}%
  {\normalfont\bfseries}}
\newcommand{\R}{\mathbb{R}}
\newcommand{\N}{\mathbb{N}}
\newcommand{\supp}{\text{\rm supp}}
\newcommand{\C}{\mathbb{C}}
\newcommand{\dd}{\mathrm{d}}
\newcommand{\clos}{\text{\rm clos}}
\newcommand{\ve}{\varepsilon}
\renewcommand{\L}{\mathcal{L}}
\newcommand{\Geo}{{\rm Geo}}
\newcommand{\Dir}{{\rm Dir}}
\newcommand{\OptGeo}{{\rm OptGeo}}
\newcommand{\Tan}{{\rm Tan}}
\newcommand{\sfd}{\mathsf d}
\newcommand{\ee}{{\rm e}}
\newcommand{\ddiv}{{\rm div}}
\newcommand{\en}{{\mathcal{E}}}
\newcommand{\h}{{\mathsf{h}}}
\newcommand{\CP}{\mathbb{C}\mathrm{P}}
\newcommand{\rest}[1]{\big\rvert_{#1}} % restriction e.g. to boundary
\title[]{Optimal transport between algebraic hypersurfaces}
\author{Paolo Antonini}
\address{Dipartimento di Matematica e Fisica "E. De Giorgi", Universit\`a del Salento, Lecce (Italy)
}
\email{paolo.antonini@unisalento.it}
\author{Fabio Cavalletti} \address{Mathematics Area, SISSA, Trieste (Italy)}
\email{cavallet@sissa.it}
\author{Antonio Lerario}
\address{Mathematics Area, SISSA, Trieste (Italy)}
\email{lerario@sissa.it}
\begin{document}
\maketitle
\begin{abstract}What is the optimal way to deform a projective hypersurface into another one? In this paper we will answer this question adopting the point of view of measure theory, introducing the optimal transport problem between complex algebraic projective hypersurfaces. 

First, a natural topological embedding of the space of hypersurfaces of a given degree into the space of measures on the projective space is constructed. Then, the optimal transport problem between hypersurfaces is defined through a constrained dynamical formulation, minimizing the energy of absolutely continuous curves which lie on the image of this embedding. In this way an inner Wasserstein distance on the projective space of homogeneous polynomials is introduced. This distance is finer than the Fubini--Study one.

The innner Wasserstein distance is  complete and geodesic: geodesics corresponds to optimal deformations of one algebraic hypersurface into another one. Outside the discriminant this distance is induced by a smooth Riemannian metric, which is the real part of an explicit Hermitian structure. Moreover, this hermitian structure is K\"ahler and the corresponding metric is of Weil--Petersson type. 

To prove these results we develop new techniques, which combine complex and symplectic geometry with optimal transport, and which we expect to be relevant on their own.

We discuss applications on the regularity of the zeroes of a family of multivariate polynomials and on the condition number of polynomial systems solving.

\end{abstract}

\setcounter{tocdepth}{1}
%\vspace{-1cm}

\tableofcontents
\smallskip
\section{Introduction}\label{sec:intro}

What is the optimal way to deform an algebraic hypersurface in the $n$--dimensional projective space into another one? In this paper we will answer this question adopting the point of view of measure theory, associating to an algebraic hypersurface a probability measure on the projective space and formulating the problem with the language of \emph{optimal transport}. 

Optimal Transport  is nowadays a pillar of modern mathematics. In a nutshell, it is a general tool to measure how expensive the transportation between two given probability measures $\mu_{0},\mu_{1}$
over a fixed  metric space $(M,\sfd)$ is. This is done by a minimization procedure:
for $q\geq 1$, the $q$--optimal transport problem between $\mu_0$ and $\mu_1$ is defined as
\beq
\label{eq:Wintrodef}W_q(\mu_0, \mu_1):=
\inf_{\xi \in \Pi(\mu_{0},\mu_{1})} \left(\int_{M\times M} \sfd(x,y)^q\,\xi(\dd x \dd y)\right)^{\frac{1}{q}},
\eeq
where $\Pi(\mu_{0},\mu_{1})$ consists of the set of probability measures $\xi$ over $M\times M$ with marginals $\mu_0$ and $\mu_1$ (see \cref{sec:wasserstein}). The elements of $\Pi(\mu_0, \mu_1)$ are called \emph{transport plans}, capturing the idea of transporting $\mu_0$ to $\mu_1$. The quantity in \eqref{eq:Wintrodef} defines indeed a distance, the \emph{$q$--Wasserstein distance}, on the set of Borel probability measures on $M$. The resulting metric space is denoted by $\mathscr{P}_q(M)$ and called the \emph{$q$--Wasserstein space}. When $(M, \sfd)$ is compact and geodesic, $\mathscr{P}_q(M)$ is also compact and geodesic: in this case the optimal transport problem \eqref{eq:Wintrodef} has a natural interpretation as a geodesic problem. 

Thanks to the generality of this formulation, during the last 30 years, Optimal Transport and the Wasserstein distance have played a central role in a large variety of different areas of mathematics.
To list few notable examples we mention:  PDEs \cite{JKO}, geometric inequalities \cite{FMP},  Random matrices \cite{FigalliGuionnet} and  
Quantum mechanics \cite{CarlenMaas}. 
For a comprehensive introduction to the subject we refer to the monographs \cite{villani:topics,villani:oldandnew}. 			
In differential geometry, recently Optimal Transport has been at the core of the synthetic treatment of lower Ricci curvature bounds,  culminated with the successful theory of J. Lott, K.-T. Sturm and C. Villani \cite{lottvillani,SturmI, SturmII} completing Gromov's program of
describing  lower bounds on the Ricci curvature in an intrinsic manner for metric spaces.

In this paper we will take $(M, \sfd)=(\CP^n, \sfd_\mathrm{FS})$, the $n$--dimensional complex projective space with the \emph{Fubini--Study} distance (see \cref{Appendix A}). We will treat a hypersurface of degree $d$ as a measure on $\CP^n$: for a smooth hypersurface this measure is given by integrating continuous functions on it, but some care has to be done to extend this definition to singular hypersurfaces, see \cref{sec:intromeasure} below. In the main part of the paper we will study the intrinsic Wasserstein geometry of the space of hypersurfaces, building the tools for answering the question posed at the beginning of the paper, establishing connections between optimal transport, symplectic geometry and singularity theory. In the final part we illustrate how to apply our techniques and formalism: first we generalize to higher dimensions the result of A. Parusinski and A. Rainer on the sharp regularity of roots of families of polynomials. We use this result to prove the compactness of the space of hypersurfaces with the inner $q$--Wasserstein distance for $q\in [1, q_0]$ with $q_0>1$. Then we propose a solution to a problem posed by C. Beltr\'an, J.-P. Dedieu, G. Malajovich and M. Shub on the condition length of a certain class of curves. 

Finally, we expect our techniques to be applicable beyond the context of complex algebraic geometry, for example in the more flexible framework of symplectic geometry. 

\subsection{Optimal transport between algebraic hypersurfaces}\label{example:one}As a warm up we first illustrate some of the ideas in the case $n=1$.

Let $\C^d$ denote the space of coefficients of monic polynomials $p(z)$ of degree $d$ in one complex variable, $p(z)=z^d+c_{1}z^{d-1}+\cdots +c_{d-1}z+c_d$, $c_k\in \C$. To each polynomial $p\in \C^d$ we can associate in a natural way a probability measure $\mu(p)\in \mathscr{P}(\C)$ defined by
\beq \label{eq:deltaintro}\mu(p):=\frac{1}{d}\sum_{p(z)=0}m_z(p)\cdot\delta_z,\eeq
where $m_z(p)\in \N$ denotes the multiplicity of $z$ as a zero of $p$. This defines a map
\beq\label{eq:solmap} \mu:\C^d\to \mathscr{P}_2(\C).\eeq
We can think of $\mu$ as a ``solution map'': given a polynomial $p$, this map gives all the solutions to $p(z)=0$, condensed into a single object, the measure $\mu(p)$. The image of $\mu$ can be identified with $\mathrm{SP}^d(\C)$, the $d$--th symmetric product of $\C$, i.e. the quotient of $\C^d$ under the action of the symmetric group $S_d$ by permutations of the factors. The Fundamental Theorem of Algebra implies that $\mu$ is in fact a homeomorphism onto its image. It turns out that the restriction of the $W_2$--metric to $\mathrm{SP}^d(\C)\subset \mathscr{P}(\C)$  is given by
\beq 
\label{eq:W21}W_2([x], [y])^2 =\min_{\sigma\in S_d} \frac{1}{d}\sum_{j=1}^d|x_j-y_{\sigma(j)}|^2,
\eeq
where $x=(x_1, \ldots, x_d)$ and $y=(y_1, \ldots, y_d)$,
with a similar expression for $q \neq2$. 

For every $q\geq 1$, $\mathrm{SP}^d(\C)$ is geodesically convex in $\mathscr{P}_{q}(\C)$.
Geodesics of 
$(\mathrm{SP}^d(\C),W_2)$ 
have a natural description:
once we have the optimal matching between
$[x]$ and $[y]$ (obtained
minimizing the sum of the square of their mutual distances in $\C$) 
then we let evolve the point 
$x_i$ along the segment joining 
$x_i$ to the optimal $y_{j_{i}}$.
As we will see, these geodesics have some special behavior with respect to the \emph{discriminant}, the set of polynomials with multiple roots. For instance if both $p_0$ and $p_1$ have simple roots, along the $2$--Wasserstein geodesic joining $\mu(p_0)$ with $\mu(p_1)$ the roots remain distinct. Optimal Transport allows to make this statement quantitative (\cref{thm:P14}). We will come back to this result later, in the context of condition numbers of polynomial system solving.

\subsubsection{The measure associated to an algebraic hypersurface in the projective space} \label{sec:intromeasure}
In higher dimension the situation is much more intricate.
Given natural numbers $n, d\in \N$, we denote by $H_{n,d}$ the space of complex, \emph{homogeneous} polynomials $p(z)$ in $n+1$ variables and of degree $d$,
$$
p(z_0, \ldots, z_n)=\sum_{|\alpha|=d}c_\alpha z_0^{\alpha_0} \cdots z_n^{\alpha_n}, \quad c_\alpha\in \C.
$$
Every nonzero polynomial $p\in H_{n,d}$, being homogeneous, defines a (possibly very singular) complex hypersurface $Z(p)$ in the complex projective space $\CP^n$,
$$
Z(p):=\{[z]\in \CP^n\,|\, p(z)=0\}\subset \CP^n.
$$

Since nonzero multiples of the same polynomial define the same hypersurface, it is more convenient to work with the projectivization $P_{n,d}$ of the space  $H_{n,d}$. This is itself a complex projective space
$$
P_{n,d}:=\mathrm{P}(H_{n,d})\simeq \CP^N, \quad N={n+d\choose d}-1.
$$
Elements of $P_{n,d}$ are therefore polynomials up to nonzero multiples but, in order to simplify the exposition, we will often call them ``polynomials'' and use the notation ``$p\in P_{n,d}$'' (it would be more precise to write ``$[p]\in P_{n,d}$'').

With this notation, we denote by $\Delta_{n,d}\subset P_{n,d}$ the discriminant, i.e. the set of polynomials $p\in P_{n,d}$ such that the equation $p=0$   \emph{is not} regular on $\CP^n$:
$$
\Delta_{n,d}:=\left\{p\in P_{n,d}\,\bigg|\, \exists z\in \C^{n+1}\setminus \{0\}, \, p(z)=\frac{\partial p}{\partial z_0}(z)=\cdots=\frac{\partial p}{\partial z_n}(z)=0\right\},
$$
If $p\in P_{n,d}\setminus \Delta_{n,d}$, then $Z(p)$ is a smooth complex submanifold of $\CP^n$, of real dimension $2n-2$. 

When $n=1$, this construction coincides with the one from \cref{example:one},  except that we view  our zeroes in the compact space $\CP^1$, rather than in $\C$. When $n>1$, algebraic hypersurfaces in $\C^n$ are never compact and it is more natural to work with $\CP^n$ as ambient space. 

As we have seen above, in the one--dimensional case we can treat the zero set of a nonzero polynomial $p$ as a sum of Dirac's delta measures, as in \eqref{eq:deltaintro}. The multi--dimensional analogue of this construction is the following.

Let us first look at the case $p\in P_{n,d}\setminus \Delta_{n,d}$. Restricting the Fubini--Study Riemannian metric of $\CP^n$ to $Z(p)$ turns it into an oriented Riemannian manifold, with volume form $\mathrm{vol}_{Z(p)}$. 
We define the associated measure $\mu(p)\in \mathscr{P}(\CP^n)$ to be the volume measure of  the smooth submanifold $Z(p)$, normalized to total mass $1$:
\beq
\label{eq:measureintro}\mu(p):=\frac{1}{\mathrm{vol}(Z(p))}\mathrm{vol}_{Z(p)}.
\eeq
Here we are tacitly agreeing that 
a Borel probability measure on $Z(p)$ 
is also a Borel probability measure on $\CP^n$.

Notice that all smooth hypersurfaces of degree $d$ have the same volume $\mathrm{vol}(Z)=d\cdot \mathrm{vol}(\CP^{n-1})$ (when $n=1$, this reduces to $d\cdot\mathrm{vol}(\CP^{0})=d$), 
so that the renormalization factor does not depend on $p$; this is a standard fact in complex algebraic geometry, see \cref{lemma:dis}.

Hence the correspondence $p\mapsto \mu(p)$ defines a ``family'' of probability measures on $\CP^n$, depending on the ``parameter'' $p\in P_{n,d}\setminus\Delta_{n,d}$. It is not difficult to show that $\mu(p)$ depends continuously on $p\in P_{n,d}\setminus\Delta_{n,d} $. Less trivial is the fact that this map can be continuously extended to the whole $P_{n,d}$, i.e. also on the discriminant. For instance, the Hausdorff measure of $Z(p)$ fails to be continuous even for $n=1$.
Moreover it is in general not true that similarly defined maps can be continuously extended also on discriminants, even in the simplest cases (see \cref{remark:notC}). 

For the continuous extension we will use the notion of multiplicity, together with an integral geometric argument.
We denote by $\mathbb{G}(1,n)$ the Grassmannian of lines in $\CP^n$, endowed with the uniform probability measure. For every $\ell\in \mathbb{G}(1,n)$, the restriction $p|_\ell$ is a polynomial in one variable, and one can talk about the multiplicity $m_z(p|_\ell)$ of its zeroes $z\in \ell\cap Z(p)$.

\begin{theorem}[The measure associated to an algebraic hypersurface]\label{thm:extintro}
For every $q\geq 1$, the map  $\mu:P_{n,d} \to \mathscr{P}_q(\CP^n)$ defined for $f\in \mathscr{C}^0(\CP^n)$ by
\beq
 \int_{\CP^n}f \dd \mu(p):=\frac{1}{d}\int_{\mathbb{G}(1,n)}\left(\sum_{z\in Z(p)\cap \ell}m_z(p|_{\ell}) f(z)\right)\mathrm{vol}_{\mathbb{G}(1,n)}(\dd \ell)
\eeq
is continuous and injective; restricted to $P_{n,d}\setminus \Delta_{n,d}$, it coincides with \eqref{eq:measureintro}. 
\end{theorem}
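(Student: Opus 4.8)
The plan is to realize $\mu(p)$ as a \emph{superposition of one--variable root measures}. Fix a nonzero representative $p\in H_{n,d}$ of a class $[p]\in P_{n,d}$; for a line $\ell$ with $p|_\ell\not\equiv 0$ set $\nu_\ell(p):=\tfrac1d\sum_{z\in Z(p)\cap\ell}m_z(p|_\ell)\,\delta_z$, which is a probability measure on $\CP^n$ because the binary form $p|_\ell$ of degree $d$ has exactly $d$ roots in $\ell\cong\CP^1$ counted with multiplicity. Since $p\neq 0$, the hypersurface $Z(p)$ is proper, so the set of lines it contains is a proper subvariety of $\mathbb{G}(1,n)$, hence $\mathrm{vol}_{\mathbb{G}(1,n)}$--negligible; on its complement $\ell\mapsto\nu_\ell(p)$ is weakly continuous, by continuity of the roots of a one--variable polynomial with respect to its coefficients. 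Consequently $f\mapsto\int_{\mathbb{G}(1,n)}\bigl(\int_{\CP^n}f\,\dd\nu_\ell(p)\bigr)\,\mathrm{vol}_{\mathbb{G}(1,n)}(\dd\ell)$ is a well defined, positive, unit--mass functional on $\mathscr{C}^0(\CP^n)$, and Riesz representation produces $\mu(p)\in\mathscr{P}(\CP^n)$; since $\CP^n$ is compact, $\mathscr{P}(\CP^n)=\mathscr{P}_q(\CP^n)$ as sets for every $q\geq 1$ and $W_q$ metrizes weak convergence there, so from now on it suffices to work with weak convergence.

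To see that on $P_{n,d}\setminus\Delta_{n,d}$ this agrees with \eqref{eq:measureintro}, I would use integral geometry. For $p\notin\Delta_{n,d}$ the set $Z(p)$ is a smooth complex submanifold, a generic line meets it transversally in $d$ distinct points, and there $m_z(p|_\ell)=1$; hence $\int f\,\dd\mu(p)=\tfrac1d\int_{\mathbb{G}(1,n)}\sum_{z\in Z(p)\cap\ell}f(z)\,\mathrm{vol}_{\mathbb{G}(1,n)}(\dd\ell)$, which is the integral of $f$ against the push--forward to $Z(p)$, along $(z,\ell)\mapsto z$, of the measure on the incidence variety $\{(z,\ell):z\in\ell,\ z\in Z(p)\}$ obtained by disintegrating $\mathrm{vol}_{\mathbb{G}(1,n)}$ along its second projection. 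The Cauchy--Crofton formula identifies this push--forward with $c_n\,\mathrm{vol}_{Z(p)}$; the point is that the Crofton density at a point $z$ depends only on the position of $T_zZ(p)$ inside $T_z\CP^n$, and since $Z(p)$ is complex this is a complex hyperplane, all such being equivalent under the isotropy $U(n)$, so the density is a constant $c_n$ independent of $z$ and of $p$. Evaluating at $f\equiv 1$ and using $\mathrm{vol}(Z(p))=d\cdot\mathrm{vol}(\CP^{n-1})$ (\cref{lemma:dis}) gives $c_n=\mathrm{vol}(\CP^{n-1})^{-1}$ and hence $\mu(p)=\mathrm{vol}(Z(p))^{-1}\mathrm{vol}_{Z(p)}$.

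Continuity is comparatively soft. If $p_k\to p$ in $P_{n,d}$, lift to $p_k\to p$ in $H_{n,d}\setminus\{0\}$; for $\mathrm{vol}_{\mathbb{G}(1,n)}$--a.e.\ $\ell$ one has $p|_\ell\not\equiv 0$, hence $p_k|_\ell\not\equiv 0$ for $k$ large and $\nu_\ell(p_k)\weak\nu_\ell(p)$ by continuity of roots, so $\int f\,\dd\nu_\ell(p_k)\to\int f\,\dd\nu_\ell(p)$; these integrands are bounded by $\|f\|_\infty$, so dominated convergence gives $\int f\,\dd\mu(p_k)\to\int f\,\dd\mu(p)$ for every $f\in\mathscr{C}^0(\CP^n)$, i.e.\ $\mu(p_k)\to\mu(p)$ in $\mathscr{P}_q(\CP^n)$.

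For injectivity, suppose $\mu(p)=\mu(q)$. I would first show $\supp\mu(p)=Z(p)$: the inclusion $\subseteq$ is clear since $\nu_\ell(p)$ is carried by $Z(p)$ for a.e.\ $\ell$; for $\supseteq$, near a smooth point of any irreducible component of $Z(p)$ a positive--measure family of lines meets that component transversally inside any prescribed neighbourhood, each such line contributing weight $\geq 1/d$ there, so every neighbourhood of every point of $Z(p)$ carries positive $\mu(p)$--mass. Hence $Z(p)=Z(q)$, and by the Nullstellensatz $p$ and $q$ have the same irreducible factors, $p=c\prod_i p_i^{a_i}$, $q=c'\prod_i p_i^{b_i}$ with $\sum_i a_i\deg p_i=\sum_i b_i\deg p_i=d$. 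A generic--line computation (for generic $\ell$ one has $m_z(p|_\ell)=a_i$ at each transverse point $z\in\ell\cap Z(p_i)$ and the sets $\ell\cap Z(p_i)$ are pairwise disjoint) gives the convex decomposition $\mu(p)=\sum_i\tfrac{a_i\deg p_i}{d}\,\mu(p_i)$, and likewise for $q$ with the $b_i$. Finally the $\mu(p_i)$ are mutually singular: $\mu(p_i)$ is carried by $Z(p_i)$ and gives no mass to the codimension--$\geq 2$ set $Z(p_i)\cap Z(p_j)$ (a generic line misses it), hence it is concentrated on $A_i:=Z(p_i)\setminus\bigcup_{j\neq i}Z(p_j)$ while $\mu(p_j)(A_i)=0$ for $j\neq i$; evaluating $\mu(p)=\mu(q)$ on $A_i$ forces $a_i=b_i$ for all $i$, so $[p]=[q]$. (For $n=1$ injectivity is the Fundamental Theorem of Algebra, as in \cref{example:one}.) I expect the main obstacle to be the integral--geometric step off the discriminant with the correct constant, since it is the only place where the superposition definition must genuinely be matched against the Riemannian volume and where the complex structure (isotropy of complex tangent hyperplanes, forcing the Crofton density to be constant) is essential; the support computation in the injectivity argument requires similar real--geometric care, whereas continuity and the multiplicity bookkeeping are routine.
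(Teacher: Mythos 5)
Your proposal is correct and follows essentially the same route as the paper: the measure is built line by line from the one--variable root measures, continuity is reduced to the $\CP^1$ case plus dominated convergence (the paper's \cref{lemma:cont} and \cref{corollary:CP1}), agreement with \eqref{eq:measureintro} off the discriminant is the kinematic/Crofton formula with the constant pinned down by $\mathrm{vol}(Z(p))=d\cdot\mathrm{vol}(\CP^{n-1})$ (the paper's \cref{lemma:dis}), and injectivity goes through the irreducible factorization and the decomposition $\mu(p)=\sum_i\frac{a_i\deg p_i}{d}\mu(p_i)$ exactly as in the paper's \cref{lemma:irreducible,lemma:factorize,propo:compactness}. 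The only cosmetic difference is that you derive the constancy of the Crofton density from $U(n)$--homogeneity where the paper cites Howard's kinematic formula, and you make the mutual--singularity step in the injectivity argument slightly more explicit.
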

%\comm{downgraded this to ``proposition'' not to give too much emphasis}
\subsubsection{The Wasserstein distance between algebraic hypersurfaces} 
At this point, using the map $\mu$ from \cref{thm:extintro}, we have associated to each element of  $P_{n,d}$ a unique element of $\mathscr{P}(\CP^n)$, getting in this way the $n$--dimensional analogue of the ``solution map'' \eqref{eq:solmap}, 
 $$
 \{\mathrm{coefficients}\}=P_{n,d}\stackrel{\mu}{\longrightarrow} \mu(P_{n,d})=\{\mathrm{zeroes}\}\subset \mathscr{P}(\CP^n).
 $$

Our goal now is to minimize the cost  \eqref{eq:Wintrodef}, when moving one hypersurface to another one \emph{within} the (finite--dimensional) space of hypersurfaces of a given degree. However, unless $n=1$, the image of $\mu$ \emph{is not} geodesically convex for the $W_q$--metric and, given $p_0, p_1\in P_{n,d}$, the geodesic joining $\mu(p_0)$ to $\mu(p_1)$ in $\mathscr{P}_q(\CP^n)$ will in general not stay on $\mu(P_{n,d})$. To overcome this issue and to formulate the optimal transport problem between complex hypersurfaces, we adopt a ``constrained'' version of the dynamic formulation of 
J.-D. Benamou and Y. Brenier \cite{BenamouBrenier}. 

To this end, recall first that a curve $\mu_t:[0,1]\to \mathscr{P}_q(\CP^n)$ is said to be in $AC^q(I, \mathscr{P}_q(\CP^n))$ if for almost every $t\in I$
$$\exists \lim_{h\to 0}\frac{W_q(\mu_{t+h}, \mu_t)}{|h|}=:|\dot\mu_t|\quad\textrm{and}\quad \int_{I}|\dot\mu_t|^q\dd t<\infty.$$
The function $|\dot\mu_t|$ is called the \emph{metric speed} of $\mu_t$ (notice that this notion makes sense for curves with values in general metric spaces, see \cref{Ss:metricgeometry}). 

For the rest of this section we focus on the case $q=2$; similar definitions can be given for $q\geq 1, q\neq2$. With the above notation, we define
the set of \emph{admissible curves}
\beq 
\Omega_{n,d}:=\Big{\{}\gamma:I\to P_{n,d}\,|\, \mu\circ \gamma\in \mathrm{AC}^2(I, \mathscr{P}_2(\CP^n))\Big{\}}.
\eeq 

If $\gamma\in \Omega_{n,d}$ is an admissible curve, we write $ \mu_t:=\mu(\gamma(t))$ and define the \emph{Energy} of $\gamma$ by
\beq \label{eq:mini}\en(\gamma):=\int_{I}|\dot\mu_t|^2\dd t.\eeq
Given $p_0, p_1\in P_{n,d}$ we denote by $\Omega_{n,d}(p_0, p_1)\subset \Omega_{n,d}$ the set of admissible curves $\gamma:I\to P_{n,d}$ such that $\gamma(0)=p_0, \gamma(1)=p_1$ and we 
define 
\beq\label{eq:bb}
W_2^\textrm{in}(p_{0},p_{1}): =   \inf_{\gamma\in \Omega_{n,d}(p_0, p_1)}\en(\gamma)^{\frac{1}{2}}.
\eeq
The superscript in \eqref{eq:bb} refers to the fact that $W_2^\mathrm{in}$ is the \emph{inner} metric inherited on $P_{n,d}$ from its embedding in $\mathscr{P}_2(\CP^n)$ via \cref{thm:extintro}, not to be confused with the restriction of the ambient $W_2$--metric, called the \emph{outer} metric.

The definition of inner metrics via the minimization of the Energy of  admissible curves, mimicking the familiar construction from Riemannian geometry,  is quite natural in metric geometry, where it is used to build a geodesic distance on non--geodesically convex subsets \cite{Burago}. To obtain therefore an insight on $W_2^\textrm{in}$ one needs to study the metric speed of admissible curves, a problem which reduces to studying the infinitesimal behaviour of the $W_{2}$ distance between two algebraic hypersurfaces.  For this  we will need a novel combination of techniques from complex, symplectic and metric geometry, which we explain in the next section. 

We note that, though being quite natural to address, the general optimal transport problem between measures supported on submanifolds
in an arbitrary Riemannian manifold seems to have a short bibliography. 
Gangbo--McCann \cite{GangboMcCann:shape} considered optimal transport for measures supported on hypersurfaces in the Euclidean space;
McCann--Sosio 
\cite{McCannSosio} and Kitagawa--Warren \cite{KitWarren} gave more refined results about optimal transport between measures 
supported on a codimension one sphere in Euclidean spaces.
Castillon \cite{Castillon} considered optimal transport between a measure supported on a submanifold of Euclidean spaces and a measure supported on a linear subspace.
Lott \cite{Lott17} characterized the tangent cone (in the $W_{2}$--metric) to a probability measure supported on a smooth submanifold of a Riemannian manifold.
Finally Ketterer--Mondino \cite{KettererMondino} linked the convexity properties of entropy functionals along $W_{2}$--geodesics concentrated on rectifiable submanifold to bounds on the partial traces of the Riemann tensor. None of these contributions constrain the paths along which the measures are allowed to move.

\subsubsection{The metric space $(P_{n,d},W^{\textrm{in}}_{2} )$} 
To see that the infimum in \eqref{eq:bb} is finite for every pair of points $p_0, p_1\in P_{n,d}$, and in particular that it defines a distance, 
requires a certain amount of work. We overview the main ideas in this section. 

We will use the following notation: whenever a curve $\gamma: I \to P_{n,d}$ will be given, we write $p_t : = \gamma(t)$. With a slight abuse of notation, we sometimes write $p_t:I\to P_{n,d}$ instead of $\gamma$.

  First assume $p_0, p_1\notin \Delta_{n,d}$. Since the discriminant has real codimension $2$, we can find a smooth curve $\gamma:I\to P_{n,d}\setminus \Delta_{n,d}$ joining $p_0$ and $p_1$. For this curve, all the hypersurfaces $Z(p_t)$ are embedded in $\CP^n$ in the same way (they are ``isotopic'' in the language of differential topology). This a consequence of the  \emph{Thom's Isotopy Lemma}: under the assumption $p_t\in P_{n,d}\setminus \Delta_{n,d}$, there exists a smooth family of diffeomorphisms $\varphi_t:\CP^n\to \CP^n$ such that $\varphi_t(Z(p_0))=Z(p_t)$ for every $t\in I$. There is also a Hamiltonian version of Thom's Isotopy Lemma, which in our case takes the following form (variations of this result can be found in \cite{zbMATH05286973,zbMATH01123717,sieberttian}, for a self--contained detailed proof see \cite{moser}).

 \begin{theorem}[Hamiltonian Thom's Isotopy Lemma]\label{thm:thomintro}Let $\gamma:I\to P_{n,d}\setminus \Delta_{n,d}$ be a smooth curve. Then there exists a Hamiltonian flow $\varphi_t:\CP^n\to \CP^n$ such that $\varphi_t(Z(p_0))=Z(p_t)$ for every $t\in I$.
 \end{theorem}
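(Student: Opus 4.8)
The plan is to run a parametrized version of Moser's trick, moving the family of hypersurfaces $Z_t:=Z(p_t)$ rather than a family of symplectic forms. First I would choose a smooth lift $t\mapsto s_t$ of $\gamma$ to a curve of holomorphic sections of $\mathcal{O}(d)\to\CP^n$ with $s_t^{-1}(0)=Z_t$; this exists because $I$ is contractible, so the $\C^*$--bundle $H_{n,d}\setminus\{0\}\to P_{n,d}$ pulls back to a trivial bundle over $I$. Since $p_t\notin\Delta_{n,d}$, each $Z_t$ is a smooth complex hypersurface and $\{Z_t\}_{t\in I}$ is a smooth family of submanifolds of $\CP^n$. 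Being a complex submanifold of the K\"ahler manifold $(\CP^n,\omega)$, with $\omega$ the Fubini--Study form, each $Z_t$ is a symplectic submanifold; hence $\omega$ is nondegenerate on $T\CP^n|_{Z_t}$ and we obtain a symplectic splitting $T\CP^n|_{Z_t}=TZ_t\oplus N_t$, where $N_t:=(TZ_t)^{\omega}$ is the symplectic normal bundle.

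Next I would record the infinitesimal motion of the family. Restricting $ds_t$ to $Z_t$ and reducing modulo $TZ_t$ yields the canonical isomorphism $N_t\xrightarrow{\ \sim\ }\mathcal{O}(d)|_{Z_t}$; differentiating the identity $s_t|_{Z_t}\equiv 0$ then produces a smooth normal field
\[
\nu_t:=-(ds_t|_{Z_t})^{-1}(\dot s_t|_{Z_t})\in\Gamma(Z_t,N_t).
\]
A quick computation shows that $\nu_t$ is unchanged if $s_t$ is replaced by $\lambda_t s_t$ with $\lambda_t:I\to\C^*$, so $\nu_t$ is intrinsic to $\gamma$; it is precisely the velocity $\tfrac{d}{dt}Z_t$ of the moving hypersurface, i.e. the $N_t$--component along $Z_t$ of $\dot z(t)$ for any smooth path with $z(t)\in Z_t$.

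The key step — and the one I expect to be the main obstacle — is to realize $\nu_t$ as the normal part of a Hamiltonian vector field. For $H\in C^\infty(\CP^n)$, decompose $X_H|_{Z_t}=Y+V$ along $TZ_t\oplus N_t$; a short piece of symplectic linear algebra gives that $Y$ is the intrinsic Hamiltonian field of $H|_{Z_t}$ on $(Z_t,\omega|_{Z_t})$, while $V$ is determined by $\iota_V(\omega|_{N_t})=dH|_{N_t}$. Thus the normal component $V$ depends only on the conormal part $dH|_{N_t}$ of the first jet of $H$ along $Z_t$ and, $\omega|_{N_t}$ being nondegenerate, it can be prescribed to be any smooth section of $N_t$. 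I would therefore produce, by a standard parametrized tubular--neighbourhood construction, a smooth family $H_t\in C^\infty(\CP^n)$ with $H_t|_{Z_t}\equiv 0$ and $dH_t|_{N_t}=\iota_{\nu_t}(\omega|_{N_t})$, so that the $N_t$--component of $X_{H_t}|_{Z_t}$ equals $\nu_t$. Carrying out this extension smoothly in $t$ over the moving submanifolds $Z_t$ is the technical heart of the argument.

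Finally, let $\varphi_t$ be the flow, starting from $\varphi_0=\mathrm{id}$, of the time--dependent vector field $X_t:=X_{H_t}$; it is defined for all $t\in I$ since $\CP^n$ is compact, and it is a Hamiltonian flow by construction. Consider the smooth submanifold $\mathcal{W}:=\bigcup_{t\in I}\{t\}\times Z_t\subset I\times\CP^n$. The condition that the normal part of $X_t$ along $Z_t$ equal $\nu_t$ says exactly that the vector field $\partial_t+X_t$ is tangent to $\mathcal{W}$; hence its flow preserves $\mathcal{W}$, i.e. $\varphi_t(Z_0)\subseteq Z_t$ for every $t$. Since $\varphi_t$ is a diffeomorphism and $\dim Z_0=\dim Z_t$, this gives $\varphi_t(Z(p_0))=Z(p_t)$, as required. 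As a byproduct $\varphi_t$ is a symplectomorphism, so $(Z_0,\omega|_{Z_0})$ and $(Z_t,\omega|_{Z_t})$ are symplectomorphic, consistently with all smooth degree--$d$ hypersurfaces having the same volume.
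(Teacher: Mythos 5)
Your proposal is essentially correct and gives the standard Moser--type argument that the paper itself does not reproduce: for this statement the authors simply appeal to the symplectic literature, citing \cite{zbMATH05286973,zbMATH01123717,sieberttian} and referring to \cite{moser} for a self--contained proof, after having sketched the non--Hamiltonian Thom isotopy in \cref{rem:thom} via the ``tangent to the track'' trick and partitions of unity. Your argument follows exactly that framework and supplies the extra Hamiltonian ingredient in the expected way: since each $Z_t$ is a complex (hence symplectic) hypersurface, $\omega|_{N_t}$ is nondegenerate and the normal component of a Hamiltonian field along $Z_t$ is freely prescribable through $dH|_{N_t}$, while the tangent component is the intrinsic Hamiltonian field of $H|_{Z_t}$ --- this last decomposition is in fact the same linear--algebra computation the paper carries out inside the proof of \cref{propo:ka}. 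The only place you correctly flag as needing care, namely producing a smooth--in--$t$ family $H_t$ with prescribed $1$--jet along the moving $Z_t$, is routine (choose a uniform tubular radius by compactness, set $H_t(\exp_z v):=\omega(\nu_t(z),v)$ in the tube, cut off by a bump function that is $1$ near $Z_t$, and glue by a partition of unity in $(t,z)$, exactly as in \cref{rem:thom}); patching functions is even easier than patching vector fields because the tangency condition is linear in the $1$--jet. One cosmetic point: in $\nu_t=-(ds_t|_{Z_t})^{-1}(\dot s_t|_{Z_t})$ the inverse should be read as the inverse of the induced isomorphism $N_t\to\mathcal{O}(d)|_{Z_t}$, as you intend; written literally $ds_t|_{Z_t}$ is not invertible.
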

 
Using \cref{thm:thomintro}, we can write the measures $\mu_t:=\mu(p_t)$ as $\mu_t=(\varphi_t)_{\#}\mu_0$. In general, if $v_{t} \in L^{2}(\mu_{t};TM)$ is a time--dependent vector field solving (in the sense of distributions) the continuity equation 
\beq \label{eq:continuity}
\partial_{t} \mu_{t} + \ddiv(v_{t}\mu_{t}) = 0, 
\eeq
then, for almost every $t\in I$, the metric speed of $\mu_t$ equals:
\begin{equation}\label{speednormalintro}
|\dot{\mu_{t}} |=\| \mathsf{P}(v_{t}) \|_{L^{2}_{\mu_{t}}},\end{equation}
where $\mathsf{P}:L^2(\mu; TM)\to T_\mu(\mathscr{P}_2(M))$ denotes the projection on the  ``space of gradients'' (see \cref{sec:spaceofgrad}). 
In our case the crucial point is that, since $v_t$ is Hamiltonian,  for every $z\in Z(p_t)$ we have $\mathsf{P}(v_t)(z)=N(v_t(z))$, where $N$ denotes the orthogonal projection of the field on the normal bundle of $Z(p_t)$, see \cref{propo:ka}.

Summing up: if $\gamma:I\to P_{n,d}\setminus \Delta_{n,d}$ is a smooth curve and $v_t$ is the time--dependent vector field generating the Hamiltonian isotopy from \cref{thm:thomintro}, the Energy of the curve can be written as
\beq \en(\gamma)=\int_{I}|\dot \mu_t|^2=\int_{I}\int_{\CP^n}\|N(v_t)\|^2\dd \mu_t\dd t.\eeq
There are plenty of Hamiltonian fields $v_t$ solving the continuity equation \eqref{eq:continuity}, but the condition $p_t\in P_{n,d}\setminus \Delta_{n,d}$ determines the normal component of such fields and allows to prove the following theorem.

\begin{theorem}[The energy of an admissible curve avoiding the discriminant]\label{thm:metricspeedsmoothintro}Let $\gamma:I\to P_{n,d}\setminus \Delta_{n,d}$ be a $\mathscr{C}^1$ curve. Then $\gamma$ is admissible and \beq\label{eq:speedintro} \en(\gamma)=\frac{1}{d\cdot \mathrm{vol}(\CP^{n-1})}\int_{I}\int_{Z(p_t)}\frac{|\dot{p}_t(z)|^2}{\|\nabla^{\C}p_t(z)\|^2}\mathrm{vol}_{Z(p_t)}(\dd z)\dd t.\eeq
\end{theorem}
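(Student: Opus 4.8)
The strategy is to combine three ingredients that have already been set up: the Hamiltonian Thom isotopy lemma (Theorem \ref{thm:thomintro}), the formula \eqref{speednormalintro} for the metric speed in terms of the projection $\mathsf P(v_t)$, and the explicit identification $\mathsf P(v_t) = N(v_t)$ on $Z(p_t)$ coming from the Hamiltonian nature of the flow (Proposition \ref{propo:ka}). The heart of the matter is a pointwise computation: given the curve $\gamma$ with $\dot\gamma(t)=\dot p_t$, one must identify the normal component $N(v_t(z))$ of the Hamiltonian vector field at a point $z\in Z(p_t)$ in terms of $\dot p_t(z)$ and $\nabla^{\C} p_t(z)$.

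First I would fix $t$ and a point $z\in Z(p_t)$. Differentiating the identity $p_t(\varphi_t(w))=0$ (valid whenever $p_0(w)=0$, using $\varphi_t(Z(p_0))=Z(p_t)$ from Theorem \ref{thm:thomintro}), one gets $\dot p_t(z) + \mathrm{d}p_t(z)[v_t(z)] = 0$, i.e. $\mathrm{d}p_t(z)[v_t(z)] = -\dot p_t(z)$. Here I must be slightly careful about the homogeneity/projective bookkeeping: $p_t$ lives in $P_{n,d}$, so the representative is chosen up to scale, and the vector field $v_t$ is on $\CP^n$; the quantity $\dot p_t(z)/\|\nabla^{\C}p_t(z)\|$ is, however, scale-invariant in the right sense, and one should record the precise normalization (this is where the factor $\frac{1}{d\cdot\mathrm{vol}(\CP^{n-1})}$ together with the renormalization of $\mu_t$ enters). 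Since the kernel of $\mathrm{d}p_t(z)$ is exactly the (complex) tangent space $T_z Z(p_t)$ — because $p_t\notin\Delta_{n,d}$ means $\nabla^{\C}p_t(z)\neq 0$ — the tangential part of $v_t(z)$ does not contribute, and the normal part $N(v_t(z))$ is determined: writing $\nabla^{\C}p_t(z)$ for the (metric) gradient, one finds $N(v_t(z)) = -\dfrac{\overline{\dot p_t(z)}}{\|\nabla^{\C}p_t(z)\|^2}\,\nabla^{\C}p_t(z)$ (up to the conventions for the Hermitian inner product on $\CP^n$), so that
\[
\|N(v_t(z))\|^2 = \frac{|\dot p_t(z)|^2}{\|\nabla^{\C}p_t(z)\|^2}.
\]

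With this pointwise identity in hand, the rest assembles quickly. By Proposition \ref{propo:ka}, $\mathsf P(v_t)(z) = N(v_t(z))$ for $\mu_t$-a.e.\ $z$, so by \eqref{speednormalintro} the metric speed is $|\dot\mu_t|^2 = \int_{\CP^n}\|N(v_t)\|^2\,\mathrm d\mu_t = \frac{1}{\mathrm{vol}(Z(p_t))}\int_{Z(p_t)}\frac{|\dot p_t(z)|^2}{\|\nabla^{\C}p_t(z)\|^2}\,\mathrm{vol}_{Z(p_t)}(\mathrm dz)$, and using $\mathrm{vol}(Z(p_t)) = d\cdot\mathrm{vol}(\CP^{n-1})$ (Lemma \ref{lemma:dis}) and integrating in $t$ gives \eqref{eq:speedintro}. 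To conclude that $\gamma$ is admissible I would check that the right-hand side is finite: on the compact curve $\gamma(I)\subset P_{n,d}\setminus\Delta_{n,d}$ the quantity $\|\nabla^{\C}p_t(z)\|$ is bounded below away from zero (by compactness and $p_t\notin\Delta_{n,d}$) and $|\dot p_t(z)|$ is bounded above, so the integrand is bounded; this also shows $t\mapsto\mu_t$ is Lipschitz, hence in $\mathrm{AC}^2$, and that the continuity equation \eqref{eq:continuity} holds with the constructed $v_t\in L^2(\mu_t;T\CP^n)$, which legitimizes the use of \eqref{speednormalintro}.

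The main obstacle I anticipate is not any single estimate but the careful reconciliation of conventions: the homogeneous polynomial $p_t$ is only defined up to scalar, the gradient $\nabla^{\C}p_t$ is the Euclidean gradient of a representative, whereas everything on $Z(p_t)\subset\CP^n$ uses the Fubini–Study metric, and one must verify that the combination $|\dot p_t(z)|^2/\|\nabla^{\C}p_t(z)\|^2$ that appears is genuinely well-defined on $P_{n,d}$ and transforms correctly — in particular that the normalization of $\mu_t$ to a probability measure produces precisely the constant $1/(d\cdot\mathrm{vol}(\CP^{n-1}))$ and not some $z$-dependent factor. A secondary technical point is justifying \eqref{speednormalintro} and the equality $\mathsf P(v_t)=N(v_t)$ rigorously for the merely $\mathscr C^1$ curve $\gamma$ (as opposed to smooth), i.e.\ checking the regularity of $t\mapsto v_t$ is enough to invoke the metric-speed formula; this should follow from the quantitative bounds above together with the results of \cref{sec:spaceofgrad}.
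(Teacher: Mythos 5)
Your proposal follows the paper's own proof essentially step for step: Hamiltonian Thom isotopy, the identification $\mathsf P(v_t)=N(v_t)$ from \cref{propo:ka}, the pointwise determination of $N(v_t(z))$ by differentiating $p_t(\varphi_t(w))=0$ (which is the content of \cref{lemma:invert}/\cref{lemma:Ncoord} in the paper), and the final normalization via $\mathrm{vol}(Z(p_t))=d\cdot\mathrm{vol}(\CP^{n-1})$. The one substantive point you flag but do not carry out is exactly where the paper spends most of its effort: verifying that the Fubini--Study norm of $N(v_t(z))$ comes out as $|\dot p_t(z)|/\|\nabla^\C p_t(z)\|$ with the \emph{full} Euclidean gradient of the homogeneous representative at a unit-norm $z$, rather than its projection onto the horizontal space $z^\perp\simeq T_{[z]}\CP^n$. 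The paper resolves this by working in the trivialization of the line bundle $L_{n,d}$ and invoking Euler's identity $\langle z,\overline{\nabla^\C p(z)}\rangle=d\cdot p(z)=0$ at zeroes, which shows the gradient already lies in $z^\perp$; without that observation the denominator in \eqref{eq:speedintro} would a priori be $\|\mathrm{proj}_{z^\perp}\overline{\nabla^\C p_t(z)}\|^2$. Since your asserted formula is the correct one and you correctly identify this as the step needing verification, the plan is sound, but that verification is the proof's real content rather than a bookkeeping afterthought.
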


We observe two interesting consequences of the previous result, which are peculiar of the case $q=2$. First, the innermost integral in \eqref{eq:speedintro} defines a quadratic form on the tangent spaces to $P_{n,d}\setminus \Delta_{n,d}$. This is induced by the real part of a Hermitian form $\h$, which we call the \emph{Wasserstein--Hermitian structure}:
\beq 
\label{eq:herintro}\h_p(q_1, q_2):= \int_{Z(p)}\frac{q_1(z)\overline{q_2(z)}}{\|\nabla^\C p(z)\|^2}\mathrm{vol}_{Z(p)}(\dd z), \quad q_1, q_2\in T_{p}(P_{n,d}\setminus \Delta_{n,d}).
\eeq

In particular, the Energy of a $\mathscr{C}^1$ curve $\gamma:I\to P_{n,d}\setminus \Delta_{n,d}$ can be written in the usual Riemannian way as:
$$\en(\gamma)=\frac{1}{d\cdot \mathrm{vol}(\CP^{n-1})}\int_{I}\h_{p_t}(\dot{p}_t, \dot{p}_t)\, \dd t.$$
Notice that $p\in P_{n,d}$ denotes a class of polynomials in $H_{n,d}$ up to nonzero multiples, but the above definition does not depend on the choice of representatives, see \cref{lemma:ker} and \cref{sec:whs} for more details. Second, this Hermitian structure is not defined on $\Delta_{n,d}$ (this is because the integrand in \eqref{eq:herintro} is not defined when $p\in \Delta_{n,d}$), but its explicit form allows us to perform a semialgebraic reparametrization argument for curves with endpoints on $\Delta_{n,d}$ (\cref{thm:rep}), and to prove that any two points $q_0, q_1$, regardless being on the discriminant or not, can be joined by an admissible curve with \emph{finite} Energy (\cref{thm:finite}). In particular, the minimization problem \eqref{eq:mini} always admits a solution:
\beq \label{eq:W2in}
W_2^\mathrm{in}(p_0, p_1) =\min_{\gamma\in \Omega_{p_{0}, p_1}}\en(\gamma)^{\frac{1}{2}}.
\eeq

\begin{theorem}[The Wasserstein space of hypersurfaces]\label{thm:propertywintro}
The function $W_2^{\mathrm{in}}$ defines a distance on $P_{n,d}$ and the metric space 
$(P_{n,d}, W_2^{\mathrm{in}})$ is  complete and geodesic, i.e. for any couple $p_0,p_1$, there exists an admissible curve $\gamma$ from $p_0$ to 
$p_1$ such that 
$W_2^{\mathrm{in}}(\gamma(s),\gamma(t)) = |t-s| W_2^{\mathrm{in}}(p_0,p_1)$.
\end{theorem}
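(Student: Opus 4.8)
The plan is to establish the three assertions --- that $W_2^{\mathrm{in}}$ is a distance, that $(P_{n,d},W_2^{\mathrm{in}})$ is complete, and that it is geodesic --- in this order, taking as black boxes the finiteness result \cref{thm:finite} and the reparametrization result \cref{thm:rep}; granting these, everything reduces to standard length--space arguments. \emph{Distance axioms.} Nonnegativity is clear and symmetry holds because reversing time in an admissible curve does not change its Energy; finiteness of the infimum in \eqref{eq:bb} for every pair $p_0,p_1$ is exactly \cref{thm:finite}. Using \cref{thm:rep} to reparametrize admissible curves by (a regularization of) arc length one obtains the length identity $W_2^{\mathrm{in}}(p_0,p_1)=\inf\{L(\mu\circ\gamma)\colon\gamma\in\Omega_{n,d}(p_0,p_1)\}$, where $L$ denotes length in $\mathscr P_2(\CP^n)$; since the concatenation of two admissible curves is admissible and $L$ is subadditive under concatenation, the triangle inequality follows. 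For positivity, Cauchy--Schwarz on $I$ gives $\en(\gamma)\ge\big(\int_I|\dot\mu_t|\,\dd t\big)^2=L(\mu\circ\gamma)^2\ge W_2(\mu(p_0),\mu(p_1))^2$ for every admissible $\gamma$, hence $W_2^{\mathrm{in}}(p_0,p_1)\ge W_2(\mu(p_0),\mu(p_1))$; if this vanishes then $\mu(p_0)=\mu(p_1)$, so $p_0=p_1$ by the injectivity of $\mu$ from \cref{thm:extintro}. I also record two facts used below: $P_{n,d}\simeq\CP^N$ is compact and $\mu$ is continuous (\cref{thm:extintro}), so $\mu(P_{n,d})$ is a compact --- hence closed --- subset of $\mathscr P_2(\CP^n)$ on which $\mu$ is a homeomorphism; and $\mathscr P_2(\CP^n)$ is compact, complete and geodesic, being the $2$--Wasserstein space of the compact geodesic space $(\CP^n,\sfd_{\mathrm{FS}})$.

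\emph{Completeness.} Given a $W_2^{\mathrm{in}}$--Cauchy sequence, pass to a subsequence $(p_{k_j})$ with $W_2^{\mathrm{in}}(p_{k_j},p_{k_{j+1}})<2^{-j}$ and, using the length identity, pick for each $j$ an admissible curve from $p_{k_j}$ to $p_{k_{j+1}}$ whose image has length $<2^{-j+1}$. Concatenating these on the dyadic subintervals of $[0,1)$ gives a curve $\gamma\colon[0,1)\to P_{n,d}$ for which $\mu\circ\gamma$ has finite total length; in particular $\mu(\gamma(t))$ is Cauchy as $t\to1^-$ and so converges in the complete space $\mathscr P_2(\CP^n)$ to some $\nu$, and $\nu=\mu(p)$ for a unique $p\in P_{n,d}$ because $\mu(P_{n,d})$ is closed. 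Setting $\gamma(1):=p$ and reparametrizing $\mu\circ\gamma$ by arc length (\cref{thm:rep}) turns $\gamma$ into an admissible curve on $[0,1]$ of finite Energy, so $W_2^{\mathrm{in}}(p_{k_j},p)$ is bounded by the length of the tail of $\mu\circ\gamma$ beyond $p_{k_j}$, which tends to $0$; hence $p_{k_j}\to p$, and then the whole Cauchy sequence converges to $p$.

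\emph{Geodesics.} Fix $p_0,p_1$ and a minimizing sequence $\gamma_k\in\Omega_{n,d}(p_0,p_1)$; by \cref{thm:rep} we may assume that each $\mu\circ\gamma_k$ has constant speed, hence is $L_k$--Lipschitz with $L_k=\en(\gamma_k)^{1/2}\to W_2^{\mathrm{in}}(p_0,p_1)$. These curves take values in the compact set $\mu(P_{n,d})$ and are uniformly equi--Lipschitz, so by Arzel\`a--Ascoli a subsequence converges uniformly to a curve $\sigma\colon[0,1]\to\mathscr P_2(\CP^n)$ that is $W_2^{\mathrm{in}}(p_0,p_1)$--Lipschitz, has endpoints $\mu(p_0),\mu(p_1)$, and takes values in the closed set $\mu(P_{n,d})$. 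Then $\gamma:=\mu^{-1}\circ\sigma$ lies in $\Omega_{n,d}(p_0,p_1)$ and $\en(\gamma)=\int_0^1|\dot\sigma_t|^2\,\dd t\le W_2^{\mathrm{in}}(p_0,p_1)^2$, so $\gamma$ realizes the infimum; comparing $W_2^{\mathrm{in}}(p_0,p_1)\le L(\sigma)\le\en(\gamma)^{1/2}=W_2^{\mathrm{in}}(p_0,p_1)$ with the equality case of Cauchy--Schwarz shows $\sigma$ has constant metric speed equal to $W_2^{\mathrm{in}}(p_0,p_1)$. Restricting and affinely rescaling $\gamma$ to a subinterval $[s,t]$ produces an admissible curve from $\gamma(s)$ to $\gamma(t)$ of Energy $(t-s)^2W_2^{\mathrm{in}}(p_0,p_1)^2$, so $W_2^{\mathrm{in}}(\gamma(s),\gamma(t))\le(t-s)W_2^{\mathrm{in}}(p_0,p_1)$; summing these bounds for the partition $\{0,s,t,1\}$ and using the triangle inequality forces equality, giving $W_2^{\mathrm{in}}(\gamma(s),\gamma(t))=|t-s|\,W_2^{\mathrm{in}}(p_0,p_1)$, which is the required geodesic property.

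The only genuinely nontrivial inputs are \cref{thm:finite} and \cref{thm:rep}: the former makes $W_2^{\mathrm{in}}$ finite, hence a bona fide distance, while the latter is what permits reparametrizing admissible curves --- including those meeting the discriminant $\Delta_{n,d}$ --- by arc length, simultaneously yielding the length identity and the reduction of minimizing sequences to constant speed. I expect the most delicate point to be the endpoint--extension step in the completeness proof, namely checking that appending the limit point preserves admissibility and finiteness of the Energy; this is precisely where the closedness of $\mu(P_{n,d})$ in $\mathscr P_2(\CP^n)$ and the stability of the $\mathrm{AC}^2$ condition under arc--length reparametrization are used. Everything else is an assembly of standard facts about length spaces, in the spirit of \cite{Burago}.
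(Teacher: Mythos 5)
Your overall architecture is sound and the distance and geodesic arguments are essentially the same as the paper's, but you consistently invoke the wrong tool for the reparametrization step, and this is worth correcting. \cref{thm:rep} is a very specific result: it takes a \emph{semialgebraic} curve $\gamma$ with $\gamma(0)\in\Delta_{n,d}$ and $\gamma((0,1])\cap\Delta_{n,d}=\emptyset$ and shows that the monomial reparametrization $s\mapsto\gamma(s^m)$ has finite Energy. Its sole role in the paper is to prove the \emph{finiteness} result \cref{thm:finite}, i.e. that some admissible curve of finite Energy exists between any two points. It is not an arc--length (or constant--speed) reparametrization lemma for general admissible curves, and it cannot play that role: it does not promise constant metric speed, nor does it apply to curves that are not semialgebraic. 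The reparametrization you actually need — passing from a generic $\mathrm{AC}^2$ curve of finite Energy to a constant--speed curve, and hence to the length identity $W_2^{\mathrm{in}}(p_0,p_1)=\inf L(\mu\circ\gamma)$ — is the standard metric--space fact recorded as \cite[Lemma~1.1.4]{AGS:book}, which is precisely what the paper cites in its proof of the triangle inequality, and what underlies \eqref{E:p}. With that substitution, every step where you wrote ``by \cref{thm:rep}'' (constant--speed minimizing sequences, arc--length reparametrization of the concatenated curve in the completeness argument, the length identity) becomes correct, and the rest of your proof goes through.

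Modulo that misattribution, your approach is correct and close in spirit to the paper's, with one genuine variation. For \emph{geodesics} you run the Arzel\`a--Ascoli / lower--semicontinuity argument by hand where the paper instead cites \cite[4.3.2]{AmbrosioTilli} and \cite[Proposition~1.2.10]{Gigli_Pasqualetto_2020}; same content, more explicit. For \emph{completeness} your argument is genuinely different: you build the limit directly by concatenating a geometrically summable chain of admissible curves into a curve of finite total length and appending the endpoint, using closedness of $\mu(P_{n,d})$ in $\mathscr{P}_2(\CP^n)$. The paper instead proves the lower semicontinuity of $W_2^{\mathrm{in}}(p,\cdot)$ with respect to $\sfd_{\mathrm{FS}}$ (\cref{lemma:lsc}), then extracts a Fubini--Study--convergent subsequence by compactness of $P_{n,d}$ and upgrades it via lower semicontinuity. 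Your route is self--contained and more constructive; the paper's route is shorter once \cref{lemma:lsc} is in place, and that lemma (which also rests on Arzel\`a--Ascoli together with the lower semicontinuity of the Energy from \cite[Proposition~1.2.7]{Gigli_Pasqualetto_2020}) is reused conceptually for the geodesic statement, so the two proofs share their infrastructure. Either is acceptable.
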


Notice that $P_{n,d}$ is compact for the ambient $W_2$--metric, by \cref{thm:extintro}. On the other hand, denoting by $\sfd_{\mathrm{FS}}$ the metric induced by the Fubini--Study structure on $P_{n,d}\simeq \CP^N$, the compactness of $(P_{n,d}, W_2^\mathrm{in})$ is equivalent to the continuity of 
$$\mathrm{id}:(P_{n,d}, \sfd_{\mathrm{FS}})\longrightarrow (P_{n,d}, W_2^\mathrm{in}).$$
In metric geometry, it is typically very hard to predict compactness, for the inner metric, of a compact space in the ambient topology (and in fact, in general, not even true for very simple examples, see \cite[Figure 2.2]{Burago}).  However, in a fashion similar to the one--dimensional case, we prove that the above map is continuous (in fact Lipschitz) ``away'' from the discriminant, i.e. where we have the  Riemannian structure \eqref{eq:herintro}. More precisely, setting
$$P_{n,d}(\epsilon):=\{p\in P_{n,d}\,|\, \sfd_{\mathrm{FS}}(p,\Delta_{n,d})\geq \epsilon\},$$
we prove the following result.

\begin{theorem}[Wasserstein and Fubini--Study]\label{thm:Lipschitzintro}For every $\epsilon>0$ the identity map
$$\mathrm{id}:(P_{n,d}(\epsilon), \sfd_{\mathrm{FS}})\longrightarrow (P_{n,d}(\epsilon), W_2^\mathrm{in})$$
is Lipschitz. In particular $P_{n,d}(\epsilon)$ is compact in the $W_2^\mathrm{in}$--topology.
\end{theorem}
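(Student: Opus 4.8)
The plan is to upgrade \cref{thm:metricspeedsmoothintro} from an identity about the Energy of a single $\mathscr{C}^1$ curve to a quantitative comparison between $W_2^{\mathrm{in}}$ and $\sfd_{\mathrm{FS}}$ on the compact set $P_{n,d}(\epsilon)$. First I would observe that, by \eqref{eq:speedintro} together with the fact that every smooth hypersurface of degree $d$ has volume $d\cdot\mathrm{vol}(\CP^{n-1})$, one gets for any $\mathscr{C}^1$ curve $\gamma:I\to P_{n,d}\setminus\Delta_{n,d}$ the pointwise bound
\[
|\dot\mu_t|^2=\frac{1}{d\cdot\mathrm{vol}(\CP^{n-1})}\int_{Z(p_t)}\frac{|\dot p_t(z)|^2}{\|\nabla^{\C}p_t(z)\|^2}\,\mathrm{vol}_{Z(p_t)}(\dd z)\;\le\;\Big(\sup_{z\in Z(p_t)}\frac{1}{\|\nabla^{\C}p_t(z)\|^2}\Big)\,\|\dot p_t\|^2,
\]
once a fixed norm (say the one induced by the Fubini--Study / Bombieri--Weyl Hermitian product on $H_{n,d}$, normalized so that $\|p_t\|=1$) is used on the right. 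The key analytic input is then that the quantity $\Lambda(p):=\sup_{z\in Z(p)}\|\nabla^{\C}p(z)\|^{-2}$ (suitably homogenized, i.e. evaluated at unit-norm representatives and at points $z$ on the unit sphere of $\C^{n+1}$) is bounded on $P_{n,d}(\epsilon)$. This is essentially the content of the \emph{condition number} estimate in numerical algebraic geometry: the distance of $p$ to the discriminant $\Delta_{n,d}$ is comparable to $\inf_{z}(\ |p(z)|^2+\|\nabla^{\C}p(z)\|^2\ )^{1/2}$ on the unit sphere (an Eckart--Young type theorem, cf. the work of Shub--Smale, and the references on condition numbers cited later in the paper). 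Hence there is a constant $C_\epsilon$ with $\Lambda(p)\le C_\epsilon$ for all $p\in P_{n,d}(\epsilon)$; concretely $C_\epsilon = \epsilon^{-2}$ up to normalization constants.

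Granting this, the comparison of lengths is immediate: for a $\mathscr{C}^1$ curve $\gamma$ contained in $P_{n,d}(\epsilon)$ one has $|\dot\mu_t|\le (C_\epsilon/(d\cdot\mathrm{vol}(\CP^{n-1})))^{1/2}\,\|\dot p_t\|$, so the $W_2^{\mathrm{in}}$-length of $\gamma$ is at most $L_\epsilon$ times its Fubini--Study length, where $L_\epsilon=(C_\epsilon/(d\cdot\mathrm{vol}(\CP^{n-1})))^{1/2}$. Since $P_{n,d}(\epsilon)$ is a compact smooth submanifold-with-boundary of $\CP^N$ (being a closed sublevel set of the continuous, in fact semialgebraic, function $p\mapsto\sfd_{\mathrm{FS}}(p,\Delta_{n,d})$), any two of its points $p_0,p_1$ can be joined by a $\mathscr{C}^1$ curve inside $P_{n,d}(\epsilon)$ whose Fubini--Study length is at most $K_\epsilon\,\sfd_{\mathrm{FS}}(p_0,p_1)$ for a fixed geometric constant $K_\epsilon$ (this is the statement that a compact Riemannian manifold-with-boundary is \emph{quasiconvex}: the intrinsic and ambient distances are bi-Lipschitz equivalent; here the extra care is only that the connecting path must remain in $P_{n,d}(\epsilon)$, which one can arrange by staying inside a slightly larger sublevel set $P_{n,d}(\epsilon/2)$ and using that the two differ by a collar). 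Composing, $W_2^{\mathrm{in}}(p_0,p_1)\le L_\epsilon K_\epsilon\,\sfd_{\mathrm{FS}}(p_0,p_1)$, which is exactly the asserted Lipschitz continuity of $\mathrm{id}:(P_{n,d}(\epsilon),\sfd_{\mathrm{FS}})\to(P_{n,d}(\epsilon),W_2^{\mathrm{in}})$. The ``in particular'' follows formally: a Lipschitz bijection from a compact space is a homeomorphism onto its image, so $(P_{n,d}(\epsilon),W_2^{\mathrm{in}})$ is compact.

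The main obstacle is the uniform bound $\Lambda(p)\le C_\epsilon$ on $P_{n,d}(\epsilon)$, i.e. the quantitative Eckart--Young / condition-number estimate relating $\inf_{\|z\|=1}\|\nabla^{\C}p(z)\|$ (more precisely $\inf_{\|z\|=1}(|p(z)|^2+\|\nabla^{\C}p(z)\|^2)^{1/2}$, homogenized correctly so that it descends to $P_{n,d}$ and is comparable to $\sfd_{\mathrm{FS}}(p,\Delta_{n,d})$) to the distance from the discriminant: this is where complex algebraic geometry genuinely enters and where I expect the bulk of the work to lie, whereas the quasiconvexity of $P_{n,d}(\epsilon)$ and the length comparison are soft. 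A secondary technical point is handling curves that touch the (semialgebraic) boundary $\partial P_{n,d}(\epsilon)$, or equivalently ensuring admissibility and the applicability of \cref{thm:metricspeedsmoothintro} along the chosen connecting path; this is dealt with by working inside the open set $P_{n,d}\setminus\Delta_{n,d}$ and enlarging $\epsilon$ slightly, so that Thom's Isotopy Lemma (\cref{thm:thomintro}) and hence \cref{thm:metricspeedsmoothintro} apply verbatim.
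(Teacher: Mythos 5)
Your proposal follows essentially the same strategy as the paper: control the Wasserstein--Hermitian coefficient $\Lambda(p)=\big(\min_{b\in Z(p)}\|\nabla^\C p(b)\|^2/\|p\|^2\big)^{-1}$ on $P_{n,d}(\epsilon)$, then compare $W_2^{\mathrm{in}}$-length to Fubini--Study length. This is exactly what \cref{lemma:dist1} does for nearby points, except that for the uniform bound $\Lambda\le C_\epsilon$ the paper applies Lojasiewicz's inequality to the two semialgebraic functions $\alpha(p)=\sfd_{\mathrm{FS}}(p,\Delta_{n,d})$ and $\beta(p)=\min_{b\in Z(p)}\|\nabla^\C p(b)\|^2/\|p\|_{\mathrm{FS}}^2$, which share the zero set $\Delta_{n,d}$; this gives $\alpha^{c_3}\le c_4\beta$ with unspecified constants, for any choice of Hermitian norm. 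Your condition-number route reaches the same conclusion (use $\sfd(p,\Delta_{n,d})\le\sfd(p,\Delta_z)$ together with the condition number theorem to get the lower bound on $\inf_{Z(p)}\|\nabla^\C p\|$), but the ``concretely $C_\epsilon=\epsilon^{-2}$'' is specific to the Bombieri--Weyl normalization and is not needed: the theorem is stated for a general Fubini--Study structure on $P_{n,d}$, where only a Lojasiewicz-type bound with unknown exponent is available.

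There is a genuine gap in the quasiconvexity step. $P_{n,d}(\epsilon)$ is \emph{not} in general a compact smooth submanifold-with-boundary: $\partial P_{n,d}(\epsilon)$ is a level set of the distance function to the singular semialgebraic set $\Delta_{n,d}$, and such distance functions are Lipschitz but fail to be $\mathscr{C}^1$ at points with non-unique nearest projection, so the justification you give does not apply. The statement itself can be salvaged more elementarily: when $\sfd_{\mathrm{FS}}(p_0,p_1)<\epsilon/2$ the FS-geodesic stays inside $P_{n,d}(\epsilon/2)$ because $\sfd_{\mathrm{FS}}(\cdot,\Delta_{n,d})$ is $1$-Lipschitz, while for far-apart pairs it is enough that $W_2^{\mathrm{in}}$ is bounded on $P_{n,d}(\epsilon)\times P_{n,d}(\epsilon)$ (which the paper establishes by a finite covering, \cref{lemma:dist2}). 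The paper itself never invokes global quasiconvexity: after the local estimate (\cref{lemma:dist1}) and the boundedness (\cref{lemma:dist2}), it argues by compactness and contradiction — a sequence of pairs violating the Lipschitz bound must accumulate, and the local estimate near the accumulation point yields the contradiction.
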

It turns out that, for $\delta(n)>0$ small enough and for $1\leq q<1+\delta(n)$, the space $(P_{n,d}, W_q^\mathrm{in})$, is compact (see \cref{thm:compactqintro} below). The proof of this result requires some additional work and for it we will need higher dimensional versions of results on the regularity of roots of polynomials. We will discuss this in \cref{sec:regularityintro}. 
\subsection{The K\"ahler structure and a Weil--Petersson type metric}
Quite remarkably on $P_{n,d}\setminus \Delta_{n,d}$ the Wasserstein--Hermitian structure from \eqref{eq:herintro} is K\"ahler and the corresponding metric is a Weil--Petersson type metric. More precisely, consider the solution variety $V:=\{(p, z)\,|\, p(z)=0\}\subset P_{n,d}\times \CP^n$ together with the restrictions 
of the projections on the two factors:
$$ 
\begin{tikzcd}
           & V \arrow[ld, "\pi_1"'] \arrow[rd, "\pi_2"] &       \\
{P_{n, d}} &                                            & \CP^n
\end{tikzcd}$$
 From this diagram we have two induced maps at the level of $k$--forms: the usual pull--back $\pi_2^*:\Omega^k(\CP^n)\to \Omega^k(V)$  and the integration along the fibers 
 $$(\pi_1)_*:\Omega^{k}(V\setminus \mathrm{crit}(\pi_1))\to \Omega^{k-2n-2}(P_{n,d}\setminus \Delta_{n,d}).$$ On $\CP^n$ we have the Fubini--Study form $\omega_{\mathrm{FS}}\in \Omega^{1,1}(\CP^n)\subset \Omega^{2}(\CP^n)$, which we can pull--back to $V$ by $\pi_2$, and the integral along the fibers of $\pi_2^*\omega_{\mathrm{FS}}^n/n!$ (which is the volume form of $\CP^n$) turns out to be the imaginary part of $\h$. 
\begin{theorem}[The Wasserstein--K\"ahler structure]Let $\h$ be the Wasserstein--Hermitian structure \eqref{eq:herintro} and let $\sigma:=\mathrm{Im}(\h)\in \Omega^{2}(P_{n,d}\setminus \Delta_{n,d}).$
Then
\begin{equation}\label{eq:sigma}\sigma=(\pi_1)_*\pi_2^*\left(\frac{\omega_{FS}^n}{n!}\right)\in \Omega^{1,1}(P_{n,d}\setminus \Delta_{n,d}).\end{equation}
In particular $(P_{n,d}\setminus\Delta_{n,d}, \h)$ is a K\"ahler manifold.
\end{theorem}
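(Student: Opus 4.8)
\emph{Strategy.} I would split the proof into two essentially independent parts: the pointwise identity \eqref{eq:sigma}, which is a differential--geometric computation at a fixed $p\notin\Delta_{n,d}$, and the K\"ahler conclusion, which then follows formally. I would first record the structure of $\pi_1$ over the complement of the discriminant. Since $V$ is (via $\pi_2$) a $\CP^{N-1}$--bundle over $\CP^n$ it is smooth, and $\pi_1^{-1}(p)=\{p\}\times Z(p)$; the differential of $\pi_1$ is onto at $(p,z)$ exactly when $z$ is a smooth point of $Z(p)$, so over $P_{n,d}\setminus\Delta_{n,d}$ the map $\pi_1$ is a proper holomorphic submersion, hence (Ehresmann) a locally trivial fibre bundle with compact complex fibres of real dimension $2n-2$, canonically oriented. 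Fibrewise integration $(\pi_1)_*$ is thus defined on $\Omega^\bullet(V\setminus\mathrm{crit}(\pi_1))$, and for a $2n$--form $\eta$ and any lifts $\tilde q_1,\tilde q_2\in T_{(p,z)}V$ of $q_1,q_2\in T_pP_{n,d}=H_{n,d}/\C p$ one has
\[
\big((\pi_1)_*\eta\big)_p(q_1,q_2)=\int_{Z(p)}\big(\iota_{\tilde q_2}\iota_{\tilde q_1}\eta\big)\big|_{Z(p)},
\]
the integrand being independent of the chosen lifts once restricted to the fibre.

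\emph{The pointwise identity.} Take $\eta=\pi_2^*(\omega_{\mathrm{FS}}^n/n!)=\pi_2^*\mathrm{vol}_{\CP^n}$. Since interior product commutes with pull--back, $\iota_{\tilde q_2}\iota_{\tilde q_1}\eta=\pi_2^*(\iota_{w_2}\iota_{w_1}\mathrm{vol}_{\CP^n})$ with $w_i:=(d\pi_2)\tilde q_i\in T_z\CP^n$. Splitting $w_i=w_i^{T}+w_i^{N}$ along $T_zZ(p)\oplus N_zZ(p)$ and writing $\mathrm{vol}_{\CP^n}$ near $z$ as (normal $2$--covolume)$\,\wedge\,\mathrm{vol}_{Z(p)}$, a short linear--algebra computation in the oriented real rank--two bundle $N_zZ(p)$ --- which carries the complex orientation, hence a Hermitian product $\langle\cdot,\cdot\rangle_{\C}$ --- gives, with the orientation conventions fixed so that \eqref{eq:sigma} holds,
\[
\big(\iota_{w_2}\iota_{w_1}\mathrm{vol}_{\CP^n}\big)\big|_{Z(p)}=\mathrm{Im}\langle w_1^{N},w_2^{N}\rangle_{\C}\,\mathrm{vol}_{Z(p)}.
\]
Differentiating the relation $p(z)=0$ along $V$ shows that a lift $\tilde q_i=(q_i,w_i)$ satisfies $q_i(z)+dp_z(w_i)=0$; as $dp_z$ is $\C$--linear and kills $T_zZ(p)$, it is the Hermitian pairing against $\nabla^{\C}p(z)$, so $w_i^{N}=-\,q_i(z)\,\nabla^{\C}p(z)/\|\nabla^{\C}p(z)\|^2$ and hence $\mathrm{Im}\langle w_1^N,w_2^N\rangle_\C=\mathrm{Im}\big(q_1(z)\overline{q_2(z)}/\|\nabla^{\C}p(z)\|^2\big)$. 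Integrating over $Z(p)$ and comparing with \eqref{eq:herintro} yields $\big((\pi_1)_*\pi_2^*(\omega_{\mathrm{FS}}^n/n!)\big)_p(q_1,q_2)=\mathrm{Im}\,\h_p(q_1,q_2)=\sigma_p(q_1,q_2)$, which is \eqref{eq:sigma}. (The identity $\h_p(p,\cdot)=0$, which makes everything descend to $H_{n,d}/\C p$, is \cref{lemma:ker}.)

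\emph{Type $(1,1)$, closedness and the K\"ahler property.} Since $\omega_{\mathrm{FS}}$ is of type $(1,1)$ and $\pi_2$ is holomorphic, $\pi_2^*(\omega_{\mathrm{FS}}^n/n!)$ is of type $(n,n)$ on $V$; fibre integration along the holomorphic submersion $\pi_1$, of relative complex dimension $n-1$, lowers the bidegree by $(n-1,n-1)$, so $\sigma\in\Omega^{1,1}(P_{n,d}\setminus\Delta_{n,d})$ (equivalently: the imaginary part of a Hermitian form is automatically $J$--invariant). As $\omega_{\mathrm{FS}}$ is closed, so is $\pi_2^*(\omega_{\mathrm{FS}}^n/n!)$; because the fibres of $\pi_1$ are compact without boundary, $d$ commutes with $(\pi_1)_*$, whence $d\sigma=(\pi_1)_*\pi_2^* d(\omega_{\mathrm{FS}}^n/n!)=0$. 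Finally $\h$ is a positive--definite Hermitian metric on $P_{n,d}\setminus\Delta_{n,d}$: from \eqref{eq:herintro}, $\h_p(q,q)=\int_{Z(p)}|q(z)|^2/\|\nabla^{\C}p(z)\|^2\,\mathrm{vol}_{Z(p)}\ge 0$, and equality forces $q$ to vanish on $Z(p)$, which --- $Z(p)$ being a smooth, hence squarefree, hypersurface --- gives $q\in(p)$ by the Nullstellensatz, i.e. $q=0$ in $T_pP_{n,d}$. A positive--definite Hermitian metric whose fundamental $2$--form (which is $\pm\sigma$) is closed is by definition K\"ahler, so $(P_{n,d}\setminus\Delta_{n,d},\h)$ is a K\"ahler manifold.

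\emph{Main obstacle.} The delicate point is the second paragraph: pinning down all conventions --- the identification of $dp_z$ with the Hermitian pairing against $\nabla^{\C}p(z)$, the orientation and sign in the normal--bundle contraction, and the passage to the quotient $H_{n,d}/\C p$ --- so that the fibre integral reproduces $\mathrm{Im}(\h)$ on the nose, with no stray normalization constant. Once \eqref{eq:sigma} is established, the type and closedness statements, and hence the K\"ahler conclusion, are formal.
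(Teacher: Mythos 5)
Your proof is correct and follows essentially the same route as the paper: lift $q_1,q_2$ to vector fields along $\pi_1^{-1}(p)$ so that only their normal components survive, reduce the fiber integral of $\pi_2^*(\omega_{\mathrm{FS}}^n/n!)$ to a pointwise double contraction, identify that contraction with $\mathrm{Im}\bigl(q_1(z)\overline{q_2(z)}\bigr)/\|\nabla^\C p(z)\|^2\,\mathrm{vol}_{Z(p)}$, and obtain closedness from the commutation of $d$ with fiber integration along a proper submersion. The one genuine methodological difference is in how the double contraction is carried out: the paper applies the algebraic identity $\iota_X\iota_Y\beta^n=n\,\beta(X,Y)\,\beta^{n-1}+n(n-1)(\iota_X\beta)\wedge(\iota_Y\beta)\wedge\beta^{n-2}$ with $\beta=\pi_2^*\omega_{\mathrm{FS}}$ and then kills the mixed term by observing $\iota_{\tilde q_i}\beta|_{\pi_1^{-1}(p)}=0$ (an $\omega$-orthogonality exploiting that $Z(p)$ is a complex, hence $J$-invariant, submanifold), whereas you factor $\mathrm{vol}_{\CP^n}$ pointwise as $(\text{normal }2\text{-covolume})\wedge\mathrm{vol}_{Z(p)}$ and contract the two factors. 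These amount to the same linear algebra, but your route makes the role of the normal component geometrically more transparent, while the paper's is shorter once the identity is granted. You also spell out the $(1,1)$-type and re-establish positive-definiteness of $\h$ (by Nullstellensatz, the same mechanism as \cref{lemma:ker}); the paper takes both as already known. One small typo: the normal component should carry a conjugate, $w_i^N=-\,q_i(z)\,\overline{\nabla^\C p(z)}/\|\nabla^\C p(z)\|^2$, since $dp_z(w)=\langle w,\overline{\nabla^\C p(z)}\rangle$; this cancels inside $\mathrm{Im}\langle w_1^N,w_2^N\rangle_\C$, so your final formula is unaffected.
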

 
The K\"ahler metrics on the parameter space of a family of algebraic varieties which can be 
obtained as pushforwards of objects on the total space of the family (as in \eqref{eq:sigma}) are recurring objects in complex algebraic geometry, and they are usually called \emph{Weil--Petersson type metrics}. For instance, the Weil–Petersson metric on the moduli space of curves can be obtained in this
way, and a version for a family of Calabi–Yau varieties appears for instance in \cite{Songtian, Tosatti}.

%In fact, it turns out that, for $\delta(n)>0$ small enough and for $1\leq q<1+\delta(n)$, the space $(P_{n,d}, W_q^\mathrm{in})$, is compact. The number $\delta(n)>0$ in the next statement is related to the existence of rational curves on some of the zero sets of the family $\{p_t\}_{t\in I}$, see \cref{sec:regularityintro} below.
\subsection{The regularity of a curve of measures and a compactness result}\label{sec:regularityintro} Let us go back to the metric space $(P_{n,d}, W_q^\mathrm{in})$.
In order to prove its compactness for $1\leq q\leq 1+\delta(n)$ we will use a higher dimensional generalization of the results from  \cite{Sobolev}. We first discuss this generalization, which is interesting in its own, as it addresses and solves a natural question on the regularity of zeroes of families of \emph{multivariate} polynomials. We then explain how it can be used to prove the compactness of the $q$--Wasserstein inner  metric. 

Given a curve of polynomials $p_t:I\to P_{n,d}$, it is natural to ask what regularity we can expect for the curve of measures $\mu(p_t)$. 
We go back to the case $n = 1$. By the 
homeomorphism 
$$
\{\mathrm{coefficients}\}=\C^d\stackrel{\mu}{\longrightarrow} \mathrm{SP}^d(\C)=\{\mathrm{zeroes}\},
$$
it follows that the roots of a polynomial depend continuously on its coefficients but, in general, no more regularity can be assumed.
We can explain this phenomenon by looking at the previous homeomorphism from a metric point of view, endowing $\mathrm{SP}^d(\C)$ with the Wasserstein distance, as in \eqref{eq:W21}.
The loss of regularity  of the roots of a family of polynomials is due to the fact that the above map 
$\mu$, seen as a map between metric spaces,  \emph{is not} Lipschitz. This failure of regularity happens precisely on the discriminant,
 where we have polynomials with multiple roots.
For instance, the zeroes $e^\frac{i2\pi k}{d}t^{\frac{1}{d}}$ of the polynomial $z^d-t$ involve a $d$--th square root and cannot be Lipschitz  functions of $t$ near $t=0$. 
(Notice that, in this case, $\mu^{-1}:\mathrm{SP}^d(\C)\to \C^d$ is instead Lipschitz, as the coefficients of a polynomial depends smoothly on its zeroes.)

Still,  A. Parusinski and A. Rainer in a series of works culminated with \cite{Sobolev} have recently obtained the sharp result that the zeroes of a curve of polynomials (for $n=1$) enjoy a Sobolev regularity: 
in particular any continuous choice of a root of $p_{t}$ has first derivative in $L^{q}$ for $1\leq q<\frac{d}{d-1}$, provided 
$p_t:I\to \C^d$ is a $\mathscr{C}^{d-1,1}$ curve.

In our formalism  the results from \cite{Sobolev} are equivalently stated by saying that if $p_t:I\to \C^d$ is a $\mathscr{C}^{d-1,1}$ curve, then $\mu_t:=\mu(p_t)$ is in $AC^q(I, \mathscr{P}_q(\C))$ 
for every $1\leq q<\frac{d}{d-1}.$ It is then natural to adopt this formalism to address the same question in the more general setting of $n \geq 2$; to the best of our knowledge no other approach is at disposal to address this question.

Quite remarkably, in higher dimensions, the existence of rational curves on the zero set of some of the polynomials in the curve $p_t$ (a very classical problem in algebraic geometry!) 
seems to be an obstruction for the regularity of $\mu(p_t)$. In this direction we prove two results, the first being a direct generalization of \cite[Theorem 1]{Sobolev}. It requires the same regularity as \cite[Theorem 1]{Sobolev} on the curve $p_t$, but at the same time it needs the curve to miss (what we call) the \emph{Fano discriminant} (see \cref{definition:Fano}). It can tought as the set $F_{n,d}\subset P_{n,d}$ consisting of polynomials $p$ for which the set of lines on $Z(p)$ is not defined by regular equations on the Grassmannian $\mathbb{G}(1,n)$.

The Fano discriminant is contained in a proper algebraic set of $P_{n,d}$, see \cite[Theorem 6.34]{3264}.
When $n=1$, it is empty: in this case the only line is $\CP^1$ itself, where no nonzero polynomial can vanish identically. In general, if $d>2n-3$ and $p\notin  {F_{n,d}}$, then no line can be found on $Z(p)$ and one can use  integral geometry  to perform a coupling between measures ``uniformly'' on the set of lines. Notice that, when $n=1$, the conditions $d>2n-3$ and $p_t\notin F_{n,d}$ are empty, and the statement below reduces to \cite[Theorem 1]{Sobolev}.

\begin{theorem}[Regularity of curves of measures I]\label{T:Parusinskiintro}
Let $d>2n-3$. Let $p_t \in \mathscr{C}^{d-1,1}(I, P_{n,d})$ be a curve such that $p_t\notin {F_{n,d}}$ for every $t\in I$.
Then, for every $1 \leq q < d/(d-1)$ the curve
$\mu_t:=\mu(p_t) \in AC^{q}(I,\mathscr{P}_q(\CP^n)).$
\end{theorem}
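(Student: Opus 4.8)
The plan is to use the integral–geometric description of $\mu$ from \cref{thm:extintro} to realize each $\mu(p_t)$ as a superposition, over the Grassmannian $\mathbb{G}(1,n)$, of the one–variable measures attached to the restrictions $p_t|_\ell$, and then to reduce the statement to the sharp $n=1$ result of \cite{Sobolev} applied line by line and integrated over $\mathbb{G}(1,n)$. The hypotheses $d>2n-3$ and $p_t\notin F_{n,d}$ enter exactly here: they guarantee that $Z(p_t)$ contains no line, so that for every $\ell\in\mathbb{G}(1,n)$ the restriction $p_t|_\ell$ is a nonzero binary form of degree $d$ and
\[
\mu_\ell(p_t):=\frac1d\sum_{z\in Z(p_t)\cap\ell}m_z(p_t|_\ell)\,\delta_z
\]
is a well–defined Borel probability measure on $\ell$; \cref{thm:extintro} then says precisely that $\mu(p_t)=\int_{\mathbb{G}(1,n)}\mu_\ell(p_t)\,\mathrm{vol}_{\mathbb{G}(1,n)}(\dd\ell)$.

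First I would establish the Wasserstein estimate
\[
W_q\big(\mu(p_t),\mu(p_s)\big)^q\ \le\ \int_{\mathbb{G}(1,n)}W_q^{(\ell)}\big(\mu_\ell(p_t),\mu_\ell(p_s)\big)^q\,\mathrm{vol}_{\mathbb{G}(1,n)}(\dd\ell),
\]
where $W_q^{(\ell)}$ denotes the $q$–Wasserstein distance on $(\ell,\sfd_{\mathrm{FS}}|_\ell)$. To prove it, I would choose, by a measurable selection, an optimal plan $\pi_\ell\in\Pi(\mu_\ell(p_t),\mu_\ell(p_s))$ supported on $\ell\times\ell$ for each $\ell$, and superpose them: $\pi:=\int_{\mathbb{G}(1,n)}\pi_\ell\,\mathrm{vol}_{\mathbb{G}(1,n)}(\dd\ell)$ is a transport plan between $\mu(p_t)$ and $\mu(p_s)$ on $\CP^n$, and since a projective line is totally geodesic in $(\CP^n,\sfd_{\mathrm{FS}})$ the cost of $\pi_\ell$ measured with $\sfd_{\mathrm{FS}}$ equals $W_q^{(\ell)}(\mu_\ell(p_t),\mu_\ell(p_s))^q$; integrating over $\ell$ gives the claim.

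Then I would apply \cite[Theorem 1]{Sobolev} on each line. After lifting the $\mathscr{C}^{d-1,1}$ curve $p_t$ locally to a $\mathscr{C}^{d-1,1}$ curve in $H_{n,d}\setminus\{0\}$ and restricting to $\ell$, the coefficients of $p_t|_\ell$ are $\mathscr{C}^{d-1,1}$ functions of $t$ whose norms are bounded uniformly in $\ell$, because $p\mapsto p|_\ell$ depends continuously on $\ell\in\mathbb{G}(1,n)$ and the Grassmannian is compact. The quantitative form of the $n=1$ theorem then gives $t\mapsto\mu_\ell(p_t)\in AC^q(I,\mathscr{P}_q(\ell))$ for every $1\le q<d/(d-1)$, with metric speed $g_\ell\in L^q(I)$ and $\|g_\ell\|_{L^q(I)}\le C$ for a constant $C$ independent of $\ell$. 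Inserting $W_q^{(\ell)}(\mu_\ell(p_t),\mu_\ell(p_s))\le\big|\int_s^t g_\ell(r)\,\dd r\big|$ in the estimate above and using Minkowski's integral inequality, one finds that $t\mapsto\mu(p_t)$ is $AC^q$ with metric speed bounded by $G(r):=\big(\int_{\mathbb{G}(1,n)}g_\ell(r)^q\,\mathrm{vol}_{\mathbb{G}(1,n)}(\dd\ell)\big)^{1/q}$, and $\int_I G^q\,\dd r=\int_{\mathbb{G}(1,n)}\|g_\ell\|_{L^q(I)}^q\,\mathrm{vol}_{\mathbb{G}(1,n)}(\dd\ell)\le C^q\,\mathrm{vol}(\mathbb{G}(1,n))<\infty$, which is exactly the assertion $\mu_t\in AC^q(I,\mathscr{P}_q(\CP^n))$.

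The hard part will be the uniform line–by–line use of \cite{Sobolev}. That theorem is stated for monic polynomials in a single affine variable, whereas $p_t|_\ell$ is a binary form on $\ell\cong\CP^1$, so some of its roots may lie at, or cross, the point at infinity of a chart, and one must check that the constants produced are uniform in $\ell$. The fix is to work in the two standard affine charts of $\ell$, which are interchanged by an isometry of $(\ell,\sfd_{\mathrm{FS}}|_\ell)$, and — using compactness of $I$ — to cover $I$ by finitely many subintervals on each of which, after such an isometric change of variables, the leading coefficient of $p_t|_\ell$ does not vanish; on each piece the Euclidean estimate applies and the pieces glue, the delicate point being the uniformity in $\ell$. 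Establishing this ``projective, uniform'' version of the Parusinski–Rainer estimate is the technical core; granting it, the superposition estimate and the integration over $\mathbb{G}(1,n)$ are routine.
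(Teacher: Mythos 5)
Your proposal is correct and follows essentially the same route as the paper: the integral--geometric disintegration of $\mu(p_t)$ over $\mathbb{G}(1,n)$ from \cref{thm:ext}, the superposition coupling giving $W_q(\mu_t,\mu_s)^q\le\int_{\mathbb{G}(1,n)}W_q(\mu_{t,\ell},\mu_{s,\ell})^q\,\mathrm{vol}(\dd\ell)$ (with the observation that lines are totally geodesic, which the paper uses implicitly), and the line-by-line reduction to \cite[Theorem~1]{Sobolev} together with compactness to obtain bounds uniform in $\ell$. The only step you leave as a sketch is exactly where the paper does something concrete: rather than covering $I$ by subintervals with varying charts, the paper's \cref{lemma:nhood} covers the Grassmannian and, for each $\ell_0$, picks once and for all a direction $e_1(\ell_0)\in\ell_0$ avoiding $\bigcup_{t\in I}(Z(p_t)\cap\ell_0)$ — possible precisely because $p_t|_{\ell_0}\not\equiv 0$ makes this union of Hausdorff dimension at most one in $\ell_0\simeq\CP^1$. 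Dehomogenizing with respect to $e_1(\ell)$ then produces a monic $\mathscr{C}^{d-1,1}$ family with leading coefficient bounded away from zero for all $t\in I$ and all $\ell$ in a neighborhood of $\ell_0$ simultaneously, which is what feeds cleanly into the quantitative estimate of \cite{Sobolev}. Your Minkowski-inequality endgame is a legitimate alternative to the paper's difference-quotient bound plus \cite[Lemma~1]{Lisini06}, and slightly more direct.
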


The second regularity result that we prove allows to drop the restriction $p_t\notin F_{n,d}$, but requires to assume more regularity on the curve $p_t$, at the price of losing some regularity on the curve of measures $\mu_t$.

\begin{theorem}[Regularity of curves of measures II]\label{thm:rationalintro}For every $n\in \mathbb{N}$ there exists $e(n)>0$ such that for every curve $p_t \in \mathscr{C}^{k,1}(I, P_{n,d})$,  with $k\geq e(n) d-1$,
and for every $1 \leq q < e(n)d/(e(n)d-1)$ the curve
$\mu_t:=\mu(p_t) \in AC^{q}(I,\mathscr{P}_q(\CP^n)).$
\end{theorem}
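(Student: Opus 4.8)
The plan is to run the argument of \cref{T:Parusinskiintro} in a form that is insensitive to the presence of the Fano discriminant. Recall that to prove $\mu_t=\mu(p_t)\in AC^{q}(I,\mathscr{P}_q(\CP^n))$ it suffices, by the metric characterization of absolutely continuous curves (\cref{Ss:metricgeometry}), to construct for each pair $s,t\in I$ a transport plan between $\mu_s$ and $\mu_t$ whose cost is at most $\bigl(\int_s^t g(\tau)\,\dd\tau\bigr)^q$ for some fixed $g\in L^{q}(I)$. In the proof of \cref{T:Parusinskiintro} such a plan is assembled by restricting $p_t$ to lines $\ell\in\mathbb{G}(1,n)$: on each line one obtains a $\mathscr{C}^{d-1,1}$ curve of univariate polynomials of degree $d$, to which the one--dimensional sharp regularity of \cite{Sobolev} applies, yielding a canonical matching of the roots with $L^{q}$ time derivative for $q<d/(d-1)$; averaging the resulting fibered couplings over $\mathbb{G}(1,n)$ via the Crofton--type identity of \cref{thm:extintro} produces the plan. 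This breaks down precisely on $F_{n,d}$, because a line contained in $Z(p_t)$ makes the slice $p_t|_\ell$ identically zero and destroys the fibered structure; the loss of regularity is exactly the price one pays for repairing this.

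The repair I would carry out is to replace lines by a family $\mathcal F$ of rational curves of a fixed degree $e=e(n)$ in $\CP^n$ --- concretely, the $U(n+1)$--orbit of one such curve, equipped with the Haar--induced probability measure --- chosen large enough that: (i) a Crofton--type identity still represents $\mu(p)$ as the average over $C\in\mathcal F$ of the root measure of the slice $p|_C$, now a section of $\mathcal O_C(ed)$, i.e.\ a univariate polynomial of degree $ed$ once $C\simeq\CP^1$ is fixed; and (ii) for \emph{every} $d$ and every curve $p_t$, the set of pairs $(C,t)$ for which $C\subset Z(p_t)$, or for which $p_t|_C$ degenerates in degree, is negligible and may be discarded. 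Point (i) holds by $U(n+1)$--equivariance together with B\'ezout. Point (ii) is where the algebraic geometry does the real work: a dimension count on the incidence variety inside $P_{n,d}\times\mathcal F$, combined with the fact that the chain of Fano--type strata of $P_{n,d}$ (cf.\ \cref{definition:Fano} and \cite[Theorem 6.34]{3264}) is cut out with multiplicities controlled by a quantity depending only on $n$, must yield a bound on $e(n)$ independent of $d$. This is exactly what forces $d$ to be replaced by $e(n)d$ in the regularity exponents, and hence accounts for the trade--off announced before the statement: one needs $\mathscr{C}^{e(n)d-1,1}$ regularity on $p_t$ in order to apply the one--dimensional theorem to the degree--$e(n)d$ slices, and one only recovers $AC^{q}$ for $q<e(n)d/(e(n)d-1)$.

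Granting such a family $\mathcal F$, the argument concludes as in \cref{T:Parusinskiintro}. For almost every $C\in\mathcal F$ the curve $\tau\mapsto p_\tau|_C$ is a $\mathscr{C}^{k,1}$ curve of univariate polynomials of degree $e(n)d$ with $k\ge e(n)d-1$ (using the projective convention at the finitely many times where the leading coefficient vanishes), so \cite{Sobolev} supplies a continuous root matching $\{z_j(\tau,C)\}_{j=1}^{e(n)d}$ with $\partial_\tau z_j(\cdot,C)\in L^{q}$ for $1\le q<e(n)d/(e(n)d-1)$; pushing the diagonal coupling of $\{z_j(s,C)\}_j$ with $\{z_j(t,C)\}_j$ forward through the Crofton identity gives a plan between $\mu_s$ and $\mu_t$ of cost at most $\int_{\mathcal F}\frac{1}{e(n)d}\sum_j \sfd_{\mathrm{FS}}\bigl(z_j(s,C),z_j(t,C)\bigr)^{q}\,\dd C$, and estimating $\sfd_{\mathrm{FS}}(z_j(s,C),z_j(t,C))\le\int_s^t|\partial_\tau z_j(\tau,C)|\,\dd\tau$ together with Jensen and Fubini yields the desired bound with $g(\tau)^{q}:=\int_{\mathcal F}\frac{1}{e(n)d}\sum_j|\partial_\tau z_j(\tau,C)|^{q}\,\dd C\in L^{1}(I)$. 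The main obstacle is step (ii): proving that one single degree $e(n)$, depending only on $n$, simultaneously handles all degrees $d$ and all $\mathscr{C}^{k,1}$ curves $p_t$ --- equivalently, that the loss of regularity caused by rational curves appearing on $Z(p_t)$ can always be absorbed by slicing with curves of $n$--bounded degree. This is a genuinely algebro--geometric statement about the quantitative behaviour of the Fano strata, and it is where the interplay between integral geometry on the space of curves and the classical theory of rational curves on hypersurfaces is essential.
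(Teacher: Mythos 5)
Your overall strategy matches the paper's: replace lines by rational curves of a fixed degree $e(n)$, establish a Crofton--type representation of $\mu(p)$ over $U(n+1)$--translates of one such curve (this is \cref{P:rationalcoupling}), slice $p_t$ along these translates to obtain $\mathscr{C}^{k,1}$ curves of univariate polynomials of degree $e(n)d$, and apply \cite{Sobolev} fiberwise, closing with the same averaging estimate used in the proof of \cref{T:Parusinski}.

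The genuine gap is your step (ii), which you correctly flag as the main obstacle, but the mechanism you suggest for it is not the right one. What is actually needed is not a multiplicity bound on ``Fano--type strata'' of $P_{n,d}$ --- and \cite[Theorem 6.34]{3264}, which you cite, is used in the paper only to show that $F_{n,d}$ is a proper algebraic subset, for \cref{T:Parusinski}, not here. What is needed is a uniform--in--$d$ bound on $\dim_\C R_e(Z(p))$, the dimension of the space of degree--$e$ rational curves lying on a single hypersurface $Z(p)$. The paper supplies this as \cref{thm:riedl}, a result of Lehmann--Riedl--Tanimoto tied to Fujita invariants: for all $n,T$ there is $\xi(n,T)$ such that $e\geq\xi(n,T)$ forces $\dim_\C R_e(Z(p))\leq(n+1)(e+1)-T$ for \emph{every} $p\in P_{n,d}$, with no dependence on $d$. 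Taking $T(n)=\lfloor(n+1)^2/2\rfloor+1$ and $e(n)=\xi(n,T(n))$, a dimension count on the incidence set $Y\subset I\times R_{e(n)}(\CP^n)$ (\cref{P:goodcurve}) then yields a single embedded $\nu$ such that \emph{no} $U(n+1)$--translate of $\nu(\CP^1)$ lies on \emph{any} $Z(p_t)$; this is slightly stronger and cleaner than your ``almost every $(C,t)$ is good,'' and it also dispatches your degree--degeneracy worry, since after pre-composing with a unitary in $U(2)$ one arranges that the leading coefficient of $p_t\circ g\circ\nu$ never vanishes. Without an input of this precise shape, the claim that one fixed degree $e(n)$, depending only on $n$, simultaneously controls all $d$ and all curves $p_t$ remains unestablished; once \cref{thm:riedl} is granted, your sketch closes exactly as the paper does.
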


The main idea for the proof of \cref{thm:rational} is to replace the use of lines, in the integral geometry approach, with the use of rational curves, see \cref{P:rationalcoupling}. The crucial point for our argument is to find at least one rational curve, possibly of very high degree, with the property that nor this curve nor any of its translates under the natural unitary group action lie on any of the hypersurfaces $Z(p_t)$, $t\in I$. The existence of such rational curve follows from a recent result of B. Lehmann, E. Riedl and S. Tanimoto, privately communicated to the authors (\cref{thm:riedl}). The  constant $e(n)$ in \cref{thm:rationalintro} is the degree of this rational curve, and is related to the so--called Fujita invariants.

Using \cref{thm:rationalintro}, we can finally prove the following compactness result. 
\begin{theorem}[Compactness]\label{thm:compactqintro}
For every $1 \leq q < 1+\frac{1}{e(n)d-1}$, 
the metric space $(P_{n,d}, W_q^\mathrm{in})$ is compact.
\end{theorem}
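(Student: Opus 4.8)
The plan is to reduce the compactness of $(P_{n,d}, W_q^{\mathrm{in}})$ to the continuity of the identity map $\mathrm{id}\colon (P_{n,d}, \sfd_{\mathrm{FS}}) \to (P_{n,d}, W_q^{\mathrm{in}})$. Indeed, $P_{n,d} \simeq \CP^N$ is compact in the Fubini--Study topology, and a continuous image of a compact space is compact; so it suffices to show this identity map is continuous. Since $(P_{n,d}, \sfd_{\mathrm{FS}})$ is a compact metric space, continuity is equivalent to sequential continuity: given $p_j \to p_\infty$ in $\sfd_{\mathrm{FS}}$, we must produce admissible curves $\gamma_j \in \Omega_{n,d}(p_j, p_\infty)$ with $\en_q(\gamma_j)^{1/q} = W_q^{\mathrm{in}}(p_j, p_\infty) + o(1) \to 0$. (Here $\en_q$ and $\Omega_{n,d}$ are the obvious $q$--analogues of the objects defined in the text for $q=2$; the $q$--version of the energy functional and the metric $W_q^{\mathrm{in}}$ are introduced in the body of the paper.) By \cref{thm:Lipschitzintro} (or rather its $q$--analogue), for each $\epsilon>0$ the identity is Lipschitz on $P_{n,d}(\epsilon)$, so the only difficulty is uniform control of $W_q^{\mathrm{in}}(p_j, p_\infty)$ when $p_\infty$ lies on (or near) the discriminant $\Delta_{n,d}$.

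The key step is therefore a \emph{quantitative} estimate near the discriminant, and this is where \cref{thm:rationalintro} enters. The idea is: to connect a nearby polynomial $p$ to a polynomial $p_\infty \in \Delta_{n,d}$, take the straight (Fubini--Study geodesic) segment $p_t$ from $p$ to $p_\infty$ in $P_{n,d}$, and if necessary perturb it to a $\mathscr{C}^{k,1}$ curve with $k \geq e(n)d - 1$; by \cref{thm:rationalintro} the associated curve of measures $\mu_t = \mu(p_t)$ lies in $AC^q(I, \mathscr{P}_q(\CP^n))$ for $1 \leq q < e(n)d/(e(n)d-1) = 1 + \frac{1}{e(n)d-1}$, hence $p_t$ is an admissible curve of finite $q$--energy. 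One then needs the energy (equivalently the $W_q^{\mathrm{in}}$--length) of such a short segment to go to zero as $p \to p_\infty$. I expect this to follow from an explicit bound on the metric speed $|\dot\mu_t|$ in terms of $\sfd_{\mathrm{FS}}$--data along the segment: away from $\Delta_{n,d}$ one has the Hermitian expression \eqref{eq:herintro} (and its $q$--analogue, an $L^q$ norm of $\dot p_t / \|\nabla^\C p_t\|$ over $Z(p_t)$), and the Parusinski--Rainer--type Sobolev bound underlying \cref{thm:rationalintro} is precisely what makes $\int_I |\dot\mu_t|^q\, dt$ finite and, crucially, small when the curve $p_t$ is short. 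Combining the Lipschitz estimate on $P_{n,d}(\epsilon)$ (to handle the part of the segment far from $\Delta_{n,d}$) with this near--discriminant estimate (for the part within distance $\epsilon$), and letting $\epsilon \to 0$, yields $W_q^{\mathrm{in}}(p_j, p_\infty) \to 0$.

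The main obstacle, I expect, is making the near--discriminant estimate genuinely \emph{quantitative and uniform}: \cref{thm:rationalintro} as stated is a qualitative statement ($\mu_t \in AC^q$), and one must extract from its proof a bound of the form $\en_q(\gamma)^{1/q} \leq C(n,d,q)\cdot \sfd_{\mathrm{FS}}(p_0,p_1)^{\alpha}$ (or at least $\en_q(\gamma)^{1/q} \to 0$ as the $\sfd_{\mathrm{FS}}$--diameter of the curve shrinks) that is uniform as the endpoints approach $\Delta_{n,d}$. This requires tracking constants through the integral--geometric coupling over rational curves (\cref{P:rationalcoupling}) and through the Sobolev regularity estimate for roots of the restricted one--variable polynomials $p_t|_C$ along a fixed rational curve $C$ and its unitary translates. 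A secondary technical point is the reparametrization/perturbation argument needed to upgrade the straight $\sfd_{\mathrm{FS}}$--segment (which is merely real--analytic, but whose endpoint behavior at $\Delta_{n,d}$ must be controlled) to a curve meeting the hypotheses of \cref{thm:rationalintro} without inflating the energy — an analogue of the semialgebraic reparametrization argument (\cref{thm:rep}) already used for $q=2$. Once these uniform bounds are in hand, the compactness follows formally as above.
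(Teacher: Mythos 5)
Your reduction (compactness of $(P_{n,d},\sfd_{\mathrm{FS}})$ plus sequential continuity of the identity into $(P_{n,d},W_q^{\mathrm{in}})$) is the same as the paper's, and you correctly identify \cref{thm:rationalintro} as the essential input. But the obstacle you flag in your last paragraph is a genuine gap, not a technicality: joining each $p_j$ to $p_\infty$ by its own short segment $\gamma_j$ requires a bound on $\en_q(\gamma_j)$ that is both uniform in $j$ and tends to zero, and \cref{thm:rationalintro} gives neither. Tracking the constants through its proof, the $L^q$ bound on the metric speed is controlled by $\mathscr{C}^{k,1}$ norms of the coefficients of the restricted one--variable polynomials and by lower bounds on quantities such as $|a_0(t,\ell)|$; for a family of shrinking segments these do not scale with $\sfd_{\mathrm{FS}}(p_j,p_\infty)$ (reparametrizing each segment to $[0,1]$ keeps the coefficient norms bounded but bounded away from zero), so the estimate one extracts is of the form $\en_q(\gamma_j)\le C$ rather than $o(1)$. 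Making this genuinely quantitative with the right scaling would essentially amount to reproving the regularity theorem in a sharper form, which is why your sketch stalls at that point. (Also, the Fubini--Study segment is already smooth, so no perturbation to $\mathscr{C}^{k,1}$ is needed; that part of your plan is vacuous.)

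The paper sidesteps the uniformity issue entirely with a soft trick. After passing to a subsequence with $\sfd_{\mathrm{FS}}(p_k,p_\infty)\to 0$, the special curve lemma of Kriegl--Michor produces a \emph{single} smooth curve $\gamma:I\to P_{n,d}$ and times $t_k\to 0$ with $\gamma(t_k)=p_k$ and limit point $p_\infty$. Applying \cref{thm:rationalintro} once to this one curve gives $|\dot\mu_t|_q\in L^q(I)$, and then
$$
W_q^{\mathrm{in}}(p_\infty,p_k)\le\Bigl(\int_0^{t_k}|\dot\mu_s|_q^q\,\dd s\Bigr)^{1/q}\longrightarrow 0
$$
by absolute continuity of the integral of a fixed integrable function. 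No uniform near--discriminant estimate, no family of couplings, and no reparametrization argument are needed. If you want to complete your proof, replace the family of short segments by this single interpolating curve; the rest of your outline then goes through.
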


\subsection{A Wasserstein approach to condition numbers}In an influential sequence of papers, the ``Complexity of B\'ezout's Theorem'' series \cite{CB1, CB2, CB3, CB4, CB5}, M. Shub and S. Smale opened to a geometric investigation of the complexity of polynomial systems solving. This line of research has received a lot of attention, and culminated with the positive solution of Smale's 17th Problem \cite{Smaleproblems}: ``Can a zero of $n$ complex polynomial equations in $n$ unknowns be found approximately, on the average, in polynomial time with a uniform algorithm?''.  The solution to this problem follows from a combination of  works by C. Beltr\'an and L. M. Pardo \cite{BP}, M. Shub \cite{CB6}, P. B\"urgisser and F. Cucker \cite{BuCuannals} and finally by P. Lairez \cite{Lairez1}. 

Here we propose a conceptual application of our approach: to go back to the original question of Smale, 
 reinterpreting it in the language of Optimal Transport. We do not venture into this full program, which would be beyond the purposes of this paper. Nonetheless, we will show how our  point of view already gives a solution to \cite[Problem 14]{burgisser} in the case $n=1$ (see \cref{thm:P14intro} below). 

Let us explain the context behind \cite[Problem 14]{burgisser}. One of the main ideas underlying the algorithm solving Smale's problem is \emph{continuation along linear paths}: given a polynomial system $p_1=0$, which is assumed to be regular, one builds another regular polynomial system $p_0$ of which one solution $z_0$ is known, and then ``continues'' this solution along the linear homotopy $p_t:=(1-t)p_0+t p_1$. The fact that one can choose $p_0$ so that the linear path $p_t$ does not hit the discriminant follows from dimensional considerations (the discriminant has real codimension two); the complexity of the algorithm is related to the possibility of estimating the distance to the discriminant throughout the path $p_t$. In order to measure this distance, one introduces the notion of \emph{normalized condition number} of a system at one of its solutions. In the case of a single polynomial $p\in P_{1, d}$ at one of its zeroes $z\in \CP^1$, this is the quantity:
$$\nu_{\mathrm{norm}}(p, z):=\sfd_{\mathrm{BW}}(p, \Delta_{z})^{-1},$$
where $\Delta_{z}\subset \Delta_{1, d}$ denotes the set of polynomials with a multiple root at $z$ (see \cref{sec:condition} for more details), and $\sfd_{\mathrm{BW}}$ denotes the Bombieri--Weyl distance on the projective space of polynomials (see \cref{Appendix A}).

In order to address Smale's problem, prior to its solution, M. Shub \cite{CB6} suggested the possibility of changing the class of paths, switching from linear paths to special  homotopies $(p_t, z_t)$, with $p_t(z_t)\equiv 0$, called \emph{condition geodesics}. In order to describe them, in the case $n=1$, let us introduce the \emph{solution variety} $V:=\{(p, z)\in P_{1,d}\times \CP^1\,|\, p(z)=0\}$ together with the projection $\pi_1:V\to P_{1,d}$ on the first factor.  Denoting by $\Sigma'$ the set of critical points of $\pi_1$, the normalized condition number can be used to turn $V\setminus \Sigma'$ into a \emph{Lipschitz} Riemannian manifold as follows. First, we denote by $g_V$ the Riemannian structure on $V\subset P_{1,d}\times \CP^1$ induced by the product Riemannian structure. Then, for every absolutely continuous curve $\gamma_t=(p_t, z_t):I\to V\setminus \Sigma'$ we define its \emph{condition length} by
$$L_{\mathrm{cond}}(\gamma):=\int_{I}\nu_{\mathrm{norm}}(p_t, z_t)\cdot \|\dot\gamma_t\|_V\dd t.$$
Taking the infimum of the length over all absolutely continuous curves joining two points endows $V\setminus \Sigma'$ with a metric structure, called the \emph{condition metric}. A condition geodesic is a a curve $\gamma_t$ in $V\setminus \Sigma'$ minimizing the condition length between any two of its points.

A similar construction can be introduced in the case of polynomial systems, and in \cite{CB6} it is shown that $O ( d^{ 3 / 2} L_\mathrm{cond}(\gamma))$ Newton steps are sufficient to continue the zero $z_0$ from $p_0$ to $p_1$ along $\gamma_t$. Following this result, C. Beltrán, J.-P. Dedieu, G.Malajovich and M.Shub, in \cite{BDM1, BDM2}, asked whether the function $t\mapsto \log(\nu_{\mathrm{norm}}(\gamma_t))$ is convex along a condition geodesic. An affirmative answer to this question would imply that, for any condition geodesic $\gamma_t$, one has  $L_\mathrm{cond}(\gamma_t)\leq L_V(\gamma_t)\max\{\nu_{\mathrm{norm}}(p_0, z_0), \nu_{\mathrm{norm}}(p_1 , z_1)\},$ where $L_V(\gamma_t)$ is the length of $\gamma_t$ in the Riemannian metric of $V$ defined above. In particular, it would be possible to estimate the distance from the discriminant along the whole path $p_t$ by only knowing this distance at the starting and the ending system. For more details on this problem, we refer the reader to \cite[Problem 14]{burgisser}, from where this formulation is taken.

Here we propose a solution to this problem in the case $n=1$, changing the class of paths, and replacing \emph{condition geodesics} with lifts of \emph{Wasserstein geodesics}. We expect a similar result to be true in the case $n>1$, but we leave this question to future investigation (this will require to work with the $W_2^\mathrm{in}$ metric on the space of polynomial systems).

\begin{theorem}[The condition length of Wasserstein geodesics]\label{thm:P14intro}For every $d\in \N$ there exist $\beta_1, \beta_2, \beta_3, \beta_4>0$ such that, if $p_t:I\to P_{1,d}$ is a $W_2$--geodesic between $p_0, p_1\in P_{1,d}\setminus \Delta_{1,d}$ then the following is true:
\begin{enumerate}
\item for every $t\in I$ 
\beq\label{eq:MMs}
\mathrm{dist}(p_t, \Delta_{1,d})^2\geq \beta_1\min\{\mathrm{dist}(p_0, \Delta_{1,d}), \mathrm{dist}(p_1, \Delta_{1,d})\}^{\beta_2}.
\eeq
In particular, $P_{1,d}\setminus \Delta_{1,d}$ is $W_2$--geodesically convex;
\item if $\gamma:I\to V\setminus \Sigma'$ is a lift of $p_t$, i.e.  $\gamma(t)=(p_t, z_t)$, then:
$$
L_\mathrm{cond}(\gamma)\leq \beta_3 \cdot L_{V}(\gamma)\cdot \max\{\nu_{\mathrm{norm}}(p_0, z_0), \nu_{\mathrm{norm}}( p_1, z_1)\}^{\beta_4}.
$$
\end{enumerate}
\end{theorem}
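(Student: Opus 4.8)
The plan is to deduce both items from a single Łojasiewicz inequality applied, in a fixed o-minimal structure, to a function measuring the distance from the one-dimensional $W_2$-geodesic to the relevant stratum. Recall from \cref{example:one} that, $\mathrm{SP}^d(\CP^1)$ being geodesically convex in $\mathscr P_2(\CP^1)$, a $W_2$-geodesic $p_t$ joining $p_0$ to $p_1$ is obtained by fixing an optimal matching $\sigma\in S_d$ between the root multisets $Z(p_0)=\{x_i\}$ and $Z(p_1)=\{y_j\}$ — the one minimising $\sum_i\sfd_{\mathrm{FS}}(x_i,y_{\sigma(i)})^2$ — and letting each root run at constant speed along the Fubini--Study geodesic arc $t\mapsto z_i(t)$ from $x_i$ to $y_{\sigma(i)}$; in particular every root, hence $p_t=\gamma(t)$ and every lift $z_t$, is Lipschitz in $t$ with constant $\le\diam(\CP^1)$. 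Throughout I use that $\sfd_{\mathrm{BW}}$ and $\sfd_{\mathrm{FS}}$ agree on $P_{1,d}$, so $\mathrm{dist}(\cdot,\Delta_{1,d})$ may be read in either metric.

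The first preliminary step is a qualitative \emph{non-crossing} statement: if the $x_i$ are pairwise distinct and the $y_j$ are pairwise distinct then $z_i(t)\ne z_j(t)$ for all $i\ne j$ and $t\in(0,1)$. Indeed, if $z_i(t^*)=z_j(t^*)=:w$, then running through $w$ shows $\sfd_{\mathrm{FS}}(x_i,y_{\sigma(j)})\le t^*\ell_i+(1-t^*)\ell_j$ and $\sfd_{\mathrm{FS}}(x_j,y_{\sigma(i)})\le t^*\ell_j+(1-t^*)\ell_i$, where $\ell_k=\sfd_{\mathrm{FS}}(x_k,y_{\sigma(k)})$, so the matching obtained from $\sigma$ by swapping the targets of $i$ and $j$ has cost at most $\sum_k\sfd_{\mathrm{FS}}(x_k,y_{\sigma(k)})^2-2t^*(1-t^*)(\ell_i-\ell_j)^2$; optimality of $\sigma$ then forces $\ell_i=\ell_j$ and $w$ to lie on the minimising arcs $[x_i,y_{\sigma(j)}]$ and $[x_j,y_{\sigma(i)}]$, and (away from a degenerate case at the cut locus of $\CP^1$) the great circle through $x_i$ and $w$ must contain both $y_{\sigma(i)}$ and $y_{\sigma(j)}$ at equal distance from $w$, whence $y_{\sigma(i)}=y_{\sigma(j)}$, a contradiction. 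In particular $P_{1,d}\setminus\Delta_{1,d}$ is $W_2$-geodesically convex, which is the last assertion of item (1); moreover, along a geodesic with $p_0,p_1\notin\Delta_{1,d}$ and a lift $z_t$ starting and ending at simple roots, $z_t$ stays simple for all $t$.

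Next I would introduce, on the compact sets $P_{1,d}\times P_{1,d}$ and $V\times V$, the nonnegative functions
\beq
\Psi_1(p_0,p_1):=\inf_{t\in[0,1]}\mathrm{dist}(p_t,\Delta_{1,d}),\qquad\Psi_2(p_0,z_0,p_1,z_1):=\inf_{t\in[0,1]}\sfd_{\mathrm{BW}}(p_t,\Delta_{z_t}),
\eeq
taking, where the optimal matching or a minimising arc is not unique, the minimum over the (definable family of) choices, and passing to lower semicontinuous envelopes. These functions are definable in the o-minimal structure $\R_{\mathrm{an}}$: the coefficients--roots correspondence is semialgebraic (Vieta), an optimal matching is cut out by finitely many inequalities between the definable functions $\sfd_{\mathrm{FS}}^2$, Fubini--Study geodesic interpolation on $\CP^1$ is given by the explicit formula $z_i(t)=\tfrac{\sin((1-t)\theta)}{\sin\theta}x_i+\tfrac{\sin(t\theta)}{\sin\theta}y_{\sigma(i)}$ with $\theta=\sfd_{\mathrm{FS}}(x_i,y_{\sigma(i)})=\arccos(\cdots)$ — all $\R_{\mathrm{an}}$-definable — while $\mathrm{dist}(\cdot,\Delta_{1,d})$ and $\sfd_{\mathrm{BW}}(\cdot,\Delta_z)=\nu_{\mathrm{norm}}(\cdot,z)^{-1}$ are (semi)definable, and infima over $t\in[0,1]$ preserve definability. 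Using the non-crossing statement together with a compactness argument for the family of $W_2$-geodesics with converging endpoints (Arzel\`a--Ascoli on the uniformly Lipschitz geodesics), one identifies the zero sets: $\{\Psi_1=0\}=\{p_0\in\Delta_{1,d}\}\cup\{p_1\in\Delta_{1,d}\}$ and $\{\Psi_2=0\}=\{(p_0,z_0)\in\Sigma'\}\cup\{(p_1,z_1)\in\Sigma'\}$, equivalently $\Psi_1$ vanishes exactly where $\min\{\mathrm{dist}(p_0,\Delta_{1,d}),\mathrm{dist}(p_1,\Delta_{1,d})\}=0$ and $\Psi_2$ exactly where $\min\{\nu_{\mathrm{norm}}(p_0,z_0)^{-1},\nu_{\mathrm{norm}}(p_1,z_1)^{-1}\}=0$. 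The Łojasiewicz inequality for nonnegative definable functions on a compact definable set then yields $C_1,N_1,C_2,N_2>0$ with $\min\{\mathrm{dist}(p_0,\Delta_{1,d}),\mathrm{dist}(p_1,\Delta_{1,d})\}^{N_1}\le C_1\Psi_1$ and $\min\{\nu_{\mathrm{norm}}(p_0,z_0)^{-1},\nu_{\mathrm{norm}}(p_1,z_1)^{-1}\}^{N_2}\le C_2\Psi_2$. Since $\Psi_1\le\mathrm{dist}(p_t,\Delta_{1,d})$ for every $t$, this gives \eqref{eq:MMs} with $\beta_1=C_1^{-2}$, $\beta_2=2N_1$; since $\Psi_2\le\sfd_{\mathrm{BW}}(p_t,\Delta_{z_t})=\nu_{\mathrm{norm}}(p_t,z_t)^{-1}$, it gives $\nu_{\mathrm{norm}}(p_t,z_t)\le C_2\max\{\nu_{\mathrm{norm}}(p_0,z_0),\nu_{\mathrm{norm}}(p_1,z_1)\}^{N_2}$ for all $t$, and integrating against $\|\dot\gamma_t\|_V\,\dd t$ gives item (2) with $\beta_3=C_2$, $\beta_4=N_2$.

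The hard part will be the middle step, namely checking that $\Psi_1,\Psi_2$ genuinely satisfy the hypotheses of the Łojasiewicz inequality, and this has two faces. One is the qualitative geodesic geometry on $\CP^1$: the non-crossing/convexity statement and the stability of $W_2$-geodesics under convergence of endpoints require care at the cut locus (pairs of roots at maximal Fubini--Study distance, coincident roots), precisely where geodesics and optimal matchings are not unique, and the zero-set identities must be argued through these degenerate loci. The other is the definability bookkeeping: one must verify that geodesic interpolation, the selection of optimal matchings, and the distances to $\Delta_{1,d}$ and to the linear slices $\Delta_z$ all live in one fixed o-minimal structure — they do in $\R_{\mathrm{an}}$, but the $\arccos$ and the factor $1/\sin\theta$ must be handled at $\theta=0$ and at $\theta=\tfrac12\diam(\CP^1)$ — and that passing to infima over $t$ and to lower semicontinuous envelopes keeps us inside it. Once these points are settled, the two inequalities and the bound on $L_{\mathrm{cond}}$ follow as above; note that the argument only produces the exponents $\beta_2,\beta_4$, consistently with the statement.
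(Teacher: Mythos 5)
Your plan is viable in outline but takes a genuinely different route from the paper, and the technical burden you flag at the end is real. The paper does not apply {\L}ojasiewicz to a function defined as an infimum along the geodesic on the endpoint-pair space $P_{1,d}\times P_{1,d}$. Instead it introduces four \emph{pointwise} functions $\alpha_1(p,w)=\nu_{\mathrm{norm}}(p,w)^{-1}$, $\alpha_2(p,w)=\min_{i\neq j}\bigl(\sfd_{\mathrm{FS}}(w,z_i)^2+\sfd_{\mathrm{FS}}(w,z_j)^2\bigr)$, $\alpha_3(p)=\mathrm{dist}(p,\Delta_{1,d})^2$, $\alpha_4(p)=\min_{i\neq j}\sfd_{\mathrm{FS}}(z_i,z_j)^2$, applies {\L}ojasiewicz to the pairs $(\alpha_1,\alpha_2)$ and $(\alpha_3,\alpha_4)$ on the fixed compact spaces $P_{1,d}\times\CP^1$ and $P_{1,d}$ (no geodesics, no matchings, no selections), and then proves a \emph{quantitative} concavity of $\alpha_2$ and $\alpha_4$ along $W_2$-geodesics from the Monge--Mather estimate $\sfd_{\mathrm{FS}}(z_t,z_i(t))^2\geq(1-t)^2\sfd_{\mathrm{FS}}(z_0,z_i(0))^2+t^2\sfd_{\mathrm{FS}}(z_1,z_i(1))^2$. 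Your exchange argument is the qualitative shadow of this estimate: where you conclude ``no crossing, hence $p_t\notin\Delta_{1,d}$ for $t\in(0,1)$'', the paper keeps the explicit inequality, which, combined with the convexity of $x\mapsto 1/x$ and the two {\L}ojasiewicz comparisons, already yields $\nu_{\mathrm{norm}}(p_t,z_t)\leq\beta_3\max\{\nu_{\mathrm{norm}}(p_0,z_0),\nu_{\mathrm{norm}}(p_1,z_1)\}^{\beta_4}$ uniformly in $t$, and likewise item~(1). This bypasses every item you list as ``the hard part'': definability of $\Psi_i$ in a fixed o-minimal structure, the selection of optimal matchings and geodesic arcs through degenerate loci, the passage to lower semicontinuous envelopes, and the stability of the geodesic family under endpoint convergence.

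The concrete gap in your write-up is precisely the zero-set identification for $\Psi_1,\Psi_2$ \emph{after} taking the lower semicontinuous envelope. Establishing $\{\Psi_1=0\}\subseteq\{p_0\in\Delta_{1,d}\}\cup\{p_1\in\Delta_{1,d}\}$ at that stage requires a uniform lower bound on $\inf_t\mathrm{dist}(q_t,\Delta_{1,d})$ over $W_2$-geodesics $q_t$ with endpoints ranging over a neighbourhood of $(p_0,p_1)$; this is a \emph{quantitative} non-crossing statement, and your purely qualitative exchange argument does not supply it. At the point where you would have to produce such a bound you would in effect be re-deriving the Monge--Mather inequality the paper invokes directly. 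The cleanest fix, if you want to salvage the argument rather than adopt the paper's organisation, is to replace the qualitative exchange step and the large-parameter-space {\L}ojasiewicz with the explicit estimate on $\sfd_{\mathrm{FS}}(z_t,z_i(t))^2$ quoted above: it already contains the geodesic convexity of $P_{1,d}\setminus\Delta_{1,d}$ and, after the pointwise {\L}ojasiewicz comparison of $\alpha_1$ with $\alpha_2$ (and of $\alpha_3$ with $\alpha_4$), the bound on $L_{\mathrm{cond}}$ follows by the same integration step you carry out at the end.
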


The proof of this result is based on the structure of Wasserstein geodesics, see for instance the section on Monge--Mather shortening principle in \cite{villani:oldandnew}, which guarantees a quantitative  separation of roots along such curves (\cref{propo:quasiconc}). (The constants $\beta_i$ from the statement come from the use of Lojasiewicz's inequality, and we expect that actually $\beta_2=\beta_4=1$.)

We remark that recently P. Lairez \cite{Lairez2} has proposed to consider yet another class of paths, called \emph{rigid paths}, that have a quasi--optimal polynomial behavior in terms of the complexity of Smale's 17th Problem. We wonder what could be the behavior of an algorithm based on our Wasserstein approach. In this direction, we observe that, compared to the condition metric, which is only Lipschitz,  the $W_2^\mathrm{in}$ metric, away from the discriminant, comes from a smooth Riemannian metric (\cref{thm:smooth}); in particular Wasserstein geodesics can be found by means of a smooth Riemannian optimization problem.

\subsection{Structure of the paper} 
 \cref{S:OT} recalls basic facts about the optimal transport problem and Wasserstein distances on Riemannian manifolds reserving special care to the dynamical formulation \`a la Benamou--Brenier. The section also covers some needed tools from metric geometry.

 \cref{sec:Measures} is the heart of the construction of the correspondence $\mu$ (in all its incarnations) associating a measure to a polynomial. 
This is the map that appears in Theorem \ref{thm:extintro} before, proved in this section (this is precisely Theorem \ref{thm:ext}).
We begin with a careful study of the zero dimensional case (illustrated in the introduction) then, using integral geometry tools, we move to the higher dimensional case.

In  \cref{sec:Ham}, we use instruments from symplectic geometry to compute the metric speed of a curve of polynomials that stays away from the discriminant. In particular this is a consequence of  \cref{thm:thomintro}.

In \cref{S:OTalgebraic}, the optimal transport problem for hypersurfaces in the form of zero sets of polynomials is introduced and studied.
We define the notion of admissible curves, energy and inner metric. Here we prove Theorem \ref{thm:metricspeedsmoothintro} (corresponding to \cref{thm:metricspeedsmooth})
, we define the Wasserstein--Hermitian structure and  
prove \cref{thm:propertywintro} (\cref{thm:propertyw} in the body of the paper) and Theorem \ref{thm:Lipschitzintro} (corresponding to Theorem \ref{thm:Lipschitz}) of comparison between the Wasserstein and Fubini--Study metric. In \cref{sec:WK} we prove that the Wasserstein--Hermitian metric is K\"ahler (\cref{thm:WK} in the body of the paper).

In \cref{sec:abscont} we study the problem of the regularity of curves of solutions of polynomials in dimension greater than one. We prove here \cref{T:Parusinskiintro} (\cref{T:Parusinski} in the body) and \cref{thm:rationalintro} (\cref{thm:rational} in the body). \cref{thm:compactqintro} corresponds to \cref{thm:compactq} in the body of the paper.
In \cref{sec:condition} a Wasserstein approach to condition number is discussed; here we prove \cref{thm:P14intro} (\cref{thm:P14} in the body).

%The paper ends with some technical appendices. \comm{should we keep them or not?}

\subsection*{Acknowledgements}We thank Peter B\"urgisser and Jacopo Stoppa for insightful discussions. We also wish to thank Roberto Svaldi for pointing out the work of Eric Riedl and coauthors, and Eric Riedl for sharing with us their results, which helped proving a stronger versions of our \cref{thm:rational}, and an anonymous referee for pointing out the connection to Weil--Petersson type metrics.

%%%%%%%%%%%%%%%%%%%%%%%%%%%%%%%%%%%%%%%%%%%%%%%%%%%%%%%%%%%%%%%%%%%%%%%
%%%%%%%%%%%%%%%%%%%%%%%%%%%%%%%%%%%%%%%%%%%%%%%%%%%%%%%%%%%%%%%%%%%%%%%
\section{A quick overview on Optimal transport}
\label{S:OT}
In order to introduce an optimal transport distance over the space of homogeneous polynomials,
a careful analysis on the infinitesimal behaviour of the Wasserstein distance 
near a general probability measure is required. 
In this section we will therefore provide a quick introduction to Optimal Transport
and review the needed material for the rest of the paper. We refer to the monographs \cite{villani:topics, villani:oldandnew} and references therein
for a complete account on the theory.

%%%%%%%%%%%%%%%%%%%%%%%%%%%%%%%%%%%%%%%%%%%%%%%%%%%%%%%%%%%%%%%%%%%%%%%
%%%%%%%%%%%%%%%%%%%%%%%%%%%%%%%%%%%%%%%%%%%%%%%%%%%%%%%%%%%%%%%%%%%%%%%

\subsection{Wasserstein distance}\label{sec:wasserstein}

Let $(M,g)$ be a smooth compact Riemannian manifold. We will denote by $\mathscr{P}(M)$ the space of 
 Borel probability measures over $M$. 
On $\mathscr{P}(M)$ we will consider the following distances induced 
by Optimal Transport.

\begin{definition}[$q$--Wasserstein distance]Fix any $q\geq 1$. For any $\mu_{0},\mu_{1} \in \mathscr{P}(M)$,  the $q$-Wasserstein distance 
is defined by
$$
W_{q} (\mu_{0},\mu_{1}) : = \left(\inf_{\xi \in \Pi(\mu_{0},\mu_{1})}
\int_{M\times M} \sfd_{g}(x,y)^g \, \xi(dxdy)\right)^{\frac{1}{q}},  
$$
where $\sfd_{g}$ is the geodesic distance induced by the Riemannian metric $g$ (making $(M,{\sfd}_{g})$ a compact metric space) and 
$$
\Pi(\mu_{0},\mu_{1}) : = \Big{\{} \xi \in \mathscr{P}(M\times M) \colon 
(\pi_{1})_{\sharp} \xi = \mu_{0}, \ (\pi_{2})_{\sharp} \xi = \mu_{1} \Big{\}}
$$
is the (convex) set of \emph{transport plans} between $\mu_{0}$ and $\mu_{1}$; finally $\pi_{i} : M \times M \to M$ denotes the projection on the $i$-th component, for $i=1,2$. 
\end{definition}

Roughly, transport plans represent
the way in which $\mu_0$ is moved to $\mu_1$
while $\sfd_g^q$ is the chosen cost function for transporting an infinitesimal amount of mass.
The subset of those transport plans realising the  infimum in the definition of the $W_{q}$ distance 
are called optimal transport plans and 
are denoted by $\Pi_{opt}^q(\mu_{0},\mu_{1})$.

As we will always stick to the case of compact manifolds, $W_{q}$ will be well defined between any couple of elements of $\mathscr{P}(M)$ 
with a non--empty $\Pi_{opt}^q(\mu_{0},\mu_{1})$.
In particular 
$(\mathscr{P}(M),W_q)$ 
is a complete and separable metric space;
from the compactness of $M$ it 
follows moreover that 
it is compact as well.
For ease of notation we will sometimes write $\mathscr{P}_q(M)$ 
to intend the metric space  $(\mathscr{P}(M), W_q)$.

\subsection{
Length structures and inner distances}
\label{Ss:metricgeometry}

Here we follow \cite{Burago} 
to which we refer for the missing details.
If $(X,\sfd)$ is a metric space one can 
define a length structure in the following way. 
Firstly consider a class of admissible curves.

\begin{definition}[Absolutely continuous curves with values in a metric space]
Let $(X,\sfd)$ be a metric space and let $I$ be a real interval. 
We say that a curve $\gamma : I \to X$ belongs to $AC^{q}(I;X)$,  $q \geq 1$,
if there exists $m \in L^{q}(I)$ such that
\begin{equation}\label{E:ac}
\sfd(\gamma_s,\gamma_t) \leq \int_{s}^{t}m(\tau) \,d\tau, \qquad \forall \ s \leq t.
\end{equation}
When $q=1$, we simply denote $AC=AC^1.$
\end{definition}
\begin{theorem}[Theorem 1.1.2 of \cite{AGS:book}]
If $\gamma \in AC^{q}(I;X)$, $q\geq 1$,
then for $\L^{1}$-a.e. $t \in I$ there exists the limit
$$
\lim_{h\to 0} \frac{\sfd(\gamma_{t+h}, \gamma_t)}{|h|}.
$$
We denote the value of this limit by $|\dot \gamma|(t)$ and we call it metric derivative, or metric speed, of $\gamma$ at $t$. 
The function 
$t \mapsto |\dot \gamma|(t)$ belongs to $L^{q}(I)$ and 
$$
\sfd(u(s),u(t)) \leq \int_{s}^{t}|\dot \gamma|(\tau) \, d\tau, \qquad \forall \ s \leq t.
$$
Moreover $|\dot \gamma|(t) \leq m(t)$ for $\L^{1}$-a.e. $t\in I$, 
for each $m$ such that \eqref{E:ac} holds.
\end{theorem}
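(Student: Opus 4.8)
The plan is to reduce the statement about the curve $\gamma$ with values in the metric space $X$ to a countable family of statements about real--valued absolutely continuous functions, for which the classical one--dimensional theory applies, and then to recover the metric speed as a suitable supremum.

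First I would record that $\gamma$ is continuous: the hypothesis $\sfd(\gamma_s,\gamma_t)\le\int_s^t m(\tau)\,d\tau$ together with absolute continuity of the Lebesgue integral forces $\sfd(\gamma_s,\gamma_t)\to 0$ as $s\to t$. Hence $\gamma(I)$ is separable, and I can fix a countable set $D=\{x_n\}_{n\in\N}\subset\gamma(I)$ dense in the image (e.g.\ $x_n=\gamma(t_n)$ with $\{t_n\}$ dense in $I$). Set $\varphi_n(t):=\sfd(\gamma_t,x_n)$. The triangle inequality gives $|\varphi_n(s)-\varphi_n(t)|\le\sfd(\gamma_s,\gamma_t)\le\int_s^t m$, so each $\varphi_n$ is absolutely continuous on $I$; by the classical theory $\varphi_n$ is differentiable $\L^1$--a.e., $\varphi_n'\in L^1(I)$, and $|\varphi_n'|\le m$ a.e.

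Next I would introduce the candidate metric speed $M(t):=\sup_n|\varphi_n'(t)|$, which is measurable, satisfies $M\le m$ a.e., and hence lies in $L^q(I)$. Using density of $D$ in $\gamma(I)$ together with the triangle inequality one checks $\sfd(\gamma_s,\gamma_t)=\sup_n|\varphi_n(s)-\varphi_n(t)|$; writing $\varphi_n(s)-\varphi_n(t)=\int_t^s\varphi_n'$ and taking the supremum over $n$ then yields $\sfd(\gamma_s,\gamma_t)\le\int_s^t M(\tau)\,d\tau$ for all $s\le t$, which will be the asserted integral inequality once $M$ is identified with $|\dot\gamma|$.

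The heart of the argument --- and the step I expect to demand the most care --- is proving that $\lim_{h\to 0}\sfd(\gamma_{t+h},\gamma_t)/|h|$ exists and equals $M(t)$ for a.e.\ $t$. For the upper bound, at each Lebesgue point of $M$ one has $\sfd(\gamma_{t+h},\gamma_t)/|h|\le|h|^{-1}\big|\int_t^{t+h}M\big|\to M(t)$, so $\limsup_{h\to 0}\sfd(\gamma_{t+h},\gamma_t)/|h|\le M(t)$ a.e. For the lower bound, for each fixed $n$ and a.e.\ $t$ (with null set depending on $n$) one has $\sfd(\gamma_{t+h},\gamma_t)/|h|\ge|\varphi_n(t+h)-\varphi_n(t)|/|h|\to|\varphi_n'(t)|$; taking the countable union of these null sets gives $\liminf_{h\to 0}\sfd(\gamma_{t+h},\gamma_t)/|h|\ge\sup_n|\varphi_n'(t)|=M(t)$ for a.e.\ $t$. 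Combining the two bounds proves existence of the limit and the identity $|\dot\gamma|=M$ a.e.; then membership $|\dot\gamma|\in L^q(I)$, the integral inequality, and the minimality $|\dot\gamma|\le m$ a.e.\ all follow at once from the properties of $M$ obtained above. The only genuinely delicate point is the interchange of the continuum of limits in $h$ with the countable supremum over $n$, and this is precisely what the Lebesgue--point estimate (for the upper bound) and the countability of $D$ (for the lower bound) are set up to handle.
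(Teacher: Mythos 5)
Your argument is correct and is precisely the standard proof of this statement (Theorem 1.1.2 of the cited reference \cite{AGS:book}), which the paper itself does not reproduce but simply invokes: reduce to the countable family $\varphi_n(t)=\sfd(\gamma_t,x_n)$ for $\{x_n\}$ dense in $\gamma(I)$, set $M=\sup_n|\varphi_n'|$, and identify $M$ with the metric derivative via a Lebesgue--point estimate for the upper bound and the countable union of null sets for the lower bound. Nothing is missing; the minimality claim $|\dot\gamma|\le m$ a.e.\ does indeed drop out immediately from $M\le m$ a.e., as you note.
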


The space of absolutely continuous curves 
$AC^{q}(I;X)$ can be seen as a subset of 
the metric space $\mathscr{C}^{0}(I;X)$ 
(endowed with the natural sup distance); recall 
that $\mathscr{C}^{0}(I;X)$  is complete and separable provided the same is true for $X$. 

One then defines the length of absolutely continuous curves, that we can assume without loss of generality to be defined on $I=[0,1]$, by 
$$
L(\gamma) = \int_0^1|\dot \gamma|(t)\,dt.
$$
The length $L$ induces an ``intrinsic'' distance, called \emph{inner distance}, by 
minimizing the length 
functional among all the sets of admissible curves:
$$
\widehat {\sfd}(x,y) = \inf \Big{\{} L(\gamma)\colon \gamma \in AC([0,1];X), \gamma_0 = x,\, \gamma_1 = y \Big{\}}.
$$
If $\widehat{\sfd} = \sfd$, then $(X,\sfd)$ is called a length space. 
It is called a geodesic space 
if 
the infimum is attained by at least one admissible curve, hereafter called a geodesic.  

Our analysis will naturally 
bring us to study 
non-geodesically convex subsets
of a geodesic space $X$.  
For our scope it will be sufficient 
to assume $(X,\sfd)$ to be complete and separable and $Y \subset X$ 
to be a closed set so that $(Y,\sfd)$ is still complete and separable. 
Clearly if $Y$ fails to be  geodesically convex then $(Y,\sfd)$ will not be anymore a geodesic space. 
Nonetheless 
one can consider 
the length structure induced by $L$ over $Y$ 
and define the inner distance of $Y$ 
as follows:
\begin{equation}
\label{E:intrinsic}    
\sfd_Y^{\textrm{in}} (x,y): = 
\inf \Big{\{} L(\gamma) \colon  
\gamma \in AC([0,1];Y), \gamma_0 = x, \,\gamma_1 = y\Big{\}}.
\end{equation}
If for any couple $x,y \in Y$ there exists 
an admissible curve $\gamma \in 
AC([0,1];Y)$ of finite length $L(\gamma) < \infty$, then 
$\sfd_Y^{\textrm{in}}$ is a distance 
making $(Y,\sfd_Y^{\textrm{in}})$ a length space.

Through a standard reparametrization argument, see for instance \cite[Lemma 1.1.4]{AGS:book}, 
one notices that for any 
$q>1$
\begin{equation}
\label{E:p}
\sfd_Y^{\textrm{in}}(x,y)^q
= \inf \left\{ \int_0^1 |\dot \gamma|^q(t)\,dt 
\colon \gamma \in
AC^q([0,1];Y), \gamma_0 = x,\, \gamma_1 = y
\right\}.
\end{equation}
Finally we mention that, defining the energy functional 
$\mathcal{E} : \mathscr{C}^{0}(I;Y) \longrightarrow 
[0;\infty]$ 
by 
$$
\mathcal{E}(\gamma) : = 
\begin{cases}
\int_0^1 |\dot \gamma|^2(t)\, & u \in  AC^{2}(I;Y); \\
+ \infty, & {\textrm{ otherwise}},
\end{cases}
$$
then $\mathcal{E}$ is lower semicontinuous (see for instance \cite[Proposition 1.2.7]{Gigli_Pasqualetto_2020}). 

\smallskip
%%%%%%%%%%%%%%%%%%%%%%%%%%%%%%%%%%%%%%%%%%%%%%%%%%%%%%%%%%%%
\subsection{Absolutely continuous curves, 
the continuity equation and the space of gradients}\label{sec:spaceofgrad}

We continue the short review of Optimal Transport  needed for this paper. 
Since $(M,\sfd_g)$ is a geodesic space, the $q$-Wasserstein space $\mathscr{P}_q(M)$
is a geodesic space as well (see \cite[Proposition 2.10]{SturmI}): for any 
$\mu_{0},\mu_{1} \in \mathscr{P}(M)$ there exists a curve 
$(\mu_t)_{t \in [0,1]} \subset \mathscr{P}(M)$ 
from $\mu_0$ to $\mu_1$ such that 
$$
W_{q}(\mu_{t},\mu_{s}) = |t-s|W_{q}(\mu_{1},\mu_{0}).
$$
Wasserstein geodesics are strongly linked to 
the geometry of the underlying space $M$. 
Denote by $\Geo(M)$ 
the set of minimal geodesics of $M$ seen as a closed 
subspace of $\mathscr{C}^{0}([0,1];M)$; 
then 
in the strictly convex case ($q>1$)
 $W_{q}$-geodesics have the following lifting property 
(see \cite[Proposition 2.10]{SturmI}):
for each $W_{q}$-geodesic $(\mu_t)_{t \in [0,1]}$
there exists a probability measure $\eta$ on $\Geo(M)$, 
such that
$$
(\ee_{t})_{\sharp} \eta = \mu_{t}, \qquad 
\forall \ s,t \in [0,1] \quad (\ee_{s},\ee_{t})_{\sharp} \eta \in \Pi_{opt}^q(\mu_{s},\mu_{t}), 
$$
where $\ee_t : \mathscr{C}^{0}([0,1];M) \to M$ is the evaluation map ($\ee_t(f) : = f(t)$).
The set of $\eta \in \mathscr{P}(\Geo(M))$ verifying the previous conditions is called the set of dynamical $q$-optimal plans 
and is denoted by $\OptGeo_q(\mu_{0},\mu_{1})$.

As discussed, our interest will be in studying the inner distance induced by $W_q$ on a distinguished closed 
subset of $\mathscr{P}(M)$ 
and to define its inner 
Wasserstein distance as in 
\cref{E:intrinsic}. 
It will be therefore necessary to discuss the metric speed 
of curves in $AC^q([0,1];\mathscr{P}_q(M))$.
As for $W_q$--geodesics, also 
$q$--absolutely continuous curves $\mathscr{P}_q(M)$ have a  geometric interpretation 
in terms of family of curves of $M$ firstly pioneered by 
J.-D. Benamou and Y. Brenier 
\cite{BenamouBrenier}.

If 
$\mu_{t} : I \to \mathscr{P}_{q}(M)$ is a $q$-absolutely continuous curve 
with $I\subset \R$ an interval,
there exists a Borel vector field $v : (x,t)\mapsto v_{t}(x) \in T_x M$ 
such that 
$\mu_t$ 
solves the continuity equation 
with vector field $v_t$:
\begin{equation}
\label{E:continuity1}
\partial_{t} \mu_{t} + \ddiv(v_{t}\mu_{t}) = 0, 
\end{equation}
in the sense of distributions (see \eqref{eq:sod} below). 
One can 
check that \eqref{E:continuity1}
implies that
$|\dot{\mu_{t}} | \leq \| v_{t} \|_{L^{q}_{\mu_{t}}}$ for $\mathcal{L}^{1}$-a.e. $t \in I$.

The vector field $v_t$ is highly non-unique and in order to restore the identity between the metric speed of $\mu_t$ and the norm of $v_t$
a subclass of special vector fields for \eqref{E:continuity1} has to be considered. 
This class is introduced by analogy with the infinitesimal behaviour of $q$-Wasserstein geodesics. 
As we will be mostly dealing with the case $q=2$, where an infinitesimal Riemannian structure appears naturally, we will 
continue this review 
only for the case $q=2$. 
We refer to \cite{AGS:book} and references therein for the other cases.

For every $\mu \in \mathscr{P}_2(M)$, the so-called ``space of gradients'' is defined by
\begin{equation}\label{E:Gradients}
T_{\mu}(\mathscr{P}_{2}(M)) : = 
\overline{\{\nabla f \colon f \in \mathscr{C}^{\infty}(M) \}}^{L^{2}_{\mu}}.
\end{equation}

This definition is motivated by the
classical results of Y. Brenier and R. McCann on the shape of 
optimal maps and on 2-cyclically monotone sets. 
For more details on all the possible notions of tangent space to $\mathscr{P}_{2}(M)$ 
we refer to \cite{Gigli11, Ohta09}.

We now state the main result 
concerning the Wasserstein metric speed
in $\mathscr{P}_q(M)$.
For the analogous and  earlier
result when $M$ is an Hilbert 
space
we refer to \cite[Theorem 8.3.1]{AGS:book} and \cite[Proposition 8.4.5]{AGS:book}, where also the case $q \neq 2$ is considered.

\begin{theorem}[Proposition 2.5 of \cite{Erbar_2010}]\label{T:AGS1}
Let $(\mu_{t})_t \in AC^2(I; \mathscr{P}_{2}(M))$ be a 2-absolutely continuous curve 
with $I\subset \R$ an interval.  
Then there exists a Borel vector field $v : (x,t)\mapsto v_{t}(x) \in T_x M$ such that
$$
v_{t} \in L^{2}(\mu_{t},TM), \qquad \| v_{t} \|_{L^{2}_{\mu_{t}}} = |\dot{\mu_{t}} |, \qquad 
\mathcal{L}^{1}-a.e. \ t \in I, 
$$
and the continuity equation 
\begin{equation}\label{E:continuity}
\partial_{t} \mu_{t} + \ddiv(v_{t}\mu_{t}) = 0, 
\end{equation}
holds in the sense of distributions on $I\times M$. Moreover for $\mathcal{L}^{1}$-a.e. $t \in I$, 
$v_{t} \in T_{\mu_{t}}(\mathscr{P}_2(M))$.\\
The vector field $v_t$ is uniquely determined in $L^2(\mu_t, TM)$ by
\eqref{E:continuity} and by being an element of $T_{\mu_{t}}(\mathscr{P}_2(M))$.

Conversely, if a narrowly continuous curve $\mu_{t} : I \to \mathscr{P}_{2}(M)$
satisfies the continuity equation for some Borel velocity field $v_{t}$
with $\|v_{t} \|_{L^{2}_{\mu_{t}}} \in L^{2}(I)$, then
$(\mu_{t})_t \in AC^2(I; \mathscr{P}_{2}(M))$ and 
$|\dot{\mu_{t}} | \leq \| v_{t} \|_{L^{2}_{\mu_{t}}}$ 
for $\mathcal{L}^{1}$-a.e. $t \in I$. 
\end{theorem}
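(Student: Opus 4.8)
The plan is to follow the classical Ambrosio--Gigli--Savar\'e strategy (\cite[Chapter 8]{AGS:book}), adapted from the Euclidean/Hilbert setting to our compact Riemannian manifold $M$, splitting the statement into three parts: (i) the converse implication (continuity equation $\Rightarrow$ $AC^2$ with the speed inequality); (ii) the existence of a velocity field realizing the metric speed; (iii) the fact that this field lies in the space of gradients, together with its uniqueness. For part (i), let $\mu_t$ be narrowly continuous and solve \eqref{E:continuity} with a Borel field $v_t$ such that $t\mapsto\|v_t\|_{L^2_{\mu_t}}$ is in $L^2(I)$. I would regularize $\mu_t$ in space by a short--time heat flow, obtaining $\mu^\varepsilon_t$ with smooth positive density; a direct computation shows $\mu^\varepsilon_t$ solves a continuity equation with a \emph{smooth} field $v^\varepsilon_t$, and the regularizing properties of the heat flow (Jensen's inequality) give the contraction $\|v^\varepsilon_t\|_{L^2_{\mu^\varepsilon_t}}\le\|v_t\|_{L^2_{\mu_t}}$. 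For smooth positive densities the characteristic ODE $\dot X=v^\varepsilon_t(X)$ is globally solvable, so $\mu^\varepsilon_t=(X_{s,t})_\#\mu^\varepsilon_s$ and hence $W_2(\mu^\varepsilon_s,\mu^\varepsilon_t)\le\int_s^t\|v^\varepsilon_r\|_{L^2_{\mu^\varepsilon_r}}\dd r\le\int_s^t\|v_r\|_{L^2_{\mu_r}}\dd r$. Letting $\varepsilon\to0$, using $\mu^\varepsilon_t\weak\mu_t$ and the lower semicontinuity of $W_2$, yields $W_2(\mu_s,\mu_t)\le\int_s^t\|v_r\|_{L^2_{\mu_r}}\dd r$, i.e. $\mu_t\in AC^2$ and $|\dot\mu_t|\le\|v_t\|_{L^2_{\mu_t}}$ a.e.

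For part (ii), let $\mu_t\in AC^2(I;\mathscr{P}_2(M))$. I would fix a partition $0=t_0<\cdots<t_N=1$ of mesh $|P|$ and interpolate: on each $[t_i,t_{i+1}]$ replace $\mu_t$ by a constant--speed $W_2$--geodesic $\mu^P_t$ from $\mu_{t_i}$ to $\mu_{t_{i+1}}$, lifted to $\eta_i\in\OptGeo_2(\mu_{t_i},\mu_{t_{i+1}})$. Using the structure of $W_2$--geodesics on a Riemannian manifold (they are concentrated on non--branching minimizing geodesics, by McCann's theorem), each geodesic carries a well--defined velocity, which defines a Borel field $v^P_t$ with $\|v^P_t\|_{L^2_{\mu^P_t}}=|\dot\mu^P_t|$ and, by Jensen applied to $W_2(\mu_{t_i},\mu_{t_{i+1}})^2\le(t_{i+1}-t_i)\int_{t_i}^{t_{i+1}}|\dot\mu_r|^2\dd r$,
\begin{equation*}
\int_I\|v^P_t\|_{L^2_{\mu^P_t}}^2\,\dd t=\sum_i\frac{W_2(\mu_{t_i},\mu_{t_{i+1}})^2}{t_{i+1}-t_i}\le\int_I|\dot\mu_t|^2\,\dd t .
\end{equation*}
Encoding the concatenated $\eta_i$'s as a probability measure $\nu^P$ on $\mathscr{C}^0(I;M)$, the uniform kinetic--energy bound together with compactness of $M$ gives tightness; extract $\nu^P\weak\nu$ along $|P|\to0$. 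Since $W_2(\mu^P_t,\mu_t)\to0$ uniformly in $t$ (the interpolation error over $[t_i,t_{i+1}]$ is controlled by $\int_{t_i}^{t_{i+1}}|\dot\mu_r|\dd r$), the time--marginals of $\nu$ are the original $\mu_t$; disintegrating $\nu$ produces a Borel field $v_t$ solving \eqref{E:continuity} (the weak formulation is linear in $v_t\mu_t$ and passes to the limit), and the joint convexity and lower semicontinuity of the Benamou--Brenier functional $(\r,E)\mapsto\int|E|^2/\r$ give $\int_I\|v_t\|_{L^2_{\mu_t}}^2\dd t\le\int_I|\dot\mu_t|^2\dd t$. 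Combined with $|\dot\mu_t|\le\|v_t\|_{L^2_{\mu_t}}$ from part (i), this forces $\|v_t\|_{L^2_{\mu_t}}=|\dot\mu_t|$ for a.e. $t$.

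For part (iii), I would replace $v_t$ by its $L^2_{\mu_t}$--orthogonal projection $\mathsf{P}(v_t)$ onto $T_{\mu_t}(\mathscr{P}_2(M))=\overline{\{\nabla f:f\in\mathscr{C}^\infty(M)\}}^{L^2_{\mu_t}}$. Because the weak formulation of \eqref{E:continuity} only pairs $v_t$ with gradients $\nabla\varphi$, the field $\mathsf{P}(v_t)$ still solves the continuity equation, and $\|\mathsf{P}(v_t)\|_{L^2_{\mu_t}}\le\|v_t\|_{L^2_{\mu_t}}=|\dot\mu_t|$; applying part (i) to $\mathsf{P}(v_t)$ gives the reverse inequality, so in fact $v_t=\mathsf{P}(v_t)\in T_{\mu_t}(\mathscr{P}_2(M))$ for a.e. $t$. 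Uniqueness: if $v_t,w_t$ both solve \eqref{E:continuity} and both lie in $T_{\mu_t}(\mathscr{P}_2(M))$, then subtracting the equations and testing against products $\varphi(x)\psi(t)$ gives $\ddiv((v_t-w_t)\mu_t)=0$ for a.e. $t$, i.e. $v_t-w_t$ is $L^2_{\mu_t}$--orthogonal to every $\nabla\varphi$, hence to $T_{\mu_t}(\mathscr{P}_2(M))$; being itself an element of that space, $v_t=w_t$.

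The main obstacle is part (ii): producing the limiting velocity field on a \emph{curved} compact manifold, where one must control the cut locus, invoke McCann's structure theorem to know that $W_2$--geodesics are non--branching and hence carry well--defined velocities, and run the superposition/tightness argument in $\mathscr{C}^0(I;M)$ rather than in a linear space. An alternative I would consider is to embed $M$ isometrically into some $\R^N$ (Nash), apply the known Euclidean statement \cite[Theorem 8.3.1]{AGS:book} to the pushed--forward curve, and project the resulting $\R^N$--valued field back onto $TM$; the cost of this route is that one must then verify that the intrinsic and extrinsic $W_2$--metric speeds coincide along the curve and that the projected field still solves the \emph{intrinsic} continuity equation.
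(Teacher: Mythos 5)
The paper does not prove this statement: it is quoted directly from Erbar's work (\cite{Erbar_2010}), with the Hilbert--space precursor attributed to \cite[Theorem 8.3.1 and Proposition 8.4.5]{AGS:book}. Your sketch is therefore not in competition with an argument in the text, but it can still be assessed on its own merits, and it follows the right overall architecture: converse implication by regularization, existence by geodesic interpolation and superposition, membership in the space of gradients by projection plus the sharpness of the speed inequality, uniqueness by orthogonality to divergence--free fields. Parts~(ii) and~(iii) are essentially correct as outlined.

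There is, however, a genuine gap in part~(i). You assert that applying a short--time heat flow to $\mu_t$ produces a curve $\mu^\varepsilon_t$ that still solves a continuity equation with a smooth field $v^\varepsilon_t$ satisfying the Jensen contraction $\|v^\varepsilon_t\|_{L^2_{\mu^\varepsilon_t}}\le\|v_t\|_{L^2_{\mu_t}}$. In Euclidean space this works because convolution with a mollifier commutes with $\ddiv$, which rests on the symmetry $\nabla_x\eta(x-y)=-\nabla_y\eta(x-y)$ of the kernel. On a curved compact manifold the scalar heat kernel satisfies no such identity: $\nabla_x p_\varepsilon(x,y)\neq-\nabla_y p_\varepsilon(x,y)$, so pushing the weak continuity equation through $P_\varepsilon$ does not produce $\partial_t\mu^\varepsilon_t+\ddiv(E^\varepsilon_t)=0$ for any natural regularization $E^\varepsilon_t$ of $v_t\mu_t$. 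If one tries to fix this by smoothing the vector part with the heat semigroup on $1$--forms, the Weitzenb\"ock formula introduces curvature error terms that must be estimated; the ``direct computation'' is not direct. Two sound replacements are available, and Erbar's proof in fact follows one of them: (a) prove $W_2(\mu_s,\mu_t)\le\int_s^t\|v_r\|_{L^2_{\mu_r}}\dd r$ via Kantorovich duality combined with the Hopf--Lax/Hamilton--Jacobi semigroup, which requires only that $M$ be a compact length space and entirely avoids mollification; or (b) Nash--embed $M\hookrightarrow\R^N$, apply the Euclidean statement there, and then verify -- which you flag but do not carry out -- that the intrinsic and extrinsic $W_2$ metric derivatives coincide (they do, because the ambient and intrinsic distances agree to second order) and that the projected field solves the intrinsic continuity equation. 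As written, the regularization step needs to be replaced by one of these routes before the converse implication is established.

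A smaller point in part~(iii): deducing $\ddiv((v_t-w_t)\mu_t)=0$ for a.e.\ fixed $t$ from the space--time distributional equation requires a localization in time, i.e.\ testing against $\varphi(x)\psi(t)$ for a countable dense family of $\varphi$ and invoking Lebesgue differentiation. This is standard, but since you state the conclusion ``for a.e.\ $t$'' without mentioning it, it is worth being explicit that the argument passes from a single space--time distributional identity to a one--parameter family of spatial ones.
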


Notice 
that 
\cite[Proposition 2.5]{Erbar_2010} and 
the discussion of \cref{Ss:metricgeometry}
permit to reobtain the 
so-called Benamou-Brenier formulation of the Optimal Transport problem:
for any $\mu_0,\mu_1 \in \mathscr{P}_2(M)$ 
it holds
\begin{equation}\label{E:BenamouBrenier}
W_2(\mu_0,\mu_1)^2 = \min \left\{ \int_0^1 \|v_t \|^2_{L^2_{\mu_t}}\, dt \right\},
\end{equation}
where the minimum is taken over all absolutely continuous curves $(\mu_t)_{t \in [0,1]}$ connecting $\mu_0$ to $\mu_1$ with tangent vector field $(v_t)_t$ in the sense of \eqref{E:continuity}.

By definition, $T_{\mu}(\mathscr{P}_{2}(M))$ is a closed subspace 
of $L^{2}(\mu;TM)$ 
and therefore has a uniquely defined projection
\beq
\mathsf{P}:L^2(\mu; TM)\to T_\mu(\mathscr{P}_2(M)).
\eeq
The orthogonal space to $T_{\mu}(\mathscr{P}_{2}(M))$ consists
of divergence-free vector fields, giving the following characterization for $\mathsf{P}$.

\begin{lemma}
[Lemma 2.4 of \cite{Erbar_2010}]
\label{L:AGSprojection}
A vector $v\in L^{2}(\mu;TM)$ belongs to $T_{\mu}(\mathscr{P}_{2}(M))$ if and only if 
$$
\| v  + w \|_{L^{2}_{\mu}} \geq \| v \|_{L^{2}_{\mu}}, \qquad \forall \ w \in L^{2}(\mu;TM)\ 
such \ that \ 
\ddiv(w\mu) = 0.
$$
In particular, for every $v\in L^{2}(\mu;TM)$ there exists a unique 
$\mathsf{P}(v) \in T_{\mu}(\mathscr{P}_{2}(M))$ 
in the equivalence class of $v$ modulo divergence--free vector fields: 
it is the element of minimal $L^{2}$-norm in this class.
\end{lemma}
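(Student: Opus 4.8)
The plan is to reduce everything to the orthogonal projection theorem in the Hilbert space $H := L^2(\mu;TM)$, the only non-formal point being the identification of the orthogonal complement of the space of gradients. Write $T_\mu := T_\mu(\mathscr{P}_2(M))$, which by \eqref{E:Gradients} is a closed subspace of $H$.

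First I would prove the orthogonality description
$$ T_\mu^{\perp} \;=\; \bigl\{\, w \in L^2(\mu;TM) \;:\; \ddiv(w\mu) = 0 \,\bigr\}, $$
where the divergence is meant in the sense of distributions (see \eqref{eq:sod} below). Unwinding that definition, $\ddiv(w\mu)=0$ says precisely that $\int_M \langle \nabla f, w\rangle_g \, \dd\mu = 0$ for every $f \in \mathscr{C}^\infty(M)$ (on a compact $M$ it is immaterial whether one tests against $\mathscr{C}^\infty(M)$ or $\mathscr{C}^\infty_c(M)$). Since $T_\mu$ is by definition the $L^2_\mu$-closure of $\{\nabla f : f \in \mathscr{C}^\infty(M)\}$, continuity of the scalar product shows that $w \perp \nabla f$ for all such $f$ is the same as $w \perp T_\mu$, which is the claimed equality. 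I expect this definition chase to be the only step where the specific structure of $T_\mu$ enters; everything else is soft Hilbert-space geometry, and there is no genuine analytic obstacle.

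With this in hand, the stated equivalence is the general Hilbert-space fact that, for a closed subspace $V\subseteq H$ and $v\in H$, one has $v\in V$ if and only if $\|v+w\|\geq\|v\|$ for all $w\in V^{\perp}$: if $v\in V$ this is Pythagoras, while if $v=v_1+v_2$ with $v_1\in V$ and $0\neq v_2\in V^{\perp}$, the choice $w=-v_2$ gives $\|v_1\|\geq\|v\|$, forcing $v_2=0$. Applying this with $V=T_\mu$ and $V^{\perp}$ as above gives the ``if and only if'' assertion. For the ``in particular'' part, let $\mathsf{P}:H\to T_\mu$ be the orthogonal projection (well defined because $T_\mu$ is closed); then $v-\mathsf{P}(v)\in T_\mu^{\perp}$, so $\mathsf{P}(v)$ differs from $v$ by a divergence-free field and lies in $T_\mu$, hence by the equivalence just proved it has minimal $L^2_\mu$-norm in the closed affine set $v+T_\mu^{\perp}$; uniqueness of the minimiser over such a set is the standard projection-onto-a-closed-convex-set statement, which simultaneously identifies that minimiser with $\mathsf{P}(v)$. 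Thus the proof is essentially formal once the distributional divergence constraint is correctly translated into an orthogonality relation.
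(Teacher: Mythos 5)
Your proof is correct and is essentially the standard argument behind the cited result (Erbar's Lemma 2.4, itself modelled on Lemma 8.4.2 of Ambrosio--Gigli--Savar\'e): identify the divergence-free fields as the $L^2_\mu$-orthogonal complement of the closure of gradients by unwinding the distributional definition, and then everything reduces to orthogonal projection onto a closed subspace. The paper does not reproduce a proof but simply cites \cite{Erbar_2010}, and your argument is the one that reference uses.
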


In order to highlight 
the relevance of the projection $\mathsf{P}$ we 
reformulate \cref{T:AGS1}.

\begin{proposition}[Proposition 2.5 of \cite{Erbar_2010}]\label{P:AGS2}
Let $\mu_{t} : I \to \mathscr{P}_{2}(M)$ be an absolutely continuous curve 
and let $v_{t} \in L^{2}(\mu_{t};TM)$ be such that 
$$
\partial_{t} \mu_{t} + \ddiv(v_{t}\mu_{t}) = 0, 
$$
holds in the sense of distributions. Then $v_{t}$ satisfies 
$$
\| v_{t} \|_{L^{2}_{\mu_{t}}} = |\dot{\mu_{t}} |, \qquad 
\mathcal{L}^{1}-a.e., 
$$
if and only if $v_{t} = \mathsf{P}(v_{t}) \in T_{\mu_{t}}(\mathscr{P}_{2}(M))$ for 
$\mathcal{L}^{1}$-a.e. $t \in I$.
\end{proposition}

Relevant for this paper will be curves obtained via the pushforward of families of diffeomorphisms: 
let $\varphi_{t} : M \to M$ be a family of diffeomorphisms which is the flow of a time dependent vector field $v_t\in \Gamma(TM)$. If we
define $\mu_{t} : = (\varphi_{t})_{\sharp}\mu_{0}$,
then 
$$
\partial_{t} \mu_{t} + \ddiv (v_{t} \mu_{t}) = 0.
$$
Indeed by definition 
$\frac{d}{dt} \varphi_t(x) =v_t(\varphi_t(x))$ 
and for 
any smooth test function $\psi$:
\beq\label{eq:sod}
\frac{d}{dt} \int_M \psi \dd\mu_t = 
\frac{d}{dt} \int_M \psi(\varphi_t) \dd\mu_0 = \int g\Big{(}\nabla \psi(\varphi_t(x)), v_t(\varphi_t(x)) \Big{)}\dd\mu_0= \int g(\nabla \psi, v_t )  \dd\mu_t.
\eeq

\noindent
Moreover since $\ddiv ((v_{t} - \mathsf{P}(v_{t}))\mu_{t}) = 0$, 
$$
\partial_{t} \mu_{t} + \ddiv (\mathsf{P}(v_{t}) \mu_{t}) = 0.
$$
Since $\mathsf{P}(v_{t}) \in T_{\mu_{t}}(\mathscr{P}_{2}(M))$, by Proposition \ref{P:AGS2}
we have therefore obtained the following statement. 

\begin{corollary}\label{propo:metricspeedflow}
Let $\varphi_{t} : M \to M$ be a family of diffeomorphisms for $t \in I$,
with $\varphi_{t}$ be the flow map associated to a vector 
field $v_{t} \in \Gamma(TM)$ and $I$ any real interval.
Define for $t \in I$ the curve of probability measures 
$\mu_{t} : = (\varphi_{t})_{\sharp}\mu_{0}$.
Then, for $\mathcal{L}^{1}$-a.e. $t \in I$: 
$$
|\dot{\mu_{t}} | = \| \mathsf{P}(v_{t}) \|_{L^{2}_{\mu_{t}}}. 
$$
\end{corollary}

The description of the space of gradients 
is therefore a natural problem to address.
The following proposition describes the space of gradients in the case of a measure $\mu$ supported on a smooth submanifold $S\subseteq M$.
For other descriptions 
of the tangent cone to the $2$-Wasserstein space
and similar results, we refer to \cite{Lott17}.

\begin{proposition}\label{propo:decograd}
Let $S \subset M$ be a submanifold and $\mu \in \mathscr{P}(M)$ such 
that $\supp (\mu) \subset S$.
Then 
\begin{equation}\label{splittingsupported}
   \overline{\{\nabla f \colon f \in \mathscr{C}^{\infty}(M) \}}^{L^{2}_\mu}
= \overline{\{\nabla g \colon g \in \mathscr{C}^{\infty}(S) \}}^{L^{2}_\mu} 
\oplus
L^{2}(NS, \mu), 
\end{equation}
where $NS \subset TM$ is the normal bundle to $S$ in $M$. 
\end{proposition}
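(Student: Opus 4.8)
\textbf{Proof strategy for Proposition \ref{propo:decograd}.}
The plan is to show the two claimed summands are $L^2_\mu$-orthogonal, that their (closed) sum contains $\nabla f$ for all $f\in\mathscr{C}^\infty(M)$, and conversely that each summand lies in the left-hand space. Throughout, the key elementary fact is that for $f\in\mathscr{C}^\infty(M)$ and $x\in S$ one has the pointwise orthogonal decomposition $\nabla f(x)=\nabla^S(f|_S)(x)+N(\nabla f)(x)$, where $\nabla^S$ is the intrinsic gradient of the restricted function on $S$ and $N$ denotes orthogonal projection onto $N_xS$; this is immediate from $g(\nabla f(x),w)=\dd f(x)[w]=\dd(f|_S)(x)[w]=g(\nabla^S(f|_S)(x),w)$ for $w\in T_xS$.

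First I would establish the orthogonality of the two summands: if $\nabla g$ with $g\in\mathscr{C}^\infty(S)$ is tangent to $S$ $\mu$-a.e.\ and $w\in L^2(NS,\mu)$ is normal $\mu$-a.e., then $g(\nabla g,w)=0$ pointwise on $\supp(\mu)\subset S$, hence $\int_M g(\nabla g,w)\,\dd\mu=0$; since the inner product is continuous in the $L^2_\mu$-topology, this orthogonality passes to the closures, giving that the right-hand side of \eqref{splittingsupported} is a genuine orthogonal direct sum of two closed subspaces (in particular it is itself closed). Next, for the inclusion $\subseteq$: given $f\in\mathscr{C}^\infty(M)$, use the pointwise splitting above to write $\nabla f=\nabla^S(f|_S)+N(\nabla f)$ $\mu$-a.e.; the first term lies in $\{\nabla g:g\in\mathscr{C}^\infty(S)\}$ since $f|_S\in\mathscr{C}^\infty(S)$, and the second is a bounded (hence $L^2_\mu$) section of $NS$. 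Taking closures, every element of $\overline{\{\nabla f:f\in\mathscr{C}^\infty(M)\}}^{L^2_\mu}$ lies in the right-hand side.

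For the reverse inclusion $\supseteq$ there are two things to check. For the tangential part, every $g\in\mathscr{C}^\infty(S)$ extends (at least locally, using a tubular neighbourhood of $S$ in $M$ and a partition of unity, or globally if $S$ is closed in $M$) to some $\tilde g\in\mathscr{C}^\infty(M)$ with $\tilde g|_S=g$; then $\nabla^S g=N^\perp(\nabla\tilde g)$ where $N^\perp$ is tangential projection, and since $\mu$ is supported on $S$ one checks $\nabla\tilde g$ and $\nabla^S g$ differ by a normal field, so it suffices to approximate normal fields — reducing everything to showing $L^2(NS,\mu)\subseteq\overline{\{\nabla f\}}^{L^2_\mu}$ modulo the already-proven $\subseteq$ inclusion. (Alternatively, and more cleanly: combine the $\subseteq$ inclusion with the orthogonal-sum structure to see the right-hand side is the orthogonal complement, inside $\overline{\{\nabla f\}}\oplus L^2(NS,\mu)$, of nothing — so one only needs $L^2(NS,\mu)\subseteq\overline{\{\nabla f\}}$.) To get an arbitrary $w\in L^2(NS,\mu)$ as an $L^2_\mu$-limit of gradients, approximate $w$ first by smooth compactly supported normal fields, then realize a smooth normal field near $S$ as the gradient of a suitable function: in a tubular neighbourhood with normal coordinates, the field $w$ at $S$ is the value at the zero section of $\nabla f$ for $f(x):=g(w(\pi(x)),\exp^{-1}_{\pi(x)}(x))\chi(x)$ with $\pi$ the nearest-point projection and $\chi$ a cutoff, since $\dd f$ along normal directions at $S$ recovers $w$ while $\mu$ does not see the tangential error (it is supported on the zero section).

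\textbf{Main obstacle.} The delicate point is precisely this last realization of normal fields as limits of ambient gradients: one must control that the tangential components of the approximating gradients, which need not vanish off $S$, become $L^2_\mu$-negligible — this is exactly where $\supp(\mu)\subset S$ is used, but making it rigorous requires care with the tubular-neighbourhood construction and cutoffs, and with the possibility that $S$ is not closed in $M$ (so that globally defined extensions may fail and one must localize). I expect the orthogonality and the $\subseteq$ inclusion to be routine, and this density statement for the normal summand to be the real content of the proof.
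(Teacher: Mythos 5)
Your proposal is correct and follows essentially the same route as the paper's proof. You correctly identify that the only genuine content is the inclusion $L^2(NS,\mu)\subseteq\overline{\{\nabla f:f\in\mathscr{C}^\infty(M)\}}^{L^2_\mu}$, and your construction of the approximating function $f(x)=\langle w(\pi(x)),\exp^{-1}_{\pi(x)}(x)\rangle\chi(x)$ in a tubular neighbourhood is the same as the paper's $g(x)=\langle\nu(\pi(\varphi^{-1}(x))),\varphi^{-1}(x)\rangle$, with $\varphi=\exp:NS\to M$; the paper omits the cutoff and the explicit orthogonality discussion but the argument is the same.
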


\begin{proof}
Since $\mu$ is supported on $S$ this arises by separating tangential and normal components at $S$ of the gradients of functions on $M$.
The unique non trivial inclusion to justify is: $L^2(NS,\mu) \subseteq  \overline{\{\nabla f \colon f \in \mathscr{C}^{\infty}(M) \}}^{L^{2}_\mu}$. For this
we just have to prove that any smooth section $\nu \in \Gamma(S,NS)$ is the normal component at $S$ of the gradient of a smooth function defined around $S$. This can be easily done using the tubular neighborhood theorem. Identify $NS \cong TS^{\bot}$; the map
$$\varphi:NS \longrightarrow M, \quad v\longmapsto \operatorname{exp} v,$$
maps diffeomorphically a neighborhood of $S$ inside $NS$ to a neighborhood $U$ of $S$ in $M$. 
Moreover $d\varphi \rest{S}=\operatorname{Id}$. Let $\pi:NS \to S$ be the projection and
define the function $g(x):=\langle \nu( \pi(\varphi^{-1}(x))   ), \varphi^{-1}(x) \rangle,$ for $x\in U$.
Denote with $\nabla^{\bot}g$ the normal component of the gradient over $S$. This is computed by 
differentiating $g$ along curves in the form $t \mapsto \operatorname{exp}(tw)=\varphi(tw)$ for $w\in NS$. It follows immediately: $\nabla^{\bot}g=\nu$. 
\end{proof}

\section{Measure theory and algebraic geometry}\label{sec:Measures}
In this section we will define the correspondence between homogeneous polynomials and probability measures on their zero sets. 
We will begin by studing families of measures depending on a finite dimensional parameter space.

The setting we will consider is, roughly speaking, the following. We are given a compact Riemannian manifold $(M, g)$, a manifold $P$,
and a smooth map 
$\pi:M\to P$. If $p\in P$ is a regular value of $\pi$, then $\pi^{-1}(p)$ is a smooth, compact submanifold of $M$, which inherits a Riemannian structure with its (normalized) volume measure $\mu(p)\in \mathscr{P}(M)$ that equals the Hausdorff measure of its dimension. Some care has to be put if one wants to extend the map $p\mapsto \mu(p)$ to the whole $P$ (i.e. also to the set of critical values of $\pi$) in a nice way. In fact, even if $\mu:\{\textrm{regular values of $\pi$}\}\to \mathscr{P}(M)$ is continuous, it is not always possible to extend it to a continuous map without imposing additional conditions on $\pi$ (see the example in \cref{remark:notC}).

In the case $M$ and $P$ are complex manifolds of the same dimension and $\pi$ is holomorphic, the so-called ``finiteness'' of $\pi$ guarantees a continuous extension of $\mu$ (we discuss this in \cref{sec:familieszero}). In fact, by \cref{remark:notC}, the finiteness condition is also necessary for the continuous extension. The key ingredient here is the notion of \emph{multiplicity} from complex algebraic geometry.

In the case $\dim(M)>\dim(P)$, in general, the situation is more involved. However, when $M=\CP^n$, using integral geometry arguments, one can provide the continuous extension for interesting classes of maps, including the one parametrizing complex hypersurfaces of a given degree (see \cref{sec:familiesgeneral}).

%\comm{added this comment and the reference to Stoll}
In this framework, the problem of the continuous extension can also be treated directly using a variation of the arguments from  \cite{Stoll}, but we prefer to keep the exposition self--contained as the description of the extension using integral geometry will become useful in \cref{sec:abscont}.

\subsection{Families of zero-dimensional measures}\label{sec:familieszero}
We say that a holomorphic map $\pi:V\to P$ between smooth, complex, compact and connected manifolds of the same dimension is \emph{finite} if for every $p\in P$ we have $\#\pi^{-1}(p)<\infty.$ We denote by $\Sigma(\pi)\subset V$ the set of critical points of $\pi$, and by $\Delta(\pi):=\pi(\Sigma(\pi))$ the set of its critical values (the ``discriminant''). The degree $\deg(\pi)$ is defined as $\deg(\pi):=\#\pi^{-1}(p)$ for $p\in P\setminus \Delta(\pi)$ (the set of critical points is an analytic set of complex codimension one, therefore $P\setminus \Delta(\pi)$ is connected). For every $v\in V$ we denote by $\mathrm{mult}_v(\pi)$ the multiplicity of $\pi$ at $v$. The multiplicity satisfies
\beq \deg(\pi)=\sum_{v\in \pi^{-1}(p)}\mathrm{mult}_v(\pi), \quad \forall p\in P.\eeq We refer to \cite[Chapter 5.2]{GH} for elementary properties of finite maps. 
\begin{lemma}\label{lemma:cont}Let $\pi:V\to P$ be a finite map between smooth, compact, complex manifolds of the same dimension. Then the map $\alpha:P\to \mathscr{P}(V)$, defined by
\begin{equation} \alpha(p):=\frac{1}{\deg(\pi)}\sum_{v\in \pi^{-1}(p)}\mathrm{mult}_{v}(\pi) \delta_v,\end{equation}
is continuous.
\end{lemma}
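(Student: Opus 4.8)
The plan is to prove continuity of $\alpha$ at an arbitrary point $p_0 \in P$ by combining the structure theory of finite holomorphic maps with a test-function characterization of weak convergence of probability measures. Since $V$ is compact and metrizable, weak convergence in $\mathscr{P}(V)$ is metrized by, say, the Wasserstein or Lévy--Prokhorov distance, so it suffices to show that for every $f \in \mathscr{C}^0(V)$ the map $p \mapsto \int_V f \, d\alpha(p) = \frac{1}{\deg(\pi)}\sum_{v \in \pi^{-1}(p)} \mathrm{mult}_v(\pi)\, f(v)$ is continuous at $p_0$.

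First I would use the local normal form of a finite holomorphic map near each point of the fiber $\pi^{-1}(p_0) = \{v_1, \dots, v_k\}$ (with $\sum_i \mathrm{mult}_{v_i}(\pi) = \deg(\pi)$). Choose disjoint open neighborhoods $U_i \ni v_i$ in $V$. By properness of $\pi$ (automatic since $V$ is compact), there is a neighborhood $W$ of $p_0$ in $P$ such that $\pi^{-1}(W) \subset \bigsqcup_i U_i$; shrinking if necessary, we may assume $\pi(U_i) \subset W$ for each $i$, so that for $p \in W$ the fiber decomposes as $\pi^{-1}(p) = \bigsqcup_i (\pi^{-1}(p)\cap U_i)$. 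The key algebraic fact, from the theory of finite maps (e.g. \cite[Chapter 5.2]{GH}), is that for each $i$ and each $p \in W$ one has $\sum_{v \in \pi^{-1}(p)\cap U_i} \mathrm{mult}_v(\pi) = \mathrm{mult}_{v_i}(\pi)$: multiplicity is conserved under deformation of the fiber within a small neighborhood. (Concretely, in suitable local coordinates $\pi|_{U_i}$ looks like a branched cover $z \mapsto$ (elementary-symmetric-type map) of degree $\mathrm{mult}_{v_i}(\pi)$ onto its image, and the $\mathrm{mult}_{v_i}(\pi)$ preimages — counted with multiplicity — all lie in $U_i$ and converge to $v_i$ as $p \to p_0$.)

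Granting this, I would estimate, for $p \in W$ and any $f \in \mathscr{C}^0(V)$,
\[
\deg(\pi)\left|\int_V f\,d\alpha(p) - \int_V f\,d\alpha(p_0)\right|
= \left| \sum_{i=1}^k \left( \sum_{v \in \pi^{-1}(p)\cap U_i} \mathrm{mult}_v(\pi)\, f(v) - \mathrm{mult}_{v_i}(\pi)\, f(v_i)\right)\right|.
\]
Using $\sum_{v \in \pi^{-1}(p)\cap U_i}\mathrm{mult}_v(\pi) = \mathrm{mult}_{v_i}(\pi)$, the $i$-th summand equals $\sum_{v \in \pi^{-1}(p)\cap U_i}\mathrm{mult}_v(\pi)\,(f(v) - f(v_i))$, which is bounded by $\mathrm{mult}_{v_i}(\pi) \cdot \sup_{v \in U_i}|f(v) - f(v_i)|$. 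Given $\varepsilon > 0$, uniform continuity of $f$ on the compact set $V$ lets us choose the $U_i$ small enough that $\sup_{v \in U_i}|f(v) - f(v_i)| < \varepsilon$; then the whole expression is at most $\varepsilon \sum_i \mathrm{mult}_{v_i}(\pi) = \varepsilon \deg(\pi)$, giving $\left|\int f\,d\alpha(p) - \int f\,d\alpha(p_0)\right| < \varepsilon$ for all $p \in W$. This proves $\alpha(p) \rightharpoonup \alpha(p_0)$ as $p \to p_0$, i.e. continuity of $\alpha$.

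The main obstacle is the conservation-of-multiplicity statement: showing that the total multiplicity of $\pi$ over the part of a nearby fiber lying in a fixed small neighborhood $U_i$ of $v_i$ equals $\mathrm{mult}_{v_i}(\pi)$. This is where the hypothesis that $V$ and $P$ have the \emph{same} dimension and $\pi$ is finite is essential — it is exactly the semicontinuity/invariance of intersection multiplicity under small perturbations, and it can be established either via the local description of finite flat maps (the pushforward $\pi_* \mathcal{O}_V$ is locally free and the fiber dimension $\dim_{\mathbb{C}} \mathcal{O}_{V,v}/\mathfrak{m}_p\mathcal{O}_{V,v}$ summed over the fiber is locally constant) or, more elementarily, by the argument-principle / resultant computation in the Weierstrass-polynomial normal form of $\pi$ near $v_i$, as in \cite[Chapter 5.2]{GH}. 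Once this is in hand, everything else is the routine weak-convergence estimate above.
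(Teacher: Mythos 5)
Your proof is correct, and it reaches the conclusion by a route that differs from the paper's in one substantive respect. The paper also reduces to testing against $f\in\mathscr{C}^0(V)$ and uses the same grouping of a nearby fiber into clusters around the points $v_1,\dots,v_k$ of $\pi^{-1}(p_0)$, but it only establishes the key estimate for sequences of \emph{regular} values $p_n\in P\setminus\Delta(\pi)$, where each cluster consists of exactly $\mathrm{mult}_{v_i}(\pi)$ simple preimages; the case of arbitrary $p_n$ is then handled by a second step, approximating each $p_n$ by regular values and concluding with a triangle inequality. You instead invoke the stronger local fact that $\sum_{v\in\pi^{-1}(p)\cap U_i}\mathrm{mult}_v(\pi)=\mathrm{mult}_{v_i}(\pi)$ for \emph{all} $p$ in a neighborhood of $p_0$, not just regular ones, which lets you run the uniform-continuity estimate in one pass and yields continuity (not merely sequential continuity) directly. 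That conservation statement is genuinely the crux, as you note: it amounts to the local constancy of $\sum_{v}\dim_\C\mathcal{O}_{V,v}/\mathfrak m_p\mathcal{O}_{V,v}$, i.e.\ to flatness of a finite map between smooth equidimensional complex manifolds (or, more concretely, to the argument-principle count in a Weierstrass normal form), and either justification you sketch is standard and adequate. The trade-off is that the paper's two-step argument needs only the weaker characterization of $\mathrm{mult}_v(\pi)$ as the number of preimages of a generic nearby regular value, while yours front-loads a stronger (but still classical) input in exchange for a shorter and more uniform estimate. One small point of hygiene: after shrinking the $U_i$ to arrange $\pi(U_i)\subset W$ you should re-shrink $W$ (or simply replace $U_i$ by $U_i\cap\pi^{-1}(W)$) so that $\pi^{-1}(W)\subset\bigsqcup_i U_i$ is preserved; this is routine and does not affect the argument.
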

\begin{proof}The topology on $\mathscr{P}(V)$ is metrizable and it is enough to prove sequential continuity of $\alpha$: given $\{p_n\}_{n\in \mathbb{N}}\subset P$ converging to $p\in P$, the sequence $\{\alpha(p_n)\}_{n\in \mathbb{N}}$ converges to $\alpha(p)$. In other words, we need to prove that $\forall f\in \mathscr{C}^0(V)$
\beq \label{eq:lim1}\lim_{n\to \infty}\frac{1}{\deg(\pi)}\sum_{v\in \pi^{-1}(p_n)}\mathrm{mult}_{v}(\pi)f(v)=\frac{1}{\deg(\pi)}\sum_{v\in \pi^{-1}(p)}\mathrm{mult}_{v}(\pi) f(v)\eeq

Let us consider first the case $\{p_n\}_{n\in \N}\subset P\setminus \Delta(\pi).$ Let $p=\lim_n p_n$ and set $\pi^{-1}(p)=\{v_1, \ldots, v_r\}$, with $m_j:=\mathrm{mult}_{v_j}(\pi)$. For every $\epsilon>0$ there exists an open neighborhood $U$ of $p$ such that $\pi^{-1}(U)=V_1\sqcup\cdots \sqcup V_r$ and for every $j=1, \ldots, r$ and for every $v\in V_j$
\beq |f(v)-f(v_j)|\leq \epsilon.\eeq
For every $n\in \N$ we denote by 
$v_{n, j}^1, \ldots, v_{n, j}^{m_j}$ the (distinct!) preimages of $p_n$ in $V_j$. Notice in particular that, since $p_n\notin \Delta(\pi)$, we have $\mathrm{mult}_{v_{n,j}^k}(\pi)\equiv1$. Then
\begin{align}
\left| \sum_{v\in \pi^{-1}(p_n)}\mathrm{mult}_v(\pi)f(v)-\sum_{v\in \pi^{-1}(p)}\mathrm{mult}_v(\pi)f(v)\right|&=
{
\left|\sum_{j=1}^r\left[ \left(\sum_{k=1}^{m_j}f(v_{n,j}^k)\right)-m_j f(v_j)\right]\right|}\\
&=\left|\sum_{j=1}^r\sum_{k=1}^{m_j}\left(f(v_{n,j}^k)-f(v_j)\right)\right|\\
&\leq  \deg(\pi)\epsilon.
\end{align}
This proves \eqref{eq:lim1} in the case $\{p_n\}_{n\in \N}\subset P\setminus \Delta(\pi).$ For the general case we argue as follows. For every $n\in \N$ let $\{p_{n, m}\}_{m\in \N}\subset P\setminus \Delta(\pi)$ be a sequence such that $\lim_{m}p_{n,m}=p_n$. For every $\epsilon>0$ let $p_{n, m_\epsilon}=:\widetilde p_n$ such that $\lim_n \widetilde p_n=p$ and 
\beq \left|\sum_{v\in \pi^{-1}(\widetilde p_n)}f(v)-\sum_{v\in \pi^{-1}(p_n)}\mathrm{mult}_v(\pi)f(v)\right|\leq \epsilon.\eeq
Then:
\begin{align} \left| \sum_{v\in \pi^{-1}(p_n)}\mathrm{mult}_v(\pi)f(v)-\sum_{v\in \pi^{-1}(p)}\mathrm{mult}_v(\pi)f(v)\right|\leq& \bigg| \sum_{v\in \pi^{-1}(\widetilde p_n)}\mathrm{mult}_v(\pi)f(v)+\\
&-\sum_{v\in \pi^{-1}(p_n)}\mathrm{mult}_v(\pi)f(v)\bigg| +\\
&+\bigg| \sum_{v\in \pi^{-1}(p_n)}\mathrm{mult}_v(\pi)f(v)+\\
&-\sum_{v\in \pi^{-1}(p)}\mathrm{mult}_v(\pi)f(v)\bigg|.\end{align}
By the previous part of the proof, both summands go to zero as $n\to\infty$, and this concludes the proof. 
\end{proof}
 
 \subsection{Polynomials in one variable}We will now focus on the case of the family of measures given by the normalized sum of the delta measures centered at the zeroes of a polynomial. Our ``parameter'' space will be the (projectivization of the) complex vector space $\C[z_0, z_1]_{(d)}$ of homogeneous polynomials of degree $d$ in two variables.  
 
 Every nonzero homogeneous polynomial $p\in \C[z_0, z_1]_{(d)}$ can be written as
 \beq\label{eq:factorhomo} p(z_0, z_1)=\prod_{j=1}^r(\lambda_{j,1} z_0-\lambda_{j, 0}z_1)^{m_j},\eeq
 where $[\lambda_{j,0},\lambda_{j,1}]$, $j=1, \ldots, r$ are distinct points in $\CP^1$ and $m_1+\cdots +m_r=d$.
 
 With this notation, if $w=[w_0, w_1]\in \CP^1$, we denote by $m_w(p)$ the \emph{multiplicity} of $p$ at $w$. In other words, $m_w(p)=m_j$, if $w=[\lambda_{j,0}, \lambda_{j,1}]$ for some $j=1, \ldots, r$, and $m_w(p)=0$ otherwise. In particular, $m_w(p)\neq 0$ only if $w$ is a zero of $p$, in which case it is the multiplicity of it as a projective zero of $p$.
For a nonzero polynomial $p\in \C[z_0, z_1]_{(d)}$  we denote by $Z(p)\subset \CP^1$ the set of its zeroes, i.e., writing $p$ as in \eqref{eq:factorhomo}, we set 
\beq Z(p):=\{[\lambda_{j,0}, \lambda_{j,1}]\in \CP^1 , j=1, \ldots, r\}.\eeq

\begin{remark}[Homogeneous versus non-homogeneous polynomials]

The reader might prefer to work with polynomials in one variable of degree \emph{at most} $d$, whose zero set lives in the complex line $\C$. Every monic polynomial of the form
 \beq q(z)=z^d+a_{d-1}z^{d-1}+\cdots +a_1 z+a_0\eeq
 can be written (by the Fundamental Theorem of Algebra) as
 \beq q(z)=\prod_{j=1}^d(z-\lambda_j),\eeq
 for some numbers $\lambda_1, \ldots, \lambda_d\in \C$ (the zeroes of $q$). Homogenizing $q$ we get the polynomial
 \beq p(z_0, z_1)=z_1^d+a_{d-1}z_0z_1^{d-1}+\cdots +a_1 z_0^{d-1}z_1+a_0 z_0^d,\eeq
 whose zeroes in $\CP^1$ are $[1, \lambda_1], \ldots, [1, \lambda_d]$. Notice that $p$ can be written as $p(z_0, z_1)=\prod_{j=1}^d(z_1-\lambda_j z_0)$. The advantage of working with homogenous polynomials is that their zeroes live in a compact space (the projective line). In the case of homogeneous polynomials  in two variables it is still safe to think at their dehomogenization and look at their zeroes in $\C$ (notice, though, that if we start with a polynomial with $a_d=0$ some zeroes are lost in this procedure). However, for our purposes, when moving to more variables the reader should really think in terms of projective zeroes and pay attention to the fact that the set of zeroes of a complex polynomial in $\C^n$, $n\geq 2$, is not compact (see also \cref{sec:homosec}).
 \end{remark}

From \cref{lemma:cont} we deduce the following corollary (this also proved in \cite[Section 4.3.12]{federer}).

\begin{corollary}\label{corollary:CP1} Let $\CP^d$ be the projectivization of $\C[z_0, z_1]_{(d)}\simeq \C^{d+1}$. The map $\mu:\CP^d\to \mathscr{P}(\CP^1)$ defined by
\beq\label{eq:map1} \mu(p):=\frac{1}{d}\sum_{z\in Z(p)}m_{z}(p)\delta_z\eeq
is continuous.
\end{corollary}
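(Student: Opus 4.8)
The plan is to deduce \cref{corollary:CP1} directly from \cref{lemma:cont} by exhibiting the data $(V,P,\pi)$ to which the lemma applies. First I would set $P:=\CP^d$, the projectivization of $\C[z_0,z_1]_{(d)}$, and take $V$ to be the incidence variety
$$
V:=\bigl\{\,([p],w)\in \CP^d\times \CP^1 \;:\; p(w)=0\,\bigr\},
$$
with $\pi:=\mathrm{pr}_1:V\to \CP^d$ the projection onto the first factor and $\rho:=\mathrm{pr}_2:V\to\CP^1$ the projection onto the second. One checks that $V$ is a smooth complex compact manifold: for each fixed $w\in\CP^1$ the condition $p(w)=0$ is a single nonzero linear condition on the coefficients of $p$, so the fiber $\rho^{-1}(w)$ is a hyperplane $\CP^{d-1}$, exhibiting $V$ as a $\CP^{d-1}$-bundle over $\CP^1$; in particular $\dim_{\C}V=d=\dim_{\C}\CP^d$. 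The map $\pi$ is visibly holomorphic, and it is finite: for $p\neq 0$ the set $\pi^{-1}([p])=\{([p],w):w\in Z(p)\}$ is finite since a nonzero homogeneous polynomial of degree $d$ in two variables has at most $d$ distinct projective roots.

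Next I would identify the multiplicity and the degree. The degree of $\pi$ is computed on the complement of the discriminant: if $p$ has $d$ distinct simple roots then $\#\pi^{-1}([p])=d$, so $\deg(\pi)=d$. For the multiplicity, I would argue that $\mathrm{mult}_{([p],w)}(\pi)=m_w(p)$, the multiplicity of $w$ as a root of $p$ in the sense of \eqref{eq:factorhomo}. This is the one genuinely non-formal point: one must check that the local intersection multiplicity of $\pi$ at the point $([p],w)$ agrees with the algebraic root multiplicity. The cleanest way is a local coordinate computation — dehomogenize so that $w$ corresponds to $z\in\C$, write $p_z$ for the dehomogenized family, and observe that near $([p],w)$ the map $\pi$ looks like the germ $(a,z)\mapsto a$ on the hypersurface $\{p_a(z)=0\}$; since $p$ vanishes to order $m_w(p)$ at $z=w$, a generic small perturbation $a$ of the coefficients splits this zero into exactly $m_w(p)$ nearby simple zeros, which is precisely the statement that $\mathrm{mult}=m_w(p)$. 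Alternatively one can invoke the standard fact (e.g. \cite[Chapter 5.2]{GH}) that the number of preimages near $([p],w)$, counted for a generic nearby value, equals this multiplicity, together with the identity $\deg(\pi)=\sum_{v\in\pi^{-1}([p])}\mathrm{mult}_v(\pi)=d=\sum_{w\in Z(p)}m_w(p)$ which forces the per-point equality.

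Finally I would conclude: \cref{lemma:cont} applied to $\pi:V\to P$ gives that the map $\alpha:\CP^d\to\mathscr{P}(V)$, $\alpha([p])=\frac1d\sum_{v\in\pi^{-1}([p])}\mathrm{mult}_v(\pi)\,\delta_v$, is continuous. Pushing forward under the holomorphic (hence continuous) map $\rho:V\to\CP^1$, and using that $\rho$ restricted to $\pi^{-1}([p])$ is the bijection $([p],w)\mapsto w$ onto $Z(p)$, we get $\rho_\#\alpha([p])=\frac1d\sum_{w\in Z(p)}m_w(p)\,\delta_w=\mu([p])$; since $\rho_\#:\mathscr{P}(V)\to\mathscr{P}(\CP^1)$ is continuous (the pushforward under a continuous map between compact metric spaces is weakly continuous), $\mu=\rho_\#\circ\alpha$ is continuous, as claimed. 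I expect the main obstacle to be the verification that $V$ is smooth of the right dimension and that the analytic multiplicity of $\pi$ coincides with the root multiplicity $m_w(p)$; everything else is bookkeeping around \cref{lemma:cont} and functoriality of pushforward.
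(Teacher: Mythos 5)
Your proposal is correct and follows essentially the same route as the paper: both set up the incidence variety $V\subset\CP^d\times\CP^1$, apply \cref{lemma:cont} to the finite holomorphic map $\pi_1:V\to\CP^d$, and write $\mu=(\pi_2)_\#\circ\alpha$. The paper's proof is terser (it takes for granted that $V$ is smooth and that $\mathrm{mult}_{(p,w)}(\pi_1)=m_w(p)$), so your extra verification of these two points is a welcome fleshing-out of the same argument rather than a different one.
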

\begin{proof}Let $P=\CP^d$ and consider the smooth algebraic set $V=\{(p,z)\in P\times \CP^1\,|\, p(z)=0\}$ and the diagram:
\beq\begin{tikzcd}
  & V \arrow[ld, "\pi_1"'] \arrow[rd, "\pi_2"] &       \\
P &                                            & \CP^1
\end{tikzcd}\eeq
where $\pi_1$ and $\pi_2$ are the restrictions of the projections on the two factors. The map $\pi_1:V\to P$ is a finite map between smooth, compact, complex manifolds and $\deg(\pi_1)=d.$ Moreover, using the notation of \cref{lemma:cont}, we see that $\mu(p)=(\pi_2)_{\#}\alpha(p)$. Since $\pi_2$ is continuous, the continuity of $\mu$ follows from \cref{lemma:cont}.
\end{proof}

\subsection{Families of hypersurfaces}\label{sec:familiesgeneral} We now generalize  the previous construction to homogeneous polynomials of more then two variables, whose zero sets are hypersurfaces in the projective space.

For a smooth Riemannian manifold $(M, g)$, we denote by $\mathrm{vol}_M$ the corresponding Riemannian volume measure (hence omitting the dependence on the Riemannian metric $g$ in the notation). In the case $M=\CP^n$, we denote by $g_{\mathrm{FS}}$ the Fubini--Study metric (see \cref{Appendix A}). 

\begin{definition}[The space of polynomials and the discriminant] For $n, d\in \N$, we denote by 
$$H_{n,d}:=\C[z_0, \ldots, z_n]_{(d)}$$
the space of homogeneous polynomials of degree $d$, a vector space of complex dimension $N+1={n+d\choose d}$. We also consider the \emph{discriminant} $D_{n,d}\subset H_{nd}$ defined by
$$D_{n,d}:=\left\{p\in H_{n,d}\,\bigg|\, \exists z\in \C^{n+1}\setminus \{0\},\, p(z)=\frac{\partial p}{\partial z_0}(z)=\cdots=\frac{\partial p}{\partial z_n}(z)=0\right\},$$
i.e. the set of polynomials $p$ such that the equation $p=0$ is not regular in $\CP^n$. 
Notice that the discriminant is an algebraic set in $H_{n,d}$ defined by a polynomial of degree $(n+1)(d-1)^{n-1}.$
We also denote by $P_{n,d}$  and $\Delta_{n,d}$ the projectivization of $H_{n,d}$ and $D_{n,d}$: 
$$P_{n,d}:=\mathrm{P}(H_{n,d})\simeq \CP^N\quad \textrm{and}\quad \Delta_{n,d}:=\mathrm{P}(D_{n,d}).$$
Notice that, given a point $[p]\in P_{n,d}$, its projective zero set
$$Z(p):=\{[x]\in \CP^n\,|\, p(x)=0\}$$ is well-defined  (for this reason we sometimes simply write $p$ for $[p]$).
\end{definition}

Let us make a couple of comments on the previous definitions. If $p\in P_{n,d}$, regardless of it being on the discriminant or not, the set $Z(p)\subset \CP^n$ is a (possibly singular) compact, complex hypersurface, which has Hausdorff dimension $2n-2$. If $p\in P_{n,d}\setminus \Delta_{n,d}$, then the set $Z(p)$ is a smooth complex manifold. If $p\in \Delta_{n,d}$ then, as a subset of $\CP^n$, $Z(p)$ can still be smooth. For example, if $p\in P_{n,d}\setminus \Delta_{n,d}$, then $p^2\in \Delta_{n,2d}$, but $Z(p^2)=Z(p)$, as a set, is a smooth complex submanifold. In order to emphasize this phenomenon, we will say that $Z(p)$ is \emph{regular} if $p\in P_{n,d}\setminus \Delta_{n,d}$ and \emph{singular} otherwise (thus being ``regular'' has to do with the equation defining a set, and not with the set itself). If $p\in \Delta_{n,d}$ we will denote by $Z(p)^{\mathrm{sing}}$ the set
$$Z(p)^{\mathrm{sing}}=\left\{[z]\in Z(p)\,\bigg |\, \frac{\partial p}{\partial z_0}(z)=\cdots=\frac{\partial p}{\partial z_n}(z)=0\right\}.$$
It can be that $Z(p)^{\mathrm{sing}}=Z(p)$ (for instance, this happens again for the polynomials of the form $p^2$). On the other hand, if $p\in P_{n,d}$ is irreducible, then $Z(p)^\mathrm{sing}$ is a proper algebraic subset of $Z(p)$, therefore of complex codimension one in it, and 
$$Z(p)^{\mathrm{reg}}:=Z(p)\setminus Z(p)^{\mathrm{sing}}$$
 is a smooth submanifold of $\CP^n$. 

In the rest we will often use \emph{semialgebraic} objects, whose definition we recall now. For more details the reader is referred to \cite{BCR}.

\begin{definition}[Semialgebraic sets and functions]\label{def:semialg} A semialgebraic set $S\subseteq \R^n$ is a set obtained by taking finite unions, finite intersections of sets of the form $\{p\leq 0\}$ and their complements,
where $p\in \R[x_1, \ldots, x_n]$ is a polynomial. A function $f:A\to B$, where $A\subset \R^n, B\subset \R^m$ are semialgebraic, is said to be semialgebraic if its graph $\Gamma(f)\subset A\times B\subset \R^{n+m}$ is a semialgebraic set.
\end{definition}

By \cite[Proposition 2.4.6]{BCR}, there exists a smooth, real algebraic embedding $\varphi:\CP^n\to \R^\ell$, for some $\ell\in \mathbb{N}$ sufficiently large. Therefore we can view $\CP^n$ as a semialgebraic set itself. Since complex algebraic set are in particular real algebraic sets (every complex equation corresponds to a pair of algebraic equations), every complex algebraic set $Z\subseteq \CP^n$ is therefore semialgebraic. Therefore we can stratify $Z$ as a disjoint union 
\beq \label{eq:strata}Z=\bigsqcup_{j=1}^s Z_j,\eeq
where each $Z_j\subset \CP^n$ is a smooth semialgebraic set (see \cite[Theorem 9.1.8]{BCR}).

\begin{definition}[Volume of a complex algebraic set]\label{def:volume} Let $Z\subset \CP^n$ be a complex algebraic set, stratified as \eqref{eq:strata}. Each stratum inherits a Riemannian metric $g_{\mathrm{FS}}|_{Z_j}$ and hence a volume measure $\mathrm{vol}_{Z_j}$.  We define the volume measure $\mathrm{vol}_Z\in \mathscr{P}(\CP^n)$ as follows. For a continuous function $f:Z\to \R$ we set
\beq \int_{Z}f(z)\mathrm{vol}_{Z}(\dd z):=\sum_{\dim(Z_j)= 2n-2}\int_{Z_j}  f(z)\mathrm{vol}_{Z_j}(\dd z).\eeq
Since any two semialgebraic stratifications can be commonly refined (see \cite[Theorem 9.1.8]{BCR}), this definition does not depend on the choice of the stratification from \eqref{eq:strata}.
We denote by $\mathrm{vol}(Z)$ the total mass of $\mathrm{vol}_Z$.
\end{definition}

In the case $p\in P_{n,d}\setminus \Delta_{n,d}$ (i.e. if the hypersurface $Z(p)\subset \CP^n$ is  \emph{regular}), then for every Borel set $A\subseteq Z(p)$, 
\beq \mathrm{vol}_{Z(p)}(A)=\frac{1}{(n-1)!}\int_{A}\omega_{\mathrm{FS}}^{n-1},\eeq
where $\omega_{\mathrm{FS}}$ denotes the K\"ahler form of $\CP^n$ (see \cref{Appendix A}).

We observe that \cref{def:volume} does not produce a continuous map in the Wasserstein topology, i.e. the correspondence $p\mapsto \frac{1}{\mathrm{vol}(Z(p))}\mathrm{vol}_{Z(p)}$ is not continuous on $\Delta_{n,d}$. 
As we have seen in \cref{corollary:CP1}, a continuous extension on $\Delta_{1,d}$ should take into account the multiplicities of zeroes. In the general case we proceed as follows. First, in \cref{lemma:dis}, we give an integral geometric formulation of the volume, for smooth submanifolds of $\CP^n$. Then, in \cref{thm:ext}, we use this formulation to extend the map $\mu$ to $\Delta_{n,d}$.

We denote by $\mathbb{G}(1,n)$ the Grassmannian of projective lines in $\CP^n$ (notice that $\mathbb{G}(1, n)$ is isomorphic to $G(2, n+1),$ the Grassmannian of complex $2$--planes in $\C^{n+1}$). There is a natural action of $U(n+1)$ on $\mathbb{G}(1,n)$, and we endow $\mathbb{G}(1,n)$ with a Riemannian metric which is invariant under this action, normalized such that $\mathrm{vol}_{\mathbb{G}(1,n)}(\mathbb{G}(1,n))=1.$  

\begin{lemma}\label{lemma:dis}Let $f\in \mathscr{C}^0(\CP^n)$ and $Z\subset \CP^n$ be a complex submanifold of complex dimension $n-1$. Then for every $U\subseteq Z$ open we have
\beq\label{eq:ig} \int_{U}f(z) \mathrm{vol}_{Z}(\dd z)=\mathrm{vol}(\CP^{n-1})\int_{\mathbb{G}(1, n)}\left(\sum_{z\in U\cap \ell}f(z)\right)\mathrm{vol}_{\mathbb{G}(1,n)}(\dd \ell),\eeq
the integrand being a.e. finite.
In particular, if $p\in P_{n,d}\setminus \Delta_{n,d}$, then $\mathrm{vol}_{Z(p)}(Z(p))=d\cdot \mathrm{vol}(\CP^{n-1}).$
\end{lemma}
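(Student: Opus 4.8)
The plan is to prove this via the complex Crofton / integral geometry formula, which expresses the volume of a complex submanifold of $\CP^n$ as an average over the Grassmannian of lines of the number of intersection points. First I would set up the local picture: fix a point $z_0\in Z$ and work in an affine chart, so that $Z$ is locally the graph of a holomorphic map; the Fubini--Study volume form on $Z$ is then $\frac{1}{(n-1)!}\omega_{\mathrm{FS}}^{n-1}$ restricted to $Z$, which in turn is the Wirtinger form of the induced Hermitian metric. The key fact I would invoke is that, pointwise, the $(2n-2)$--volume density of a complex $(n-1)$--plane $\Pi \subset T_z\CP^n$ equals $\mathrm{vol}(\CP^{n-1})^{-1}$ times the measure of the set of lines $\ell$ meeting $\Pi$ transversally ``through $z$'' — this is the infinitesimal form of \eqref{eq:ig} and is where the normalization constant $\mathrm{vol}(\CP^{n-1})$ enters. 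Equivalently, one applies the coarea/Crofton formula for the incidence variety $\{(z,\ell)\,:\,z\in Z\cap\ell\}\subset Z\times\mathbb{G}(1,n)$ with its two projections.

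The steps, in order, would be: (1) Reduce to $U$ with compact closure contained in a single chart and in the smooth locus, then use a partition of unity to globalize; the right-hand side of \eqref{eq:ig} is additive in $U$, so this reduction is harmless. (2) Consider the incidence correspondence $\mathcal{I}=\{(z,\ell)\in U\times\mathbb{G}(1,n)\,:\,z\in\ell\}$, with projections $q_1:\mathcal{I}\to U$ and $q_2:\mathcal{I}\to\mathbb{G}(1,n)$. The fiber of $q_1$ over $z$ is $\mathbb{G}(0,n-1)\cong\CP^{n-1}$ (the lines through $z$), a homogeneous space under the stabilizer of $z$ in $U(n+1)$. (3) Compute the Jacobian of $q_2$ at a point $(z,\ell)$ with $z\in Z$: since $Z$ has complex dimension $n-1$ and is transverse to a generic line, $q_2$ restricted to $q_1^{-1}(U)\cap (Z\times\mathbb{G}(1,n))$ is a submersion onto an open subset of $\mathbb{G}(1,n)$, and its Jacobian is (up to the universal constant) the volume density of $T_zZ$ as a complex hyperplane. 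This is the computation that produces the factor $\mathrm{vol}(\CP^{n-1})$: integrating the constant $q_1$--fiber volume form against the unit-normalized measure on $\mathbb{G}(1,n)$ forces that constant to be $\mathrm{vol}(\CP^{n-1})$ by taking $Z$ a linear $\CP^{n-1}$, where both sides are computed directly. (4) Apply the coarea formula twice (once along $q_1$, once along $q_2$) to $\mathcal{I}$ restricted over $Z$: pushing forward by $q_1$ recovers $\int_U f\,\mathrm{vol}_Z$ times the constant volume of the $\CP^{n-1}$--fiber, while pushing forward by $q_2$ gives $\mathrm{vol}(\CP^{n-1})\int_{\mathbb{G}(1,n)}\big(\sum_{z\in U\cap\ell}f(z)\big)\,\mathrm{vol}_{\mathbb{G}(1,n)}$; equating them gives \eqref{eq:ig}. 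Almost every line meets $Z$ in finitely many points because $Z\cap\ell$ is a proper analytic subset of $\ell\cong\CP^1$ for $\ell$ outside a measure-zero set (those contained in $Z$), so the integrand is a.e. finite. (5) For the ``in particular'': take $Z=Z(p)$ with $p\notin\Delta_{n,d}$ and $f\equiv 1$, $U=Z(p)$; then $\sum_{z\in Z(p)\cap\ell}1 = \deg(p)=d$ for every $\ell$ not contained in $Z(p)$ (B\'ezout, counting multiplicities, which for $\ell\not\subset Z(p)$ and generic $\ell$ are all $1$ — but the count with multiplicity is always $d$), and since no line lies on a smooth hypersurface of degree $d\geq 1$ when... actually a smooth hypersurface can contain lines, but the set of such lines has measure zero in $\mathbb{G}(1,n)$, so $\mathrm{vol}_{Z(p)}(Z(p))=\mathrm{vol}(\CP^{n-1})\cdot d$.

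The main obstacle I anticipate is step (3): pinning down the Jacobian factor and the normalization constant cleanly. The cleanest route is probably not to compute Jacobians by hand but to argue by homogeneity — both sides of \eqref{eq:ig} for $f\equiv 1$ define $U(n+1)$--invariant measures on the space of complex hyperplanes' worth of data, so the identity must hold up to a universal constant depending only on $n$, and that constant is then fixed by evaluating on a single linear example $Z=\CP^{n-1}\subset\CP^n$, where $\mathrm{vol}_Z(Z)=\mathrm{vol}(\CP^{n-1})$ and a generic line meets it in exactly one point, giving the constant $=\mathrm{vol}(\CP^{n-1})$. The other subtlety, worth a sentence, is measurability and the a.e.-finiteness of $\ell\mapsto\#(U\cap\ell)$, which follows since the ``bad'' lines (contained in the Zariski closure of $Z$, or tangent to it along a positive-dimensional set) form a proper subvariety of $\mathbb{G}(1,n)$, hence are $\mathrm{vol}_{\mathbb{G}(1,n)}$--null. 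Alternatively, one can cite the general complex integral-geometry formula (e.g. as in Federer's work, or Stoll's, referenced in the paper) and simply identify the constant; I would present the homogeneity argument as the self-contained option since the paper values self-containedness here.
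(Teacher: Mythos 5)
Your approach is correct, and it differs from the paper's at two places. The paper \emph{cites} the complex kinematic formula (from Howard's integral geometry) to get the identity for volumes of open sets $A\subseteq Z$: $\int_{U(n+1)}\#(A\cap gB)\,\dd g=\mathrm{vol}_Z(A)/\mathrm{vol}(\CP^{n-1})$ with $B=\CP^1$, then converts the $U(n+1)$-integral to a $\mathbb{G}(1,n)$-integral via the Riemannian submersion $g\mapsto g\ell_0$ and the coarea formula, and finally upgrades from volumes of open sets to integrals of continuous $f$ by a Riesz-representation-plus-approximation argument (defining the functional $\lambda(f)$, getting a Radon measure $\nu$, and checking $\nu(U)=\mathrm{vol}_Z(U)$ on open sets). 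You instead propose to \emph{prove} the Crofton identity from scratch by running the coarea formula on the incidence correspondence $\mathcal I\subset Z\times\mathbb G(1,n)$ with the test function $f$ already in place, which lets you skip the Riesz step entirely; the price is that you must control the coarea Jacobians of the two projections $q_1,q_2$, and your homogeneity argument (both Jacobian densities are invariant under the $U(n)$-action on complex $(n-1)$-planes in $T_z\CP^n$, which is transitive, so they are proportional; fix the constant on $Z=\CP^{n-1}$) is a clean and standard way to do that without an explicit computation. The two routes buy different things: the paper's is shorter modulo the citation to Howard and avoids Jacobian bookkeeping, while yours is genuinely self-contained and produces the formula directly for $f\in\mathscr C^0$. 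Your handling of a.e.-finiteness (lines contained in $Z$ form a proper subvariety of $\mathbb G(1,n)$, hence measure zero) and of the ``in particular'' via B\'ezout matches the paper's. One sentence worth tightening if you write this up: in step (4), the identity you need between the two fiber integrals requires the pointwise relation $J(q_1)=\mathrm{vol}(\CP^{n-1})\cdot J(q_2)$ on $\mathcal I$ over $Z$, and it is exactly this relation (not merely the global equality for $f\equiv 1$) that the homogeneity argument must deliver; as you note, since the ratio $J(q_1)/J(q_2)$ at $(z,\ell)$ depends only on the $U(n)$-orbit of $T_zZ$ and that action is transitive on complex hyperplanes, the ratio is constant, which is precisely what is needed.
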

\begin{proof}Let us recall first the kinematic formula in complex projective spaces from \cite[Example 3.12 (b)]{Howard}. Given two open subsets $A\subseteq Z, B\subseteq L$ of complex submanifolds $Z, L$ of complementary dimensions in $\CP^n$, $\dim_\C (Z)=a$, $\dim_\C(L)=b$, we have:
\beq \label{eq:igfc}\int_{U(n+1)}\#\left(A\cap gB\right) \mathrm{vol}_{U(n+1)}(\dd g)=\frac{\mathrm{vol}_Z(A)}{\mathrm{vol}_{\CP^a}(\CP^a)}\frac{\mathrm{vol}_L(B)}{\mathrm{vol}_{\CP^b}(\CP^b)},\eeq 
where ``$\dd g$'' denotes the integration with respect to the normalized Haar measure $\int_{U(n+1)} \dd g=1$ (which is obtained as the volume form of a left--invariant Riemannian metric on $U(n+1)$).
Applying this to the special case  $B=\CP^1$, 
we get
\beq \label{eq:igfc2}\int_{U(n+1)}\#\left(A\cap gB\right) \mathrm{vol}_{U(n+1)}(\dd g)=\frac{\mathrm{vol}_Z(A)}{\mathrm{vol}_{\CP^{n-1}}(\CP^{n-1})}.\eeq 
In the case $A = Z=Z(p)$ is regular,
the integrand is equal 
to $d$ for a.e. $g$ (see \cite{GH}) and
\beq \label{eq:igf3}d=\frac{\mathrm{vol}_Z(Z)}{\mathrm{vol}_{\CP^{n-1}}(\CP^{n-1})},\eeq 
proving the second part of the statement.

Observe now that the map $U(n+1)\to \mathbb{G}(1,n)$ given by $g\mapsto g( \ell_0)$, where $\ell_0\in \mathbb{G}(1, n)$ is a fixed line, is a Riemannian submersion. Therefore, in the case $A\subseteq Z$ is open and $B=\CP^1$, using the smooth coarea formula, \eqref{eq:igfc} can be written as
\beq\label{eq:igf2} \int_{\mathbb{G}(1,n)}\#(A\cap\ell)\mathrm{vol}_{\mathbb{G}(1,n)}(\dd \ell)=\frac{\mathrm{vol}_Z(A)}{\mathrm{vol}_{\CP^{n-1}}(\CP^{n-1})}.\eeq
Consider now the linear functional $\lambda:\mathscr{C}^0(Z)\to \R$ defined by:
\beq f\mapsto \mathrm{vol}_{\CP^{n-1}}(\CP^{n-1})\int_{\mathbb{G}(1, n)}\left(\sum_{z\in Z\cap \ell}f(z)\right)\mathrm{vol}_{\mathbb{G}(1,n)}(\dd \ell).\eeq
This is a continuous and positive linear functional on $\mathscr{C}^0(Z)$, therefore by Riesz's Representation Theorem \cite[Theorem 2.14]{Rudin}, there exists a unique nonnegative Radon measure $\nu$ such that for every $f:Z\to \R$ Borel measurable:
\beq \lambda(f)=\int_{Z}f(z)\nu(\dd z).\eeq
Let now $U$ be an open set of $Z$ and $f=\chi_U$ be its characteristic function. Approximating $f$ with continuous functions, we see that
\begin{align} \nu(U)&=\lambda(f)={\mathrm{vol}_{\mathbb{C}P^{n-1}}(\mathbb{C}P^{n-1})}\int_{\mathbb{G}(1, n)}\left(\sum_{z\in Z\cap \ell}f(z)\right)\mathrm{vol}_{\mathbb{G}(1,n)}(\dd \ell)\\
&={\mathrm{vol}_{\mathbb{C}P^{n-1}}(\mathbb{C}P^{n-1})}\int_{\mathbb{G}(1, n)}\#(U\cap \ell)\mathrm{vol}_{\mathbb{G}(1,n)}(\dd \ell)\\
&={\mathrm{vol}_Z(U)}.\quad \textrm{(by \eqref{eq:igf2})}\end{align}
This shows that, as measures, $\nu=\mathrm{vol}_{Z}$ and \eqref{eq:ig} follows.
\end{proof}

We have now a well defined map 
\beq \mu:P_{n,d}\setminus \Delta_{n,d}\longrightarrow \mathscr{P}(\CP^n)\eeq 
given by 
\beq \label{eq:tbe}\mu(p)(f):= \frac{1}{\mathrm{vol}(Z(p))}\int_{Z(p)}f(z)\mathrm{vol}_Z(\dd z), \quad f\in \mathscr{C}^0(\CP^n).\eeq
We prove now that this map can be continuously extended to the whole $P_{n,d}$, generalizing \cref{corollary:CP1}.
Notice that the normalization factor $\frac{1}{\mathrm{vol}(Z(p))}$ in \eqref{eq:tbe} does not actually depend on the hypersurface, because of \cref{lemma:dis}.

\begin{theorem}\label{thm:ext}
The map  $\mu:P_{n,d} \to \mathscr{P}(\CP^n)$ defined for $f\in \mathscr{C}^0(\CP^n)$ by
\beq
\label{eq:ext} \int_{\CP^n}f \dd \mu(p):=\frac{1}{d}\int_{\mathbb{G}(1,n)}\left(\sum_{z\in Z(p)\cap \ell}m_z(p|_{\ell}) f(z)\right)\mathrm{vol}_{\mathbb{G}(1,n)}(\dd \ell)
\eeq
is continuous and, when restricted to $P_{n,d}\setminus \Delta_{n,d}$ coincides with \eqref{eq:tbe}. 
\end{theorem}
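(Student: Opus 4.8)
The plan is to build $\mu(p)$ by disintegrating over the Grassmannian of lines: apply \cref{corollary:CP1} fibrewise and then push the resulting one--dimensional measures up to $\CP^n$. Let $\mathcal{L}=\{(\ell,z)\in\mathbb{G}(1,n)\times\CP^n\,|\,z\in\ell\}$ be the incidence variety, with projections $\rho_1:\mathcal{L}\to\mathbb{G}(1,n)$ (a locally trivial $\CP^1$--bundle) and $\rho_2:\mathcal{L}\to\CP^n$. For a nonzero $p\in P_{n,d}$, restriction to the fibre of $\rho_1$ over $\ell$ presents $p|_\ell$ (in a local trivialisation) as an element of $\C[z_0,z_1]_{(d)}$, and the locus $F_p:=\{\ell\in\mathbb{G}(1,n)\,|\,p|_\ell\equiv0\}$ of lines contained in $Z(p)$ is a \emph{proper} algebraic subvariety: were it all of $\mathbb{G}(1,n)$, every line would lie in $Z(p)$ and hence $Z(p)=\CP^n$, contradicting $p\neq0$. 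In particular $F_p$ is $\mathrm{vol}_{\mathbb{G}(1,n)}$--null. On $\mathbb{G}(1,n)\setminus F_p$ define $\nu^p_\ell\in\mathscr{P}(\CP^n)$ as the pushforward under $\ell\hookrightarrow\CP^n$ of the probability measure $\mu(p|_\ell)$ from \cref{corollary:CP1}, so that for $f\in\mathscr{C}^0(\CP^n)$,
\[
g_p(\ell):=\int_{\CP^n}f\,\dd\nu^p_\ell=\frac1d\sum_{z\in Z(p)\cap\ell}m_z(p|_\ell)\,f(z).
\]
Using the local triviality of $\rho_1$ and the fact that, for fixed $\ell$, the chain $[p|_\ell]\mapsto\mu(p|_\ell)\mapsto\nu^p_\ell\mapsto\int f\,\dd\nu^p_\ell$ is a composition of continuous maps (\cref{corollary:CP1}, plus the weak continuity of pushforward and of integration against a fixed continuous function), $g_p$ is continuous on $\mathbb{G}(1,n)\setminus F_p$, hence Borel on $\mathbb{G}(1,n)$, with $|g_p|\le\|f\|_\infty$. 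So $f\mapsto\int_{\mathbb{G}(1,n)}g_p\,\dd\mathrm{vol}_{\mathbb{G}(1,n)}$ is a well--defined positive linear functional on $\mathscr{C}^0(\CP^n)$; taking $f\equiv1$ and using $\sum_z m_z(p|_\ell)=d$ shows it has mass $1$, so by Riesz representation it equals $\int f\,\dd\mu(p)$ for a unique $\mu(p)\in\mathscr{P}(\CP^n)$. This is precisely \eqref{eq:ext}.

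Next I would check the coincidence with \eqref{eq:tbe} on $P_{n,d}\setminus\Delta_{n,d}$. If $p\notin\Delta_{n,d}$ then $Z(p)$ is a smooth complex hypersurface, and for $\ell$ outside a proper subvariety of $\mathbb{G}(1,n)$ (lines in $Z(p)$, or lines tangent to $Z(p)$ somewhere) the polynomial $p|_\ell$ has $d$ distinct simple roots, whence $m_z(p|_\ell)\equiv1$ and $\#(Z(p)\cap\ell)=d$. Thus $g_p(\ell)=\frac1d\sum_{z\in Z(p)\cap\ell}f(z)$ for a.e.\ $\ell$, and \eqref{eq:ig} of \cref{lemma:dis} gives
\[
\int_{\CP^n}f\,\dd\mu(p)=\frac1d\int_{\mathbb{G}(1,n)}\Big(\sum_{z\in Z(p)\cap\ell}f(z)\Big)\,\dd\mathrm{vol}_{\mathbb{G}(1,n)}(\ell)=\frac{1}{d\cdot\mathrm{vol}(\CP^{n-1})}\int_{Z(p)}f\,\dd\mathrm{vol}_{Z(p)},
\]
which is \eqref{eq:tbe} because $\mathrm{vol}(Z(p))=d\cdot\mathrm{vol}(\CP^{n-1})$, again by \cref{lemma:dis}.

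The heart of the proof is continuity. Given $p_k\to p$ in $P_{n,d}$ and $f\in\mathscr{C}^0(\CP^n)$, I would combine fibrewise continuity with dominated convergence on $\mathbb{G}(1,n)$: fix $\ell\notin F_p$, so $p|_\ell\not\equiv0$; since $\{q\in P_{n,d}\,|\,q|_\ell\equiv0\}$ is closed, for $k$ large $p_k|_\ell\not\equiv0$ and $[p_k|_\ell]\to[p|_\ell]$ in $\CP^d$, hence $\mu(p_k|_\ell)\to\mu(p|_\ell)$ in $\mathscr{P}(\CP^1)$ by \cref{corollary:CP1}; pushing forward to $\CP^n$ and integrating $f$ gives $g_{p_k}(\ell)\to g_p(\ell)$. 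So $g_{p_k}\to g_p$ pointwise $\mathrm{vol}_{\mathbb{G}(1,n)}$--a.e. (each $F_{p_k}$ being null does not affect the integrals), and $|g_{p_k}|\le\|f\|_\infty$; dominated convergence yields $\int f\,\dd\mu(p_k)\to\int f\,\dd\mu(p)$, and since $\mathscr{P}(\CP^n)$ is metrizable this is the continuity of $\mu$.

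The main obstacle I anticipate is the bookkeeping around the varying fibres $\ell\cong\CP^1\hookrightarrow\CP^n$: making precise that $\ell\mapsto\nu^p_\ell$ is Borel (so \eqref{eq:ext} is well posed) and that the convergence supplied by \cref{corollary:CP1} is uniform enough in $\ell$ to feed into dominated convergence. Both reduce to the local triviality of the $\CP^1$--bundle $\rho_1$ and to the genuinely null character of the exceptional sets — $F_p$, and in the coincidence step also the tangency locus — which in turn rests on the irreducibility of $\mathbb{G}(1,n)$ together with the elementary dimension count that tangent or contained lines form a proper subvariety.
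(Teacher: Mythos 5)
Your argument is correct and follows essentially the same route as the paper: reduce to the one–variable case on each line via Corollary~\ref{corollary:CP1}, observe that the exceptional locus of lines contained in $Z(p)$ is null, and conclude with dominated convergence over $\mathbb{G}(1,n)$; the coincidence with \eqref{eq:tbe} off the discriminant is Lemma~\ref{lemma:dis} combined with the genericity of transversal lines, exactly as the paper intends. The only (welcome) extra you add is the explicit Borel-measurability and Riesz-representation bookkeeping certifying that \eqref{eq:ext} really defines a probability measure, and a pointwise-at-$\ell$ handling of the exceptional sets in place of the paper's device of removing $F\cup\bigcup_k F_k$ all at once—both are cosmetic rearrangements of the same proof.
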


\begin{proof}The fact that the restriction of $\mu$ to $P_{n,d}\setminus \Delta_{n,d}$ coincides with \eqref{eq:tbe} is an immediate consequence of \cref{lemma:dis}. For the continuity of $\mu$, as above, it is enough to prove sequential continuity. Let therefore $\{p_k\}_{k\in\N}\subset P_{n,d}$ be a sequence converging to $p$. For every $k\in \N$ denote by $F_k\subset \mathbb{G}(1,n)$ the set of lines contained in $Z(p_k)$ and by $F$ the set of lines contained in $Z(p)$.  Notice that these sets have measure zero in $\mathbb{G}(1,n)$, therefore the complement of their union 
\beq \widetilde{\mathbb{G}}:=\mathbb{G}(1, n)\setminus \left(F\cup\bigcup_{k\in \N}F_k\right)\eeq
has full measure in $\mathbb{G}(1,n)$. Given $f\in \mathscr{C}^0(\CP^n)$ and for every $k\in \N$ denote by $g_k, g:\mathbb{G}(1,n)\to \R$ the functions defined for $\ell\in \widetilde{\mathbb{G}}$ by:
\beq g_k(\ell):=\frac{1}{d}\sum_{Z\in Z(p_k)\cap \ell}m_z(p|_{\ell}) f(z)\quad \textrm{and}\quad g(\ell):=\frac{1}{d}\sum_{Z\in Z(p)\cap \ell}m_z(p|_{\ell}) f(z),\eeq
and set these function to zero on $\widetilde{\mathbb{G}}^c$. Then for every $k\in \N$ we have
\beq \int_{\CP^n}f\dd\mu(p_k)=\int_{\mathbb{G}(1,n)}g_k(\ell)\mathrm{vol}_{\mathbb{G}(1,n)}(\dd \ell)\quad \textrm{and}\quad \int_{\CP^n}f\dd\mu(p)=\int_{\mathbb{G}(1,n)}g(\ell)\mathrm{vol}_{\mathbb{G}(1,n)}(\dd \ell).\eeq 
The sequence of functions $\{g_k\}_{k\in \N}$ converges pointwise to $g$ by \cref{corollary:CP1}. Moreover the whole sequence is dominated by $|g_k|\leq \max_{z\in \CP^n}|f(z)|$, therefore the convergence $\mu(p_k)\to \mu(p)$ follows from the Dominated Convergence Theorem.
\end{proof}
We prove now that $\mu:P_{n,d}\to \mathscr{P}(\CP^n)$ is injective. We need a couple of lemmas first.

\begin{lemma}\label{lemma:irreducible}If $p\in \C[z_0, \ldots, z_n]_{(d)}$ is irreducible, then $\mu(p)=\frac{1}{d\cdot \mathrm{vol}(\CP^{n-1})}\mathrm{vol}_{Z(p)^{\textrm{reg}}}.$\end{lemma}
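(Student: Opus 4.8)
The plan is to show that when $p$ is irreducible, the integral-geometric formula \eqref{eq:ext} defining $\mu(p)$ agrees with the volume formula \eqref{eq:tbe} applied to the smooth locus $Z(p)^{\mathrm{reg}}$. First I would observe that, since $p$ is irreducible, $Z(p)^{\mathrm{sing}}$ is a proper algebraic subset of the irreducible variety $Z(p)$, hence has complex dimension at most $n-2$ (so real dimension at most $2n-4$), while $Z(p)^{\mathrm{reg}}$ is a smooth complex submanifold of $\CP^n$ of complex dimension $n-1$, dense in $Z(p)$. In particular $\mathrm{vol}_{Z(p)}$ (in the sense of \cref{def:volume}, which only sees the strata of top dimension $2n-2$) coincides with $\mathrm{vol}_{Z(p)^{\mathrm{reg}}}$, and $\mathrm{vol}(Z(p)) = \mathrm{vol}(Z(p)^{\mathrm{reg}})$.

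Next I would analyze the multiplicities appearing in \eqref{eq:ext}. For a line $\ell\in \mathbb{G}(1,n)$, the restriction $p|_\ell$ is a nonzero homogeneous polynomial of degree $d$ in two variables (nonzero because $p$ is irreducible of degree $d>0$, so it vanishes identically only on the measure-zero set $F$ of lines contained in $Z(p)$; if $d=1$ there are none). The key claim is that for almost every $\ell$, the line $\ell$ meets $Z(p)$ transversally at $d$ distinct points, all lying in $Z(p)^{\mathrm{reg}}$, so that $m_z(p|_\ell)=1$ for each such $z$. This follows from a standard Bertini/generic-transversality argument: the set of lines through a fixed point $z\in Z(p)^{\mathrm{sing}}$ has codimension $n-1$ in $\mathbb{G}(1,n)$ and $\dim Z(p)^{\mathrm{sing}}\le 2n-4$, so the locus of lines meeting $Z(p)^{\mathrm{sing}}$ is a proper (hence measure-zero) subset; similarly the lines tangent to $Z(p)^{\mathrm{reg}}$ at some point form a proper algebraic subset (the dual variety construction). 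Hence for $\ell$ outside a measure-zero set, $\sum_{z\in Z(p)\cap\ell} m_z(p|_\ell) f(z) = \sum_{z\in Z(p)^{\mathrm{reg}}\cap \ell} f(z)$ with all multiplicities equal to $1$.

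Finally, combining these observations with \cref{lemma:dis} applied to the smooth submanifold $Z = Z(p)^{\mathrm{reg}}$ of complex dimension $n-1$ (with $U = Z$): for $f\in \mathscr{C}^0(\CP^n)$,
\beq
\int_{\CP^n} f\,\dd\mu(p) = \frac{1}{d}\int_{\mathbb{G}(1,n)}\Big(\sum_{z\in Z(p)^{\mathrm{reg}}\cap \ell} f(z)\Big)\mathrm{vol}_{\mathbb{G}(1,n)}(\dd\ell) = \frac{1}{d\cdot \mathrm{vol}(\CP^{n-1})}\int_{Z(p)^{\mathrm{reg}}} f(z)\,\mathrm{vol}_{Z(p)^{\mathrm{reg}}}(\dd z),
\eeq
which is exactly the asserted identity $\mu(p) = \frac{1}{d\cdot \mathrm{vol}(\CP^{n-1})}\mathrm{vol}_{Z(p)^{\mathrm{reg}}}$. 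I expect the main obstacle to be the careful verification that the multiplicity-$1$, transversality, and ``no component of $Z(p)$ is a line'' conditions all hold off a set of $\mathrm{vol}_{\mathbb{G}(1,n)}$-measure zero — essentially a packaging of classical facts about lines meeting a projective hypersurface (dual varieties, dimension counts on the incidence variety in $\mathbb{G}(1,n)\times \CP^n$) — together with checking that \cref{lemma:dis} legitimately applies to the non-compact smooth submanifold $Z(p)^{\mathrm{reg}}$, which may require an exhaustion of $Z(p)^{\mathrm{reg}}$ by relatively compact open subsets and a limiting argument, since \cref{lemma:dis} as stated concerns open subsets $U$ of a (possibly compact) complex submanifold.
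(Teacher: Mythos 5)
Your proposal takes essentially the same route as the paper: exploit that $Z(p)^{\mathrm{sing}}$ has complex codimension at least two so that almost every line misses it, argue that for a generic line $\ell$ all multiplicities $m_z(p|_\ell)$ equal $1$, and finish by applying \cref{lemma:dis} to $Z(p)^{\mathrm{reg}}$. The only differences are cosmetic: the paper invokes Bertini's theorem where you give a dimension count plus a dual-variety argument (both establish the same genericity), and the paper applies \cref{lemma:dis} to the possibly non-compact $Z(p)^{\mathrm{reg}}$ without the exhaustion you flag as a potential worry, which is harmless here since the test functions live in $\mathscr{C}^0(\CP^n)$ and the total mass $\mathrm{vol}(Z(p)^{\mathrm{reg}})=d\cdot\mathrm{vol}(\CP^{n-1})$ is finite.
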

\begin{proof}If $p$ is irreducible, then $Z(p)^\textrm{sing}$ has complex codimension at least $2$ in $\CP^n$ (see \cite[Chapter 0.2]{GH}). In particular, almost every projective line $\ell\in \mathbb{G}(1, n)$ misses it; since the condition ``$\ell $ misses $Z(p)^\textrm{sing}$'' is semialgebraic, it follows that the set $D_1\subset \mathbb{G}(1,n)$ of lines missing $Z(p)^\textrm{sing}$ is a full-measure semialgebraic set. Moreover, by Bertini's Theorem (see \cite[Chapter 1.1]{GH}), for the generic line $\ell$ the equation $p|\ell=0$ is regular. Again, since the condition ``$p|_\ell=0$ is regular'' is semialgebraic, the set $D_2$ of lines $\ell$ such that $p|_{\ell}=0$ is regular is also full-measure and semialgebraic. (Notice that $p|_\ell=0$ is regular if and only if for every $z\in Z(p)\cap \ell$ we have $m_z(p|_\ell)=1$). 

In particular, given $f\in \mathscr{C}^0(\CP^n)$, we have
\begin{align}\int_{\CP^n}f \dd \mu(p)&=\frac{1}{d}\int_{\mathbb{G}(1,n)}\left(\sum_{z\in Z(p)\cap \ell}m_z(p|_{\ell}) f(z)\right)\mathrm{vol}_{\mathbb{G}(1,n)}(\dd \ell)\\
&=\frac{1}{d}\int_{D_1\cap D_2}\left(\sum_{z\in Z(p)\cap \ell}m_z(p|_{\ell}) f(z)\right)\mathrm{vol}_{\mathbb{G}(1,n)}(\dd \ell),&
\end{align}
since $D_1\cap D_2$ is full measure. This is also equal to
\begin{align}
&=\frac{1}{d}\int_{D_1\cap D_2}\left(\sum_{z\in Z(p)\cap \ell} f(z)\right)\mathrm{vol}_{\mathbb{G}(1,n)}(\dd \ell)&\textrm{($\ell \in D_1$)}\\
&=\frac{1}{d}\int_{D_1\cap D_2}\left(\sum_{z\in Z(p)^\mathrm{reg}\cap \ell}f(z)\right)\mathrm{vol}_{\mathbb{G}(1,n)}(\dd \ell)&\textrm{($\ell \in D_2$)}\\
&=\frac{1}{d}\int_{\mathbb{G}(1,n)}\left(\sum_{z\in Z(p)^\mathrm{reg}\cap \ell}f(z)\right)\mathrm{vol}_{\mathbb{G}(1,n)}(\dd \ell)&\textrm{($D_1\cap D_2$ is full measure)}\\
&=\frac{1}{d}\int_{Z(p)^{\textrm{reg}}}f (z)\mathrm{vol}_{Z(p)^{\textrm{reg}}}(\dd z)&\textrm{(by \cref{lemma:dis})}.
\end{align}
This concludes the proof.
\end{proof}

We need also the following lemma.

\begin{lemma}\label{lemma:factorize}Let $p=p_1^{m_1}\cdots p_k^{m_k}\in \C[z_0, \ldots, z_n]_{(d)}$, with $p_j\in \C[z_0, \ldots, z_n]_{(d_j)}$ irreducible and $m_j\in \mathbb{N}$. Then 
\beq \mu(p)=\frac{1}{d\cdot \mathrm{vol}(\CP^{n-1})}\sum_{j=1}^km_k \cdot \mathrm{vol}_{Z(p_j)^{\textrm{reg}}}.\eeq
\end{lemma}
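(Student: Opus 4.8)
The plan is to verify the identity of measures by integrating an arbitrary $f\in\mathscr{C}^0(\CP^n)$ against both sides, using the integral--geometric definition \eqref{eq:ext} of $\mu$ together with the additivity of the multiplicity of a one--variable polynomial under products, and then feeding in \cref{lemma:irreducible}.

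\textbf{Step 1 (restriction to a generic line).} For a line $\ell\in\mathbb{G}(1,n)$ not contained in any of the hypersurfaces $Z(p_j)$, each restriction $p_j|_\ell$ is a nonzero one--variable form of degree $d_j$ and $p|_\ell=\prod_{j=1}^k(p_j|_\ell)^{m_j}$; hence for every $z\in\ell$
\[
m_z(p|_\ell)=\sum_{j=1}^k m_j\,m_z(p_j|_\ell),
\]
because the order of vanishing of a product of one--variable polynomials at a point is the sum of the orders of vanishing of the factors. As in the proof of \cref{thm:ext}, for each $j$ the set of lines contained in the proper subvariety $Z(p_j)\subsetneq\CP^n$ has $\mathrm{vol}_{\mathbb{G}(1,n)}$--measure zero, so these identities hold for $\mathrm{vol}_{\mathbb{G}(1,n)}$--almost every $\ell$.

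\textbf{Step 2 (split the integral).} Substituting the additivity of $m_z(p|_\ell)$ into \eqref{eq:ext} and interchanging the finite sum over $j$ with the integral over $\mathbb{G}(1,n)$ yields
\[
\int_{\CP^n}f\,\dd\mu(p)=\frac{1}{d}\sum_{j=1}^k m_j\int_{\mathbb{G}(1,n)}\Big(\sum_{z\in Z(p_j)\cap\ell}m_z(p_j|_\ell)f(z)\Big)\mathrm{vol}_{\mathbb{G}(1,n)}(\dd\ell)=\frac{1}{d}\sum_{j=1}^k m_j\,d_j\int_{\CP^n}f\,\dd\mu(p_j),
\]
the last equality being exactly \eqref{eq:ext} applied to the degree--$d_j$ polynomial $p_j$. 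Then I would invoke \cref{lemma:irreducible} for each irreducible factor, $\mu(p_j)=\tfrac{1}{d_j\cdot\mathrm{vol}(\CP^{n-1})}\mathrm{vol}_{Z(p_j)^{\mathrm{reg}}}$; substituting this in, the factors $d_j$ cancel, and since $d=\sum_j m_j d_j$ one obtains
\[
\int_{\CP^n}f\,\dd\mu(p)=\frac{1}{d\cdot\mathrm{vol}(\CP^{n-1})}\sum_{j=1}^k m_j\int_{Z(p_j)^{\mathrm{reg}}}f(z)\,\mathrm{vol}_{Z(p_j)^{\mathrm{reg}}}(\dd z),
\]
which is the asserted identity.

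The computation is essentially bookkeeping, so the only point that really needs an argument is the passage "for almost every line" in Step 1: namely that for a nonzero form $q$ the set of lines $\ell$ with $q|_\ell\equiv 0$ (the Fano variety of lines on $Z(q)$) is a proper, hence null, subvariety of $\mathbb{G}(1,n)$. This is immediate since the lines through a point cover $\CP^n$, so if every line were contained in $Z(q)$ we would have $Z(q)=\CP^n$, contradicting $q\neq 0$; and it is the same fact already used in the proof of \cref{thm:ext}. Everything else is a direct manipulation of \eqref{eq:ext} and \cref{lemma:irreducible}.
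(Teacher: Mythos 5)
Your proof is correct and follows essentially the same route as the paper's: both use the additivity of one--variable multiplicities under products of polynomials, split the integral over $\mathbb{G}(1,n)$ accordingly, and then apply \cref{lemma:irreducible} term by term. Your version is a bit more explicit in tracking the degree factors $d_j$ through \eqref{eq:ext}, which is a small improvement in bookkeeping over the paper's more condensed computation.
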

\begin{proof}Using \eqref{eq:ext}, for a given $f\in \mathscr{C}^0(\CP^n)$, we have:
\begin{align} \int_{\CP^n}f \dd \mu(p)&=\frac{1}{d}\int_{\mathbb{G}(1,n)}\left(\sum_{z\in Z(p)\cap \ell}m_z(p|_{\ell}) f(z)\right)\mathrm{vol}_{\mathbb{G}(1,n)}(\dd \ell)\\
&=\frac{1}{d}\int_{\mathbb{G}(1,n)}\left(\sum_{j=1}^k \sum_{z\in Z(p_j)\cap \ell}m_k m_z(p_j|_{\ell}) f(z)\right)\mathrm{vol}_{\mathbb{G}(1,n)}(\dd \ell)\\
&=\sum_{j=1}^km_k\frac{1}{d}\int_{\mathbb{G}(1,n)}\left( \sum_{z\in Z(p_j)\cap \ell}m_z(p_j|_{\ell}) f(z)\right)\mathrm{vol}_{\mathbb{G}(1,n)}(\dd \ell)\\
&=\sum_{j=1}^k m_k \int_{Z(p_j)^{\mathrm{reg}}}f(z) \mathrm{vol}_{Z(p_j)^{\textrm{reg}}}(\dd z)&\textrm{(by \cref{lemma:irreducible})}.
\end{align}
Since almost every lines misses the common intersection $Z(p_j)\cap Z(p_j)$ (which have complex codimension at least $2$), this proves the claim. \end{proof}
\begin{proposition}\label{propo:compactness}The map $\mu:P_{n,d}\to \mathscr{P}(\CP^n)$ given by \eqref{eq:ext} is injective. In particular $\mu$ is a homeomorphism onto its image.
\end{proposition}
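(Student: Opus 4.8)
The plan is to recover $[p]$ from the measure $\mu(p)$, by reading off the irreducible components of the hypersurface $Z(p)$ together with their multiplicities. Write $p=p_1^{m_1}\cdots p_k^{m_k}$ with the $p_j\in\C[z_0,\dots,z_n]$ pairwise non‑proportional irreducible factors and $m_j\geq 1$; \cref{lemma:factorize} gives
\[
\mu(p)=\frac{1}{d\cdot\mathrm{vol}(\CP^{n-1})}\sum_{j=1}^k m_j\,\mathrm{vol}_{Z(p_j)^{\mathrm{reg}}}.
\]
First I would determine $\supp\mu(p)$. On the smooth $(2n-2)$‑manifold $Z(p_j)^{\mathrm{reg}}$ the measure $\mathrm{vol}_{Z(p_j)^{\mathrm{reg}}}$ is locally a strictly positive density, so every nonempty relatively open subset carries positive mass; since $Z(p_j)^{\mathrm{reg}}$ is dense in the closed set $Z(p_j)$, this forces $\supp\mathrm{vol}_{Z(p_j)^{\mathrm{reg}}}=Z(p_j)$, whence $\supp\mu(p)=\bigcup_j Z(p_j)=Z(p)$.

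Now suppose $\mu(p)=\mu(q)$ for $p,q\in P_{n,d}$, with $q=q_1^{l_1}\cdots q_s^{l_s}$ written analogously. Then $Z(p)=\supp\mu(p)=\supp\mu(q)=Z(q)$, so by uniqueness of the decomposition into irreducible components we get $k=s$ and, after relabelling, $Z(p_j)=Z(q_j)$ for all $j$. Since each $Z(p_j)$ is an irreducible hypersurface and $\C[z_0,\dots,z_n]$ is a UFD, its homogeneous ideal is principal, generated by an irreducible polynomial unique up to scalar; as $(p_j)$ is prime, hence radical, the Nullstellensatz gives $I(Z(p_j))=(p_j)=(q_j)$, so $q_j$ is a nonzero multiple of $p_j$. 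To recover the multiplicities I would exploit that the summands $\mathrm{vol}_{Z(p_j)^{\mathrm{reg}}}$ are pairwise mutually singular, because for $i\neq j$ the intersection $Z(p_i)\cap Z(p_j)$ has complex dimension at most $n-2$ and is therefore $\mathrm{vol}_{Z(p_j)^{\mathrm{reg}}}$‑null. Restricting the equality $\mu(p)=\mu(q)$ to the Borel set $Z(p_j)^{\mathrm{reg}}\setminus\bigcup_{i\neq j}Z(p_i)$ — which is conull for $\mathrm{vol}_{Z(p_j)^{\mathrm{reg}}}$ and null for every other summand — yields $m_j/d=l_j/d$, hence $m_j=l_j$. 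Thus $p$ and $q$ are proportional, i.e.\ $[p]=[q]$ in $P_{n,d}$, so $\mu$ is injective.

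The final assertion is then formal: $\mu$ is continuous by \cref{thm:ext}, it is now injective, $P_{n,d}\simeq\CP^N$ is compact, and $\mathscr{P}(\CP^n)$ is Hausdorff (indeed metrizable), so a continuous injection out of a compact space is a homeomorphism onto its image. I expect the one step needing genuine care to be the support computation — checking that $\mathrm{vol}_{Z(p_j)^{\mathrm{reg}}}$, viewed as a Borel measure on $\CP^n$, has support exactly the full component $Z(p_j)$ and charges no proper subvariety — after which the argument is just \cref{lemma:factorize}, unique factorization of hypersurfaces, and the Nullstellensatz.
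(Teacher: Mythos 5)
Your proof is correct and follows the same route as the paper's: both reduce to \cref{lemma:factorize}, identify the irreducible components of the hypersurface from the support of the measure, and then compare multiplicities using the fact that distinct irreducible components carry mutually singular pieces of $\mu(p)$. You fill in a few details the paper leaves implicit (that $\supp\mathrm{vol}_{Z(p_j)^{\mathrm{reg}}}=Z(p_j)$, and the explicit conull/null Borel set used to isolate $m_j$), but the argument is the same.
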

\begin{proof}Let $[p]\neq [q]$, so that $p\neq \lambda q$ for $\lambda \in \C\setminus \{0\}$. Let us write $p=p_1^{m_1}\cdots p_k^{m_k}$ and $q=q_1^{r_1}\cdots q_h^{r_h}$ with the $p_j$ and the $q_i$ irreducible and coprime.

Then the sets $\{Z(p_i)\}$ and $\{Z(q_j)\}$ are the irreducible components of $Z(p)$ and $Z(q)$ respectively. The irreducible components are uniquely determined, therefore, if the sets $\{[p_1], \ldots,[ p_k]\}$ and $\{[q_1], \ldots, [q_h]\}$ are different, then the supports of the measures $\mu(p)$ and $\mu(q)$ are different (as the two polynomials have different zero sets) and $\mu([p])\neq \mu([q]).$ 

If $\{[p_1], \ldots,[ p_k]\}=\{[q_1], \ldots, [q_h]\}$, then $h=k$ and, up to relabeling, $[p_j]=[q_j]$. Since $[p]\neq [q]$ there must be $j\in \{1, \ldots, k\}$ such that $m_j\neq r_j$ and the fact that $\mu([p])\neq \mu([q])$ follows from \cref{lemma:factorize}.

Since $\mu$ is continuous and injective and $P_{n,d}$ is compact, it is a homeomorphism onto its image.
\end{proof}
\begin{remark}\label{remark:notC}The construction for algebraic hypersurfaces is a special case of a more general framework. Consider the set $V_{n,d}:=\{([p], [z])\in  P_{n,d}\times \CP^n\,|\, p(z)=0\}.$ The projection $\pi_2:V_{n,d}\to \CP^n$, whose fibers consist of the set of polynomials vanishing at a given point (a projective hyperplane in the space of polynomials), turns $V$ into a fiber bundle over $\CP^n$; in particular $V_{n,d}\subset  P_{n, d}\times \CP^n$ is a smooth complex submanifold. The projection on the first factor $\pi_1:V_{n,d}\to P_{n,d}$ has fibers which are precisely the algebraic hypersurfaces of degree $d$. Endow $\CP^n$ and $P_{n,d}$ (which are both complex projective spaces) with the Fubini--Study Riemannian structure (see \cref{Appendix A}), and $V_{n,d}$ with the induced Riemannian structure $g$. 
 Then we can view the map $\mu:P_{n,d}\setminus \Delta_{n,d}\to \mathscr{P}(\CP^n)$ from \eqref{eq:tbe} as:
\beq \mu(p)=(\pi_2)_{\#}\lambda_{\pi_1^{-1}(p)},\eeq
were, in general, if $X\hookrightarrow V$ is a $m$--dimensional, compact and smooth submanifold of a Riemannian manifold $(V, g)$, we denote by $\lambda_{X}\in \mathscr{P}(V)$ the measure defined on $f\in \mathscr{C}^0(V)$ by 
\beq \lambda_X(f)=\frac{1}{\mathrm{vol}_m(X)}\int_{X}f \dd\mathrm{vol}_X.\eeq 
More generally, one can consider a smooth map $\pi:V\to P$, where $(V, g)$ is a Riemannian manifold, and define $\alpha:P\setminus\Delta(\pi)\to \mathscr{P}(V)$ by $\alpha(p):=\lambda_{\pi^{-1}(p)}.$
Even under the assumptions that $V$ and $P$ are complex of the same dimension and $\pi:V\to P$ is holomorphic, it is not possible in general to continuously extend $\alpha$ to the whole $P$. For instance, the conclusions of \cref{lemma:cont} are false if we drop the finiteness assumption on $\pi:V\to P$. To see this, take $P=\CP^2$  and $V=\mathrm{Bl}_{p_0}(\CP^2)$ (the blow--up at one point). Both $P$ and $V$ are compact complex manifolds of the same complex dimension. We have $\Delta(\pi)= \{p_0\}$ and the map $\pi:V\to P$ is finite (one--to--one) only on $V\setminus\pi^{-1}(p_0)$, since $\pi^{-1}(p_0)\simeq \CP^1.$ In this case, for $p\neq p_0$, we have $\alpha(p)=\delta_{\pi^{-1}(p)}$, but for $p_n\to p_0$ the limit $\alpha(p_n)$ depends on the sequence $\{p_{n}\}_{n\in \N}$.
This is because in general there is no notion of ``multiplicity'' for the fibers. In the case of \cref{thm:ext} the manifolds $V_{n,d}$ and $P_{n,d}$ do not have the same dimension (unless $n=1$) but we can still continuously extend $\mu=(\pi_2)_{\#}\circ \alpha$  using integral geometry.
\end{remark}

\section{Symplectic geometry and metric speed of Hamiltonian isotopies}\label{sec:Ham}

To define an optimal transport distance between algebraic hypersurfaces 
we will need  to
study curves inside the image of $\mu$. 
This will be done relying on 
techniques from symplectic geometry. 

Recall that a \emph{symplectic manifold} $(M, \omega)$ is a smooth manifold $M$ equipped with a closed, nondegenerate $2$--form.  The nondegeneracy assumption on $\omega$ implies that for every smooth function $H:M\to \R$ there is a unique vector field $V_H\in \Gamma(TM)$ such that
$$
\dd H= \omega(V_H,\cdot)=\iota_{V_H} \omega.
$$
A non--autonomous vector field $v_t$ on a symplectic manifold $(M, \omega)$ is said to be \emph{Hamiltonian} if there exists a smooth function $H:M\times I\to \R$ (the Hamiltonian) such that, denoting by $h_t:=H(\cdot, t):M\to \R$, we have  $v_t=V_{h_t}$, i.e. $\dd h_t=\iota_{V_{h_t}}\omega$. Notice that the flow $\varphi_t$ of a Hamiltonian vector field $V_{h_t}$ preserves the symplectic form: in fact, by Cartan's magic formula,
$$
\frac{d}{dt}\varphi_t^*\omega
=\varphi_t^* \mathcal{L}_{X_{h_t}}\omega
=\varphi_t^*(\iota_{V_{h_t}} \dd \omega + \dd (\iota_{V_{h_t}}\omega))=0,
$$
since $\omega$ is closed and $\iota_{V_{h_t}}\omega=\dd h_t$.

Recall also that a \emph{K\"ahler manifold} $(M, \omega, J)$ is a symplectic manifold $(M, \omega)$ equipped with an integrable almost--complex structure (i.e. and endomorphims $J:TM\to TM$ with $J^2=-\mathrm{id}_{TM}$  and admitting a holomorphic atlas) which is compatible with the symplectic form. This means that the bilinear form $g:TM\times TM\to \R$ defined by 
\beq g(u,v):=\omega (u, Jv)\eeq
is symmetric and positive definite. In particular $g$ defines a Riemannian metric on $M$, which we call the \emph{K\"ahler Riemannian metric}.

 \subsection{Hamiltonian isotopies}
 
 \begin{definition}[Smooth family of submanifolds]\label{def:smoothfamily} We say that a family $\{S_t\}_{t\in I}$ of submanifolds of a manifold $M$ is a \emph{smooth family} if there exist a smooth map $F:M\times I\to L$, where $I=[0,1]$ and $L$ is a smooth manifold, and a submanifold $A\hookrightarrow L$ such that for every $t\in I$ the map $f_t:=F(\cdot, t):M\to L$ is transverse\footnote{If $f:M\to L$ is trasnverse to $A$, we will write $f\pitchfork A$.} to $A$ and $S_t=f_t^{-1}(A).$
 \end{definition}
 
 Recall that $f_t : M \to L$ transverse to $A$ means that $\dd f_t(T_x M)+ T_{f(x)}A=T_{f_t(x)}L$ at every point $x \in f^{-1}_t(A)$. In this case $f^{-1}_t(A)$ is a submanifold of the same codimension as $A$.

 \begin{remark}\label{rem:thom}A classical result in differential topology (Thom's isotopy lemma) states that if $M$ is compact and $A$ is closed, given a smooth family $\{S_t\hookrightarrow M\}_{t\in I}$ there exists a (not unique) isotopy (i.e. a smooth family of diffeomorphisms with $\varphi_0=\mathrm{id}_M$) $\varphi_t:M\to M$ such that $S_t=\varphi_t(S_0)$. We recall now how the construction of the isotopy $\varphi_t$ is made, using the notion of flow of a \emph{non-autonomous} vector field on $M$. First observe that we can associate to a non-autonomous vector field $v_t$ on $M$, an autonomous vector field $\widehat{v}(x,t):=v_t(x)+\partial_t$ on $M\times I$. The isotopy $\varphi_t$ is obtained by restricting the time--$t$ flow $\widehat{\varphi}_t$ of $\widehat{v}$ to the slice $M\times \{0\}$ (which is sent to the slice $M\times \{t\}\simeq M$).  In order to understand what properties of  $v_t$ would guarantee that $\varphi_t(S_0)=S_t, $ denote by $\widehat{S}:=F^{-1}(A)\subseteq M\times I$. The transversality condition $f_t\pitchfork A$ for every $t\in I$ implies that $\widehat{S}$ is a smooth submanifold of $M\times I$. If the vector field $v_t$ is such that the corresponding vector field $\widehat{v}$  on $M\times I$ is tangent to $\widehat{S}$, then $\widehat{S}$ would be preserved by the flow $\widehat{\varphi}_t$ and $\varphi_t(S_0)=\widehat{\varphi}_t(\widehat{S}\cap(M\times \{0\}))\subseteq \widehat{S}\cap(M\times \{t\})\simeq S_t.$ (The reverse inclusion is obtained by considering the flow of $-\widehat{v}$.) In other words: in order to find an isotopy $\varphi_t:M\to M$ such that $\varphi_t(S_0)=S_t$, it is enough to find a non--autonomous vector field $v_t$ on $M$ such that $\widehat{v}$ is tangent to $\widehat{S}$. A crucial observation is that the condition ``$\widehat{v}$ is tangent to $\widehat{S}\,$'' behaves well under partition of unities: if we can find, for every $z=(x,t)$ a neighborhood $U_z$ of $z$ in $M\times I$ and a vector field $\widehat{v}_z$ on $U_z$ of the form $\widehat{v}_z(x,t)=v_{z,t}(x)+\partial_t$ such that ``$\widehat{v}_z$ is tangent to $\widehat{S}\cap U_z$'', then, if $\{\rho_z\}_{z\in M\times I}$ is a partition of unity subordinated to $\{U_z\}_{z\in M\times I}$, the field $\widehat{v}:=\sum_{z}\rho_z v_z$ is of the form $\widehat{v}(x,t)=\sum_z \rho_z(x,t) v_{z,t}(x)+\partial_t$ and ``$\widehat{v}$ is tangent to $\widehat{S}$''. 
\end{remark}
 
There is also an Hamiltonian version of Thom's isotopy lemma
(see \cite{zbMATH05286973,zbMATH01123717,sieberttian})
which is well known by the symplectic community. A complete proof can be found in \cite{moser}.

 \begin{theorem}\label{thm:thom}Let $(M, \omega)$ be a smooth, compact symplectic manifold and $\{S_t\}_{t\in I}$ be a smooth family of symplectic submanifolds. Then there exists a Hamiltonian flow $\varphi_t:M\to M$ such that $\varphi_t(S_0)=S_t$ for every $t\in I$.
 \end{theorem}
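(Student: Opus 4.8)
The plan is to reduce the statement to a \emph{relative} version of Moser's deformation argument. First I would invoke the ordinary (smooth) Thom isotopy lemma recalled in \cref{rem:thom}: since $M$ is compact, there is a smooth isotopy $\psi_t:M\to M$ with $\psi_0=\mathrm{id}_M$ and $\psi_t(S_0)=S_t$ for all $t\in I$; let $X_t$ be the (non--autonomous, a priori non--Hamiltonian) vector field generating it, $\tfrac{\dd}{\dd t}\psi_t=X_t\circ\psi_t$. Set $\omega_t:=\psi_t^*\omega$. This is a smooth family of symplectic forms on $M$ with $\omega_0=\omega$; since $\psi_t$ is homotopic to the identity, $[\omega_t]=[\omega]$ in $H^2_{\mathrm{dR}}(M)$ for every $t$; and since $\psi_t|_{S_0}\colon S_0\to S_t$ is a diffeomorphism onto the symplectic submanifold $S_t$, the pullback $\iota_{S_0}^*\omega_t$ is a symplectic form on $S_0$, so that $TM|_{S_0}$ splits $\omega_t$--orthogonally as $TS_0\oplus N_t$, with $N_t=(TS_0)^{\perp_{\omega_t}}$ the $\omega_t$--symplectic normal bundle. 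The reason for this reduction is that if one can produce an isotopy $\rho_t\colon M\to M$ with $\rho_0=\mathrm{id}_M$, $\rho_t(S_0)=S_0$ and $\rho_t^*\omega_t=\omega$ for all $t$, then $\varphi_t:=\psi_t\circ\rho_t$ is a symplectic isotopy (since $\varphi_t^*\omega=\rho_t^*\omega_t=\omega$) with $\varphi_0=\mathrm{id}_M$ and $\varphi_t(S_0)=\psi_t(S_0)=S_t$.

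Constructing such a $\rho_t$ is the heart of the argument, and it is here that the Hamiltonian condition has to be built in. The form $\dot\omega_t:=\tfrac{\dd}{\dd t}\omega_t$ is closed and, because $[\omega_t]$ is constant, exact, so it admits a smooth family of primitives; the natural one is $\beta_t^0:=\psi_t^*(\iota_{X_t}\omega)$, which satisfies $\dd\beta_t^0=\psi_t^*\mathcal{L}_{X_t}\omega=\dot\omega_t$ by Cartan's formula. Moser's recipe would then take $Y_t$ with $\iota_{Y_t}\omega_t=-\beta_t^0$; the difficulty is that this $Y_t$ need not be tangent to $S_0$, so its flow would not preserve $S_0$. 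I would resolve this by adjusting the primitive: pick a smooth family of functions $f_t\colon M\to\R$ with $f_t|_{S_0}\equiv 0$ whose transverse $1$--jet along $S_0$ is prescribed (using a fixed tubular neighbourhood of $S_0$ together with a cutoff) so that $\beta_t:=\beta_t^0+\dd f_t$ annihilates $N_t$ at every point of $S_0$. This is possible precisely because imposing $f_t|_{S_0}=0$ leaves the values of $\dd f_t$ on a complement of $TS_0$ unconstrained, and it does not affect $\dd\beta_t=\dot\omega_t$. Defining $Y_t$ by $\iota_{Y_t}\omega_t=-\beta_t$, the fact that $\iota_{Y_t}\omega_t$ vanishes on $N_t$ over $S_0$, together with $N_t=(TS_0)^{\perp_{\omega_t}}$ and $(N_t)^{\perp_{\omega_t}}=TS_0$, forces $Y_t$ to be tangent to $S_0$; hence its flow $\rho_t$ (defined for all $t$ by compactness of $M$) preserves $S_0$, and the standard Moser computation $\tfrac{\dd}{\dd t}\rho_t^*\omega_t=\rho_t^*(\mathcal{L}_{Y_t}\omega_t+\dot\omega_t)=\rho_t^*(\dd\iota_{Y_t}\omega_t+\dd\beta_t)=0$ yields $\rho_t^*\omega_t=\omega$.

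Finally I would verify that $\varphi_t=\psi_t\circ\rho_t$ is Hamiltonian, not merely symplectic. Its generating vector field is $Z_t=X_t+(\psi_t)_*Y_t$, and from $\psi_t^*\bigl(\iota_{(\psi_t)_*Y_t}\omega\bigr)=\iota_{Y_t}\omega_t=-\beta_t$ and $(\psi_t^{-1})^*\beta_t^0=\iota_{X_t}\omega$ one computes $\iota_{Z_t}\omega=\iota_{X_t}\omega-(\psi_t^{-1})^*\beta_t=-(\psi_t^{-1})^*\dd f_t=-\dd\bigl(f_t\circ\psi_t^{-1}\bigr)$. Thus $\iota_{Z_t}\omega$ is exact, i.e.\ $Z_t=V_{h_t}$ with Hamiltonian $h_t:=-f_t\circ\psi_t^{-1}$, so $\varphi_t$ is a Hamiltonian flow satisfying $\varphi_t(S_0)=S_t$. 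The main obstacle is exactly the tangency issue in the relative Moser step: one must realise the deformation of $\omega_t$ into $\omega$ through an isotopy preserving $S_0$ setwise, and this forces a specific, non--canonical choice of primitive; pleasantly, the correction $\dd f_t$ that produces this tangency is precisely the exact $1$--form certifying that the resulting ambient isotopy is Hamiltonian, with Hamiltonian $-f_t\circ\psi_t^{-1}$. The remaining points — smoothness in $t$ of $\beta_t^0$, of $N_t$, and of $f_t$, and the fact that the prescribed transverse $1$--jet is realised by a genuine globally defined smooth function — I expect to be routine.
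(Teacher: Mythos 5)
Your argument is correct. Note that the paper itself does not prove \cref{thm:thom}: it records the statement and defers to the references \cite{zbMATH05286973,zbMATH01123717,sieberttian} and, for a self--contained proof, to \cite{moser}. So the comparison can only be with the route the paper gestures at in \cref{rem:thom}, which is a local--to--global patching argument: one constructs, near each point of $\widehat S\subset M\times I$, a local Hamiltonian whose vector field (plus $\partial_t$) is tangent to $\widehat S$, and then glues with a partition of unity, using that both tangency to $\widehat S$ and being Hamiltonian are preserved under convex combinations of the local data (for the Hamiltonian property one averages the local Hamiltonians, not just the fields). Your proof takes the other standard route — smooth Thom isotopy followed by a relative Moser argument — and all the steps check out: $\omega_t=\psi_t^*\omega$ is symplectic with $S_0$ symplectic for each $\omega_t$, $\beta_t^0=\psi_t^*(\iota_{X_t}\omega)$ is indeed a primitive of $\dot\omega_t$, the correction $\dd f_t$ with $f_t|_{S_0}\equiv 0$ can be arranged so that $\beta_t$ annihilates $N_t=(TS_0)^{\perp_{\omega_t}}$ along $S_0$ (which, by $(N_t)^{\perp_{\omega_t}}=TS_0$, makes $Y_t$ tangent to $S_0$), and the final computation $\iota_{Z_t}\omega=-\dd(f_t\circ\psi_t^{-1})$ correctly identifies the global Hamiltonian. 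The one hypothesis you use implicitly is that the $S_t$ are closed (hence compact) submanifolds — needed for the tubular neighbourhood, for the flow of $Y_t$ to preserve $S_0$, and already needed to invoke the smooth isotopy lemma — but this is implicit in the paper's \cref{def:smoothfamily} as used (the relevant $A$ is a zero section, hence closed). What each approach buys: the patching argument is the one that generalizes most directly to the stratified/relative settings and matches the paper's set--up of \cref{rem:thom}; your Moser argument is more structural and has the aesthetic advantage of producing the Hamiltonian $h_t=-f_t\circ\psi_t^{-1}$ explicitly as the exact correction term forced by the tangency constraint.
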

 
\begin{corollary}\label{coro:hamilthom}Let $(M, \omega)$ be a smooth, compact symplectic manifold and $\{S_t\}_{t\in I}$ be a smooth family of symplectic submanifolds. For every $t\in I$ denote by $\mu_t\in \mathscr{P}(M)$ the measure defined by setting for every  $f\in \mathscr{C}^{0}(M, \R)$:
\beq \label{eq:defflow}\int_M f \dd \mu_t:=\frac{\displaystyle \int_{S_t}f \omega^{n-1}}{\displaystyle \int_{S_t} \omega^{n-1}}.\eeq
Then there exists a Hamiltonian flow $\varphi_t:M\to M$ such that $\mu_t=(\varphi_t)_\#\mu_0$ for every $t\in I$.
\end{corollary}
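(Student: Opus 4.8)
The plan is to derive the statement directly from \cref{thm:thom}, exploiting that a Hamiltonian flow preserves the symplectic form; this turns the identity $\mu_t=(\varphi_t)_\#\mu_0$ into a routine change of variables. First I would apply \cref{thm:thom} to the smooth family $\{S_t\}_{t\in I}$ of symplectic submanifolds, obtaining a Hamiltonian flow $\varphi_t:M\to M$ with $\varphi_0=\mathrm{id}_M$ and $\varphi_t(S_0)=S_t$ for every $t\in I$; in particular each $\varphi_t$ restricts to a diffeomorphism $S_0\to S_t$, and since $\mu_0$ is supported on $S_0$, the measure $(\varphi_t)_\#\mu_0$ is supported on $S_t$.

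The two facts about $\varphi_t$ that I would then use are: (i) as recalled at the beginning of this section via Cartan's formula, $\varphi_t^*\omega=\omega$, hence $\varphi_t^*(\omega^{n-1})=\omega^{n-1}$; and (ii) because $S_t$ is a symplectic submanifold of the $2n$-dimensional manifold $M$, the form $\omega^{n-1}$ restricts to a nowhere vanishing top-degree form on the $(2n-2)$-dimensional $S_t$, so the denominator in \eqref{eq:defflow} is a nonzero constant. Orienting every $S_t$ by $\omega^{n-1}\big|_{S_t}$ and using (i) to see that $\varphi_t:S_0\to S_t$ is orientation preserving, the classical change-of-variables formula gives, for every $f\in\mathscr{C}^0(M,\R)$,
\[
\int_{S_0}(f\circ\varphi_t)\,\omega^{n-1}
=\int_{S_0}(f\circ\varphi_t)\,\varphi_t^*\!\left(\omega^{n-1}\big|_{S_t}\right)
=\int_{S_t}f\,\omega^{n-1};
\]
taking $f\equiv 1$ shows in particular that $\int_{S_t}\omega^{n-1}$ is independent of $t$.

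Combining these, for every $f\in\mathscr{C}^0(M,\R)$ one gets
\[
\int_M f\,\dd\big((\varphi_t)_\#\mu_0\big)
=\int_M (f\circ\varphi_t)\,\dd\mu_0
=\frac{\int_{S_0}(f\circ\varphi_t)\,\omega^{n-1}}{\int_{S_0}\omega^{n-1}}
=\frac{\int_{S_t}f\,\omega^{n-1}}{\int_{S_t}\omega^{n-1}}
=\int_M f\,\dd\mu_t ,
\]
whence $(\varphi_t)_\#\mu_0=\mu_t$. I do not expect any substantial obstacle in this corollary: once \cref{thm:thom} is in hand the argument is essentially a formality, and the only point that needs a sentence of care is the orientation bookkeeping in the change-of-variables step, which is settled by the invariance $\varphi_t^*\omega=\omega$.
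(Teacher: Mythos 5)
Your proof is correct and follows exactly the paper's argument: the paper also deduces the corollary immediately from \cref{thm:thom} together with the invariance $\varphi_t^*\omega=\omega$, and your change-of-variables computation simply spells out what the paper leaves implicit.
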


\begin{proof}This follows immediately from \cref{thm:thom}, using the fact that $\varphi_t^*\omega=\omega.$
\end{proof}

%%%%%%%%%%%%%%%%%%%%%%%%%
\subsection{The metric speed of the isotopy}

We now proceed calculating the metric speed of the curve $(\mu_t)_{t\in [0,1]}$ from  
\cref{coro:hamilthom} using the 
results of Section \ref{S:OT}, under the additional assumption that $M$ is K\"ahler. The connection with Section \ref{S:OT} is given by \cref{propo:metricspeedflow}.

\begin{proposition}\label{propo:ka}Let $(M, \omega, J)$ be a smooth, compact, K\"ahler manifold and $\{S_t\}_{t\in I}$ be a smooth family of codimension--two submanifolds such that $(S_t, \omega|_{S_t}, J|_{S_t})$ is K\"ahler for every $t\in I$. Denote by $\mu_t\in \mathscr{P}(M)$ the measure defined by \eqref{eq:defflow} and by $v_t$ the time--dependent vector field whose flow generates the corresponding Hamiltonian isotopy given by \cref{coro:hamilthom}. For every $t\in I$ let also denote by $T(v_t)$ and $N(v_t)$ the orthogonal projections (with respect to the K\"ahler Riemannian structure) of $v_t$ onto $TS_t$ and $NS_t$ (the normal bundle of $S_t$ in $M$), respectively. Then, $t\mapsto \mu_t$ is absolutely continuous and 
\beq |\dot \mu_t|^2=\int_{S_t}\|N(v_t)\|^2\dd \mu_t.\eeq
\end{proposition}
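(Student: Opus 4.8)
The strategy is to combine Corollary~\ref{propo:metricspeedflow} with the decomposition of the space of gradients for a measure supported on a submanifold (Proposition~\ref{propo:decograd}), exploiting in a crucial way the Hamiltonian nature of the isotopy. By Corollary~\ref{coro:hamilthom}, there is a Hamiltonian flow $\varphi_t$ with $\mu_t=(\varphi_t)_\#\mu_0$, generated by a time--dependent vector field $v_t$. Corollary~\ref{propo:metricspeedflow} then gives immediately that $t\mapsto\mu_t$ is absolutely continuous and that $|\dot\mu_t|=\|\mathsf P(v_t)\|_{L^2_{\mu_t}}$ for a.e.\ $t$, where $\mathsf P:L^2(\mu_t;TM)\to T_{\mu_t}(\mathscr P_2(M))$ is the projection onto the space of gradients. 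So the whole content of the statement is the identity $\mathsf P(v_t)=N(v_t)$ as elements of $L^2_{\mu_t}$, i.e.\ the projection onto the space of gradients coincides $\mu_t$--a.e.\ with the pointwise orthogonal projection onto the normal bundle $NS_t$.

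To prove $\mathsf P(v_t)=N(v_t)$, I would argue as follows. Since $\mathrm{supp}(\mu_t)\subseteq S_t$, Proposition~\ref{propo:decograd} gives the orthogonal splitting
\beq
T_{\mu_t}(\mathscr P_2(M))=\overline{\{\nabla g : g\in\mathscr C^\infty(S_t)\}}^{L^2_{\mu_t}}\oplus L^2(NS_t,\mu_t).
\eeq
Decompose $v_t=T(v_t)+N(v_t)$ pointwise along $S_t$; the term $N(v_t)$ already lies in $L^2(NS_t,\mu_t)\subseteq T_{\mu_t}(\mathscr P_2(M))$. Hence it suffices to show that the tangential part $T(v_t)$ is $L^2_{\mu_t}$--orthogonal to the whole space of gradients $T_{\mu_t}(\mathscr P_2(M))$, equivalently that $T(v_t)$, viewed as a (tangential) vector field on $S_t$, is divergence--free with respect to the measure $\mu_t$ on $S_t$ — here one uses Lemma~\ref{L:AGSprojection}, which characterises the orthogonal complement of the space of gradients as divergence--free fields. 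Indeed, the measure $\mu_t$ on $S_t$ is, up to the constant normalisation $\int_{S_t}\omega^{n-1}$, the volume measure of the Kähler metric $g|_{S_t}$, whose volume form is $\tfrac{1}{(n-1)!}\,\omega|_{S_t}^{\,n-1}$. The flow $\varphi_t$ being Hamiltonian preserves $\omega$, hence preserves $\omega|_{S_t}^{\,n-1}$, hence preserves $\mu_t$ along $S_t$; differentiating $\varphi_t$ restricted to $S_t$ at time $t$ shows that the generating tangential field $T(v_t)$ preserves $\mu_t$, i.e.\ $\mathrm{div}_{\mu_t}(T(v_t))=0$. Therefore $T(v_t)\perp T_{\mu_t}(\mathscr P_2(M))$, so $\mathsf P(v_t)=N(v_t)$, and
\beq
|\dot\mu_t|^2=\|\mathsf P(v_t)\|^2_{L^2_{\mu_t}}=\|N(v_t)\|^2_{L^2_{\mu_t}}=\int_{S_t}\|N(v_t)\|^2\,\dd\mu_t.
\eeq

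One technical point to be careful about: the flow $\varphi_t$ need not map $S_0$ to $S_t$ by the \emph{identity} of the underlying sets in a way that literally restricts to a flow \emph{on} $S_t$, since $\varphi_t(S_0)=S_t$ but $S_t$ is moving. The clean way to phrase the preceding paragraph is to use the isotopy directly: $\varphi_t$ carries $(S_0,\omega|_{S_0})$ to $(S_t,\omega|_{S_t})$ and $\varphi_t^*\omega=\omega$, so $\varphi_t^*(\omega|_{S_t}^{\,n-1})=\omega|_{S_0}^{\,n-1}$; hence $(\varphi_t|_{S_0})_\#(\text{vol}_{S_0})=\text{vol}_{S_t}$, and after normalisation $(\varphi_t)_\#\mu_0=\mu_t$ with $\mu_t$ the normalised Kähler volume of $S_t$ — consistent with \eqref{eq:defflow}. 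The vector field $v_t$ is genuinely a field on $M$, so the pointwise splitting $v_t=T(v_t)+N(v_t)$ along $S_t$ makes sense, and the divergence--free claim for $T(v_t)$ is the infinitesimal form of the volume--preservation just recorded; this is the step I would write out most carefully, as it is where the Hamiltonian hypothesis is genuinely used (a non--Hamiltonian isotopy would in general contribute a nonzero divergence from the tangential part, spoiling the formula).

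**Main obstacle.** The only real subtlety — and the step I expect to spend the most effort on — is establishing that the tangential component $T(v_t)$ is $\mu_t$--divergence free on $S_t$, carefully reconciling the fact that $v_t$ lives on the ambient $M$ while the normalising factor $\int_{S_t}\omega^{n-1}$ depends on $t$. Since $S_t$ is a smooth family of symplectic (in fact Kähler) submanifolds and $\varphi_t^*\omega=\omega$, this factor is in fact constant in $t$ (symplectic volume is a symplectic invariant and $\varphi_t|_{S_0}:S_0\to S_t$ is a symplectomorphism onto its image), which removes the only source of worry; after that the argument is a direct application of Lemma~\ref{L:AGSprojection} and Proposition~\ref{propo:decograd}. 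Everything else is bookkeeping with the results already established in Sections~\ref{S:OT} and~\ref{sec:Ham}.
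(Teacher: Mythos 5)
Your architecture is the right one and matches the paper's: reduce via Corollary~\ref{propo:metricspeedflow} to showing $\mathsf P(v_t)=N(v_t)$, invoke the splitting of Proposition~\ref{propo:decograd}, and then conclude by proving that the tangential part $T(v_t)$ is $\mu_t$--divergence--free. The gap is in the justification of that last step, which is in fact the entire content of the proposition.

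You argue that $\varphi_t^*\omega=\omega$ makes the flow preserve the Liouville volume of $S_t$, and that ``differentiating $\varphi_t$ restricted to $S_t$'' yields $\operatorname{div}_{\mu_t}(T(v_t))=0$. This does not work: $\varphi_{t_0+s}\circ\varphi_{t_0}^{-1}$ carries $S_{t_0}$ to the \emph{moving} submanifold $S_{t_0+s}$, not to $S_{t_0}$, so there is no flow \emph{on} $S_{t_0}$ to differentiate. The identity $(\varphi_{t_0+s}\circ\varphi_{t_0}^{-1})^*(\omega^{n-1}|_{S_{t_0+s}})=\omega^{n-1}|_{S_{t_0}}$ is simply the symplectomorphism property again; differentiating it recovers the continuity equation $\partial_t\mu_t+\operatorname{div}(v_t\mu_t)=0$ for the \emph{whole} field $v_t$, and because $\partial_t\mu_t\neq 0$ (the support moves normally) this yields no separate information on $T(v_t)$. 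Put differently, Hamiltonicity of the ambient flow plus $S_t$ being merely \emph{symplectic} would give you everything you invoked, yet the conclusion is false in that generality.

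What is actually needed --- and what the paper proves --- is that $T(v_t)$ is the Hamiltonian vector field of $h_t|_{S_t}$ on $(S_t,\omega|_{S_t})$. Equivalently, one must check that $\iota_{T(v_t)}(\omega|_{S_t})$ is exact. Writing $\iota_{T(v_t)}\omega=\iota_{v_t}\omega-\iota_{N(v_t)}\omega=\dd h_t-\iota_{N(v_t)}\omega$, this reduces to the pointwise identity $(\iota_{N(v_t)}\omega)|_{TS_t}\equiv0$, i.e.\ $\omega(N(v_t),w)=0$ for all $w\in TS_t$. This is where the K\"ahler hypothesis on $S_t$ is used and is not optional: one writes $\omega(N(v_t),w)=g(N(v_t),-Jw)$, and since $TS_t$ is $J$--invariant (being a complex submanifold), $Jw\in TS_t$ and the $g$--orthogonality $N(v_t)\perp TS_t$ kills the term. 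Your write--up never invokes the $J$--invariance of $TS_t$; the ``main obstacle'' you flag (constancy of the normalising factor) is a minor point, while the real work is the algebraic identity above. Once that is in place, $T(v_t)$ preserves $\omega^{n-1}|_{S_t}\propto\mu_t$, Lemma~\ref{L:AGSprojection} applies, and the rest is as you wrote.
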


\begin{proof}Using \cref{propo:metricspeedflow} we get that $|\dot \mu_t|^2=\|\mathsf{P}(v_t)\|_{L^2(\mu_t)}^2$ for almost every $t\in I$, where $\mathsf{P}(v_t)$ denotes the projection of $v_t$ onto
\beq \overline{\{\nabla f \colon f \in \mathscr{C}^{\infty}(M) \}}^{L^{2}(\mu_t)}
= \overline{\{\nabla g \colon g \in \mathscr{C}^{\infty}(S) \}}^{L^{2}(\mu_t)} 
\oplus
L^{2}(NS, \mu_{t}),\eeq
where this last decomposition follows from \cref{propo:decograd}. 

We will prove that $T(v_t)$ is $\mu_t$--divergence free, which implies that $\mathsf{P}(v_t)=N(v_t)$, from which the conclusion follows. To this end, denoting by $h_t:M\to M$ the time--dependent Hamiltonian corresponding to the Hamiltonian flow $\varphi_t$, it will be sufficient to show that $T(v_t)$ (a vector field on $S_t$) is the Hamiltonian vector field of $h_t|_{S_t}$. 

Let therefore $w\in TS_t$. We have
\begin{align}\dd h_t|_{S_t}(w)&=\dd h_t(w)=\omega (T(v_t)+N(v_t), w)\\
&=\omega(T(v_t)+N(v_t), -JJw)& \textrm{(using the fact that $J$ is a complex structure)}\\
&=-g(T(v_t)+N(v_t), Jw)& \textrm{(by definition of $g$)}\\
&=-g(T(v_t), Jw)&\textrm{(since $J|_{S_t}$  is a complex structure on $S_t$)}\\
&=\omega(T(v_t), w)\\
&=\omega|_{S_t}(T(v_t), w).
\end{align}
This  shows that $T(v_t)$ is the Hamiltonian vector field of $h_t|_{S_t}$ and concludes the proof.
\end{proof}

\begin{remark}\label{remnoncompact}
In the previous proposition the compactness of $M$ is used only to know that an Hamiltonian isotopy determining the family of submanifolds can be found. If we know, a priori, such an Hamiltonian isotopy (and the submanifolds of the family stay compact) the result remains of course true.
\end{remark}

\begin{lemma}\label{lemma:invert}
Let $\pi:L\to M$ be a rank--$k$ vector bundle on a Riemannian manifold $(M, g)$. Let also $F:M\times I\to L$ be a smooth map such that for every $t\in I$ the map $f_t:=F(\cdot, t):M\to L$ is a section of $L$ which is transversal to the zero section $A\hookrightarrow L$. (Using the language of \cref{def:smoothfamily} this implies that $\{S_t:=f_t^{-1}(A)\}_{t\in I}$ is a smooth family of submanifolds of $M$.) Let $v_t$ be a time--dependent vector field on $M$ such that $v_t+\partial_t$ is tangent to $\widehat{S}:=F^{-1}(A).$ For every $(x,t)\in \widehat{S}$, denoting by $N(v_t(x))$ the orthogonal projection of $v_t(x)$ on the normal space to $S_t$, we have:
\beq N(v_t(x))=-\big(\pi_2|_{f_t(x)}\circ d_xf_t|_{N_xS_t}\big)^{-1}\frac{\partial F}{\partial t}(x, t),\eeq
where, for every $a\in A$,  the linear map $\pi_2|_{a}:T_a L\simeq T_aA\oplus L_a\to L_a$ is the natural projection and $L_a:=\pi^{-1}(a)$.
\end{lemma}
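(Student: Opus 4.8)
The plan is to compute the normal component of $v_t(x)$ by differentiating the defining relation of the family $\{S_t\}$ along the flow and then inverting the isomorphism that transversality provides. First I would recall the meaning of the hypotheses at a point $(x,t)\in \widehat{S}$, i.e. $f_t(x)\in A$: since $A$ is the zero section, near $f_t(x)$ we have the canonical splitting $T_{f_t(x)}L\simeq T_{f_t(x)}A\oplus L_{f_t(x)}$, with projection $\pi_2|_{f_t(x)}$ onto the fibre $L_{f_t(x)}$. Transversality of $f_t$ to $A$ says precisely that $\pi_2|_{f_t(x)}\circ d_xf_t:T_xM\to L_{f_t(x)}$ is surjective with kernel $T_xS_t$; restricting to the orthogonal complement $N_xS_t$ (of dimension $k$, equal to $\rank L$) yields a linear isomorphism
\[
\big(\pi_2|_{f_t(x)}\circ d_xf_t\big)\big\rvert_{N_xS_t}:N_xS_t\xrightarrow{\ \sim\ }L_{f_t(x)},
\]
so the inverse on the right-hand side of the claimed formula makes sense.

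Next I would exploit the tangency hypothesis. Define $G:M\times I\to L$ by $G:=F$; by assumption the curve $s\mapsto \widehat\varphi_s(x,t)=(\varphi_{t+s}\circ\varphi_t^{-1}(x),\,t+s)$ (the integral curve of $\widehat v=v_t+\partial_t$ through $(x,t)$) stays in $\widehat S=F^{-1}(A)$. Hence the composition $s\mapsto F(\widehat\varphi_s(x,t))$ takes values in $A$, and differentiating at $s=0$ gives $d_{(x,t)}F\,(v_t(x)+\partial_t)\in T_{f_t(x)}A$. Applying the fibre projection $\pi_2|_{f_t(x)}$ kills this tangent vector:
\[
\pi_2|_{f_t(x)}\Big(d_xf_t\,(v_t(x))+\tfrac{\partial F}{\partial t}(x,t)\Big)=0,
\]
where I have used $d_{(x,t)}F\,(v_t(x)+\partial_t)=d_xf_t(v_t(x))+\tfrac{\partial F}{\partial t}(x,t)$. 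Now decompose $v_t(x)=T(v_t(x))+N(v_t(x))$ with $T(v_t(x))\in T_xS_t$. Since $T_xS_t=\ker\big(\pi_2|_{f_t(x)}\circ d_xf_t\big)$, the tangential part contributes nothing, leaving
\[
\big(\pi_2|_{f_t(x)}\circ d_xf_t\big)\big(N(v_t(x))\big)=-\tfrac{\partial F}{\partial t}(x,t).
\]
Finally, since $N(v_t(x))\in N_xS_t$, I may apply the inverse of the isomorphism displayed above to obtain exactly
\[
N(v_t(x))=-\big(\pi_2|_{f_t(x)}\circ d_xf_t|_{N_xS_t}\big)^{-1}\tfrac{\partial F}{\partial t}(x,t),
\]
which is the assertion.

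The only genuinely delicate point is the identification $T_xS_t=\ker\big(\pi_2|_{f_t(x)}\circ d_xf_t\big)$ and the fact that the restriction to the orthogonal complement is bijective; both follow from transversality and a dimension count ($\dim N_xS_t=\operatorname{codim}S_t=\rank L=\dim L_{f_t(x)}$), but one should be slightly careful that the canonical splitting $T_aL\simeq T_aA\oplus L_a$ over the zero section $A$ is being used consistently, and that $\tfrac{\partial F}{\partial t}(x,t)$, a priori an element of $T_{f_t(x)}L$, is implicitly identified with its fibre component $\pi_2|_{f_t(x)}\tfrac{\partial F}{\partial t}(x,t)\in L_{f_t(x)}$ in the final formula — which is legitimate because, by the computation above, its $T_{f_t(x)}A$-component is $-\,\pi_2$-complement of $d_xf_t(v_t(x))$ and plays no role once we project. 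Everything else is linear algebra carried out fibrewise, so I expect no real obstacle.
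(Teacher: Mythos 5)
Your argument is correct and follows essentially the same route as the paper: project the tangency relation $D_{(x,t)}F(v_t(x)+\partial_t)\in T_{f_t(x)}A$ onto the fibre via $\pi_2|_{f_t(x)}$, kill the tangential part of $v_t$ using $d_xf_t(T_xS_t)\subset T_{f_t(x)}A$, and invert the isomorphism $\pi_2|_{f_t(x)}\circ d_xf_t|_{N_xS_t}$ supplied by transversality. The only point you handle less cleanly — the identification of $\frac{\partial F}{\partial t}(x,t)$ with its fibre component — is immediate from the hypothesis that each $f_t$ is a section: the curve $t\mapsto F(x,t)$ stays in the linear fibre $\pi^{-1}(x)$, so $\frac{\partial F}{\partial t}(x,t)$ is already a vertical vector, i.e. an element of $L_{f_t(x)}$ on which $\pi_2|_{f_t(x)}$ acts as the identity.
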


\begin{proof}Assume $v_t+\partial_t$ is tangent to $\widehat{S}$ at $(x, t)$. Then $D_{(x,t)}F(v_t(x)+\partial_t)\in T_{f_t(x)}A\subset T_{f_t(x)}L$ and, in particular,
\beq \pi_2|_{f_t(x)}\left(D_{(x,t)}F(v_t(x)+\partial_t)\right)=0.\eeq
Since for every $t\in I$ the map $f_t:M\to L$ is a section, we have $\frac{\partial F}{\partial t}(x, t)\in L_{f(x_t)}$, which implies $\pi_2|_{f_t(x)}\left(\frac{\partial F}{\partial t}(x, t)\right)=\frac{\partial F}{\partial t}(x, t).$ Substituting this in the above equation gives:
\begin{align}
-\frac{\partial F}{\partial t}(x, t)&=\pi_2|_{f_t(x)}\left(D_{(x,t)}F(v_t(x))\right)\\
&=\pi_2|_{f_t(x)}\left(d_xf_t(v_t(x))\right)\\
&=\pi_2|_{f_t(x)}\left(d_xf_t(N(v_t(x))+d_xf_tT(v_t(x))\right)\\
&\label{eq:final}=\pi_2|_{f_t(x)}\left(d_xf_t(N(v_t(x))\right)&\textrm{(since $d_xf_t(T_x S_t)\subset T_{f_t(x)}A$)}.
\end{align}
The transversality assumption $f_t\pitchfork A$ implies that $\pi_2|_{f_t(x)}\circ d_xf_t|_{N_xS_t}$ is invertible and \eqref{eq:final} gives the statement.
\end{proof}

It will be useful for us to have a formulation of the previous result using vector bundles trivializations.
\begin{lemma}\label{lemma:Ncoord}Under the same assumptions and notations of \cref{lemma:invert}, let $z\in M$, $U$ be a neighborhood of $z$ and $\psi:L|_{U}\to U\times \R^k$ be a vector bundle trivialization. Denote by $\widetilde{F}:U\times I\to \R^k$ the map defined by
\beq\label{eq:Ftilde} \psi(F(x, t))=(x, \widetilde{F}(x,t))\quad \forall (x, t)\in U\times I.\eeq
For every $t\in I$ let also $\widetilde{f}_t:=\widetilde{F}(\cdot, t):U\to \R^k$.
Then, for every $(x,t)\in \widehat{S}\cap (U\times I)$ we have:
\beq N(v_t(x))=\left(d_x\widetilde{f}_t|_{N_xS_t}\right)^{-1}\frac{\partial \widetilde{F}}{\partial t}(x,t).\eeq  
\end{lemma}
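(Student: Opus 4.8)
The plan is to reduce the statement to \cref{lemma:invert} by identifying, through the trivialization $\psi$, the intrinsic projection $\pi_2|_{f_t(x)}\colon T_{f_t(x)}L\to L_{f_t(x)}$ with the fibre differentiation appearing in $d_x\widetilde f_t$ and $\partial_t\widetilde F$. The key point is that, although $\psi$ identifies $L|_U$ with $U\times\R^k$, the differential of $\psi$ at a point of the zero section $A$ does \emph{not} in general send the vertical subspace $L_a$ to $\{0\}\times\R^k$ in a way that leaves the complementary piece $T_aA$ untouched; however, what is true and sufficient here is that the vertical subspace $L_a\subset T_aL$ is always mapped isomorphically onto $\{0\}\times\R^k\subset T_{(x,0)}(U\times\R^k)$, and that $\mathrm{pr}_2\circ d\psi$ restricted to $L_a$ equals this isomorphism. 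Since we only ever feed $\pi_2$ either genuinely vertical vectors (namely $\partial_t\widetilde F$, which lies in $L_{f_t(x)}$ because $f_t$ is a section) or the image $d_xf_t(N(v_t(x)))$ which, after applying $\pi_2$, is forced to be vertical, the composition $\mathrm{pr}_2\circ d\psi\circ\pi_2^{-1}$ on the relevant subspace is just the identity under the identification $L_a\cong\R^k$.

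Concretely, first I would fix $(x,t)\in\widehat S\cap(U\times I)$ and write out both sides in the trivialization. On one hand, differentiating \eqref{eq:Ftilde} gives $d_{(x,t)}\psi\circ D_{(x,t)}F=(\mathrm{id},D_{(x,t)}\widetilde F)$, hence $\mathrm{pr}_2\circ d\psi\bigl(d_xf_t(w)\bigr)=d_x\widetilde f_t(w)$ for any $w\in T_xM$, and $\mathrm{pr}_2\circ d\psi\bigl(\tfrac{\partial F}{\partial t}(x,t)\bigr)=\tfrac{\partial\widetilde F}{\partial t}(x,t)$. On the other hand, because $\psi$ restricts to a linear isomorphism $L_a\xrightarrow{\ \sim\ }\{x\}\times\R^k$ on each fibre over the zero section, and $\tfrac{\partial F}{\partial t}(x,t)\in L_{f_t(x)}$ already lies in that fibre, the intrinsic projection $\pi_2|_{f_t(x)}$ corresponds under $d\psi$ exactly to $\mathrm{pr}_2$ on the image of $d_x f_t$ followed by this fibre identification. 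Therefore applying $\mathrm{pr}_2\circ d\psi$ to the identity $-\tfrac{\partial F}{\partial t}(x,t)=\pi_2|_{f_t(x)}\bigl(d_xf_t(N(v_t(x)))\bigr)$ from the proof of \cref{lemma:invert} (equation \eqref{eq:final}) yields
$$-\frac{\partial\widetilde F}{\partial t}(x,t)=d_x\widetilde f_t\bigl(N(v_t(x))\bigr),$$
where I also use that $d_x\widetilde f_t\bigl(d_xf_t^{-1}(T_{f_t(x)}A)\bigr)$, the image of the tangential part, lands in the subspace corresponding to $T_aA$, which is killed by the fibre projection — this is the trivialization-level avatar of the parenthetical remark ``since $d_xf_t(T_xS_t)\subset T_{f_t(x)}A$''. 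Restricting $d_x\widetilde f_t$ to $N_xS_t$, the transversality hypothesis $f_t\pitchfork A$ (equivalently, $\widetilde f_t\pitchfork\{0\}$) guarantees $d_x\widetilde f_t|_{N_xS_t}\colon N_xS_t\to\R^k$ is an isomorphism, and inverting it gives the asserted formula (the sign in the statement absorbing the minus sign, or equivalently one notes $N(v_t(x))$ here is $(d_x\widetilde f_t|_{N_xS_t})^{-1}$ applied to $-(-\partial_t\widetilde F)=\partial_t\widetilde F$ — I would double-check the sign bookkeeping against \eqref{eq:final} when writing this up).

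The only genuine subtlety — and the step I expect to need the most care — is the claim that the abstract decomposition $T_aL\cong T_aA\oplus L_a$ and its projection $\pi_2|_a$ are compatible with what the trivialization sees, i.e. that $\mathrm{pr}_2\circ d_a\psi$ restricted to $L_a$ is an isomorphism onto $\{x\}\times\R^k$ and that it is the ``same'' identification we implicitly use to read $\partial_t\widetilde F$ and the output of $d_x\widetilde f_t$ as elements of $\R^k$. This is elementary but must be stated cleanly: it follows because a vector bundle trivialization is by definition fibrewise linear, so $\psi$ maps $L_a$ linearly isomorphically onto $\{x\}\times\R^k$, and the vertical-to-vertical part of $d_a\psi$ is precisely this fibre isomorphism. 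Once that identification is pinned down, the lemma is just \cref{lemma:invert} transported through $\psi$, with no further analytic content.
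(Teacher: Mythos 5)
Your proof is correct and is exactly what the paper intends: its own proof of this lemma is the one--line remark that one ``rewrites the previous proof in coordinates,'' which is precisely what you have done, including the key (and correctly flagged) point that a fibrewise--linear trivialization identifies $\pi_2|_{a}$ with $\mathrm{pr}_2\circ d_a\psi$ at points of the zero section, since $d_a\psi$ carries $T_aA$ into $T_xU\times\{0\}$ and restricts to the fibre isomorphism on $L_a$. Your sign suspicion is also right: transporting \eqref{eq:final} through $\psi$ yields $N(v_t(x))=-\bigl(d_x\widetilde f_t|_{N_xS_t}\bigr)^{-1}\tfrac{\partial\widetilde F}{\partial t}(x,t)$, which is consistent with \cref{lemma:invert} and with the way the lemma is actually invoked (with the minus sign) in the proof of \cref{thm:metricspeedsmooth}, so the missing minus sign is a typo in the statement of \cref{lemma:Ncoord} rather than a defect of your argument.
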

\begin{proof}The proof follows easily by rewriting the previous proof in coordinates.
\end{proof}

\section{The optimal transport problem between algebraic hypersurfaces}\label{S:OTalgebraic}

We now introduce the 
 optimal transport problem  between two algebraic 
hypersurfaces adopting the dynamical formulation. 
This will produce a new metric on the space of homogeneous polynomials.

\begin{definition}[Admissible curves and their Energy]\label{def:admissible}Let $I=[0,1]$ and $\mu:P_{n,d}\to \mathscr{P}(\CP^n)$ be the map from \cref{thm:ext}. We define the set of \emph{admissible curves} to be
\beq 
\Omega:=\{\gamma:I\to P_{n,d}\,|\, \mu\circ \gamma\in \mathrm{AC}^2(I, \mathscr{P}_2(\CP^n))\}.
\eeq 
If $\gamma\in \Omega$ is an admissible curve, we write $t\mapsto \mu_t:=\mu(\gamma(t))$ and define the \emph{Energy} of $\gamma$ by
\beq \en(\gamma):=\int_{I}|\dot\mu_t|^2\dd t.\eeq
\end{definition}

It is worth stressing again that the metric speed 
$|\dot\mu_t|$, defined in \cref{S:OT}, is calculated with respect to the $W_2$-distance
as denoted by $\mathscr{P}_2(\CP^n)$.

\begin{definition}
[The inner Wasserstein distance on $P_{n,d}$]
\label{D:W2in}
We define the inner Wasserstein
distance 
$W_2^{\textrm{in}}$ on $P_{n,d}$ by
\beq 
W_2^{\textrm{in}}(p_0, p_1):=\mathrm{inf}\{\en(\gamma)^{1/2}\,|\, \gamma\in \Omega, \gamma(0)=p_0, \gamma(1)=p_1\}.
\eeq 
\end{definition}
We will prove below (\cref{thm:finite}) that the function $W_2^{\textrm{in}}$ is finite, i.e. that any two points in $P_{n,d}$ can be joined by an admissible curve, and that it is a distance. In order to do this we will first need to investigate the behaviour of the Energy on curves in $P_{n,d}\setminus \Delta_{n,d}$, as done in the next section.

Similarly to \cref{D:W2in}, for any other $q\geq  1$, over $P_{n,d}$ one can consider also the inner $W_q$ distance defined by  
$$
W_q^{\textrm{in}}(p_0,p_1)^q
: = \inf \int_I |\dot \mu_t|_q^q \, 
\dd t,
$$
where the infimum runs 
over the set of curves 
$\gamma \in AC^q(I;\mathscr{P}_q(\CP^n))$ 
such that $\gamma(0)=p_0$ 
and $\gamma(1)=p_1$.
Recall that if no exponent is specified, the metric
speed has to be considered in 
$\mathscr{P}_2(\CP^n)$.

\subsection{The Wasserstein--Hermitian structure on the set of regular hypersurfaces}\label{sec:whs}

\begin{theorem}\label{thm:metricspeedsmooth}Let $c:I\to \C[z_0, \ldots, z_n]_{(d)}\setminus D_{n,d}$ be a $\mathscr{C}^1$ curve. For every $t\in I$ denote by $\mu_t=\mu([c_t])\in \mathscr{P}(\CP^n)$ the probability measure defined by \eqref{eq:tbe} and by 
\beq \widehat{Z}(c_t):=\{z\in S^{2n+1}\,|\, c_t(z)=0\}\eeq
the zero set of $c_t$ on the sphere (this is a smooth manifold by assumption). Then
\beq\label{eq:speed} |\dot\mu_t|^2=\frac{1}{d\cdot \mathrm{vol}(S^{2n-1})}\int_{\widehat{Z}(c_t)}\frac{|\dot{c}_t(b)|^2}{\|\nabla^{\C}c_t(b)\|^2}\mathrm{vol}_{\widehat{Z}(c_t)}(\dd b),\eeq
where $\nabla^\C p(b):=(\frac{\partial p}{\partial z_0}(b), \ldots, \frac{\partial p}{\partial z_n}(b))$ denotes the ``complex gradient'' of a polynomial $p$.\end{theorem}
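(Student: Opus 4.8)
The plan is to reduce the computation of $|\dot\mu_t|^2$ to \cref{propo:ka} and \cref{lemma:Ncoord}, and then to rewrite the resulting integral over the zero set in the sphere in terms of the complex gradient. First I would set up the geometric framework on the sphere: fix $t$ and work locally near a point $z \in \widehat Z(c_t) \subset S^{2n+1}$. The family $t \mapsto \widehat Z(c_t)$ is a smooth family of real-codimension-two submanifolds of $S^{2n+1}$, cut out transversally by the section $F: S^{2n+1}\times I \to \underline{\C} = S^{2n+1}\times \C$ given by $F(b,t) = (b, c_t(b))$; the transversality is exactly the condition $c_t \notin D_{n,d}$ (the complex gradient $\nabla^{\C} c_t(b)$ does not vanish on the zero set, and since $c_t$ is homogeneous, Euler's identity shows the radial direction is not in the kernel, so the differential of $c_t$ restricted to the relevant normal directions is surjective onto $\C$). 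I would quotient out the $S^1$-action and the radial direction to relate $\widehat Z(c_t)$ to $Z([c_t]) \subset \CP^n$, picking up the normalization factor $\mathrm{vol}(S^{2n-1})$ (the orbit volume) — this is why the formula on the sphere has $\mathrm{vol}(S^{2n-1})$ where the projective formula in \cref{thm:metricspeedsmoothintro} had $\mathrm{vol}(\CP^{n-1})$.

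Next, since $\CP^n$ with the Fubini--Study structure is Kähler and each regular $Z([c_t])$ is a complex (hence symplectic and Kähler) submanifold, \cref{coro:hamilthom} gives a Hamiltonian isotopy $\varphi_t$ with $\mu_t = (\varphi_t)_\# \mu_0$, generated by a time-dependent vector field $v_t$. By \cref{propo:ka},
$$
|\dot\mu_t|^2 = \int_{Z([c_t])} \|N(v_t)\|^2 \,\dd\mu_t,
$$
where $N(v_t)$ is the normal component of $v_t$. Now the key point is \cref{lemma:Ncoord}: working in the trivialization of the trivial line bundle $\underline\C$, we have $\widetilde F(b,t) = c_t(b)$ and $\widetilde f_t = c_t$, so
$$
N(v_t(b)) = \big(d_b c_t|_{N_b \widehat Z(c_t)}\big)^{-1}\, \dot c_t(b),
$$
viewing $c_t: S^{2n+1} \to \C$. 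The restriction of $d_b c_t$ to the (real two-dimensional) normal space of $\widehat Z(c_t)$ inside $S^{2n+1}$ is a real-linear isomorphism onto $\C \cong \R^2$; I would identify this map explicitly. The normal space is spanned (after removing radial and $S^1$ directions) by $\mathrm{Re}$ and $\mathrm{Im}$ parts of the complex-gradient direction, and a direct computation using the Kähler (Hermitian) compatibility of the Fubini--Study metric shows that $d_b c_t$ on this normal space has operator behavior governed by $\|\nabla^{\C} c_t(b)\|$: concretely $\|(d_b c_t|_N)^{-1} w\| = |w|/\|\nabla^{\C}c_t(b)\|$ for $w \in \C$. Hence $\|N(v_t(b))\|^2 = |\dot c_t(b)|^2 / \|\nabla^{\C} c_t(b)\|^2$, and plugging into \cref{propo:ka} (together with $\mathrm{vol}(Z) = d\cdot\mathrm{vol}(\CP^{n-1})$ from \cref{lemma:dis}, suitably transported to the sphere) yields \eqref{eq:speed}.

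**Main obstacle.** The delicate step is the normal-component computation: identifying the inverse of $d_b c_t$ restricted to the normal bundle and checking that its norm is exactly $\|\nabla^{\C} c_t(b)\|^{-1}$. This requires carefully disentangling the real and complex structures — the normal space to a complex hypersurface is a complex line, $d c_t$ is complex-linear on it, and the Fubini--Study/round metric restricted there is the standard Hermitian metric — so that the real-linear map $d_b c_t|_N : \C \to \C$ is multiplication by a complex scalar of modulus $\|\nabla^{\C} c_t(b)\|$ (up to the homogeneity-induced radial correction handled via Euler's relation). One must also verify that the independence of the final expression on the choice of homogeneous representative $c_t$ of $[c_t]$ is consistent (this is \cref{lemma:ker}, referenced in the paper), and that passing between the sphere picture and the projective picture does not introduce spurious factors beyond the stated $\mathrm{vol}(S^{2n-1})$ versus $\mathrm{vol}(\CP^{n-1})$ discrepancy. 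Everything else — the existence of the Hamiltonian isotopy, the reduction via \cref{propo:metricspeedflow} and \cref{propo:ka}, the transversality — is either quoted or routine.
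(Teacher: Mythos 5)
Your outline invokes the same three ingredients as the paper --- \cref{propo:ka} to reduce $|\dot\mu_t|^2$ to the normal component of the Hamiltonian field, \cref{lemma:Ncoord} to compute that normal component, and Euler's identity at the end --- but packages the computation differently: you propose to work on the sphere $S^{2n+1}$ with the trivial bundle, while the paper stays on $\CP^n$, uses the trivialization $\psi_b$ of $L_{n,d}$ together with the affine chart $\phi_b$, and descends to the sphere only in the very last line via the Hopf submersion. The sphere route is tidier, since the section is simply $c_t$ itself and there is no chart to unwind; however, there is a bridge your sketch does not spell out. \cref{propo:ka} produces the normal component of $v_t$ as a vector field \emph{on $\CP^n$}, normal to $Z([c_t])$, whereas your invocation of \cref{lemma:Ncoord} with the trivial bundle over $S^{2n+1}$ computes the normal component of a field \emph{on the sphere}. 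To pass between the two one must (i) horizontally lift $v_t$ to a field $\tilde v_t$ on $S^{2n+1}$ and verify that $\tilde v_t+\partial_t$ is tangent to $\{(b,t)\,:\,c_t(b)=0\}$, so that \cref{lemma:Ncoord} applies on the sphere; and (ii) observe that the normal bundle $N\widehat Z(c_t)\subset TS^{2n+1}$ lies in the horizontal distribution, so the Hopf differential $d\pi$ carries the normal part of $\tilde v_t$ isometrically onto the normal part of $v_t$. Both points are true and standard for Riemannian submersions, but neither is automatic and they must be stated. The ``main obstacle'' you flag --- that $d_bc_t|_{N_b\widehat Z(c_t)}$ is complex multiplication of modulus $\|\nabla^\C c_t(b)\|$ once Euler's identity eliminates the radial correction --- is exactly what Step~2 of the paper's proof accomplishes via $D_0\widetilde{g}_0$ in the chart $\phi_b$; your phrasing of that step on the sphere is valid and, once the lifting is in place, a bit more direct.
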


\begin{proof}For this proof we make extensive use of the notation from \cref{Appendix A}.

\smallskip
{\bf Step 1.}
Let us observe first that $\{Z_t:=Z(c_t)\}_{t\in I}$ is a smooth family of submanifolds in $\CP^n$, in the sense of \cref{def:smoothfamily}. To this end, denote by $\sigma_t:=\sigma_{c_t}\in \Gamma(L_{n,d}, \CP^n)$ the section defined by the polynomial $c_t$ (see \cref{sec:homosec}). We define the map $F:\CP^n\times I\to L_{n,d}$ by
\beq F([z], t):=\sigma_t([z]).\eeq
The condition $c_t\notin D_{n,d}$ implies that for every $t\in I$ we have $\sigma_t\pitchfork A$ and $Z_t=\sigma_t^{-1}(A)$ is a smooth submanifold. Moreover, for every $t\in I$ the manifold $Z_t$ is a smooth complex hypersurface, hence K\"ahler. We are therefore in the setting of \cref{propo:ka}, which gives:
\beq\label{eq:ms} |\dot \mu_t|^2=\int_{Z_t}\|N(v_t)\|^2\dd \mu_t=\frac{1}{\mathrm{vol}(Z_t)}\int_{Z_t}\|N(v_t)\|^2 \omega_{\mathrm{FS}}^{n-1},\eeq
where $v_t$ is the time--dependent vector field whose flow generates the Hamiltonian isotopy given by \cref{coro:hamilthom} and $N(v_t)$ is its orthogonal projection on $NZ_t$. 

\smallskip
{\bf Step 2.}
Without loss of generality, we prove \eqref{eq:speed} for $t=0$. We denote by $p_0:=c_0
$ and $p_1:=\dot c_0$ so that $c_t=p_0+tp_1+o(t)$. Since the map associating to a polynomial $p$ the corresponding section $ \sigma_{p}$ of $L_{n,d}$ is linear (see \cref{sec:homosec}) we have $\sigma_t=\sigma_{0}+t\sigma_1+o(t),$ where $\sigma_0:=\sigma_{p_0}$ and $\sigma_1:=\sigma_{p_1}$. 

Let $[b]\in Z_0$ with $b\in S^{2n+1}$. In order to compute $N(v_0([b]))$ we use  \cref{lemma:Ncoord}. Denote by $\psi_b:L|_{U_b}\to U_b\times \C$ the trivialization given by \eqref{eq:trivib}. Then, by \eqref{eq:sectionincoord},
\beq \widetilde{F}([z], t)=\left([z], \frac{\sigma_0([z])+t\sigma_1([z])+o(t)}{\sigma_b([z])}\right)=\left([z], \frac{p_0(z)+tp_1(z)+o(t)}{\langle z, b\rangle^d}\right),\eeq
where $\sigma_b$ is the section defined in \eqref{eq:secb}. By \cref{lemma:Ncoord}, we have
\beq N(v_0([b]))=-\left(D_{[b]}\widetilde{f}_0|_{N_{[b]}Z_0}\right)^{-1}\frac{\partial \widetilde{F}}{\partial t}([b], 0), \eeq
where, according to the definition \eqref{eq:Ftilde}, 
\beq\widetilde{f}_t([z])=\frac{p_0(z)+tp_1(z)+o(t)}{\langle z, b\rangle^d}.\eeq
In order to perform the computation of the norm of $N(v_0([b]))$, we use the parametrization $\phi_b:\C^n\to U_b$ introduced in \eqref{eq:parab}. In fact, by \eqref{eq:isob}, we have
\beq\label{eq:re} \|N(v_0([b]))\|_{(T_{[b]}\CP^n, \,h_{\mathrm{FS}, [b]})}=\|D_0\phi^{-1}_bN(v_0([b]))\|_{(\C^n, \,h_{\mathrm{std}})}.\eeq
Set now $\widetilde{g}_t(v):=\widetilde{f}_t(\phi_b(v))$ so that 
\beq\label{eq:expression} \widetilde{G}(v, t):=\widetilde{F}(\phi_b(v), t)=\widetilde{g}_t(v)=\frac{p_0\left(R\left(\begin{matrix}1\\v\end{matrix}\right)\right)+tp_1\left(R\left(\begin{matrix}1\\v\end{matrix}\right)\right)+o(t)}{\left\langle R\left(\begin{matrix}1\\v\end{matrix}\right), b\right\rangle^d},\eeq
where $R$ is a unitary matrix such that $Re_0=b$. Denote also by
\beq V:=D_0\phi_b^{-1}\left(N_{[b]}S_0\right))=\left(T_0\widetilde{g}^{-1}(0)\right)^{\perp_{\mathrm{std}}}.\eeq
With these notations, by \eqref{eq:re}, we have
\beq \|N(v_0([b])\|_{(T_{[b]}\CP^n, \,h_{\mathrm{FS}, [b]})}=\left\|\left(D_0\widetilde{g}_0|_{V}\right)^{-1}\frac{\partial \widetilde G}{\partial t}(0,0)\right\|_{(\C^n, \,h_{\mathrm{std}})}.\eeq
To start with
\beq \frac{\partial \widetilde G}{\partial t}(0,0)=\frac{p_1\left(R\left(\begin{matrix}1\\0\end{matrix}\right)\right)}{\left\langle R\left(\begin{matrix}1\\0\end{matrix}\right), b\right\rangle^d}=\frac{p_1(b)}{\|b\|^2}=p_1(b),\eeq
since $Re_0=b$ and $b\in S^{2n+1}$.

Notice now that, the complex--linear operator $D_0\widetilde{g}_0:\C^n\to \C$ acts as:
\beq w=(w_1, \ldots, w_n)\mapsto \sum_{j=1}^n\frac{\partial \widetilde{g}_0}{\partial v_j}(0)w_j.\eeq
Therefore, defining
\beq \nabla^\C\widetilde{g}_0(0):=\left(\frac{\partial \widetilde{g}_0}{\partial v_1}(0), \ldots, \frac{\partial \widetilde{g}_0}{\partial v_n}(0)\right),\eeq
using the standard Hermitian structure we can write
\beq D_0\widetilde{g}_0:w\mapsto\langle w, \overline{\nabla^\C\widetilde{g}_0(0)}\rangle_{\C^n}.\eeq Consequently, since $T_0\widetilde{g}_0^{-1}(0)=\mathrm{ker}(D_0\widetilde{g}_0)=\overline{\nabla^\C\widetilde{g}_0(0)}^{\perp}$, we see that $V=\left(T_0\widetilde{g}^{-1}(0)\right)^{\perp}$ can be identified with 
\beq V=\mathrm{span}_\C\{\overline{\nabla^\C\widetilde{g}_0(0)}\}.\eeq
In particular, $D_0\widetilde{g}_0|_{V}$ can be described as the linear map
\beq D_0\widetilde{g}_0|_{V}: \lambda\cdot \overline{\nabla^\C\widetilde{g}_0(0)}\mapsto\langle\lambda\cdot \overline{\nabla^\C\widetilde{g}_0(0)}, \overline{\nabla^\C\widetilde{g}_0(0)}\rangle_{\C^n}=\lambda \|\nabla^\C\widetilde{g}_0(0)\|^2. \eeq
This implies that
\beq \left(D_0\widetilde{g}_0|_{V}\right)^{-1}\frac{\partial \widetilde G}{\partial t}(0,0)=\frac{p_1(b)}{\|\nabla^\C\widetilde{g}_0(0)\|^2}\overline{\nabla^\C\widetilde{g}_0(0)},\eeq
and, consequently:
\beq\label{eq:interN} \|N(v_0([b]))\|^2=\frac{|p_1(b)|^2}{\|\nabla^\C\widetilde{g}_0(0)\|^2}.\eeq
Given $w\in \C^n$, using \eqref{eq:expression}, for every $w\in\C^{n}$ we have
\begin{align}\label{eq:step}D_0\widetilde{g}_0w&=\langle w, \overline{\nabla^\C\widetilde{g}_0(0)}\rangle_{\C^n}\\
&=D_0p_0R\left(\begin{smallmatrix}0\\w\end{smallmatrix}\right)\\
&=\langle R\left(\begin{smallmatrix}0\\w\end{smallmatrix}\right), \overline{\nabla^{\C}p_0(b)}\rangle_{\C^{n+1}}=(*).\end{align}
Observe now that, since $Re_0=b$ and $R$ is a unitary matrix, $R\left(\begin{smallmatrix}0\\w\end{smallmatrix}\right)\in b^\perp$ for every $w\in \C^n$ and, consequently,
\beq(*)=\langle R\left(\begin{smallmatrix}0\\w\end{smallmatrix}\right), \mathrm{proj}_{b^\perp}\overline{\nabla^{\C}p_0(b)}\rangle_{\C^{n+1}}
=\langle \left(\begin{smallmatrix}0\\w\end{smallmatrix}\right), R^*\mathrm{proj}_{b^\perp}\overline{\nabla^{\C}p_0(b)}\rangle_{\C^{n+1}}.\eeq
Comparing with \eqref{eq:step}, and using the fact that $R^*$ is a unitary matrix, we have:
\beq \|\nabla^\C\widetilde{g}_0(0)\|^2=\|\mathrm{proj}_{b^\perp}\overline{\nabla^{\C}p_0(b)}\|^2.\eeq
Observe now that, for a homogeneous polynomial $p$, Euler's identity $\sum_j z_j\partial_{j}p(z)=\mathrm{deg}(p)p(z)$ can be written as
\beq \mathrm{deg}(p)p(z)=\langle z, \overline{\nabla^\C p(z)}\rangle.\eeq In particular, if $z=b$ is a zero of $p$, we have $\langle b, \overline{\nabla^\C p(b)}\rangle=0$. Consequently
\begin{align} \overline{\nabla^\C p(b)}&=\langle \overline{\nabla^\C p(b)}, b\rangle \cdot b+\mathrm{proj}_{b^\perp}\overline{\nabla^\C p(b)}\\
&=\overline{\langle b, \overline{\nabla^\C p(b)}\rangle}\cdot b+\mathrm{proj}_{b^\perp}\overline{\nabla^\C p(b)}\\
&=\mathrm{proj}_{b^\perp}\overline{\nabla^\C p(b)}.
\end{align} 
Since $\|\overline{\nabla^\C p(b)}\|=\|\nabla^\C p(b)\|,$ together with \eqref{eq:interN}, the expression \eqref{eq:ms} gives:
\beq |\dot \mu_t|^2=\frac{1}{\mathrm{vol}(Z_t)}\int_{Z_t}\frac{|\dot{c}_t(b)|^2}{\|\nabla^{\C}c_t(b)\|^2}\mathrm{vol}_{Z_t}(\dd [b]),\eeq
where in the integrand, which is a function of $[b]$, we have chosen representatives $b\in S^{n+1}$. Using the fact that the quotient map $S^{2n+1}\to \CP^n$ is a Riemannian submersion with fibers which are isometric to circles and the fact that $\mathrm{vol}(Z_t)=d\cdot \mathrm{vol}(\CP^{n-1})$ (\cref{lemma:dis}), one can immediately rewrite the last integral as in \eqref{eq:speed}. This concludes the proof.
\end{proof}
Let now $p\in \C[z_0, \ldots, z_n]_{(d)}\setminus D_{n,d}$. Motivated by \eqref{eq:speed}, we define a Hermitian form $\widehat{h}_p$ on  $T_p\C[z_0, \ldots, z_n]_{(d)}\simeq\C[z_0, \ldots, z_n]_{(d)} $ by setting:
\beq\label{eq:inte}\widehat{h}_{p}(q_1, q_2):=\int_{\widehat{Z}(p)}\frac{q_1(b)\overline{q_2(b)}}{\|\nabla^{\C}p(b)\|^2}\mathrm{vol}_{\widehat{Z}(p)}(\dd b).
\eeq 
This Hermitian form \emph{is not} positive definite, since $\widehat{h}_{p}(p,p)=0$. In fact we have the following.

\begin{lemma}\label{lemma:ker}For every $p\in \C[z_0, \ldots, z_n]_{(d)}\setminus D_{n,d}$ we have $\mathrm{ker}(\widehat{h}_p)=\mathrm{span}_{\C}\{p\}.$
\end{lemma}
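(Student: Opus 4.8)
The plan is to prove the two inclusions $\mathrm{span}_{\C}\{p\}\subseteq\ker(\widehat{h}_p)$ and $\ker(\widehat{h}_p)\subseteq\mathrm{span}_{\C}\{p\}$ separately. The first is immediate: if $q=\lambda p$ for some $\lambda\in\C$, then $q$ vanishes identically on $\widehat{Z}(p)$, so the integrand defining $\widehat{h}_p(q,q')$ in \eqref{eq:inte} is the zero function and $\widehat{h}_p(\lambda p,q')=0$ for every $q'$.

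For the reverse inclusion, suppose $q\in\ker(\widehat{h}_p)$; then in particular $\widehat{h}_p(q,q)=0$. Since $p\notin D_{n,d}$, Euler's identity together with the regularity of $Z(p)$ shows that $\nabla^{\C}p$ is nonzero on $\widehat{Z}(p)$, so the integrand $|q(b)|^2/\|\nabla^{\C}p(b)\|^2$ of $\widehat{h}_p(q,q)$ is a well-defined nonnegative continuous function on $\widehat{Z}(p)$ with vanishing integral; hence it vanishes identically, that is $q\equiv 0$ on $\widehat{Z}(p)$. Here one uses once more that $p\notin D_{n,d}$, which makes $\widehat{Z}(p)$ a smooth real submanifold of $S^{2n+1}$ whose volume measure has full support, so that a continuous function vanishing almost everywhere vanishes everywhere. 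As $q$ is homogeneous, $q$ then vanishes on the whole affine cone $\{z\in\C^{n+1}\,:\,p(z)=0\}$.

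The last step is purely algebraic. First note that $p\notin D_{n,d}$ forces $p$ to be reduced: were $p=g^2h$ with $g$ nonconstant, then $\nabla^{\C}p=2gh\,\nabla^{\C}g+g^2\nabla^{\C}h$ would vanish along $Z(g)\subseteq Z(p)$, contradicting regularity. Writing $p=p_1\cdots p_k$ with the $p_j$ irreducible and pairwise non-proportional, $q$ vanishes on each irreducible hypersurface $Z(p_j)$; by Hilbert's Nullstellensatz the ideal of functions vanishing on $Z(p_j)$ is $(p_j)$, so $p_j\mid q$ for every $j$, and coprimality gives $p\mid q$. Comparing degrees (both $p$ and $q$ lie in $\C[z_0,\dots,z_n]_{(d)}$) we conclude $q=\lambda p$ for some $\lambda\in\C$, which proves $\ker(\widehat{h}_p)\subseteq\mathrm{span}_{\C}\{p\}$.

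I expect the only genuinely delicate point to be the passage from "$q=0$ almost everywhere on $\widehat{Z}(p)$" to "$q\equiv 0$ on $\widehat{Z}(p)$": this is exactly where the hypothesis $p\notin D_{n,d}$ is essential, since it guarantees that $\widehat{Z}(p)$ is a bona fide smooth manifold carrying a volume measure of full support. The algebraic part is standard once $p$ is known to be reduced, which itself is a direct consequence of the same hypothesis.
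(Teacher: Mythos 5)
Your proof is correct, and it in fact takes a more careful algebraic route than the paper's. The analytic part coincides with the paper's: the easy inclusion $\mathrm{span}_{\C}\{p\}\subseteq\ker(\widehat{h}_p)$, and then deducing $q|_{\widehat{Z}(p)}\equiv 0$ from $\widehat{h}_p(q,q)=0$ by positivity of the integrand, nonvanishing of $\nabla^\C p$ on $\widehat{Z}(p)$, and full support of the volume measure on the smooth compact manifold $\widehat{Z}(p)$. Where you diverge is in the algebraic conclusion. The paper asserts that ``$p$ is irreducible (because $Z(p)$ is smooth)''; this is true for $n\geq 2$ (where any two positive--degree hypersurfaces in $\CP^n$ must meet, so a factorization $p=p_1p_2$ would produce a common zero, hence a singular point of $\{p=0\}$), but it is false for $n=1$, since a homogeneous binary form with distinct roots has smooth zero set yet splits into $d$ pairwise non-proportional linear factors. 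You instead prove only the weaker, uniformly valid statement that $p$ is \emph{reduced}: a repeated factor $g^2\mid p$ would force $\nabla^\C p$ to vanish along the nonempty set $Z(g)\subseteq Z(p)$, contradicting $p\notin D_{n,d}$. You then apply the Nullstellensatz to each irreducible factor $p_j$, use pairwise coprimality to get $p\mid q$, and finish by degree count. This handles $n=1$ (which the paper's argument technically does not) while recovering the paper's conclusion for $n\geq 2$. The only step requiring genuine care — passing from vanishing of the integral of a nonnegative continuous function to its pointwise vanishing — is exactly the one you flagged, and your justification is right.
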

\begin{proof}Let $q\in \mathrm{ker}(\widehat{h}_p)$. Then for all $u\in \C[z_0, \ldots, z_n]_{(d)}$ we have $\widehat{h}_{p}(q, u)=0$. In particular this holds true for $u=q$. Observe that, since $p\notin D_{n,d}$, the function $b\mapsto  \|\nabla^{\C}p(b)\|^2$ is never zero on $\widehat{Z}(p)$, otherwise $p$ would have a singular projective zero. In order for the integral in \eqref{eq:inte} to vanish, we must therefore have $|q|^2|_{\widehat{Z}(p)}\equiv 0.$ This means that $Z(p)\subseteq Z(q)$. Writing $q=q_1\cdots q_\ell$ as a product of irreducible factors, we must have $Z(p)=Z(q_j)$ for some $q_j$ and, since $p$ is irreducible (because $Z(p)$ is smooth), $q_j=c_jp $ for some $c_j\in \C$ nonzero. But then, since $\deg(q)=\deg(p)$ all the other irreducible factors of $q$ must have zero degree, which implies that $q$ is a multiple of $p$.
\end{proof}
We define now a  Hermitian structure on $P_{n,d}\setminus \Delta_{n,d}$, as follows. Denote by 
\beq \pi:\C[z_0, \ldots, z_n]_{(d)}\setminus \{0\}\to P_{n,d}\eeq 
the quotient map. 
Given $[p]\in P_{n,d}\setminus \Delta_{n,d}$ and $v_1, v_2\in T_{[p]}(P_{n,d})$, let $p\in \C[z_0, \ldots, z_n]_{(d)}$ be a representative and $q_1, q_2\in T_{p}(\C[z_0, \ldots, z_n]_{(d)})$ such that $D_p\pi q_i=v_i.$ 
We define
\beq 
\label{eq:hW}(\h)_{[p]}(v_1,
v_2):=\widehat{h}_p(q_1, q_2).
\eeq
This definition does not depend on the choice of the representatives $p$ and $q_1, q_2$. In fact, let $\lambda p\in \C[z_0, \ldots, z_n]_{(d)}$ be another representative for $[p]$, where $\lambda \in \C\setminus\{0\}$. Then $\lambda q_1$ and $\lambda q_2$ are two vectors in $T_{\lambda p}(\C[z_0, \ldots, z_n]_{(d)})$ such that $D_{\lambda p}\pi\lambda q_i=v_i$ and
\begin{align} \widehat{h}_{\lambda p}(\lambda q_1, \lambda q_2)&=\int_{\widehat{Z}(\lambda p)}\frac{\lambda q_1 \overline{\lambda q_2}}{\|\nabla^\C (\lambda p)\|^2}\mathrm{vol}_{\widehat{Z}(\lambda p)}\\
&=\int_{\widehat{Z}( p)}\frac{|\lambda|^2 q_1 \overline{ q_2}}{\|\lambda \nabla^\C p\|^2}\mathrm{vol}_{\widehat{Z}( p)}\\
&=\widehat{h}_p(q_1, q_2).\end{align}
\cref{lemma:ker} implies now that the Hermitian form $\h$ is positive definite.

\begin{definition}[The Wasserstein--Hermitian structure on the set of nonsingular hypersurfaces]\label{def:WHS} We call the form $\h$ defined in \eqref{eq:hW} the \emph{Wasserstein--Hermitian structure} on $P_{n,d}\setminus \Delta_{n,d}$.
\end{definition}

We notice that the integrand in \eqref{eq:inte} is a well defined function on $Z(p)$, therefore, using the fact that the Riemannian submersion $S^{2n+1}\to \CP^n$ restricts to a Riemannian submersion $\widehat{Z}(p)\to Z(p)$ (always with fibers that are circles), we can rewrite it as:
\begin{align}\label{eq:inte2}\widehat{h}_{p}(q_1, q_2)&=\int_{\widehat{Z}(p)}\frac{q_1(b)\overline{q_2(b)}}{\|\nabla^{\C}p(b)\|^2}\mathrm{vol}_{\widehat{Z}(p)}(\dd b)\\
&=2\pi\int_{Z(p)}\frac{q_1(b)\overline{q_2(b)}}{\|\nabla^{\C}p(b)\|^2}\mathrm{vol}_{Z(p)}(\dd b)\\
\label{lasteq}&=\frac{d\cdot \mathrm{vol}(S^{2n-1})}{d\cdot \mathrm{vol}(\CP^{n-1})}\int_{Z(p)}\frac{q_1(b)\overline{q_2(b)}}{\|\nabla^{\C}p(b)\|^2}\mathrm{vol}_{Z(p)}(\dd b).
\end{align}
The last expression reconciles with the notation adopted in \cref{sec:intro}, but the expression in \eqref{eq:inte} is more practical to work with. 

\begin{proposition}\label{thm:smooth}
    The Hermitian form $\h$ on $P_{n,d}\setminus \Delta_{n,d}$ is smooth.
\end{proposition}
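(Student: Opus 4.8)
The plan is to first show that the unprojectivized form $\widehat h_p$ of \eqref{eq:inte}, defined on the open set $\C[z_0,\ldots,z_n]_{(d)}\setminus D_{n,d}$, depends smoothly on $p$ (as a map into the space of Hermitian forms on $\C[z_0,\ldots,z_n]_{(d)}$), and then to transfer this to $\h$ on $P_{n,d}\setminus\Delta_{n,d}$ by passing through the quotient map $\pi$. The only genuine difficulty is that in \eqref{eq:inte} the domain of integration $\widehat Z(p)\subset S^{2n+1}$ varies with $p$; I would remove it by a local, smoothly varying change of variables that freezes the domain.

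Concretely, fix $p_0\notin D_{n,d}$. Then $b\mapsto p_0(b)$ is a submersion $S^{2n+1}\to\C$ along the smooth compact codimension-two submanifold $\widehat Z(p_0)$. I would apply the implicit function theorem with parameters to $G(b,v,p):=p(\exp_b(\iota_b v))$, where $b$ ranges over $\widehat Z(p_0)$, $\iota_b\colon\R^2\to N_b\widehat Z(p_0)$ is a smooth frame of the normal bundle, and $v\in\R^2\cong\C$; since $\partial_v G(b,0,p_0)$ is invertible by transversality and $\widehat Z(p_0)$ is compact, this yields a neighbourhood $U\ni p_0$ in $\C[z_0,\ldots,z_n]_{(d)}\setminus D_{n,d}$ and a smooth family of diffeomorphisms $\psi_p\colon\widehat Z(p_0)\xrightarrow{\sim}\widehat Z(p)$ ($p\in U$) with $\psi_{p_0}=\mathrm{id}$ — this is simply the parametric version of the isotopy in \cref{rem:thom}/\cref{thm:thom} (after shrinking $U$, $\psi_p$ is a diffeomorphism onto $\widehat Z(p)$, being a proper local diffeomorphism that reduces to the identity at $p=p_0$). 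Pulling \eqref{eq:inte} back along $\psi_p$ gives, for $p\in U$,
\[
\widehat h_p(q_1,q_2)=\int_{\widehat Z(p_0)}\frac{q_1(\psi_p(b))\,\overline{q_2(\psi_p(b))}}{\bigl\|\nabla^\C p(\psi_p(b))\bigr\|^2}\,J(b,p)\,\mathrm{vol}_{\widehat Z(p_0)}(\mathrm{d}b),
\]
where $J(b,p)>0$ is the Jacobian of $\psi_p$ relative to the two volume forms, smooth in $(b,p)$.

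The rest is routine: the integrand is smooth in $(b,p,q_1,q_2)$ — the $q_i$ are polynomials, $\psi_p(b)\in S^{2n+1}$ depends smoothly on $(b,p)$, and the denominator never vanishes because $\psi_p(b)\in\widehat Z(p)$ and $p\notin D_{n,d}$ force $\nabla^\C p(\psi_p(b))\neq 0$ — so, by compactness of $\widehat Z(p_0)$, all derivatives in $(p,q_1,q_2)$ are locally bounded uniformly in $b$ and one may differentiate under the integral sign freely, giving smoothness of $\widehat h$. To descend, I would use that $\pi\colon\C[z_0,\ldots,z_n]_{(d)}\setminus\{0\}\to P_{n,d}$ is a submersion admitting local smooth sections $s$; since $D_{n,d}$ is a cone, any such section over a neighbourhood $U'$ of a point of $P_{n,d}\setminus\Delta_{n,d}$ automatically avoids $D_{n,d}$, and in this chart \eqref{eq:hW} reads $\h_{[p]}(v_1,v_2)=\widehat h_{s([p])}(D_{[p]}s\,v_1,\,D_{[p]}s\,v_2)$, which is smooth because $s$ and $\widehat h$ are. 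The already-verified independence of \eqref{eq:hW} from the choices glues these local expressions into the global $\h$, proving it smooth.

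I expect the construction of the smooth family $\psi_p$ (the parametric Thom isotopy, together with the check that each $\psi_p$ is a genuine diffeomorphism onto all of $\widehat Z(p)$) to be the main step; once the domain of integration is frozen, differentiation under the integral sign and the descent through $\pi$ are standard.
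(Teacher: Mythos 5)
Your proof is correct and takes essentially the same approach as the paper: both reduce to the unprojectivized form $\widehat h$, freeze the moving domain $\widehat Z(p)$ via a locally defined smooth family of diffeomorphisms $\psi_p\colon\widehat Z(p_0)\to\widehat Z(p)$, and then pull back and differentiate under the integral sign, using that the denominator never vanishes. The only cosmetic difference is that you build $\psi_p$ by hand with the parametric implicit function theorem and a tubular neighbourhood, whereas the paper simply invokes Ehresmann's fibration theorem for the proper submersion $\pi_1\colon\widehat V\to\C[z_0,\dots,z_n]_{(d)}\setminus D_{n,d}$ to obtain the same local trivialization.
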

\begin{proof}Put $\Omega=\mathbb{C}[z_0,\dots,z_n]_{(d)}\setminus D_{n,d}$. Then 
    it is clear that it is sufficient to show that the form 
    $$\widehat{h}:\Omega \times \mathbb{C}[z_0,\dots z_n]_{(d)}^2 \longrightarrow \mathbb{C}, \quad (p,q_1,q_2) \mapsto \widehat{h}_p(q_1,q_2),$$ is smooth with respect to all its variables. Consider the solution manifold on the sphere:
    $$\widehat{V}=\Big{\{}(p,x)\in \Omega \times S^{2n+1}: x \in \widehat{Z}(p) \Big{\}}.$$ It is a smooth manifold equipped with the projection $\pi_1:\widehat{V}\to \Omega$ on the first factor. Now $\pi_1$ is proper (smooth with compact fibers) and by the Ehresmann Fibration Theorem is a locally trivial fibration. In other words, around any point  $p_0\in \Omega$ we can find a small open set $U$ and a diffeomorphism $\Phi: \pi_1^{-1}(U)\longrightarrow U \times \widehat{Z}(p_0)$ mapping the fibers of $\pi_1$ to the fibers of the projection on $U$.  In particular the integration domains vary smoothly together with the 
    volume forms $p\mapsto \textrm{vol}_{\widehat{Z}(p)}$. Indeed, in $U$, the maps  $\Psi_p:=\Phi^{-1}(p,\cdot):\widehat{Z}(p_0) \longrightarrow \widehat{Z}(p)$ give a smooth family of diffeomorphisms. Defining $\sigma_p:=\Psi_p^*\operatorname{vol}_{\widehat{Z}(p)}$ 
    we get a smooth family of volume forms on $\widehat{Z}(p_0)$
    such that
$$\widehat{h}_p(q_1,q_2)=\int_{\widehat{Z}(p_0)} \dfrac{q_1 \overline{q}_2}{\|\nabla^{\mathbb{C}}p \|^2 \circ \Psi_p} \sigma_p.$$
This implies the required smoothness.
\end{proof}

\begin{corollary}\label{coro:energy}Let $\gamma:I\to P_{n,d}\setminus \Delta_{n,d}$ be a $\mathscr{C}^1$ curve. Then $\gamma$ is admissible in the sense of \cref{def:admissible} and, denoting by $t\mapsto \mu_t:=\mu(\gamma(t))$, we have
\beq \en(\gamma) =\frac{1}{d\cdot \mathrm{vol}(\CP^{n-1})}\int_{0}^1(\h)_{\gamma(t)}(\dot \gamma(t), \dot \gamma(t))\dd t.\eeq
\end{corollary}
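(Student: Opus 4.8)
The plan is to deduce the corollary directly from \cref{thm:metricspeedsmooth}, so the only real work is to pass from a curve in $P_{n,d}\setminus\Delta_{n,d}$ to a curve in the space of polynomials and then to identify the integrand of \eqref{eq:speed} with the Wasserstein--Hermitian form $\h$. First I would lift $\gamma$: the quotient map $\pi:\C[z_0,\ldots,z_n]_{(d)}\setminus\{0\}\to P_{n,d}$ is a principal $\mathbb{C}^{*}$--bundle, and since $I$ is contractible the pulled--back bundle $\gamma^{*}\big(\C[z_0,\ldots,z_n]_{(d)}\setminus\{0\}\big)\to I$ is trivial, hence admits a $\mathscr{C}^{1}$ section. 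Composing it with the bundle map gives a $\mathscr{C}^{1}$ curve $c:I\to \C[z_0,\ldots,z_n]_{(d)}$ with $\pi\circ c=\gamma$; as $\gamma(I)\cap\Delta_{n,d}=\emptyset$ we get $c(I)\cap D_{n,d}=\emptyset$. (Alternatively, since everything below is local in $t$, one may cover $I$ by finitely many subintervals over which a $\mathscr{C}^{1}$ local section of $\pi$ exists and patch the conclusions.)

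Now \cref{thm:metricspeedsmooth} applies to $c$. It shows that the curve $t\mapsto\mu_t:=\mu([c_t])=\mu(\gamma(t))$ is admissible, i.e. $\mu\circ\gamma\in\mathrm{AC}^{2}(I,\mathscr{P}_2(\CP^n))$ so $\gamma\in\Omega$, and that for a.e. $t$
\beq
|\dot\mu_t|^2=\frac{1}{d\cdot\mathrm{vol}(S^{2n-1})}\int_{\widehat{Z}(c_t)}\frac{|\dot c_t(b)|^2}{\|\nabla^{\C}c_t(b)\|^2}\,\mathrm{vol}_{\widehat{Z}(c_t)}(\dd b)=\frac{1}{d\cdot\mathrm{vol}(S^{2n-1})}\,\widehat{h}_{c_t}(\dot c_t,\dot c_t),
\eeq
the second equality being the definition \eqref{eq:inte} of $\widehat{h}$. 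Since $D_{c_t}\pi(\dot c_t)=\dot\gamma(t)$, i.e. $\dot c_t$ is a lift of $\dot\gamma(t)$, the definition \eqref{eq:hW} of $\h$ gives $\widehat{h}_{c_t}(\dot c_t,\dot c_t)=\h_{\gamma(t)}(\dot\gamma(t),\dot\gamma(t))$ (independence of the lift being precisely the well--definedness of $\h$). Integrating the identity $|\dot\mu_t|^2=\tfrac{1}{d\cdot\mathrm{vol}(S^{2n-1})}\,\h_{\gamma(t)}(\dot\gamma(t),\dot\gamma(t))$ over $[0,1]$ gives $\en(\gamma)$ in the claimed form, once the constant is rewritten through the chain \eqref{eq:inte2}--\eqref{lasteq}, which converts the $\widehat{Z}(p)\subset S^{2n+1}$ normalization of $\widehat{h}$ into the $Z(p)\subset\CP^n$ normalization used in the statement (using that $S^{2n+1}\to\CP^n$ restricts to a circle bundle $\widehat{Z}(p)\to Z(p)$ and that $\mathrm{vol}(S^{2n-1})=2\pi\,\mathrm{vol}(\CP^{n-1})$), turning $1/(d\cdot\mathrm{vol}(S^{2n-1}))$ into $1/(d\cdot\mathrm{vol}(\CP^{n-1}))$.

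There is essentially no obstacle here: all the analytic content — the Hamiltonian isotopy of \cref{coro:hamilthom} and the explicit computation of $\|N(v_t)\|^2$ via \cref{propo:ka} and \cref{lemma:Ncoord} — is already packaged in \cref{thm:metricspeedsmooth}, so the corollary is bookkeeping. The only points that require a little care are the construction of a global $\mathscr{C}^{1}$ lift of $\gamma$ (or the reduction to local lifts), the trivial remark that $\mu\circ c=\mu\circ\gamma$ since $\mu$ is defined on $P_{n,d}$ and $[c_t]=\gamma(t)$, and keeping the $S^{2n+1}$ versus $\CP^n$ normalization factors straight.
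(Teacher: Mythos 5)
Your proof follows the same route as the paper's: pick a lift $c$ of $\gamma$ to $\C[z_0,\ldots,z_n]_{(d)}\setminus D_{n,d}$, apply \cref{thm:metricspeedsmooth}, recognize the integrand as $\widehat{h}_{c(t)}(\dot c(t),\dot c(t))$ via \eqref{eq:inte}, and then pass to $\h$. The only genuine addition in your write-up is the careful justification that a global $\mathscr{C}^{1}$ lift exists (via triviality of the pulled-back $\C^{*}$-bundle over the contractible interval $I$); the paper simply takes the lift for granted, so your extra care there is welcome.

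There is, however, a bookkeeping problem at the final normalization step. You invoke \eqref{eq:hW} to assert $\widehat{h}_{c_t}(\dot c_t,\dot c_t)=\h_{\gamma(t)}(\dot\gamma(t),\dot\gamma(t))$ (exact equality), which leads you to $\en(\gamma)=\frac{1}{d\cdot\mathrm{vol}(S^{2n-1})}\int_0^1\h_{\gamma(t)}(\dot\gamma(t),\dot\gamma(t))\,\dd t$. That is not the claimed formula, which has $\mathrm{vol}(\CP^{n-1})$ in the denominator, and $\mathrm{vol}(S^{2n-1})=2\pi\,\mathrm{vol}(\CP^{n-1})\neq\mathrm{vol}(\CP^{n-1})$. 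Your remark that the chain \eqref{eq:inte2}--\eqref{lasteq} ``converts'' the constant does not repair this: that chain expresses $\widehat{h}_p$ as $2\pi$ times the integral over $Z(p)$; if $\h$ were literally $\widehat{h}$ (as \eqref{eq:hW} says), changing the factor in front would change the identity, not preserve it. The corollary is only correct if $\h$ is understood as the integral over $Z(p)\subset\CP^n$ (the version in \eqref{eq:herintro} and used in \cref{thm:metricspeedsmoothintro}), i.e. $\h_{[p]}(v_1,v_2)=\widehat{h}_p(q_1,q_2)/(2\pi)$ rather than $\h_{[p]}=\widehat{h}_p$ as \eqref{eq:hW} literally states. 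The paper's own proof tacitly uses that convention when it inserts the factor $\frac{d\cdot\mathrm{vol}(S^{2n-1})}{d\cdot\mathrm{vol}(\CP^{n-1})}$ ``by \eqref{lasteq},'' so your confusion is inherited from a genuine $2\pi$ slip in \eqref{eq:hW}. To make your argument airtight, you should either redefine $\h$ as in \eqref{eq:herintro}, or keep $\h=\widehat{h}$ and change the denominator in the corollary's conclusion to $d\cdot\mathrm{vol}(S^{2n-1})$; as written, your sentence about ``rewriting the constant'' asserts an equality that does not hold.
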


\begin{proof}Let $c:I\to \C[z_0, \ldots, z_n]_{(d)}$ be a smooth curve lifting $\gamma$ (i.e. such that $[c(t)]=\gamma(t)$). Then, by \cref{thm:metricspeedsmooth}, we have
\begin{align}\en(\gamma)&=\int_{0}^1|\dot\mu_t|^2\dd t\\
&=\frac{1}{d\cdot \mathrm{vol}(S^{2n-1})}\int_{0}^{1}\widehat{h}_{c(t)}(\dot c(t), \dot c(t))\dd t\\
&=\frac{1}{d\cdot \mathrm{vol}(S^{2n-1})}\int_{0}^1 \frac{d\cdot \mathrm{vol}(S^{2n-1})}{d\cdot \mathrm{vol}(\CP^{n-1})}(\h)_{\gamma(t)}(\dot\gamma(t), \dot \gamma(t))\dd t\quad \textrm{(by \eqref{lasteq})},\\
&=\frac{1}{d\cdot \mathrm{vol}(\CP^{n-1})}\int_{0}^1 (\h)_{\gamma(t)}(\dot\gamma(t), \dot \gamma(t))
\end{align}
where the last identity follows from the definition of the Wasserstein--Hermitian structure and the fact that $D_{c(t)}\pi\dot c(t)=\dot \gamma(t)$ for every $t\in I$. 
\end{proof}

\begin{remark}In the case $d=1$ the discriminant $\Delta_{n,1}$ is empty and the Wasserstein metric coincides with the Fubini--Study metric (up to multiples). To see this, we identify $\CP^n\simeq (\CP^n)^*$ and observe that the Fubini--Study metric is $U(n+1)$-invariant; moreover, since the action of $U(n)=\mathrm{Stab}_{[e_0]}$ on $T_{[e_0]}\CP^n\simeq U(n)$ is the standard action, this action is irreducible. In particular, by Schur's Lemma, the Fubini--Study metric is the unique $U(n+1)$--invariant Riemannian metric on $\CP^n$ (up to multiples). Since our Wasserstein--Hermitian structure is $U(n+1)$--invariant, it must therefore be a multiple of the Fubini--Study one.
\end{remark}

\subsection{The Wasserstein--K\"ahler structure}\label{sec:WK}In this section we prove that the imaginary part of the Wasserstein--Hermitian structure $\h$ from  \cref{def:WHS} is a closed $2$--form, and therefore  $(P_{n,d}\setminus\Delta_{n,d}, \h)$ is a K\"ahler manifold.

Before proceeding, let us recall the notion of \emph{fiber integral} of a differential form. Let $\pi:W\to U$ be a proper submersion with $f$--dimensional fibers and $\alpha\in \Omega^a(W)$ be a differential form. The fiber integral of $\alpha$, denoted by $\pi_*\alpha\in \Omega^{a-f}(U)$, is the differential form defined as follows.
Given $u\in U$ and $v_1, \ldots, v_{a-f}\in T_uU$, we first choose vector fields $\tilde{v}_1, \ldots, \tilde{v}_{a-f}$ on $W$ along $\pi^{-1}(u)$ such that $D\tilde{v}_j=v_j$ for every $j=1, \ldots, a-f.$  (These vector fields exist, since $\pi$ is a submersion; the construction will not depend on this choice.) Then we consider the interior product $\iota_{\tilde{v}_1}\cdots \iota_{\tilde{v}_{a-f}}\alpha$, which is a top form on $\pi^{-1}(u)$, and we set
$$(\pi_*\alpha)_u(v_1, \ldots, v_{a-f}):=\int_{\pi^{-1}(u)} \iota_{\tilde{v}_1}\cdots \iota_{\tilde{v}_{a-f}}\alpha.$$
This defines a smooth differential form on $U$.

Let now $V=\{(p, z)\,|\, p(z)=0\}\subset P_{n,d}\times \CP^n$ be the solution variety, together with the restrictions 
$$\pi_1:V\to P_{n,d}\quad \textrm{and}\quad \pi_{2}:V\to \CP^n$$
of the projections on the two factors. Set $W:=V\setminus \pi_{1}^{-1}(\Delta_{n,d})$ and $U=P_{n,d}\setminus \Delta_{n,d}.$ We will apply the previous construction to  $\pi_2^*\omega_{\mathrm{FS}}^n/n!\in \Omega^{n,n}(W)\subset \Omega^{2n}(W)$ (note that this is the pull--back of the volume form of $\CP^n$).

\begin{theorem}[The Wasserstein--K\"ahler structure]\label{thm:WK}Let $\h$ be the Wasserstein--Hermitian structure and let $\sigma:=\mathrm{Im}(\h)\in \Omega^{2}(P_{n,d}\setminus \Delta_{n,d}).$
Then
\begin{equation}\label{eq:sigma}\sigma=(\pi_1)_*\pi_2^*\left(\frac{\omega_{FS}^n}{n!}\right)\in \Omega^{1,1}(P_{n,d}\setminus \Delta_{n,d}).\end{equation}
In particular $(P_{n,d}\setminus\Delta_{n,d}, \h)$ is a K\"ahler manifold.
\end{theorem}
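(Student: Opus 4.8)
The plan is to prove the displayed identity \eqref{eq:sigma} by a pointwise evaluation of the fiber integral, and then to read off the K\"ahler property from the fact that integration along the fiber commutes with $d$. \emph{Step 1 (the fiber integral is a closed $(1,1)$--form).} Since $V$ is compact, $\pi_1\colon V\to P_{n,d}$ is proper; its critical locus is $\{(p,z)\in V\,:\,\nabla^{\C}p(z)=0\}$, which is exactly $\pi_1^{-1}(\Delta_{n,d})$. Hence $\pi_1\colon W\to U$ (with $W=V\setminus\pi_1^{-1}(\Delta_{n,d})$, $U=P_{n,d}\setminus\Delta_{n,d}$) is a proper holomorphic submersion whose fibers are the compact complex $(n-1)$--folds $\{p\}\times Z(p)$, so $(\pi_1)_*$ is well defined on $\Omega^\bullet(W)$, sends smooth forms to smooth forms, and commutes with $d$ (the fibers are closed, so there is no boundary term). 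Because $\omega_{\mathrm{FS}}$ is closed, $\pi_2^*(\omega_{\mathrm{FS}}^n/n!)$ is closed, so $\eta:=(\pi_1)_*\pi_2^*(\omega_{\mathrm{FS}}^n/n!)$ is a closed smooth $2$--form on $U$; and since $\pi_1,\pi_2$ are holomorphic and $\omega_{\mathrm{FS}}^n/n!$ is of type $(n,n)$, integration over the complex $(n-1)$--dimensional fibers makes $\eta$ of type $(1,1)$.

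\emph{Step 2 (pointwise identification).} Fix $[p]\in U$ and $v_1,v_2\in T_{[p]}U$ represented by $q_1,q_2$ modulo $\C p$. The fiber of $\pi_1$ over $[p]$ is $Z(p)$, and for each $z\in Z(p)$ one lifts $v_i$ to $\tilde v_i(z)=(q_i,w_i(z))\in T_{(p,z)}V$ with $D\pi_2(\tilde v_i(z))=w_i(z)$; differentiating the defining equation of $V$ along $\tilde v_i$ yields the linear constraint $q_i(z)+D_z p\,[w_i(z)]=0$, and choosing $w_i(z)$ in the Fubini--Study normal line $N_zZ(p)$ pins it down. This is precisely the normal lift whose Fubini--Study norm is computed in the proof of \cref{thm:metricspeedsmooth}; polarizing that computation gives
\[
h_{\mathrm{FS},z}\bigl(w_1(z),w_2(z)\bigr)=c\cdot\frac{q_1(z)\,\overline{q_2(z)}}{\|\nabla^{\C}p(z)\|^2},
\]
with $c$ the constant relating the sphere and projective normalizations in \eqref{eq:inte2}. (As in \cref{thm:smooth} one may carry this out on the sphere bundle $\widehat V$ to avoid the scaling ambiguity, and independence of the chosen representatives is checked exactly as for $\h$.)

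\emph{Step 3 (evaluation of the integral).} Since $w_1(z),w_2(z)\in N_zZ(p)$ and the ambient volume form splits orthogonally, $\omega_{\mathrm{FS}}^n/n!\,|_z=\mathrm{vol}_{N_zZ(p)}\wedge\mathrm{vol}_{Z(p)}|_z$, contracting $\pi_2^*(\omega_{\mathrm{FS}}^n/n!)$ with $\tilde v_1,\tilde v_2$ and restricting to the fiber leaves $\bigl(\iota_{w_1(z)}\iota_{w_2(z)}\mathrm{vol}_{N_zZ(p)}\bigr)\,\mathrm{vol}_{Z(p)}|_z$; and on the complex line $N_zZ(p)$ the area form $\mathrm{vol}_{N_zZ(p)}$ is, with the complex orientation and the sign convention under which $\sigma=\mathrm{Im}(\h)$, the fundamental form of $h_{\mathrm{FS}}$, so $\iota_{w_1(z)}\iota_{w_2(z)}\mathrm{vol}_{N_zZ(p)}=\mathrm{Im}\,h_{\mathrm{FS},z}(w_1(z),w_2(z))$. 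Integrating over $Z(p)$ and using Step 2 together with \eqref{eq:inte2} and the definition of $\h$,
\[
\eta_{[p]}(v_1,v_2)=\int_{Z(p)}\mathrm{Im}\!\left(c\,\frac{q_1\overline{q_2}}{\|\nabla^{\C}p\|^2}\right)\mathrm{vol}_{Z(p)}=\mathrm{Im}\,\widehat h_p(q_1,q_2)=\sigma_{[p]}(v_1,v_2),
\]
which proves \eqref{eq:sigma}.

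\emph{Step 4 (K\"ahler).} By Steps 1 and 3 the form $\sigma=\mathrm{Im}(\h)$ coincides with the closed $(1,1)$--form $\eta$, hence $d\sigma=0$; since $\h$ is a positive definite Hermitian metric by \cref{lemma:ker}, $\sigma$ is its fundamental form, and a Hermitian metric with closed fundamental form is K\"ahler, so $(P_{n,d}\setminus\Delta_{n,d},\h)$ is K\"ahler. The main obstacle is Steps 2--3: getting the normal lift, the constant $c$, the complex orientations and the sign convention mutually consistent. However the genuine analytic content there (the Fubini--Study norm of the normal lift) is already supplied by \cref{thm:metricspeedsmooth}, so that part is essentially bookkeeping, while Step 1 is standard differential geometry of integration along the fiber.
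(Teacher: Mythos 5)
Your proof is correct and follows essentially the same route as the paper: both lift the tangent vectors $q_i$ to normal vector fields along the fiber $Z(p)$, observe that these lifts annihilate the fiber directions, and contract $\pi_2^*(\omega_{\mathrm{FS}}^n/n!)$ to obtain $\mathrm{Im}(\widehat h_p)$, with closedness coming from commutation of fiber integration with $d$; where you split $\omega_{\mathrm{FS}}^n/n!$ orthogonally as $\mathrm{vol}_{N_zZ(p)}\wedge\mathrm{vol}_{Z(p)}$, the paper instead expands $\iota_X\iota_Y\beta^n$ algebraically and kills the extra term by the vanishing of $\iota_{\tilde q_i}\beta$ on the fiber, but this is the same linear algebra. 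One small inaccuracy to note: the critical locus of $\pi_1$ is $\{(p,z):\nabla^\C p(z)=0\}$, which is a \emph{proper} subset of $\pi_1^{-1}(\Delta_{n,d})$ (a singular hypersurface still has regular points), though your actual use of this --- that $\pi_1\colon W\to U$ is a proper holomorphic submersion --- remains correct since every $(p,z)\in W$ has $\nabla^\C p(z)\neq 0$.
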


\begin{proof}Let $p\in P_{n,d}\setminus\Delta_{n,d}$ and $q_1, q_2\in T_pP_{n,d}.$ In order to compute $\sigma_p(q_1, q_2)$, we need to lift $q_1, q_2$ to vector fields on $V$ along $\pi_1^{-1}(p)\simeq Z(p)$. Working in homogenous coordinates, since $TV=\{(\dot p, \dot z)\,|\, \dot p(z)+D_zp\dot z=0\}$, it is immediate to see that the vector fields defined by
$$\tilde{q}_i(p, z):=\left(q_i, -q_i(z)\xi(z)\right), \quad \textrm{where}\quad \xi(z):=\frac{\overline{\nabla^\C p(z)}}{\|\nabla^\C p(z)\|},$$
are two such lifts.

Now, recall the following basic fact from differential geometry. Given a $2$--form $\beta$ and two vector fields $X, Y$, we have
\beq\label{eq:inner}\iota_X\iota_Y \beta^n=n\beta(X, Y)\beta^{n-1}+n(n-1)\iota_X\wedge\iota_Y\wedge \beta^{n-2}.\eeq
In the case $\beta=\pi_2^*\omega_{\mathrm{FS}}, $ we have 
$$\iota_{\tilde{q}_i}\beta=\omega_{\mathrm{FS}}(-q_i(z)\xi(z), \cdot).$$
Since $\xi(z)\perp T_zZ(p),$ it follows that $\omega_{\mathrm{FS}}(-q_i(z)\xi(z), \tau)=\mathrm{Im}(h_{\mathrm{FS}}(-q_i(z)\xi(z), \tau))=0$ for every $\tau\in T_{z}Z(p)$, i.e.
\beq\label{eq:vanish}\iota_{\tilde{q}_i}\beta|_{\pi^{-1}(p)}=0.\eeq
In particular, using \eqref{eq:inner} and \eqref{eq:vanish},
\begin{align}\sigma_{p}(q_1, q_2)&=\int_{\pi_2^{-1}(p)}\frac{1}{n!}\iota_{\tilde q_1}\iota_{\tilde q_2}\beta^n\\
&=\frac{n}{n!}\int_{Z(p)}\omega_{\mathrm{FS}}(-q_1(z)\xi(z), -q_2(z)\xi(z))\omega_{\mathrm{FS}}^{n-1}\\
&=\int_{Z(p)}\frac{\mathrm{Im}(q_1(z)\overline{q_2(z)})}{\|\nabla^\C p(z)\|^2}\frac{\omega_{\mathrm{FS}}^{n-1}}{(n-1)!}\\
&=\int_{Z(p)}\frac{\mathrm{Im}(q_1(z)\overline{q_2(z)})}{\|\nabla^\C p(z)\|^2}\mathrm{vol}_{Z(p)}(\dd z)\\
&=\mathrm{Im}(\h_p(q_1, q_2)),
\end{align}
which proves \eqref{eq:sigma}. The fact that $\sigma$ is closed follows now from the fact that integration along the fibers commutes with exterior derivative.
\end{proof}

%\begin{remark}[A Weil--Petersson type metric]The K\"ahler metrics on the parameter space of a family of algebraic varieties which can be 
%obtained as pushforwards of objects on the total space of the family are recurring objects in complex algebraic geometry, and they are usually called \emph{Weil--Petersson type metrics}. The Weil–Petersson metric on the moduli space of curves can be obtained in this
%way, and a version for a family of Calabi–Yau varieties appears for instance in [ST12, Tos10],
%also under the name of Weil–Petersson type metric.
%\end{remark}

%%%%%%%%%%%%%%%%%%%%%%%%%%%%%%%%%%%%%%%%%%%%%%%%%%%%%%%%%%%%%%%%
\subsection{Properties of $W_2^{\textrm{in}}$}We are now in the position of proving that for any two points $p_{0},p_{1} \in P_{n,d}$, 
the function $W_2^{\textrm{in}}(p_{0},p_{1})$ is finite and it defines a distance. We prove first some auxiliary results, using tools from semialgebraic geometry (see \cref{def:semialg}).

\begin{lemma}\label{lemma:reparametrization}Let $f:[0, r]\to \R^\ell$ be a continuous semialgebraic curve. Then there exists $m\in \mathbb{N}$ and $0<r_0\leq r$ such that the curve $\widetilde{f}(s):=f(s^m)$ is $\mathscr{C}^1$ on $[0, r_0]$.
\end{lemma}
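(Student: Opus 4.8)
The plan is to work coordinate by coordinate and invoke the Puiseux expansion of continuous semialgebraic functions of one variable. Write $f=(f_1,\dots,f_\ell)$, so each $f_i:[0,r]\to\R$ is continuous and semialgebraic. First I would observe that it is enough to produce, for each $i$, a positive integer $N_i$ and a radius $\rho_i>0$ such that $f_i$ has on $[0,\rho_i)$ a representation $f_i(t)=\sum_{k\ge0}a_{i,k}\,t^{k/N_i}$ by a Puiseux series \emph{with nonnegative exponents} converging there. Indeed, taking $m:=\mathrm{lcm}(N_1,\dots,N_\ell)$, each exponent $mk/N_i$ is a nonnegative integer, so $\widetilde f_i(s)=f_i(s^m)=\sum_{k\ge0}a_{i,k}\,s^{mk/N_i}$ is an honest real power series in $s$, convergent on a neighbourhood of $0$; hence $\widetilde f=(\widetilde f_1,\dots,\widetilde f_\ell)$ is real-analytic, in particular $\mathscr{C}^1$, on $[0,r_0]$ once $r_0\le r$ is chosen small enough for all these series to converge on $[0,r_0]$.

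Next I would establish the Puiseux representation. Fix $i$ and consider the graph $\Gamma_i=\{(t,f_i(t)):0<t<r\}$, a semialgebraic subset of $\R^2$ of dimension $1$; being of dimension $<2$, it is contained in a real algebraic curve, so there is a nonzero $Q\in\R[t,y]$ with $Q(t,f_i(t))=0$ for $0<t<r$. Dividing $Q$ by the highest power of $t$ that divides it, we may assume $Q(0,y)\not\equiv0$; moreover $\deg_y Q\ge1$, since otherwise $Q\in\R[t]$ would vanish on $(0,r)$ and hence be zero. Now the Newton--Puiseux theorem describes the branches of $Q(t,y)=0$ near $t=0^+$ as finitely many convergent Puiseux series $\sum_{k\ge k_0}a_k\,t^{k/N}$ (with $k_0\in\Z$, $N\ge1$). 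Two distinct branches differ by a nonzero Puiseux series and so disagree on a punctured neighbourhood of $0$; since $f_i$ is a continuous real root of $Q(t,\cdot)$, writing a small interval $(0,\rho_i)$ as the finite disjoint union of the relatively closed sets on which $f_i$ equals each branch and using connectedness shows that $f_i$ agrees on $(0,\rho_i)$ with a single branch $\sum_{k\ge k_0}a_k\,t^{k/N_i}$ (necessarily with real coefficients, as $f_i$ is real-valued). Finally, continuity of $f_i$ at $0$ forces boundedness near $0$, hence $k_0\ge0$: this is the required expansion, and it extends continuously to $t=0$ with value $f_i(0)$.

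The only non-elementary input in this argument is the Newton--Puiseux description of the branches of a plane algebraic curve (equivalently, the Puiseux-expansion property of one-variable semialgebraic functions, which one may also cite directly from \cite{BCR}); the rest is a dimension count, a connectedness argument, and bookkeeping with exponents. So I do not expect a genuine obstacle — the single point that must be gotten right is that boundedness rules out negative exponents, so that after the substitution $t=s^m$ one lands in a genuine (nonnegative-exponent) power series rather than a Laurent series.
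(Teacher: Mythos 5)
Your argument is correct and rests on the same key ingredient as the paper's proof — a Puiseux-type expansion of each one-variable continuous semialgebraic component $f_i$ near $t=0$, followed by a reparametrization $t=s^m$ — but you develop it in a more self-contained way and make a different choice of $m$. The paper simply invokes the known fact that each $f_i$ admits a representation $f_i(t)=f_i(0)+t^{1/q_i}g_i(t)$ with $g_i$ of class $\mathscr{C}^1$ near $0$, and then takes $m=\max_i q_i$; since $m/q_i\geq 1$, the map $s\mapsto s^{m/q_i}$ is $\mathscr{C}^1$ at $0$ even though $m/q_i$ need not be an integer, which is all the lemma requires. You instead reconstruct the full convergent Puiseux series from Newton--Puiseux (containing the graph in a plane algebraic curve, isolating a single real branch via the connectedness argument, and discarding negative exponents by boundedness), and then take $m=\operatorname{lcm}(N_1,\dots,N_\ell)$ so every exponent becomes a nonnegative integer. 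The payoff of your choice is that $\widetilde f$ comes out real-analytic near $0$, not merely $\mathscr{C}^1$, and you do not need to cite the semialgebraic Puiseux lemma in any particular packaged form; the cost is a possibly much larger $m$, which is irrelevant for the statement. Both routes are valid.
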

\begin{proof}Denote by $(f_1, \ldots, f_\ell)$ the components of $f$. Since each of them is continuous and semialgebraic, for every $i$ there exists $q_i\in \mathbb{N}$, $r_i\leq r$ and a $\mathscr{C}^1$ function $g_i:[0, r_i]\to \R$ such that:
\beq f_i(t)=f_i(0)+t^{\frac{1}{q_i}}g_i(t)\quad \textrm{for all}\quad 0\leq t\leq r_i.\eeq
Letting $m=\max_i q_i$ and $r_0=\min_i r_i$ we see that
\beq f_i(s^m)=f_i(0)+t^{\frac{m}{q_i}}g_i(s^m)\quad \textrm{forall}\quad 0\leq t\leq r_0.\eeq
Since $\frac{m}{q_i}\geq 1$ and $g_i$ is $\mathscr{C}^1$, this implies that $\widetilde{f}$ is $\mathscr{C}^1$ on $[0, r_0].$
\end{proof}

\begin{lemma}\label{lemma:selection}Let $\gamma:[0,1]\to P_{n,d}$ be a continuous semialgebraic curve such that $\gamma(0)\in \Delta_{n,d}$ and $\gamma(t)\in P_{n,d}\setminus \Delta_{n,d}$ for all $t\in (0, 1]$. Then there exist $0<r\leq 1$ and a continuous semialgebraic curve $b:[0, r]\to \CP^n$ such that $b|_{(0, r]}$ is $\mathscr{C}^1$ and \beq \label{eq:argmax}b(t)\in \mathrm{argmax}_{b\in Z(\gamma_t)} \frac{|\dot{\gamma}_t(b)|^2}{\|\nabla^{\C}\gamma_t(b)\|^2}.\eeq
Moreover, for every $0<t\leq r$ we have:
\beq \label{eq:ineqarg} \frac{|\dot{\gamma}_t(b(t))|^2}{\|\nabla^{\C}\gamma_t(b(t))\|^2}\leq |\dot b(t)|^2.\eeq
\end{lemma}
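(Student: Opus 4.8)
The plan is to obtain $b$ by a semialgebraic selection argument and then to verify \eqref{eq:ineqarg} by differentiating the identity $\gamma_t(b(t))\equiv 0$ and invoking Euler's identity together with Cauchy--Schwarz.

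First I would work semialgebraically. Recalling that $\CP^n$ is a real algebraic (hence semialgebraic) set and that the derivative of a semialgebraic function of one real variable is again semialgebraic and exists off a finite set, I would replace $[0,1]$ by an interval $[0,r_0]$ on which $\gamma$ is $\mathscr{C}^1$. On $(0,r_0]$ the curve $\gamma$ misses the algebraic set $\Delta_{n,d}$, so there $Z(\gamma_t)$ is a smooth compact hypersurface, $\|\nabla^\C\gamma_t\|$ does not vanish on its zero set on the sphere, and the representative-independent quantity $F_t(b):=|\dot\gamma_t(b)|^2/\|\nabla^\C\gamma_t(b)\|^2$ is a continuous function on $Z(\gamma_t)$, attaining its maximum by compactness. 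Choosing (piecewise) semialgebraic local sections of the Hopf bundles $S^{2n+1}\to\CP^n$ and $S^{2N+1}\to P_{n,d}$, one checks, using Tarski--Seidenberg, that
$$\Gamma:=\Big\{(t,b)\in(0,r_0]\times\CP^n \;:\; b\in Z(\gamma_t),\ F_t(b)\ge F_t(b')\ \text{for all}\ b'\in Z(\gamma_t)\Big\}$$
is semialgebraic, and that its fibre over each $t\in(0,r_0]$ is nonempty.

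Next I would apply the semialgebraic definable-choice theorem (see \cite{BCR}) to produce a semialgebraic section $t\mapsto b(t)$ of $\Gamma$; this already yields \eqref{eq:argmax}. As a bounded semialgebraic curve in one variable, $b$ is $\mathscr{C}^1$ (indeed Nash) on a subinterval $(0,r]$ for some $0<r\le r_0$, and by the monotonicity theorem each of its coordinates has a finite one-sided limit at $0$; setting $b(0):=\lim_{t\to 0^+}b(t)$, which lies in $\CP^n$ since that set is compact, gives a continuous semialgebraic curve $b:[0,r]\to\CP^n$ that is $\mathscr{C}^1$ on $(0,r]$, as required.

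For \eqref{eq:ineqarg}, fix $t\in(0,r]$, choose a $\mathscr{C}^1$ representative $c_t\in\C[z_0,\ldots,z_n]_{(d)}\setminus D_{n,d}$ of $\gamma_t$ near $t$ and a $\mathscr{C}^1$ lift $\hat b(t)\in S^{2n+1}$ of $b(t)$. The ratio $|\dot c_t(\hat b(t))|^2/\|\nabla^\C c_t(\hat b(t))\|^2$ is independent of these choices (rescaling $c_t$ or $\hat b(t)$ multiplies numerator and denominator by the same positive factor, using $c_t(\hat b(t))=0$ and homogeneity) and equals $F_t(b(t))$. Differentiating $c_t(\hat b(t))\equiv 0$ gives $\dot c_t(\hat b(t))=-\langle\dot{\hat b}(t),\overline{\nabla^\C c_t(\hat b(t))}\rangle_{\C^{n+1}}$; writing $\dot{\hat b}(t)=H(t)+V(t)$ as horizontal plus vertical for the Riemannian submersion $S^{2n+1}\to\CP^n$, the vertical part $V(t)$ is a real multiple of $i\hat b(t)$ and is annihilated by the pairing because $\langle\hat b(t),\overline{\nabla^\C c_t(\hat b(t))}\rangle=d\,c_t(\hat b(t))=0$ by Euler's identity. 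Hence $|\dot c_t(\hat b(t))|=|\langle H(t),\overline{\nabla^\C c_t(\hat b(t))}\rangle|\le\|H(t)\|\,\|\nabla^\C c_t(\hat b(t))\|$ by Cauchy--Schwarz, and $\|H(t)\|=|\dot b(t)|$ since $S^{2n+1}\to\CP^n$ is a Riemannian submersion; dividing by $\|\nabla^\C c_t(\hat b(t))\|^2$ gives \eqref{eq:ineqarg}. The one genuinely delicate point is the selection step: that the maximiser can be chosen continuously, eventually $\mathscr{C}^1$, and with a limit at the endpoint $t=0$ lying on the discriminant; this is exactly what the one-variable structure theory of semialgebraic sets (definable choice, monotonicity and finiteness) provides, so no real difficulty is expected, while the inequality is a short computation once one notices that the vertical motion of the lift drops out of the relevant pairing.
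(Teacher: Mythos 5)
Your proposal is correct and follows the paper's route essentially verbatim: a semialgebraic selection from the argmax set followed by one-variable semialgebraic structure theory to obtain $\mathscr{C}^1$-regularity and the one-sided limit at $t=0$, then differentiation of $\gamma_t(b(t))\equiv 0$ and Cauchy--Schwarz for \eqref{eq:ineqarg}. Your treatment of the inequality is in fact a bit more careful than the paper's terse version: by decomposing the sphere lift's derivative into horizontal and vertical parts and invoking Euler's identity to show the vertical part pairs to zero against $\overline{\nabla^{\C}c_t(\hat b(t))}$, you justify identifying the surviving speed with $|\dot b(t)|$ in the Fubini--Study metric rather than with the (generally larger) speed of an arbitrary lift in $S^{2n+1}$, a point the paper leaves implicit.
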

\begin{proof}Notice first that the function 
\beq t\mapsto \max_{\theta\in Z(\gamma_t)}\frac{|\dot{\gamma}_t(\theta)|^2}{\|\nabla^{\C}\gamma_t(\theta)\|^2}\eeq
is well defined on $(0,1)$, since $\gamma(0,1]\notin \Delta_{n,d}$, and semialgebraic (its graph is semialgebraic). In particular the following set $S\subset (0,1)\times \CP^n$ is semialgebraic:
\beq S:=\left\{(t, b)\,\bigg|\, b\in Z(\gamma_t)\quad \textrm{and}\quad \frac{|\dot{\gamma}_t(b)|^2}{\|\nabla^{\C}\gamma_t(b)\|^2}=\max_{\theta\in Z(\gamma_t)}\frac{|\dot{\gamma}_t(\theta)|^2}{\|\nabla^{\C}\gamma_t(\theta)\|^2}\right\}.\eeq
Let now $\pi:S\to (0,1)$ be the restriction of the projection on the first factor. By the Semialgebraic Triviality Theorem \cite[Theorem 9.3.2]{BCR} there exist $0<\delta<1$, a semialgebraic set $F\subset \CP^n$ and a semialgebraic homeomorphism $\psi:\pi^{-1}((0, \delta))\to (0, \delta)\times F$ such that, denoting by $p_1:(0, \delta)\times F\to (0, \delta)$ the projection on the first factor, we have $\pi=p_1\circ \psi.$

Pick $f_0\in F$ and define the curve $b:[0, \delta)\to \CP^n$ by:
\beq b(t):=\begin{cases} \psi^{-1}(t, f_0)&t\in (0, \delta)\\
\lim_{t\to 0}\psi^{-1}(t, f_0)&t=0.
\end{cases}
\eeq
The limit $\lim_{t\to 0}\psi^{-1}(t, f_0)$ exists since $\psi$ is semialgebraic with codomain $\CP^n$ (which is compact).
Such curve $b:[0, \delta]\to \CP^n$ is continuous and is semialgebraic, therefore there exists $0<r\leq \delta$ such that $b|_{(0, r)}$ is $\mathscr{C}^1$. Since $(t,b(t))\in S$, then $b(t)$ verifies \eqref{eq:argmax} for all $0<t\leq r$, this proves the first part of the statement.

For the second part of the statement, we use the fact that $b(t)\in Z(\gamma_t)$ for all $0<t\leq r$, hence $\gamma_t(b(t))\equiv 0$. Differentiating this last equation (viewed in $S^{2n+1}\subset \C^{n+1}$) with respect to $t$ gives $\dot{\gamma}_t(b(t))+\langle \dot b(t), (\nabla^{\C}\gamma_t)(b(t))\rangle\equiv 0$ which implies
\beq |\dot{\gamma}_t(b(t))|\leq \|\dot b(t)\|\cdot\| \nabla^{\C}\gamma_t(b(t))\|.\eeq
The last inequality implies \eqref{eq:ineqarg}, and this concludes the proof.
\end{proof}

\begin{theorem}\label{thm:rep}Let $\gamma:[0,1]\to P_{n,d}$ be a continuous semialgebraic curve such that $\gamma(0)\in \Delta_{n,d}$ and $\gamma(t)\notin \Delta_{n,d}$ for all $t\in (0, 1]$. Then there exists $m\in \mathbb{N}$ such that the curve $\alpha:[0,1]\to P_{n,d}$, defined by $\alpha(s):=\gamma(s^m)$, has finite energy, i.e. $\en(\alpha)<\infty$.
\end{theorem}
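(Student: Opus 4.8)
The plan is to reduce everything to the smooth regime already analysed in \cref{thm:metricspeedsmooth} and \cref{coro:energy}, using the semialgebraic reparametrization lemmas \cref{lemma:reparametrization} and \cref{lemma:selection} to tame the behaviour of $\gamma$ as it approaches the discriminant. First I would fix the reparametrization. Apply \cref{lemma:selection} to $\gamma$ to obtain $r>0$ and a continuous semialgebraic curve $b:[0,r]\to\CP^n$, of class $\mathscr C^1$ on $(0,r]$, with $b(t)\in\mathrm{argmax}_{b\in Z(\gamma_t)}\frac{|\dot\gamma_t(b)|^2}{\|\nabla^\C\gamma_t(b)\|^2}$ and $\frac{|\dot\gamma_t(b(t))|^2}{\|\nabla^\C\gamma_t(b(t))\|^2}\le|\dot b(t)|^2$ for $0<t\le r$. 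Viewing $P_{n,d}\times\CP^n$ inside some $\R^\ell$ through a real algebraic embedding, apply \cref{lemma:reparametrization} to the single curve $t\mapsto(\gamma(t),b(t))$ on $[0,r]$: this produces $m\in\mathbb N$ and $\epsilon>0$ such that both $\alpha(s):=\gamma(s^m)$ and $\beta(s):=b(s^m)$ are $\mathscr C^1$ on $[0,\epsilon]$. Since the factor $\tfrac{\dd}{\dd s}(s^m)=ms^{m-1}$ does not depend on the point $b\in Z(\gamma_{s^m})=Z(\alpha_s)$, the curve $\beta$ is still a maximizer for $\alpha$, and multiplying both sides of the inequality of \cref{lemma:selection} by $m^2s^{2m-2}$ yields $\frac{|\dot\alpha_s(\beta(s))|^2}{\|\nabla^\C\alpha_s(\beta(s))\|^2}\le|\dot\beta(s)|^2$ for $0<s\le\epsilon$.

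Next I would estimate the energy on $[0,\epsilon]$. For $s\in(0,\epsilon]$ we have $\alpha(s)\notin\Delta_{n,d}$ and $\alpha$ is $\mathscr C^1$, so $\mu\circ\alpha$ is locally absolutely continuous on $(0,\epsilon]$ and \cref{thm:metricspeedsmooth} applies to a $\mathscr C^1$ lift $c_s$ of $\alpha_s$, giving $|\dot\mu_s|^2=\tfrac{1}{d\cdot\mathrm{vol}(S^{2n-1})}\int_{\widehat Z(c_s)}\tfrac{|\dot c_s(b)|^2}{\|\nabla^\C c_s(b)\|^2}\,\mathrm{vol}_{\widehat Z(c_s)}(\dd b)$. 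I would bound this integral by $\mathrm{vol}(\widehat Z(c_s))$ times the maximum of the integrand, using two observations: $\mathrm{vol}(\widehat Z(c_s))=2\pi\,d\,\mathrm{vol}(\CP^{n-1})$ is a constant by \cref{lemma:dis}, and the integrand is invariant under $b\mapsto e^{i\theta}b$ (numerator and denominator are both homogeneous), so its maximum over $\widehat Z(c_s)\subset S^{2n+1}$ coincides with its maximum over $Z(\alpha_s)\subset\CP^n$. Hence $|\dot\mu_s|^2\le C\,\tfrac{|\dot\alpha_s(\beta(s))|^2}{\|\nabla^\C\alpha_s(\beta(s))\|^2}\le C\,|\dot\beta(s)|^2$ with $C=C(n,d)$, and since $\beta\in\mathscr C^1([0,\epsilon])$ this is bounded, so $\int_0^\epsilon|\dot\mu_s|^2\,\dd s<\infty$.

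On $[\epsilon,1]$ the set $K:=\alpha([\epsilon,1])$ is a compact subset of $P_{n,d}\setminus\Delta_{n,d}$ (as $s^m\in(0,1]$ there), and $\alpha|_{[\epsilon,1]}$ is continuous semialgebraic, hence $\mathscr C^1$ off a finite subset of $[\epsilon,1]$ (semialgebraic cell decomposition, \cite{BCR}). On each of the finitely many $\mathscr C^1$ subintervals \cref{coro:energy} gives $|\dot\mu_s|^2=\tfrac{1}{d\cdot\mathrm{vol}(\CP^{n-1})}(\h)_{\alpha(s)}(\dot\alpha(s),\dot\alpha(s))$, and since $\h$ is a smooth positive-definite Hermitian form on $P_{n,d}\setminus\Delta_{n,d}$ by \cref{thm:smooth}, it is comparable to the Fubini--Study metric on the compact set $K$; thus $\int_\epsilon^1|\dot\mu_s|^2\,\dd s<\infty$. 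Finally, $\mu\circ\alpha$ is continuous on $[0,1]$ (continuity of $\mu$, \cref{thm:ext}), locally absolutely continuous on $(0,1]$, and $\int_0^1|\dot\mu_s|^2\,\dd s<\infty$; letting $s\to0$ in $W_2(\mu_s,\mu_t)\le\int_s^t|\dot\mu_\tau|\,\dd\tau$ shows $\mu\circ\alpha\in AC^2([0,1];\mathscr P_2(\CP^n))$, i.e. $\alpha\in\Omega$, and therefore $\en(\alpha)=\int_0^1|\dot\mu_s|^2\,\dd s<\infty$.

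The main obstacle is the second step: one needs the single reparametrization $s\mapsto s^m$ to simultaneously make $\alpha$ differentiable at $s=0$ \emph{and} absorb the blow-up of the integrand $\tfrac{|\dot\alpha_s(b)|^2}{\|\nabla^\C\alpha_s(b)\|^2}$, which is genuinely singular as $\alpha_s\to\Delta_{n,d}$ because $\|\nabla^\C\alpha_s\|$ degenerates at the singular points appearing on the limiting hypersurface. The device that makes this work is exactly the inequality \eqref{eq:ineqarg} of \cref{lemma:selection}, obtained by differentiating the identity $\gamma_t(b(t))\equiv0$, which replaces the dangerous denominator by $|\dot b(t)|^2$; the point is then that one and the same semialgebraic reparametrization can regularize both $\gamma$ and the maximizing selection $b$, which is why \cref{lemma:reparametrization} must be applied to the pair $(\gamma,b)$ rather than to $\gamma$ alone. (Interior non-smooth parameters of $\gamma$ are finite in number and lie in $P_{n,d}\setminus\Delta_{n,d}$, where no singular denominator arises, and are covered by the finite-pieces argument on $[\epsilon,1]$.)
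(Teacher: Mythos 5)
Your proposal is correct and follows essentially the same route as the paper: both rest on \cref{lemma:selection} to get a semialgebraic maximizing selection $b(t)$ with $\frac{|\dot\gamma_t(b(t))|^2}{\|\nabla^\C\gamma_t(b(t))\|^2}\leq |\dot b(t)|^2$, on \cref{lemma:reparametrization} to tame $b$ near $t=0$, and on the bound "inner integral $\leq$ constant $\times$ max of the integrand" (the paper applies the reparametrization lemma only to $b$ rather than to the pair $(\gamma,b)$, since differentiability of $\alpha$ at $s=0$ is not actually needed once the maximizer bound is in place, but your choice is harmless). The one place you go beyond the paper's write-up is the closing paragraph, where you explicitly pass from local absolute continuity on $(0,1]$ plus finiteness of $\int_0^1|\dot\mu_s|^2\,\dd s$ to $\mu\circ\alpha\in AC^2([0,1];\mathscr{P}_2(\CP^n))$; the paper applies \cref{coro:energy} to $\alpha$ directly (even though $\alpha(0)\in\Delta_{n,d}$) and leaves this endpoint extension implicit, so your added step is a welcome clarification rather than a deviation.
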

\begin{proof}Let $b:[0, r]\to \CP^n$ be the curve given by \cref{lemma:selection}. By \cite[Proposition 2.4.6]{BCR}, there exists a smooth, real algebraic embedding $\varphi:\CP^n\to \R^\ell$, for some $\ell\in \mathbb{N}$ sufficiently large. Denote by $f(t):=\varphi(b(t))$. By \cref{lemma:reparametrization} there exists $0<r_0\leq r\leq 1$ and $m\in \mathbb{N}$ such that the curve $\widetilde{f}(s):=f(s^m)$ is $\mathscr{C}^1$ on $[0, r_0]$. In particular, since $\varphi$ is a semialgebraic diffeomorphism, the curve
\beq \beta(s):=b(s^m)=\varphi^{-1}(\widetilde{f}(s))\eeq
is semialgebraic and of class $\mathscr{C}^1$ on $[0, r_0]$.

Consider now the curve $\alpha(s):=\gamma(s^m)$. By \cref{coro:energy} we have 
\begin{align} \en(\alpha)&=\int_{0}^1\frac{1}{\mathrm{vol}(Z(\alpha(s)))}\int_{Z(\alpha(s))}\frac{|\dot{\alpha}_s(b)|^2}{\|\nabla^{\C}\alpha_s(b)\|^2}\mathrm{vol}_{Z(\alpha(s))}(\dd [b])\dd s\\
&=\left(\int_{0}^{r_0}+\int_{r_0}^1\right)\left(\frac{1}{\mathrm{vol}(Z(\alpha(s)))}\int_{Z(\alpha(s))}\frac{|\dot{\alpha}_s(b)|^2}{\|\nabla^{\C}\alpha_s(b)\|^2}\mathrm{vol}_{Z(\alpha(s))}(\dd [b])\right)\dd s.
\end{align}
Since for $s\geq r_0$ the set $Z(\alpha(s))=Z(\gamma(s^{m}))$ is defined by a regular equation, the integrand is bounded on $[r_0, 1]$. 
Therefore it remains to estimate the integral in $[0, r_0]$. Using \eqref{eq:ineqarg}, we have:
\begin{align}
& \int_{0}^{r_0}\frac{1}{\mathrm{vol}(Z(\alpha(s)))}\int_{Z(\alpha(s))}\frac{|ms^{m-1}\dot{\gamma}_{s^m}(b)|^2}{\|\nabla^{\C}\gamma_{s^m}(b)\|^2}\mathrm{vol}_{Z(\alpha(s))}(\dd [b])\dd s\\
\leq & \int_{0}^{r_0}\frac{1}{\mathrm{vol}(Z(\alpha(s)))}\int_{Z(\alpha(s))}|ms^{m-1}\dot b(s^m)|^2\mathrm{vol}_{Z(\alpha(s))}(\dd [b])\dd s\\
=& \int_{0}^{r_0}\frac{1}{\mathrm{vol}(Z(\alpha(s)))}\int_{Z(\alpha(s))}|\dot \beta(s)|^2\mathrm{vol}_{Z(\alpha(s))}(\dd [b])\dd s\\
=&\int_{0}^{r_0}|\dot \beta(s)|^2\dd s<\infty\quad \textrm{(since $\beta$ is $\mathscr{C}^1$ on $[0, r_0]$)}.
\end{align}
This concludes the proof.
\end{proof}

\begin{corollary}\label{thm:finite}For any two $p_0, p_1\in P_{n,d}$ there exists a curve $\gamma:I\to P_{n,d}$ such that $\gamma(0)=p_0$, $\gamma(1)=p_1$ and $\en(\gamma)<\infty.$
\end{corollary}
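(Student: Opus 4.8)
The plan is to deduce the corollary from \cref{thm:rep} and \cref{coro:energy} by a reduction-and-gluing argument, so that the only substantial analytic input is the semialgebraic reparametrization already carried out in \cref{thm:rep}.

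First I would treat the case in which an endpoint lies on $\Delta_{n,d}$. Given $p_0\in\Delta_{n,d}$, I would choose a projective line $\ell\subset P_{n,d}\simeq\CP^N$ through $p_0$ with $\ell\not\subset\Delta_{n,d}$ (such a line exists because $\Delta_{n,d}$ is a proper algebraic subset of $P_{n,d}$), and parametrize a short initial segment of $\ell$ by a continuous semialgebraic curve $\widetilde\gamma:[0,1]\to P_{n,d}$ with $\widetilde\gamma(0)=p_0$ and $\widetilde\gamma(t)\notin\Delta_{n,d}$ for every $t\in(0,1]$. This is possible: if $a,b\in H_{n,d}$ are lifts of two points of $\ell$, the restriction of the discriminant polynomial to $\mathrm{span}_{\C}\{a,b\}$ is not identically zero (otherwise $\ell\subset\Delta_{n,d}$), so $t\mapsto[a+t(b-a)]$ meets $\Delta_{n,d}$ at only finitely many parameters, and after shrinking and rescaling the interval one has the stated curve. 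Then \cref{thm:rep} yields $m\in\N$ with $\alpha_0(s):=\widetilde\gamma(s^m)$ of finite energy, joining $p_0$ to a point $q_0:=\alpha_0(1)\notin\Delta_{n,d}$. The same construction at $p_1$ produces a finite-energy curve $\alpha_1$ from $p_1$ to some $q_1\notin\Delta_{n,d}$ (when $p_i\notin\Delta_{n,d}$ I would simply set $q_i:=p_i$ and let $\alpha_i$ be constant).

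Next, since $\Delta_{n,d}$ is a complex algebraic hypersurface, it has real codimension two, so $P_{n,d}\setminus\Delta_{n,d}$ is connected; hence I would pick a $\mathscr{C}^1$ curve $\beta$ lying in $P_{n,d}\setminus\Delta_{n,d}$ from $q_0$ to $q_1$ — for instance a generic broken line perturbed off the discriminant and then smoothed — which is admissible and has finite energy by \cref{coro:energy}. Finally I would concatenate $\alpha_0$, $\beta$, and the reversal of $\alpha_1$, each reparametrized linearly onto a subinterval of $[0,1]$. Reversing a curve preserves admissibility and energy; a concatenation of finitely many $AC^2$ curves with matching endpoints is again $AC^2$ (insert the junction points into the estimate $W_2(\mu_s,\mu_t)\le\int_s^t m(\tau)\,\dd\tau$ and piece together the $L^2$ speed bounds); and a linear time-rescaling of a piece multiplies its energy by a constant factor. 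Hence the resulting curve $\gamma$ joins $p_0$ to $p_1$ with $\en(\gamma)\le 3\bigl(\en(\alpha_0)+\en(\beta)+\en(\alpha_1)\bigr)<\infty$.

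I expect no serious obstacle here: the genuinely hard step — reaching a point of the discriminant along a curve of \emph{finite} energy, which relies on the explicit form \eqref{eq:inte} of the Wasserstein--Hermitian structure together with the reparametrization $s\mapsto s^m$ — has already been handled in \cref{thm:rep}. The remaining work is the elementary verification that the line-segment curves are semialgebraic with the required incidence behaviour relative to $\Delta_{n,d}$, and the routine bookkeeping that reversal, concatenation, and time-rescaling preserve finiteness of the energy.
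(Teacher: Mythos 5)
Your proof is correct and follows the same strategy as the paper: use \cref{thm:rep} to leave the discriminant along a semialgebraic reparametrized arc of finite energy, connect the resulting off-discriminant endpoints by a $\mathscr{C}^1$ curve in $P_{n,d}\setminus\Delta_{n,d}$ (finite energy by \cref{coro:energy}), and concatenate. Your write-up is in fact slightly more careful than the paper's: the one-line appeal to \cref{thm:rep} in the paper elides the construction of a continuous semialgebraic curve that stays off $\Delta_{n,d}$ for $t\in(0,1]$, which you supply explicitly via the line-segment argument.
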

\begin{proof}
If $p_0, p_1\notin \Delta_{n,d}$, then any smooth curve $\gamma:I\to P_{n,d}\setminus\Delta_{n,d}$ connecting $p_0$ with $p_1$ (which exists because the real codimension of $\Delta_{n,d}$ is \emph{two}) has finite energy by \cref{coro:energy}.

Given $p_0\in \Delta_{n,d}$ and $p_1\in P_{n,d}\setminus \Delta_{n,d}$ there exists a curve $\gamma$ joining them with finite energy, by \cref{thm:rep}. The result for general $p_0, p_1$ (possibly both on $\Delta_{n,d}$) follows by concatenating two such curves.
\end{proof}

We now prove that $W_2^{\textrm{in}}$ is a metric on $P_{n,d}$.

\begin{theorem}\label{thm:propertyw}
The function $W_2^{\textrm{in}}$ defines a distance on $P_{n,d}$ and the metric space 
$(P_{n,d}, W_2^{\textrm{in}})$ is  complete and geodesic, i.e. for any couple $p_1,p_2$, there exists an admissible curve $\gamma$ from $p_1$ to 
$p_2$ such that 
$W_2^{\textrm{in}}(\gamma(s),\gamma(t)) = |t-s| W_2^{\textrm{in}}(p_1,p_2)$.
\end{theorem}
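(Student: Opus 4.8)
The plan is to transport the whole statement to the image $Y:=\mu(P_{n,d})\subset\mathscr{P}_2(\CP^n)$, which by \cref{propo:compactness} is compact (hence complete) and homeomorphic to $P_{n,d}$ via $\mu$. First I would note that, since $Y$ carries the restricted $W_2$-metric, the $W_2$-metric speed of a curve taking values in $Y$ is the same whether computed in $Y$ or in $\mathscr{P}_2(\CP^n)$; consequently the admissible curves of \cref{def:admissible} are exactly the curves $\mu^{-1}\circ\sigma$ with $\sigma\in AC^2(I;Y)$, and $\en(\mu^{-1}\circ\sigma)=\int_I|\dot\sigma|^2\,\dd t$. By Cauchy--Schwarz on $[0,1]$, $\en(\gamma)^{1/2}\ge L(\mu\circ\gamma)$ with equality after a constant-speed reparametrization (which preserves admissibility and endpoints), so $\inf\en(\gamma)^{1/2}=\inf L(\mu\circ\gamma)$ over admissible curves from $p_0$ to $p_1$; in other words $W_2^{\textrm{in}}(p_0,p_1)=\sfd_Y^{\textrm{in}}(\mu(p_0),\mu(p_1))$, the intrinsic length distance \eqref{E:intrinsic} of the closed set $Y$. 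By \cref{thm:finite} any two points of $P_{n,d}$, on the discriminant or not, are joined by an admissible curve of finite energy, hence of finite length, so the general theory recalled after \eqref{E:intrinsic} already gives that $W_2^{\textrm{in}}=\sfd_Y^{\textrm{in}}$ is a distance making $(P_{n,d},W_2^{\textrm{in}})$ a length space; the separation axiom also follows from $W_2^{\textrm{in}}(p_0,p_1)\ge W_2(\mu(p_0),\mu(p_1))$ (the inner distance dominates the ambient one) together with the injectivity of $\mu$.

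For completeness I would argue directly on $(Y,\sfd_Y^{\textrm{in}})$. Let $(p_k)$ be $W_2^{\textrm{in}}$-Cauchy; then $(\mu(p_k))$ is $W_2$-Cauchy, hence converges in the compact space $Y$ to $\mu(p_\infty)$ for some $p_\infty\in P_{n,d}$. Passing to a subsequence with $W_2^{\textrm{in}}(p_k,p_{k+1})<2^{-k}$, I choose admissible curves of length $<2^{-k}$ from $p_k$ to $p_{k+1}$ and concatenate their $\mu$-images into a rectifiable curve $c:[0,1)\to Y$ with $c(t_k)=\mu(p_k)$, $t_k\nearrow 1$, of total length $\le\sum_k 2^{-k}<\infty$. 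Being of finite length, $c(t)$ is $W_2$-Cauchy as $t\to 1$, so it extends continuously with $c(1)=\mu(p_\infty)$, and using the superadditivity of length together with $c(t)\to c(1)$ one checks $L(c|_{[0,1]})=L(c|_{[0,1)})<\infty$. For each $k_0$ the portion of $c$ from $\mu(p_{k_0})$ to $\mu(p_\infty)$ has length $\le 2^{-k_0+1}$, so after a constant-speed reparametrization it is admissible and $W_2^{\textrm{in}}(p_{k_0},p_\infty)\le 2^{-k_0+1}\to 0$; the subsequence, hence the whole Cauchy sequence, converges to $p_\infty$.

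For the geodesic property I would run the direct method. Fix $p_0,p_1$, set $c:=W_2^{\textrm{in}}(p_0,p_1)^2$ (finite by \cref{thm:finite}), and take a minimizing sequence $\gamma_j\in\Omega$ with $\gamma_j(0)=p_0,\gamma_j(1)=p_1$. Reparametrizing each $\mu\circ\gamma_j$ to constant speed does not increase the energy and keeps admissibility and endpoints, so I obtain curves $\sigma_j\in AC^2(I;Y)$ which are equi-Lipschitz (constant speed $\to c^{1/2}$) with $\en(\mu^{-1}\circ\sigma_j)\to c$. By Arzel\`a--Ascoli (here $Y$ is compact) a subsequence converges uniformly to a Lipschitz curve $\sigma_\infty\in AC^2(I;Y)$ with the same endpoints, and lower semicontinuity of the energy functional $\mathcal{E}$ on $\mathscr{C}^0(I;Y)$ (recalled in \cref{Ss:metricgeometry}) gives $\mathcal{E}(\sigma_\infty)\le c$; since $\gamma_\infty:=\mu^{-1}\circ\sigma_\infty$ is admissible, $\en(\gamma_\infty)=\mathcal{E}(\sigma_\infty)=c$, so the infimum in \cref{D:W2in} is attained. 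An energy minimizer has constant metric speed: indeed $L(\sigma_\infty)\le\en(\gamma_\infty)^{1/2}=c^{1/2}$ by Cauchy--Schwarz while $L(\sigma_\infty)\ge W_2^{\textrm{in}}(p_0,p_1)=c^{1/2}$, forcing equality in Cauchy--Schwarz, so $|\dot\sigma_\infty|\equiv W_2^{\textrm{in}}(p_0,p_1)$. Restricting $\sigma_\infty$ to $[s,t]$ and rescaling to $[0,1]$ shows $W_2^{\textrm{in}}(\gamma_\infty(s),\gamma_\infty(t))\le|t-s|\,W_2^{\textrm{in}}(p_0,p_1)$, and the reverse inequality follows from the triangle inequality, giving the claimed geodesic identity.

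The step I expect to be the main obstacle is completeness: the inner distance of a compact (even complete) metric subspace need not be complete in general, and the argument above succeeds only because \cref{thm:finite} guarantees that \emph{every} pair of points — including pairs on the discriminant — is joined by a curve of finite length, which is precisely what makes the concatenated curve $c$ reach its limit endpoint $\mu(p_\infty)$ with finite length. Everything else is routine once the identification $W_2^{\textrm{in}}=\sfd_Y^{\textrm{in}}$ on $Y$ and the equivalence between the energy and length infima are in place.
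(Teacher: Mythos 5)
Your proposal is correct, and for the distance axioms and the geodesic property it runs essentially parallel to the paper's argument: the paper also establishes the triangle inequality by constant--speed reparametrization (via \cite[Lemma 1.1.4]{AGS:book}) and concatenation, and obtains existence of geodesics by identifying $\mu(P_{n,d})$ as a compact subset of $\mathscr{P}_2(\CP^n)$ and invoking the standard direct method (it cites \cite[4.3.2]{AmbrosioTilli} and \cite[Proposition 1.2.10]{Gigli_Pasqualetto_2020} where you spell the compactness/lower--semicontinuity steps out by hand). The genuine divergence is in the completeness proof. The paper proves a separate statement (\cref{lemma:lsc}): $W_2^{\textrm{in}}(p,\cdot)$ is lower semicontinuous with respect to the Fubini--Study metric, established by Ascoli--Arzel\`a on near--minimizing curves in $\mathscr{P}_2(\CP^n)$; completeness then follows by extracting from a $W_2^{\textrm{in}}$--Cauchy sequence a Fubini--Study--convergent subsequence and applying the lower semicontinuity. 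You instead work entirely inside $Y=\mu(P_{n,d})$ with the restricted $W_2$--metric and give the classical proof that the induced intrinsic metric of a closed subset of a complete space is complete: pass to a rapidly Cauchy subsequence, concatenate curves of summable length, extend the concatenation to its endpoint, and bound the inner distance of the tails by the remaining length. Your route is more elementary and self--contained (it never leaves the metric category and does not need the lsc lemma), at the cost of the slightly delicate bookkeeping you correctly flag (admissibility of the arc--length reparametrization of the concatenated curve, and $L(c|_{[0,1]})=L(c|_{[0,1)})$); the paper's route isolates the lower semicontinuity of $W_2^{\textrm{in}}$ as a statement of independent interest, which it records explicitly. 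One small point worth making precise in your first paragraph: the separation axiom rests on the inequality $W_2(\mu(p_0),\mu(p_1))\leq L(\mu\circ\gamma)\leq \en(\gamma)^{1/2}$ for admissible $\gamma$, combined with the injectivity of $\mu$ from \cref{propo:compactness}; you state this correctly, and it is the same mechanism the paper uses, only phrased more tersely there.
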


\begin{proof}
By construction $W_2^{\textrm{in}}(p_0, p_1)=W_2^{\textrm{in}}(p_1,p_0)$. Moreover,  given $p_0, p_1, p_2\in P_{n,d}$, by \cref{thm:finite} for every $\epsilon>0$ there exist admissible curves $\gamma_1, \gamma_2:I\to P_{n,d}$ such that: $\gamma_1(0)=p_0, \gamma_1(1)=p_1, \gamma_2(0)=p_1, \gamma_2(1)=p_2$ and 
\beq 
\en(\gamma_1)\leq W_2^{\textrm{in}}(p_0, p_1)^2+\epsilon\quad\textrm{and}\quad \en(\gamma_2)\leq W_2^{\textrm{in}}(p_1, p_2)^2+\epsilon.
\eeq
We use now \cite[Lemma 1.1.4]{AGS:book} to get reparametrizations $\eta_1:[0, L_1]\to P_{n,d}$ and $\eta_2:[0, L_2]\to P_{n,d}$ of these curves, with $L_1^2\leq W_2^{\textrm{in}}(p_0, p_1)^2+\epsilon$ and $L_2^2\leq W_2^{\textrm{in}}(p_1, p_2)^2+\epsilon$, 
such that, denoting by $\zeta_i:=\mu\circ\eta_i$, we have
\beq 
|\dot{\zeta_i}|\equiv 1, \quad i=1, 2.
\eeq
Define now $\eta:[0, L_1+L_2]\to P_{n,d}$ by concatenating $\eta_1$ and $\eta_2$ and $\alpha:[0,1]\to P_{n,d}$ by:
\beq \alpha(t)=\eta((L_1+L_2)t).\eeq
Then, denoting by $\beta:=\mu\circ \alpha$, we have
\beq 
W_2^{\textrm{in}}(p_0,p_2)^2 \leq \en(\alpha)=\int_{0}^1
|\dot{\beta}|^2=(L_1+L_2)^2,
\eeq
which, letting $\epsilon$ go to zero, implies the triangle inequality. Finally, since if $\gamma$ is admissible, $\mu\circ \gamma\in AC^2(I, \mathscr{P}_2(\CP^n))$, it follows that $W_2^{\textrm{in}}(p_0, p_1)=0$ implies $p_0=p_1.$ Together with \cref{thm:finite}, this proves that $W_2^{\textrm{in}}$ defines a distance on $P_{n,d}$.

In order to prove that $(P_{n,d}, W_2^{\textrm{in}})$ is geodesic, we observe that the map $\mu:P_{n,d}\to \mathscr{P}_2(\CP^n)$ is injective and continuous by \cref{thm:ext}, where $P_{n,d}$ is taken with the manifold topology. In particular $E:=\mu(P_{n,d})$ is a compact subset of $ \mathscr{P}_2(\CP^n)$, and $E$ is homeomorphic to $P_{n,d}$ itself, through $\mu$. Admissible curves on $P_{n,d}$ are precisely curves $\gamma :I\to P_{n,d}$ such that there exists an admissible curve $\beta:I\to\mathscr{P}_2(\CP^n)$, entirely contained in $E$ and such that $\gamma=\mu^{-1}\circ \beta$. The fact that $(P_{n,d}, W_2^{\textrm{in}})$ is geodesic follows now from 
\cref{thm:finite}, \cite[4.3.2]{AmbrosioTilli} and \cite[Proposition 1.2.10]{Gigli_Pasqualetto_2020}.

The completeness is a direct consequence of $W_2^{\textrm{in}}$ being lower semicontinuous with respect to the Fubini--Study metric. 
The proof of the lower semicontinuity of 
$W_2^{\textrm{in}}$ is in the subsequent \cref{lemma:lsc}. 
Consider therefore a Cauchy sequence $\{p_k \}_{k\in \N} \subset (P_{n,d},W_2^{\textrm{in}})$. By compactness of $P_{n,d}$ in the Fubini--Study metric, 
it follows that $p_k$ has a converging subsequence (still denoted by $p_k$) in the Fubini--Study metric to a certain 
$p_\infty \in P_{n,d}$.
Hence for each $j \in \N$ the the lower semicontinuity of $W_2^{\textrm{in}}$ implies that 
$$
 \liminf_{k \to \infty} 
 W_2^{\textrm{in}}( p_k,p_j)
\geq 
W_2^{\textrm{in}} (p_\infty, p_j).
$$
Letting also $j \to \infty$ it follows that $W_2^{\textrm{in}} (p_\infty, p_j) \to 0$. Since for Cauchy sequences the convergence of a subsequence is enough to deduce convergence of the whole sequence, the completeness is proved. 
\end{proof}

\begin{lemma}\label{lemma:lsc}
For every fixed $p \in P_{n,d}$, the function $W_2^{\textrm{in}} (p,\cdot) : P_{n,d} \to [0,\infty)$ is lower semicontinuous 
with respect to the 
Fubini--Study metric.
\end{lemma}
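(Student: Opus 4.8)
The plan is to show that whenever $p_k \to p_\infty$ in the Fubini–Study metric, one has $\liminf_k W_2^{\mathrm{in}}(p,p_k) \geq W_2^{\mathrm{in}}(p,p_\infty)$. Passing to a subsequence we may assume the $\liminf$ is a genuine limit, say $L = \lim_k W_2^{\mathrm{in}}(p,p_k)$, and that $L < \infty$ (otherwise there is nothing to prove). For each $k$ pick a near-optimal admissible curve $\gamma_k \in \Omega_{n,d}$ with $\gamma_k(0)=p$, $\gamma_k(1)=p_k$ and $\en(\gamma_k) \leq W_2^{\mathrm{in}}(p,p_k)^2 + 1/k$; after a standard constant-speed reparametrization (\cite[Lemma 1.1.4]{AGS:book}) we may assume the curves $\mu\circ\gamma_k$ have constant metric speed, so that $|\dot{\mu\circ\gamma_k}|(t) = \en(\gamma_k)^{1/2}$ for a.e.\ $t$, which is uniformly bounded by $(L+1)^{1/2}$.

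Next I would transfer the problem to the compact metric space $(\mathscr{P}_2(\CP^n),W_2)$. Writing $\beta_k := \mu\circ\gamma_k$, each $\beta_k$ is an absolutely continuous curve with values in the compact set $E := \mu(P_{n,d}) \subset \mathscr{P}_2(\CP^n)$ (compactness and the homeomorphism $E \simeq P_{n,d}$ come from \cref{thm:ext}/\cref{propo:compactness}), and the family $\{\beta_k\}$ is uniformly Lipschitz with constant $(L+1)^{1/2}$ in the $W_2$-distance. By Arzelà–Ascoli in $\mathscr{C}^0(I;\mathscr{P}_2(\CP^n))$, a subsequence converges uniformly to a curve $\beta_\infty$, which is again $(L+1)^{1/2}$-Lipschitz, takes values in the closed set $E$, and has endpoints $\beta_\infty(0) = \mu(p)$, $\beta_\infty(1) = \lim_k \mu(p_k) = \mu(p_\infty)$ (using continuity of $\mu$ for the Fubini–Study convergence $p_k \to p_\infty$). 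Set $\gamma_\infty := \mu^{-1}\circ\beta_\infty$; since $\beta_\infty$ lies in $E$ and is absolutely continuous, $\gamma_\infty$ is an admissible curve from $p$ to $p_\infty$. Finally, lower semicontinuity of the energy functional $\mathcal{E}$ on $\mathscr{C}^0(I;E)$ (recalled in \cref{Ss:metricgeometry}, \cite[Proposition 1.2.7]{Gigli_Pasqualetto_2020}) gives
\[
W_2^{\mathrm{in}}(p,p_\infty)^2 \leq \en(\gamma_\infty) = \mathcal{E}(\beta_\infty) \leq \liminf_k \mathcal{E}(\beta_k) = \liminf_k \en(\gamma_k) = L^2,
\]
which is exactly the claim.

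The only delicate point is making sure that the uniform limit $\gamma_\infty = \mu^{-1}\circ\beta_\infty$ really qualifies as an \emph{admissible} curve in the sense of \cref{def:admissible}, i.e.\ that $\beta_\infty \in AC^2(I;\mathscr{P}_2(\CP^n))$: this is automatic since a uniform limit of curves with uniformly bounded metric speed is itself $(L+1)^{1/2}$-Lipschitz, hence in $AC^2$, and its values stay in $E$ because $E$ is closed; the lower semicontinuity of $\mathcal{E}$ then does the rest. I expect the main (mild) obstacle to be bookkeeping the two reductions — first the subsequence extracting the $\liminf$, then Arzelà–Ascoli — and checking that constant-speed reparametrization does not disturb the endpoints or push the curve outside $\Omega_{n,d}$; all of this is routine given the tools assembled in Sections \ref{S:OT} and \ref{sec:Measures}.
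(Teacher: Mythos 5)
Your proof is correct and follows essentially the same route as the paper's: pass to a subsequence realizing the $\liminf$, take (near-)optimal admissible curves with uniformly bounded energy, apply Arzel\`a--Ascoli in $\mathscr{C}^0(I;\mathscr{P}_2(\CP^n))$ to get a uniform limit whose values stay in the closed set $\mu(P_{n,d})$, and conclude by lower semicontinuity of the energy functional $\mathcal{E}$. The only differences are cosmetic: the paper uses exact minimizers (available from the geodesic part of \cref{thm:propertyw}, proved just before) and deduces equicontinuity from H\"older's inequality, while you use $\en(\gamma_k)\leq W_2^{\mathrm{in}}(p,p_k)^2+1/k$ together with constant-speed reparametrization. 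One small imprecision worth noting: after constant-speed reparametrization the metric speed equals the \emph{length} of $\mu\circ\gamma_k$, which by Cauchy--Schwarz is $\leq \en(\gamma_k)^{1/2}$ rather than equal to it (equality only if the original curve was already constant-speed); since reparametrization can only decrease the energy, your uniform Lipschitz bound and the final chain of inequalities still hold, so this does not affect the argument.
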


\begin{proof}
Fix $p \in P_{n,d}$ and consider a sequence $\{p_k \}_{k\in N} \subset P_{n,d}$ with 
$p_k \to p_\infty \in P_{n,d}$ in the Fubini--Study metric. 
By \cref{thm:propertyw} there exists a sequence of admissible curves $\gamma_k$ such that 
$\mu(\gamma_k) \in AC^2([0,1]; \mathscr{P}_2(\CP^n))$ and
$\en(\gamma_k) = W_2^{\textrm{in}}(p_k,p)$.
From \cref{thm:finite} we know that $W_2^{\textrm{in}}(p_\infty,p)<\infty$; hence 
if $\liminf_{k\to \infty}W_2^{\textrm{in}}(p_k,p) = \infty$ the claim is trivially verified.  
We assume therefore that $\liminf_{k\to \infty}W_2^{\textrm{in}}(p_k,p) < \infty$
and, up to relabeling, that the liminf is
indeed a limit. 
In particular, we deduce that $\en(\gamma_k) \leq C$, for some positive constant $C>0$.
Therefore 
applying
H\"older inequality
for any $s < t \in (0,1)$ 
and $k \in \N$  we obtain:
$$
W_2(\mu(\gamma_k)_t, \mu(\gamma_k)_s) \leq \int_s^t |\dot{\mu(\gamma_k)}|(\tau)
\dd \tau \leq C \sqrt{t-s}.
$$
The previous inequality implies that the family of maps $[0,1]\ni t \mapsto  \mu(\gamma_k)_t \in \mathscr{P}_2(\CP^n)$ is equicontinuous. Since 
$\mathscr{P}_2(\CP^n)$ is compact we can invoke Ascoli--Arzel\'a
Theorem to deduce the existence of a subsequence 
$\mu(\gamma_{k_{j}})$ and of a limit curve
$\eta \in 
\mathscr{C}^{0}([0,1] ; \mathscr{P}_2(\CP^n))$ 
such that $\mu(\gamma_{k_{j}}) \to \eta$ in the $\mathscr{C}^{0}$ distance. 
We can invoke then the lower semicontinuity of the energy 
(see \cite[Proposition 1.2.7]{Gigli_Pasqualetto_2020}) 
to deduce 
$$
\mathcal{E}(\eta) \leq \liminf_{k\to \infty} \mathcal{E}(\mu(\gamma_{k_{j}})).
$$
Finally, since $\mu(P_{n,d})$ is a closed subset 
of $\mathscr{P}_2(\CP^n)$, we deduce 
the existence of a curve $\gamma_\infty$ 
such that $\eta = \mu (\gamma_\infty)$. 
Then the above inequality can be rewritten as 
$$
\mathcal{E}(\gamma_\infty) \leq \liminf_{k\to \infty} \mathcal{E}(\gamma_{k_{j}}) = 
\lim_{j\to \infty}W_2^{\textrm{in}}(p_{k_j},p)
= \liminf_{k\to \infty}W_2^{\textrm{in}}(p_k,p),
$$
implying in particular that $\gamma_\infty$ is an  admissible curve. Moreover is trivially verified (recall that $\mu$ is an homeomorphism)
that $\gamma_\infty$ is joining the fixed $p \in P_{n,d}$ with $p_\infty \in P_{n,d}$.
By taking the infimum on the first term, the claim follows. 
\end{proof}

\subsection{Comparing the inner Wasserstein distance with the Fubini--Study one}As one can easily verify, the continuity of 
$$\mathrm{id}:(P_{n,d}, \sfd_{\mathrm{FS}})\to (P_{n,d}, W_2^\mathrm{in})$$
is equivalent to the compactness of $(P_{n,d}, W_2^\mathrm{in})$; notice that $P_{n,d}$ is compact for the ambient $W_2$\,--topology, by \cref{propo:compactness}. 
Compactness for the inner distance topology of a compact space in the ambient topology is in general not true, see \cite[Figure 2.2]{Burago}.  However, building on the results from the previous section, here we will prove that the above map is continuous (in fact Lipschitz) ``away'' from the discriminant, where the structure is Riemannian. More precisely, we will prove the following result. For every $\epsilon>0$ denote by
$$P_{n,d}(\epsilon):=\{p\in P_{n,d}\,|\, \sfd_{\mathrm{FS}}(p,\Delta_{n,d})\geq \epsilon\}$$

\begin{theorem}\label{thm:Lipschitz}For every $\epsilon>0$ the identity map
$$\mathrm{id}:(P_{n,d}(\epsilon), \sfd_{\mathrm{FS}})\to (P_{n,d}(\epsilon), W_2^\mathrm{in})$$
is Lipschitz. In particular $P_{n,d}(\epsilon)$ is compact in the $W_2^\mathrm{in}$--topology.
\end{theorem}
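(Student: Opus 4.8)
The plan is to exploit the explicit Riemannian description of $W_2^{\mathrm{in}}$ away from the discriminant provided by \cref{coro:energy}, i.e. the Wasserstein--Hermitian structure $\h$ of \cref{def:WHS}, together with a compactness argument on the closed set $P_{n,d}(\epsilon)$. The key point is that on $P_{n,d}(\epsilon)$, which is a compact subset of the open manifold $P_{n,d}\setminus\Delta_{n,d}$ where $\h$ is a smooth positive-definite Hermitian structure (by \cref{thm:smooth} and \cref{lemma:ker}), the real part $g_\h:=\mathrm{Re}(\h)$ is a smooth Riemannian metric, and by compactness it is uniformly comparable to the Fubini--Study metric $g_{\mathrm{FS}}$: there is a constant $C=C(\epsilon)>0$ with $g_\h\le C\, g_{\mathrm{FS}}$ on $P_{n,d}(\epsilon)$.

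First I would fix $\epsilon>0$ and let $p_0,p_1\in P_{n,d}(\epsilon)$. Since $P_{n,d}(\epsilon)$ need not be convex for the Fubini--Study metric, I would instead work with a nearby neighborhood: choose $\epsilon'\in(0,\epsilon)$ and note that $P_{n,d}(\epsilon')$ is a compact subset of $P_{n,d}\setminus\Delta_{n,d}$ on which $g_\h\le C' g_{\mathrm{FS}}$ for some $C'=C'(\epsilon')$. For $p_0,p_1$ sufficiently Fubini--Study-close (say $\sfd_{\mathrm{FS}}(p_0,p_1)<\delta$ for a suitable $\delta=\delta(\epsilon,\epsilon')$), the Fubini--Study minimizing geodesic $\gamma$ from $p_0$ to $p_1$ stays inside $P_{n,d}(\epsilon')$, hence is a $\mathscr{C}^1$ curve in $P_{n,d}\setminus\Delta_{n,d}$. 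By \cref{coro:energy}, $\gamma$ is admissible and
$$
W_2^{\mathrm{in}}(p_0,p_1)^2\le \en(\gamma)=\frac{1}{d\cdot\mathrm{vol}(\CP^{n-1})}\int_0^1 \h_{\gamma(t)}(\dot\gamma(t),\dot\gamma(t))\,\dd t\le \frac{C'}{d\cdot\mathrm{vol}(\CP^{n-1})}\int_0^1 \|\dot\gamma(t)\|_{\mathrm{FS}}^2\,\dd t,
$$
and since $\gamma$ is a Fubini--Study geodesic parametrized on $[0,1]$, the last integral equals $\sfd_{\mathrm{FS}}(p_0,p_1)^2$. This gives the Lipschitz bound $W_2^{\mathrm{in}}(p_0,p_1)\le L\,\sfd_{\mathrm{FS}}(p_0,p_1)$ with $L=(C'/(d\cdot\mathrm{vol}(\CP^{n-1})))^{1/2}$, for all pairs with $\sfd_{\mathrm{FS}}(p_0,p_1)<\delta$.

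To upgrade this local Lipschitz estimate to a global one on $P_{n,d}(\epsilon)$, I would use a standard chaining argument: $P_{n,d}(\epsilon)$ is compact and, being a semialgebraic (indeed smooth) set, it is locally path-connected with Fubini--Study-rectifiable paths; any two points can be joined by a path in $P_{n,d}(\epsilon')$ of length at most a fixed multiple of $\sfd_{\mathrm{FS}}(p_0,p_1)$ (here one uses that $\sfd_{\mathrm{FS}}$ restricted to the compact set $P_{n,d}(\epsilon)$ is bi-Lipschitz to the intrinsic path metric of $P_{n,d}(\epsilon')$, a consequence of compactness and smoothness of the boundary stratification). Subdividing such a path into finitely many short sub-arcs with consecutive endpoints at Fubini--Study distance $<\delta$ and applying the local estimate to each, then summing via the triangle inequality for $W_2^{\mathrm{in}}$ (\cref{thm:propertyw}), yields the global Lipschitz bound. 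Finally, $P_{n,d}(\epsilon)$ being compact in the $\sfd_{\mathrm{FS}}$-topology and the identity map being Lipschitz into $(P_{n,d}(\epsilon),W_2^{\mathrm{in}})$, the image is compact in the $W_2^{\mathrm{in}}$-topology. The main obstacle I anticipate is the passage from the local to the global estimate: one must ensure that nearby points of $P_{n,d}(\epsilon)$ can be joined by short admissible curves remaining away from $\Delta_{n,d}$, which requires controlling the geometry of $P_{n,d}(\epsilon)$ near its boundary $\{\sfd_{\mathrm{FS}}(\cdot,\Delta_{n,d})=\epsilon\}$; the semialgebraic structure of everything in sight makes this manageable but it is the step needing care.
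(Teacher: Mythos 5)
Your overall strategy mirrors the paper's: exhibit a local Lipschitz comparison of $W_2^{\mathrm{in}}$ against $\sfd_{\mathrm{FS}}$ away from the discriminant using the explicit Riemannian description of the energy from \cref{coro:energy}, then upgrade to a global estimate on $P_{n,d}(\epsilon)$. Your local step is handled a bit differently from the paper's: you deduce uniform comparability $g_\h\leq C' g_{\mathrm{FS}}$ on the compact set $P_{n,d}(\epsilon')$ directly from smoothness of $\h$ (\cref{thm:smooth}) plus compactness, whereas the paper's \cref{lemma:dist1} uses Lojasiewicz's inequality to get a quantitative local Lipschitz constant of the form $c_1\sfd_{\mathrm{FS}}(p,\Delta_{n,d})^{-c_2}$. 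For a fixed $\epsilon$, your route is cleaner and perfectly sufficient.

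The gap is in the global upgrade. You assert that any two points of $P_{n,d}(\epsilon)$ can be joined by a path in $P_{n,d}(\epsilon')$ of Fubini--Study length comparable to $\sfd_{\mathrm{FS}}(p_0,p_1)$, appealing to ``compactness and smoothness of the boundary stratification.'' This is not automatic: it amounts to a bi-Lipschitz comparison between the ambient FS metric and the intrinsic path metric of $P_{n,d}(\epsilon')$ on the compact set $P_{n,d}(\epsilon)$, and such comparisons fail for general compact semialgebraic sets (cusp phenomena). The real-codimension-two structure of $\Delta_{n,d}$ makes it plausible, but it is a claim requiring proof, not a consequence of compactness alone — and you rightly flag it as the delicate step. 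The paper avoids it entirely by arguing by contradiction: if $\mathrm{id}$ were not Lipschitz there would be pairs $a_m,b_m\in P_{n,d}(\epsilon)$ with $W_2^{\mathrm{in}}(a_m,b_m)/\sfd_{\mathrm{FS}}(a_m,b_m)\to\infty$; since $W_2^{\mathrm{in}}$ is bounded on $P_{n,d}(\epsilon)\times P_{n,d}(\epsilon)$ (the paper's \cref{lemma:dist2}, a finite-covering argument), $\sfd_{\mathrm{FS}}(a_m,b_m)\to 0$; compactness of $P_{n,d}(\epsilon)$ forces both sequences to converge (along a subsequence) to a single point $a$, and eventually both land in the convex neighborhood $U_a$ of \cref{lemma:dist1}, where the local estimate contradicts the unbounded ratio. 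If you prefer to keep your direct formulation, the simplest fix is a dichotomy: for $\sfd_{\mathrm{FS}}(p_0,p_1)<\delta$ use your local estimate, and for $\sfd_{\mathrm{FS}}(p_0,p_1)\geq\delta$ bound $W_2^{\mathrm{in}}(p_0,p_1)\leq c(\epsilon)$ using the boundedness lemma, so the ratio is $\leq c(\epsilon)/\delta$; the global Lipschitz constant is the maximum of the two. Either route closes the gap without the unproved bi-Lipschitz comparison.
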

\begin{remark}The above statement is phrased using the Fubini--Study distance on $P_{n,d}\simeq \CP^{N}.$ Recall that the Bombieri--Weyl structure (see \cref{sec:BW}) is a special case of the Fubini--Study one, i.e. the case when we identify $\C^{{n+d\choose d}}$ with $\C[z_0, \ldots z_n]_{(d)}$  through the map that sends the standard basis  to the basis \eqref{eq:BWb}; therefore the reader can also think of this as a statement involving the Bombieri--Weyl distance.
\end{remark}
In order to prove \cref{thm:Lipschitz} we need some preliminary lemmas.
\begin{lemma}\label{lemma:dist1}There exist two constants $c_1, c_2>0$ such that every $p\notin \Delta_{n,d}$ has a neighborhood $U_p$ with $\mathrm{clos}(U_p)\subset P_{n,d}\setminus\Delta_{n,d}$, which is geodesically convex in the Fubini--Study metric, and such that for every $q_1, q_2\in U_p$
$$W_2^{\mathrm{in}}(q_1, q_2)\leq\bigg(c_1\sfd_{\mathrm{FS}}(p, \Delta_{n,d})^{-c_2}\bigg)\sfd_{\mathrm{FS}}(q_1, q_2).$$
\end{lemma}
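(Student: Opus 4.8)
The plan is to produce the estimate by a local comparison between the two metrics, using the fact that on $P_{n,d}\setminus\Delta_{n,d}$ the inner Wasserstein distance is induced by the smooth Riemannian metric $\re\h$ of \cref{def:WHS}. First I would recall, from \cref{coro:energy}, that for a $\mathscr{C}^1$ curve $\gamma$ staying in $P_{n,d}\setminus\Delta_{n,d}$ one has $\en(\gamma)=\frac{1}{d\cdot\mathrm{vol}(\CP^{n-1})}\int_0^1\h_{\gamma(t)}(\dot\gamma(t),\dot\gamma(t))\dd t$, so that locally $W_2^{\mathrm{in}}$ is the length distance of the Riemannian metric $g_{\h}:=\frac{1}{d\cdot\mathrm{vol}(\CP^{n-1})}\re\h$. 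Hence the whole task reduces to comparing $g_{\h}$ with the Fubini--Study metric $g_{\mathrm{FS}}$ pointwise: if on a geodesically convex neighborhood $U_p$ we have $g_{\h}\le C^2\, g_{\mathrm{FS}}$ as quadratic forms, then for any $\mathscr{C}^1$ (hence for any admissible, by approximation and lower semicontinuity) curve joining $q_1,q_2$ inside $U_p$ the $g_{\h}$-length is at most $C$ times the $g_{\mathrm{FS}}$-length, and taking the Fubini--Study geodesic between $q_1,q_2$ (which lies in $U_p$ by geodesic convexity) gives $W_2^{\mathrm{in}}(q_1,q_2)\le C\,\sfd_{\mathrm{FS}}(q_1,q_2)$.

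Next I would make the constant $C$ explicit in terms of $\sfd_{\mathrm{FS}}(p,\Delta_{n,d})$. Working in homogeneous representatives $p\in S(H_{n,d})$, the Hermitian form is $\widehat h_p(q,q)=\int_{\widehat Z(p)}\frac{|q(b)|^2}{\|\nabla^{\C}p(b)\|^2}\mathrm{vol}_{\widehat Z(p)}(\dd b)$ from \eqref{eq:inte}; the numerator $|q(b)|^2$ is bounded by $\|q\|_{\mathrm{BW}}^2$ times a universal constant (the Bombieri--Weyl norm controls sup-norm on the sphere, see \cref{sec:BW}), and the total volume of $\widehat Z(p)$ is the fixed number $d\cdot\mathrm{vol}(S^{2n-1})$ by \cref{lemma:dis}. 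The only quantity that degenerates is $\min_{b\in\widehat Z(p)}\|\nabla^{\C}p(b)\|$, and this is, up to universal constants, exactly $\sfd_{\mathrm{BW}}(p,\Delta_{n,d})$: indeed the Bombieri--Weyl distance of $p$ to the discriminant is comparable to $\min_{b\in\widehat Z(p)}\|\nabla^{\C}p(b)\|$ by the standard estimate on the condition number of polynomial systems (this is the $n$-variable analogue of the Eckart--Young / Shub--Smale characterization; I would cite the relevant statement in \cref{sec:condition} or \cref{Appendix A}). Therefore $\h_p\le c_1'\,\sfd_{\mathrm{FS}}(p,\Delta_{n,d})^{-2}\,g_{\mathrm{FS}}$ for a universal $c_1'$, and since $\sfd_{\mathrm{FS}}(\cdot,\Delta_{n,d})$ is continuous and bounded below by $\tfrac12\sfd_{\mathrm{FS}}(p,\Delta_{n,d})$ on a small enough $\sfd_{\mathrm{FS}}$-ball $U_p$, the same bound with a doubled constant holds uniformly on $U_p$; one may take $U_p$ also geodesically convex and with closure away from $\Delta_{n,d}$ since the Fubini--Study space has positive convexity radius. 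This gives the claimed inequality with $c_2=2$ (the statement allows any exponent, and the exponent $2$ will be refined later when dealing with curves through the discriminant via the \L ojasiewicz argument).

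The main obstacle I anticipate is the passage from $\mathscr{C}^1$ competitors to general admissible curves: a priori an admissible curve $\gamma$ with $\mu\circ\gamma\in AC^2$ need not be $\mathscr{C}^1$, and one must know that $W_2^{\mathrm{in}}$ restricted to $U_p\subset P_{n,d}\setminus\Delta_{n,d}$ really is the Riemannian length distance of $g_{\h}$ and not something strictly smaller. This is handled by the standard metric-geometry fact (\cite{AGS:book}, and the reparametrization lemma \cite[Lemma 1.1.4]{AGS:book} already invoked in the paper) that the length distance is unchanged whether one minimizes over $\mathscr{C}^1$ curves or over $AC^2$ curves, together with the smoothness of $\h$ (\cref{thm:smooth}) which lets one approximate any finite-energy curve in $U_p$ by $\mathscr{C}^1$ ones without increasing energy in the limit, using lower semicontinuity of $\en$ (\cref{lemma:lsc} and \cite[Proposition 1.2.7]{Gigli_Pasqualetto_2020}). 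Once this identification is in place the rest is the elementary quadratic-form comparison described above, and the lemma follows; I expect to spend most of the write-up making the constant $c_1$ clean and citing the condition-number estimate precisely.
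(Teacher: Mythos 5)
Your proposal is correct and follows the same overall scheme as the paper: shrink to a Fubini--Study--convex neighborhood on which $\sfd_{\mathrm{FS}}(\cdot,\Delta_{n,d})$ is bounded below by half its value at $p$, take the Fubini--Study geodesic between $q_1,q_2$ as an admissible competitor, and bound its energy via \cref{coro:energy} by controlling the only degenerating quantity, $\min_{b\in \widehat Z(p)}\|\nabla^{\C}p(b)\|^2/\|p\|^2$, from below in terms of the distance to the discriminant. The one genuine difference is how that lower bound is obtained: the paper applies \L ojasiewicz's inequality to the two continuous semialgebraic functions $\alpha(p)=\sfd_{\mathrm{FS}}(p,\Delta_{n,d})$ and $\beta(p)=\min_{b\in Z(p)}\|\nabla^\C p(b)\|^2/\|p\|^2_{\mathrm{FS}}$, which have the same zero set, yielding an unspecified exponent $c_2$; you instead invoke the Eckart--Young/condition-number identity $\sfd_{\mathrm{BW}}(p,\Delta_{z})\asymp\|\nabla^\C p(z)\|/(\sqrt d\,\|p\|)$ for $z\in Z(p)$, which gives the sharper explicit exponent. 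Your route works because you only use the easy direction $\sfd_{\mathrm{BW}}(p,\Delta_{n,d})\le \sfd_{\mathrm{BW}}(p,\Delta_{b})$ for each $b\in\widehat Z(p)$; note, however, that your stronger assertion that $\min_b\|\nabla^\C p(b)\|$ is \emph{two-sidedly} comparable to $\sfd_{\mathrm{BW}}(p,\Delta_{n,d})$ is not needed and is the delicate direction (the closest singular polynomial need not be singular at a zero of $p$), and the distance formula itself is not actually stated in \cref{sec:condition} or \cref{Appendix A}, so you would have to supply the short linear-algebra computation of $\sfd_{\mathrm{BW}}(p,\Delta_z)$ in the Bombieri--Weyl metric. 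Finally, your anticipated ``main obstacle'' --- identifying $W_2^{\mathrm{in}}$ on $U_p$ with the Riemannian length distance over all admissible curves --- is a non-issue for this lemma: since $W_2^{\mathrm{in}}$ is an infimum, the upper bound requires only that the single $\mathscr{C}^1$ Fubini--Study geodesic be admissible with the computed energy, which is exactly \cref{coro:energy}; the approximation/lower-semicontinuity machinery would be needed only for a lower bound on $W_2^{\mathrm{in}}$.
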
 
\begin{proof}Let $U_p$ be a geodesically convex neighorhood of $p$ in the Fubini--Study metric such that $\mathrm{clos}(U_p)\cap \Delta_{n,d}=\emptyset$ and such that for every $q\in U_p$ we have 
\beq\label{eq:double}\sfd_{\mathrm{FS}}(q, \Delta_{n,d})\geq\frac{1}{2}\sfd_{\mathrm{FS}}(p, \Delta_{n,d}).\eeq
Consider also the semialgebraic functions $\alpha, \beta:P_{n,d}\to \R$ defined by
$$\alpha(p):=\sfd_{\mathrm{FS}}(p, \Delta_{n,d})\quad \textrm{and}\quad \beta(p):=\min_{b\in Z(p)}\frac{\|\nabla^\C p(b)\|^2}{\|p\|_{\mathrm{FS}}^2}.$$
(Notice that $\beta$ is well defined, since scaling $p$ by a nonzero multiple does not change its value.)
Notice that both $\alpha$ and $\beta$ vanish precisely on $\Delta_{n,d}$, therefore, since $P_{n,d}$ is compact, Lojasiewicz's inequality tells that there exist $c_3, c_4>0$ such that $\alpha^{c_3}\leq c_4 \beta,$ i.e.
\begin{equation}\label{eq:lipschitz}\sfd_{\mathrm{FS}}(p, \Delta_{n,d})^{c_3}\leq c_4 \min_{b\in Z(p)}\frac{\|\nabla^\C p(b)\|^2}{\|p\|_{\mathrm{FS}}^2}.\end{equation}
Let now $q_1, q_2\in U_p$ and consider the Fubini--Study geodesic $\gamma:I\to P_{n,d}$ joining them, parametrized with $\|\dot \gamma(t)\|_{\mathrm{FS}}\equiv \sfd_{\mathrm{FS}}(q_1, q_2)$. Since $U_p$ is geodesically convex, we have $\gamma_t\in U_p$ for every $t\in I$. Let us now compute the length of the curve $\gamma$ in $W_2^{\mathrm{in}}, $ using \cref{coro:energy}:
\begin{align}L_{W_2^\mathrm{in}}(\gamma)&=\int_{0}^1(\h)_{\gamma(t)}(\dot \gamma_t, \dot \gamma_t)^{\frac{1}{2}}\dd t\\
&=\frac{1}{d\cdot \mathrm{vol}(\CP^{n-1})}\int_{0}^1\left(\int_{Z(\gamma_t)}\frac{|\dot \gamma_t(b)|^2}{\|\nabla^\C \gamma_t(b)\|^2}\mathrm{vol}_{Z(\gamma_t)}(\dd b)\right)^\frac{1}{2}\dd t\\
&\leq \frac{1}{d\cdot \mathrm{vol}(\CP^{n-1})}\int_{0}^1\left(\int_{Z(\gamma_t)}\frac{c_4|\dot \gamma_t(b)|^2}{\|\gamma_t\|_{\mathrm{FS}}^2\cdot \sfd_{\mathrm{FS}}(\gamma_t, \Delta_{n,d})^{c_3}}\mathrm{vol}_{Z(\gamma_t)}(\dd b)\right)^\frac{1}{2}\dd t\\
&\leq \left(c_5 \sfd_{\mathrm{FS}}(p, \Delta_{n,d})^{-c_3/2}\right)\int_{0}^1\left(\int_{Z(\gamma_t)}\frac{|\dot \gamma_t(b)|^2}{\|\gamma_t\|_{\mathrm{FS}}^2}\mathrm{vol}_{Z(\gamma_t)}(\dd b)\right)^\frac{1}{2}\dd t=(*),
\end{align}
where we have used \eqref{eq:double} and \eqref{eq:lipschitz}. 

Observe now that, since $\|b\|^2=1$, we have $|\dot \gamma_t(b)|^2\leq c_6\|\dot \gamma_t\|_{\mathrm{FS}}^2$ and therefore:
\begin{align}(*)&\leq \left(c_5 \sfd_{\mathrm{FS}}(p, \Delta_{n,d})^{-c_3/2}\right)\int_{0}^1\left(\int_{Z(\gamma_t)}\frac{c_6\|\dot \gamma_t\|_{\mathrm{FS}}^2}{\|\gamma_t\|_{\mathrm{FS}}^2}\mathrm{vol}_{Z(p_t)}(\dd b)\right)^\frac{1}{2}\dd t\\
&= \left(c_7 \sfd_{\mathrm{FS}}(p, \Delta_{n,d})^{-c_3/2}\right)\int_{0}^1\frac{\|\dot \gamma_t\|_{\mathrm{FS}}}{\|\gamma_t\|_{\mathrm{FS}}}\left(\int_{Z(\gamma_t)}\mathrm{vol}_{Z(\gamma_t)}(\dd b)\right)^\frac{1}{2}\dd t\\
&\leq \left(c_1 \sfd_{\mathrm{FS}}(p, \Delta_{n,d})^{-c_2}\right)\int_{0}^1\frac{\|\dot \gamma_t\|_{\mathrm{FS}}}{\|\gamma_t\|_{\mathrm{FS}}}\dd t\\
&=\left(c_1 \sfd_{\mathrm{FS}}(p, \Delta_{n,d})^{-c_2}\right) L_{\mathrm{FS}}(\gamma)=\left(c_1 \sfd_{\mathrm{FS}}(p, \Delta_{n,d})^{-c_2}\right)\cdot \sfd_{\mathrm{FS}}(q_1, q_2).
\end{align}
Since $W_2^\mathrm{in}(q_1, q_2)\leq L_{W_2}(\gamma)$, this concludes the proof.
\end{proof}

\begin{lemma}\label{lemma:dist2}For every $\epsilon>0$ there exists $c(\epsilon)>0$ such that
$$W_2^{\mathrm{in}}|_{P_{n,d}(\epsilon)\times P_{n,d}(\epsilon)}\leq c(\epsilon).$$
\end{lemma}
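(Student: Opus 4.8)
The plan is to argue by contradiction, combining the Fubini--Study compactness of $P_{n,d}(\epsilon)$, the \emph{local} Lipschitz estimate of \cref{lemma:dist1}, and the finiteness of $W_2^{\mathrm{in}}$ established in \cref{thm:finite}. First I would record two easy facts: $P_{n,d}(\epsilon)$ is the superlevel set $\{p:\sfd_{\mathrm{FS}}(p,\Delta_{n,d})\geq\epsilon\}$ of a continuous function, hence closed in the compact space $P_{n,d}$ and therefore compact (if it is empty the statement is vacuous, with any $c(\epsilon)>0$); and, since every $p\in P_{n,d}(\epsilon)$ has $\sfd_{\mathrm{FS}}(p,\Delta_{n,d})\geq\epsilon>0$, we have $P_{n,d}(\epsilon)\subset P_{n,d}\setminus\Delta_{n,d}$, so \cref{lemma:dist1} is applicable at each of its points.

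Next I would suppose that $W_2^{\mathrm{in}}$ is unbounded on $P_{n,d}(\epsilon)\times P_{n,d}(\epsilon)$ and pick $\{p_k\},\{q_k\}\subset P_{n,d}(\epsilon)$ with $W_2^{\mathrm{in}}(p_k,q_k)\to\infty$. By compactness, after passing to a subsequence, $p_k\to p_\infty$ and $q_k\to q_\infty$ in the Fubini--Study metric with $p_\infty,q_\infty\in P_{n,d}(\epsilon)$. Applying \cref{lemma:dist1} at $p_\infty$ produces a Fubini--Study neighborhood $U_{p_\infty}$ and a finite constant $C_{p_\infty}=c_1\,\sfd_{\mathrm{FS}}(p_\infty,\Delta_{n,d})^{-c_2}$ with $W_2^{\mathrm{in}}(x,y)\leq C_{p_\infty}\sfd_{\mathrm{FS}}(x,y)$ for all $x,y\in U_{p_\infty}$; since $p_k\to p_\infty$ we have $p_k\in U_{p_\infty}$ eventually, hence $W_2^{\mathrm{in}}(p_k,p_\infty)\leq C_{p_\infty}\sfd_{\mathrm{FS}}(p_k,p_\infty)\to 0$, and symmetrically $W_2^{\mathrm{in}}(q_k,q_\infty)\to 0$ using \cref{lemma:dist1} at $q_\infty$. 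Then, using $W_2^{\mathrm{in}}(p_\infty,q_\infty)<\infty$ from \cref{thm:finite} together with the triangle inequality from \cref{thm:propertyw},
$$
W_2^{\mathrm{in}}(p_k,q_k)\ \leq\ W_2^{\mathrm{in}}(p_k,p_\infty)+W_2^{\mathrm{in}}(p_\infty,q_\infty)+W_2^{\mathrm{in}}(q_\infty,q_k),
$$
and the right-hand side stays bounded as $k\to\infty$, contradicting $W_2^{\mathrm{in}}(p_k,q_k)\to\infty$. This forces a finite bound $c(\epsilon)$, and in fact shows $c(\epsilon)$ can be taken to be the supremum of the (finite) values $W_2^{\mathrm{in}}$ attains on the compact set, realized by a maximizing pair.

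The only delicate point is the legitimacy of plugging the tail of $\{p_k\}$ (resp.\ $\{q_k\}$) into the local estimate of \cref{lemma:dist1}: this is exactly where compactness of $P_{n,d}(\epsilon)$ \emph{inside} $P_{n,d}\setminus\Delta_{n,d}$ matters, since it guarantees that the limit points $p_\infty,q_\infty$ stay a definite Fubini--Study distance away from the discriminant, where the Riemannian structure \eqref{eq:hW} — and hence the Lipschitz bound — is available. No uniform control over the constants $C_{p_\infty}$ is needed: only finitely many of them enter, one at each accumulation point, and the contradiction is then immediate. (Alternatively one could cover the semialgebraic, hence finitely-connected, compact set $P_{n,d}(\epsilon)$ by finitely many neighborhoods from \cref{lemma:dist1}, plus enough extra ones lying in $P_{n,d}\setminus\Delta_{n,d}$ to chain the components, and sum the resulting local bounds; the contradiction argument above avoids this bookkeeping.)
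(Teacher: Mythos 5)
Your proof is correct, but it takes a genuinely different route from the paper's. The paper proves the bound constructively: it covers the compact set $P_{n,d}(\epsilon)$ by finitely many of the Fubini--Study--convex neighborhoods $U_{q_1},\dots,U_{q_r}$ from \cref{lemma:dist1} (shrunk to lie in $P_{n,d}(\epsilon/2)$), joins $p_0$ to $p_1$ by a piecewise smooth curve threading through these neighborhoods with $L_{\mathrm{FS}}(\gamma)\leq r\max_j\mathrm{diam}_{\mathrm{FS}}(U_{q_j})$, and then bounds $W_2^{\mathrm{in}}(p_0,p_1)$ by $\max_j\{c_1\sfd_{\mathrm{FS}}(q_j,\Delta_{n,d})^{-c_2}\}\cdot L_{\mathrm{FS}}(\gamma)$ — exactly the chaining you sketch in your final parenthesis and then set aside. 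Your contradiction argument instead leans on two global facts already available at this point in the paper: the triangle inequality for $W_2^{\mathrm{in}}$ (\cref{thm:propertyw}) and the finiteness $W_2^{\mathrm{in}}(p_\infty,q_\infty)<\infty$ (\cref{thm:finite}), using \cref{lemma:dist1} only at the two accumulation points; there is no circularity, since both results precede \cref{lemma:dist2} and do not use it. What the paper's approach buys is an explicit, in-principle computable $c(\epsilon)$ and independence from the metric axioms; what yours buys is brevity and the avoidance of the connectivity bookkeeping for the covering (which, as you note, the paper's proof glosses over). One small overclaim: your remark that the supremum is "realized by a maximizing pair" would require (upper semi)continuity of $W_2^{\mathrm{in}}$, which is not established — only lower semicontinuity is (\cref{lemma:lsc}); but since any finite upper bound suffices for the statement, this does not affect the validity of your argument.
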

\begin{proof}For every point $q\in P_{n,d}(\epsilon)$ let $U_q$ be the Fubini--Study convex neighborhood centered at $q$ given by \cref{lemma:dist1}. Possibly shrinking this neighborhood, we can assume that it is contained in $P_{n,d}(\epsilon/2)$. Since $P_{n,d}(\epsilon)$ is compact in $\sfd_{\mathrm{FS}}$, it can be covered by finitely many such neighborhoods $U_{q_1}, \ldots, U_{q_r}$. 

Given $p_0, p_1\in P_{n,d}(\epsilon)$, let $\gamma:I\to P_{n,d}$ be a piecewise smooth curve joining $p_0$ and $p_1$, entirely contained in $\bigcup U_{q_j}$ and such that for every $j=1, \ldots ,r$ the set $\gamma^{-1}(U_{q_j})$ is an interval. Then the length of $\gamma$ is bounded by 
$$L_{\mathrm{FS}}(\gamma)\leq r \max_{j}\mathrm{diam}_{\mathrm{FS}}(U_{q_j})\leq c_1(\epsilon).$$

Arguing now as in the proof of \cref{lemma:dist1}, we see that
$$W_2^{\mathrm{in}}(p_0, p_1)\leq L_{\mathrm{W_2^{\mathrm{in}}}}(\gamma)\leq \max_j\left\{c_1\sfd_{\mathrm{FS}}(q_j, \Delta_{n,d})^{-c_2}\right\}L_{\mathrm{FS}}(\gamma)\leq c(\epsilon).$$
\end{proof}

\begin{proof}[Proof of \cref{thm:Lipschitz}]
Assume by contradiction that $\mathrm{id}$ \emph{is not} Lipschitz. Then, for every $n\in \N$ there exist $a_m, b_m\in P_{n,d}(\epsilon)$ such that
\beq \label{eq:dist3}\frac{W_2^\mathrm{in}(a_m, b_m)}{\sfd_{\mathrm{FS}}(a_m, b_m)}\geq m.\eeq
Since $W_2^\mathrm{in}$ is bounded on $P_{n,d}(\epsilon)\times P_{n,d}(\epsilon)$ (by \cref{lemma:dist2}), it follows that we must have $\sfd_{\mathrm{FS}}(a_m, b_m)\to 0$. In particular, since $P_{n,d}(\epsilon)$ is compact for the Fubini--Study topology, up to subsequences, we must have $a_{m_k}, b_{m_k}\to a$ as $k\to \infty$ in the Fubini--Study topology, with $a\in P_{n,d}(\epsilon)$. 

Let now $U_a$ be the neighborhood of $a$ given by \cref{lemma:dist1}. The sequences $\{a_{m_k}\}$ and $\{b_{m_k}\}$ lie definitely in $U_p$, therefore by \cref{lemma:dist1}:
\beq \label{eq:dist4}W_2^{\mathrm{in}}(a_{m_k}, b_{m_k})\leq \bigg(c_1\sfd_{\mathrm{FS}}(p, \Delta_{n,d})^{-c_2}\bigg)\sfd_{\mathrm{FS}}(a_{m_k}, b_{m_k}).\eeq
However, as $k\to \infty$, \eqref{eq:dist4} contradicts \eqref{eq:dist3}.

The second part of the statement follows from the fact that, being Lipschitz, $\mathrm{id}:(P_{n,d}(\epsilon), \sfd_{\mathrm{FS}})\to (P_{n,d}(\epsilon), W_2^\mathrm{in})$ is in particular continuous (and $(P_{n,d}(\epsilon), \sfd_{\mathrm{FS}})$ is compact).
\end{proof}

\section{Absolute Continuity of curves of roots}\label{sec:abscont}
In this section we deduce a first application of the new definition of Wasserstein distance introduced in \cref{S:OTalgebraic}, generalizing a result by A. Parusinski and A. Rainer \cite{Sobolev} on the regularity of roots of univariate polynomials, to the case of hypersurfaces. As we will see, for the generalization the existence of rational curves on the zero set of our curve of polynomials will play a role.

Let us start by recalling \cite[Theorem 1]{Sobolev}.

\begin{theorem}[Parusinski--Rainer]
Let $(\alpha,\beta) \subset \R$ be a bounded open interval and $P_a$ a monic polynomial 
$$
P_a(t)(z) = P_{a(t)}(z) = z^d + 
\sum_{j = 1}^{d} a_j(t) z^{d-j},
$$
with coefficients $a_j \in \mathscr{C}^{d-1,1}([\alpha,\beta])$, with $j = 1,\dots,d$.
Let $\lambda \in \mathscr{C}((\alpha,\beta))$ 
be a continuous root of $P_a$ on $(\alpha,\beta)$. 
Then $\lambda$ is absolutely continuous on $(\alpha,\beta)$ and belongs to the Sobolev space 
$W^{1,p}((\alpha,\beta))$ for every $1 \leq  p < d/(d-1)$. The derivative $\lambda'$ satisfies 
$$
\| \lambda' \|_{L^p((\alpha,\beta))}
\leq  
C(d,p)\max\{1,(\beta -\alpha)^{1/p} \}
\max_{1\leq j\leq d} \|a_j \|_{\mathscr{C}^{d-1,1}([\alpha,\beta])}^{1/j}.
$$
\end{theorem}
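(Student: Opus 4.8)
The plan is to follow the strategy of \cite{Sobolev}, proving the statement by induction on the degree $d$ after a few standard reductions. First I would normalize: the conclusion is invariant under the Tschirnhausen shift $z\mapsto z-a_1(t)/d$ (which only modifies $\lambda$ by the Lipschitz function $a_1/d$, whose norm is among the data), so one may assume $a_1\equiv 0$; and it is invariant under affine reparametrization of $t$, so one may assume the interval is $[0,1]$, at the cost of the factor $\max\{1,(\beta-\alpha)^{1/p}\}$, and rescale so that $\max_j\|a_j\|_{\mathscr{C}^{d-1,1}}\le 1$. Under this normalization the classical Ostrowski--Lagrange bound gives $|\lambda(t)|\le C(d)\max_{j}|a_j(t)|^{1/j}\le C(d)$ for all $t$, which makes every quantity below bounded and is the origin of the exponents $1/j$ in the final estimate.

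The inductive step proceeds by a local factorization. At any point $t_0$ at which $P_{a(t_0)}$ has at least two distinct roots, the roots split into two separated clusters, and Hensel's lemma (the holomorphic implicit function theorem applied to the relevant resultant) yields, on a neighborhood of $t_0$, a factorization $P_{a(t)}=Q_t\cdot R_t$ into monic polynomials of degrees $<d$ whose coefficients are again of class $\mathscr{C}^{d-1,1}$ with controlled norms; applying the inductive hypothesis to the factor carrying $\lambda$ handles a neighborhood of $t_0$. Globalizing this requires a covering/summation argument over the (open) set where not all roots coincide, rescaling each piece in time and in the $z$--variable by the diameter of the ambient cluster and tracking how the $\mathscr{C}^{d-1,1}$--norms transform under these rescalings; I expect verifying that the resulting series of local $L^p$--contributions converges, with constant depending only on $d$ and $p$, to be one of the more delicate pieces of bookkeeping, and it is here that the precise threshold enters.

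The remaining, and hardest, situation is the local estimate near a point $t_0$ at which all $d$ roots coincide; since $a_1\equiv 0$ the common value is $0$, so all $a_j$ vanish at $t_0$, and $\lambda$ need not be differentiable there (model: $z^d-t$, with root $t^{1/d}$). Off the closed, Lebesgue--null set where $\lambda$ is a multiple root one has the pointwise identity $|\lambda'(t)|=|\dot P_{a(t)}(\lambda(t))|\big/\prod_{k\ne i}|\lambda(t)-\lambda_k(t)|$ with bounded numerator, so the whole matter reduces to
\[
\int \Big(\prod_{k\ne i}|\lambda(t)-\lambda_k(t)|\Big)^{-p}\,dt<\infty\qquad\text{for }p<\tfrac{d}{d-1},
\]
and this is the main obstacle. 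My approach would be to express the elementary symmetric functions of the shifted roots $\lambda-\lambda_k$ as the "recentred" coefficients of $P_{a(t)}$ at $\lambda(t)$, which up to bounded factors are controlled by $\max_j|a_j(t)|^{1/j}$; using Taylor's theorem with Lipschitz remainder for the $\mathscr{C}^{d-1,1}$ functions $a_j$ together with the H\"older-$\tfrac{1}{d}$ continuity of the roots, and a dyadic decomposition in $t$ around $t_0$, one extracts exactly the integrability exponent $p<d/(d-1)$, the example $z^d-t$ showing sharpness. Finally, assembling the three cases and propagating the constants through the induction and the rescalings yields local absolute continuity of $\lambda$, membership in $W^{1,p}$ for all $1\le p<d/(d-1)$, and the quantitative bound with the stated dependence $C(d,p)\max\{1,(\beta-\alpha)^{1/p}\}\max_{1\le j\le d}\|a_j\|_{\mathscr{C}^{d-1,1}}^{1/j}$.
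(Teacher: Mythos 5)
The paper does not prove this statement: it is quoted verbatim as Theorem~1 of \cite{Sobolev}, so there is no ``paper's own proof'' to compare against. What you have written is a high-level reconstruction of the Parusinski--Rainer strategy, and in outline it matches theirs: Tschirnhausen reduction to $a_1\equiv 0$, an affine normalization of the time interval, the root bound $|\lambda|\lesssim\max_j|a_j|^{1/j}$, induction on the degree via a local Hensel-type factorization where the roots separate into clusters, and a direct $L^p$ estimate of $|\lambda'|=|\dot P_{a}(\lambda)|/|\partial_z P_{a}(\lambda)|$ near points where all roots collapse.

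However, in every place where the actual difficulty of the theorem resides you explicitly defer to ``delicate bookkeeping'' without supplying it, and those gaps are not incidental. Concretely: (i) the Hensel factorization must be performed together with a time- and space-rescaling that keeps the lower-degree factors normalized while preserving quantitative $\mathscr{C}^{d-1,1}$ bounds, and verifying that the local $L^p$ contributions across the (a priori infinite) cover sum to a constant depending only on $d,p$ is nontrivial; (ii) the collapse-point estimate is not a single dyadic decomposition---inside each dyadic shell the roots may re-cluster, producing a recursive tree of partial collisions, and the sharp threshold $p<d/(d-1)$ emerges only after tracking how the $\mathscr{C}^{d-1,1}$ Taylor estimates and the H\"older-$1/d$ root continuity propagate through that tree; (iii) the affine rescaling of the interval changes all the $\mathscr{C}^{d-1,1}$ norms (each derivative picks up a factor of $\beta-\alpha$), so extracting the advertised prefactor $\max\{1,(\beta-\alpha)^{1/p}\}$ is itself a small computation that must be done consistently with the exponents $1/j$. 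As written this is a correct route map to the Parusinski--Rainer theorem, but not a proof; the steps you flag as bookkeeping are precisely the content of \cite{Sobolev}.
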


{\cite[Theorem 1]{Sobolev}}
 is optimal: in general 
 the derivatives of the roots of a smooth curve of monic polynomials of degree $d$ are not locally 
 $d/(d - 1)$-integrable (take for instance $z^d + t$), and the roots may have locally unbounded variation if the coefficients are only of class
 $\mathscr{C}^{d-1,\alpha}$ 
 for $\alpha < 1$.

We now reformulate \cite[Theorem 1]{Sobolev} 
in our formalism. 
The natural generalisation to higher degree 
will be then discussed.

Given a monic polynomial $P_{a(t)}$ as in
\cite[Theorem 1]{Sobolev} (for simplicity defined over $[0,1]$), 
we consider the associated homogeneous polynomial 
$$
p_t(z_0,z_1) 
= z_1^d + \sum_{j = 1}^{d} a_j(t)z_1^{d-j}z_0^j \in \C[z_0,z_1]_{(d)}.
$$
The corresponding measure 
$\mu_t : = \mu (p_t)$ will then be a probability measure over $\CP^1$: 
$$
\mu_t =\frac{1}{d} 
\sum_{z \in Z(P_{a(t)})} m_{z}(P_{a(t)})\delta_{[1,z]}.
$$
We now compute the metric speed of the curve
$[0,1] \ni t \mapsto \mu_t 
\in \mathscr{P}_p(\CP^1)$, with $p\geq 1$.
Suppose additionally the existence of 
continuous, pairwise distinct roots
$\lambda_1,\dots, \lambda_r$ of $P_{a(t)}$ with 
$r \leq  d$.  
For $h$ sufficiently small
we then have
$$
W_p(\mu_t,\mu_{t+h})^p = 
\frac{1}{d}\sum_{j=1}^r 
m_{[1,\lambda_j(t)]}(p_t)
\sfd_{FS}([1,\lambda_j(t)], [1,\lambda_j(t+h)])^p.
$$
Hence taking the limit 
as $h \to 0$ we obtain
$$
\frac{C_1 }{d} \sum_{j=1}^r 
m_{[1,\lambda_j(t)]}(p_t) 
|\lambda_j(t)'|^p
\leq 
|\dot\mu_t|_p^p \leq 
\frac{C_2 }{d} \sum_{j=1}^r 
m_{[1,\lambda_j(t)]}(p_t) 
|\lambda_j(t)'|^p,
$$
where $C_1,C_2$ are two positive constants 
given by the equivalence of the metric speeds calculated in $\CP^1$ or in $\C$.
We see therefore that the absolute continuity of the roots and the integrability of their derivative  $|\lambda_j(t)'|^p$ is actually equivalent to 
the $p$-absolute continuity of the curve 
$t \mapsto \mu_t$ inside 
$\mathscr{P}_p(\CP^1)$.
In particular 
to $
(\mu_t)_{[0,1]} \in AC^p([0,1];\mathscr{P}_p(\CP^1))$.

In order to formulate and prove the first generalization of {\cite[Theorem 1]{Sobolev}}, we need one extra definition.
Let $\tau^*\to \mathbb{G}(1, n)$ be the dual of the tautological bundle and 
$$E:=\mathrm{sym}^{(d)}(\tau^*)\to \mathbb{G}(1,n)$$
be its $d$--th symmetric power. Recall that $E$ is a complex vector bundle on $\mathbb{G}(1,n)$; given $\ell\in \mathbb{G}(1,n)$, the fiber $E|_\ell$ is naturally isomorphic to the set of homogeneous polynomials of degree $d$ on $\ell$; in particular the complex rank of $E$ is $d+1$. Every homogeneous polynomial $p\in H_{n, d}$ gives rise to a section $\sigma_{p}$ of $E$, defined by
$$\sigma_p(\ell)=p|_\ell.$$
A line $\ell$ is contained in the zero set of $Z(p)$ if and only if if $\sigma_p(\ell)=0$. 
\begin{definition}[Fano discriminant]\label{definition:Fano}We denote by $F_{n,d}\subset P_{n,d}$ and call it the \emph{Fano discriminant} the set of polynomials $[p]\in P_{n,d}$ such that the section $\sigma_p$ \emph{is not} transversal to the zero section of $E$.   
\end{definition}

When $n=1$, the Fano discriminant is empty: in this case the only line is $\CP^1$ itself, and no nonzero polynomial $p\in\C[z_0, z_1]_{(d)}$ can vanish identically on this line. If $p\in P_{n,d}\setminus  {F_{n,d}}$, then the set of lines contained in $Z(p)$ is a smooth, complex subset of the Grassmannian, of complex codimension $d+1$ (the rank of $E$). In particular, if $d=2n-3$ and $p\notin  {F_{n,d}}$, then there are finitely many lines on $Z(p)$. If $d>2n-3$ and $p\notin  {F_{n,d}}$, then there is no line on $Z(p)$ (the ``generic'' section misses the zero section). This explains the assumptions in next theorem, which gives a regularity for a curve of measures of the form $\mu_t=\mu(p_t)$. Of course, the relevant fact is that $F_{n,d}$ is contained in a proper algebraic set \cite[Theorem 6.34]{3264}.

\begin{theorem}\label{T:Parusinski}
Let $d>2n-3$. For every curve $p_t \in \mathscr{C}^{d-1,1}(I, \mathbb{C}[z_{0},\dots,z_{n}]_{(d)})$, such that $p_t\notin {F_{n,d}}$ for every $t\in I$, 
and for every $1 \leq q < d/(d-1)$ the curve
$\mu_t:=\mu(p_t) \in AC^{q}(I,\mathscr{P}_q(\CP^n)).$
\end{theorem}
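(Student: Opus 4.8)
The strategy is to reduce the higher-dimensional statement to the one-variable result \cite[Theorem 1]{Sobolev} by slicing with lines, exactly as in the integral-geometric formula \eqref{eq:ext} defining $\mu$. Fix a curve $p_t\in\mathscr{C}^{d-1,1}(I,\mathbb{C}[z_0,\dots,z_n]_{(d)})$ with $p_t\notin F_{n,d}$ for all $t$. Since $d>2n-3$ and $p_t\notin F_{n,d}$, for every $t$ the section $\sigma_{p_t}$ of $E=\mathrm{sym}^{(d)}(\tau^*)$ is transverse to the zero section and, the rank of $E$ being $d+1>2n-2=\dim_{\C}\mathbb{G}(1,n)$, this forces $Z(p_t)$ to contain \emph{no} line. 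Hence for \emph{every} $t\in I$ and \emph{every} $\ell\in\mathbb{G}(1,n)$ the restriction $p_t|_\ell$ is a nonzero homogeneous polynomial of degree $d$ in two variables; dehomogenizing in a chart it is a (non-monic, but with controlled leading coefficient away from a measure-zero set of directions) polynomial of degree $\le d$ in one variable whose coefficients depend on $t$ with the same $\mathscr{C}^{d-1,1}$ regularity as $p_t$ (the coefficients of $p_t|_\ell$ are linear in the coefficients of $p_t$).

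The key step is then: for a fixed line $\ell$ (outside a null set where the leading coefficient of the dehomogenization degenerates), apply the one-variable reformulation of \cite[Theorem 1]{Sobolev} discussed in the excerpt to the curve $t\mapsto \mu(p_t|_\ell)\in\mathscr{P}_q(\CP^1\cong\ell)$, obtaining that this curve lies in $AC^q(I,\mathscr{P}_q(\ell))$ with a bound on its metric speed $|\dot\mu(p_t|_\ell)|_q$ controlled, via the explicit estimate in \cite[Theorem 1]{Sobolev}, by $\max_j\|(\text{coeff}_j \text{ of } p_t|_\ell)\|_{\mathscr{C}^{d-1,1}}^{1/j}$, hence uniformly in $\ell$ by compactness of $\mathbb{G}(1,n)$ and continuity of the slicing map. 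Now combine these sliced estimates: for $s<t$, using \eqref{eq:ext} one estimates $W_q(\mu_s,\mu_t)$ by integrating over $\mathbb{G}(1,n)$ a $W_q$-distance on $\CP^n$ of the pushed-forward-along-$\ell$ measures. Concretely, each line $\ell$ furnishes, through the Parusinski--Rainer absolutely continuous choice of roots, a transport plan on $\ell$ between $\mu(p_s|_\ell)$ and $\mu(p_t|_\ell)$; averaging these plans against $\mathrm{vol}_{\mathbb{G}(1,n)}$ and pushing forward via the inclusions $\ell\hookrightarrow\CP^n$ produces a transport plan between $\mu(p_s)$ and $\mu(p_t)$ whose cost is $\le \frac{1}{d}\int_{\mathbb{G}(1,n)}\big(\text{cost on }\ell\big)\,\mathrm{vol}_{\mathbb{G}(1,n)}(\dd\ell)$. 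Via the one-variable $AC^q$ bound and Minkowski's integral inequality this yields
$$
W_q(\mu_s,\mu_t)\le\int_s^t m(\tau)\,\dd\tau,\qquad m\in L^q(I),
$$
with $m(\tau)$ built by integrating the sliced metric speeds over $\mathbb{G}(1,n)$; this is precisely the defining estimate \eqref{E:ac} for $\mu_t\in AC^q(I,\mathscr{P}_q(\CP^n))$.

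The main obstacle I expect is the \emph{measurable and uniform dependence on $\ell$} of the Parusinski--Rainer data: one needs that the absolutely continuous roots $\lambda_j(t)$ of $p_t|_\ell$, the multiplicities $m_z(p_t|_\ell)$, and the induced transport plans can be chosen jointly measurably in $(\ell,t)$ and with $L^q$-in-$t$ bounds that are integrable (indeed bounded) in $\ell$. The degeneration of the leading coefficient of $p_t|_\ell$ on a positive-codimension subvariety of $\mathbb{G}(1,n)$ must be handled so that roots do not "escape to infinity" in the dehomogenization—here one either works intrinsically on $\ell\cong\CP^1$ using the homogeneous form (as the excerpt suggests), or covers $\mathbb{G}(1,n)$ by finitely many charts adapted to affine charts of $\CP^n$. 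A second, more technical, point is turning the pointwise-in-$\ell$ transport plans into a genuine transport plan on $\CP^n$ with the correct marginals: this requires checking that averaging the line-slice measures over $\mathbb{G}(1,n)$ reproduces exactly $\mu(p_s)$ and $\mu(p_t)$, which is the content of \eqref{eq:ext}, and that the averaging of plans preserves the marginal constraints (it does, by Fubini). Once measurability and the uniform $L^q$ bound are in place, the rest is the Minkowski/Hölder bookkeeping sketched above.
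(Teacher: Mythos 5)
Your proposal follows essentially the same route as the paper: slice by lines via \eqref{eq:ext}, apply the one--variable result of Parusinski--Rainer to the dehomogenized restrictions $p_t|_\ell$, average the resulting line--wise optimal plans over $\mathbb{G}(1,n)$ to get $W_q(\mu_{t+h},\mu_t)^q\leq\int_{\mathbb{G}(1,n)}W_q(\mu_{t+h,\ell},\mu_{t,\ell})^q\,\mathrm{vol}(\dd\ell)$, and conclude from the uniform $L^q$--in--$t$ bound on the sliced metric speeds. You correctly flag the two technical points the paper actually addresses: (i) measurability of the line--wise plans (handled by a measurable selection argument), and (ii) controlling the dehomogenization so roots do not escape to infinity --- the paper's \cref{lemma:nhood} implements your "finitely many adapted charts'' idea, with the crucial extra care (using $d>2n-3$ and $p_t\notin F_{n,d}$, hence no line in any $Z(p_t)$, so $\bigcup_t Z(p_t)\cap\ell_0$ has Hausdorff dimension one) that the chosen frame point must avoid the zeroes of \emph{all} $p_t$ at once, not just one.
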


\begin{proof}
Fix any $q$ with $1 \leq q < d/(d-1)$. We will provide an upper bound
on the increment of $\mu_t$
via an explicit coupling 
between $\mu_{t+h}$ and $\mu_{t}$ 
constructed by means of 
integral geometry. 
For any $t \in [0,1]$,
the claim of Theorem \ref{thm:ext} can be rewritten as 
\beq 
\mu_{t}=\int_{\mathbb{G}(1,n)}
\mu (\gamma(t)|_{\ell}) \,\mathrm{vol}_{\mathbb{G}(1,n)}(\dd \ell),
\eeq
where, in particular, for any continuous function 
$f$ of $\CP^{n}$, the map 
$$
\mathbb{G}(1,n) \ni l \longmapsto  \int f \, \mu(\gamma(t)|_{l})  = \frac{1}{d} \sum_{z\in Z(\gamma(t))\cap \ell}m_z(\gamma(t)|_{\ell}) f(z), 
$$
is measurable and bounded. 
For ease of notation we will write $\mu_{t,l}$ 
in place of $\mu(\gamma(t)|_{l})$.

We consider then the optimal transport problem between 
$\mu_{t,l}$ and $\mu_{t+h,l}$ with cost function given by $\sfd_{std}^{q}$ 
for which, 
by compactness of $\Pi(\mu_{t,l},\mu_{t+h,l})$ and linearity of the functional,  
there exists a minimizer that we denote by
$\xi_{t,h,l} \in \Pi(\mu_{t,l},\mu_{t+h,l})$.
By a classical selection argument, 
the map 
$\mathbb{G}(1,n) \ni l 
\mapsto \xi_{t,h,l} \in 
\mathscr{P}(\CP^n \times \CP^n)$
can be assumed to be measurable; 
we can then define 
$$
\xi_{t,h} : = \int_{\mathbb{G}(1,n)}\xi_{t,h,l} \,\mathrm{vol}_{\mathbb{G}(1,n)}(\dd \ell) \in \Pi(\mu_{t},\mu_{t+h}),
$$
and compute: 
\begin{align}
W_{q}(\mu_{t+h},\mu_{t})^{q} 
&~ \leq 
\int_{\CP^{n} \times \CP^{n}} \sfd_{std}^{q}(z,w) \,\xi_{t,h}(\dd z \dd w) 
\\
&~ = 
\int_{\mathbb{G}(1,n)} \int_{\CP^{n} \times \CP^{n}} \sfd_{std}^{q}(z,w) \, \xi_{t,h,l} \,\mathrm{vol}_{\mathbb{G}(1,n)}(\dd \ell)  
\\
&~ = 
\int_{\mathbb{G}(1,n)} W_{q}(\mu_{t+h,l}, \mu_{t,l})^{q}
\,\mathrm{vol}_{\mathbb{G}(1,n)}(\dd \ell).
\end{align}
We will now estimate $W_{q}(\mu_{t+h,l}, \mu_{t,l})^{q}$.

To this end, fix a line $\ell_0$ and denote by $L_0\simeq \C^{2}\subset \C^{n+1}$ a complex vector space such that $\mathrm{P}(L_0)=\ell_0$. On a small enough neighborhood $U_1(\ell_0)$ of $L_0$ in $G(2, n+1)=\mathbb{G}(1,n)$, by \cref{lemma:nhood} (proved below), we can find smooth maps $e_0, e_1:U_1(\ell_0)\to \C^{n+1}$ such that: (i) for every $L\in U_1(\ell_0)$, where $\mathrm{P}(L)=\ell,$ the set $\{e_0(\ell), e_1(\ell)\}$ is a Hermitian orthonormal basis of $L$ and (ii) for every $t\in I$ the point $e_1(\ell_0)$ \emph{is not} a zero of  $p_t$.

For every $\ell \in U_1(\ell_0)$ 
define the polynomial:
\beq 
a_{t,\ell}(z_0, z_1):=p_t(z_0e_0(\ell) +z_1e_1(\ell))=\sum_{k=0}^da_{k}(t, \ell)z_0^{k}z_1^{d-k}\in \C[z_0, z_1]_{(d)}.
\eeq
Notice that, the functions $t\mapsto a_{k}(t, \ell)$ are $\mathscr{C}^{d-1, 1}.$ Moreover, since $a_{0}(t, l_0)=p_t(e_1(\ell_0))\neq 0$ for all $t\in I$, then $a_0(t, \ell_0)\neq 0$ for all $t\in I$. In particular (by continuity of $a_0$) there is a neighborhood $U_2(\ell_0)$ with $\clos(U_2(\ell_0))\subseteq U_1(\ell_0)$ such that $a_0(t, \ell)\neq 0$ for every $\ell\in \clos(U_{2}(\ell_0))$ and for every $t\in I$. For $(t, \ell)\in I\times U_2(\ell)$ define:
\beq b_k(t, \ell):=\frac{a_k(t,\ell)}{a_0(t, \ell)}.\eeq
For every $\ell\in U_2(\ell_0)$ and $t\in I$ we consider now the polynomial $b_{t, \ell}\in \C[z_0, z_1]_{(d)}$ defined by
\beq b_{t, \ell}(z_0, z_1):=\sum_{k=0}^db_k(t, \ell)z_0^k z_1^{d-k}=z_1^d+\sum_{k=1}^{d}b_k(t, \ell)z_0^k z_1^{d-k}.\eeq
Notice that $b_{t,\ell}$ has the same zeroes of $a_{t, \ell}$ and that the functions $t\mapsto b_k(t, \ell)$ are again $\mathscr{C}^{d-1, 1}$ (moreover, $b_0\equiv 1$). 

For every $\ell\in U_1(\ell_0)$ consider now the map $\varphi_\ell:\CP^1\to \ell$ given by 
\beq \varphi_\ell([z_0,z_1]):=[z_0e_0(\ell) +z_1e_1(\ell)].\eeq  
Such map is an isometry, since $\{e_0(\ell), e_1(\ell)\}$ is a Hermitian orthornomal basis.  In particular, if $\lambda_1(t, \ell), \ldots, \lambda_d(t, \ell)$ are  continuous selections of the zeroes of $b_{t, \ell}$, we have:
\beq 
W_q(\mu_{t+h,l}, \mu_{t,l})^q
\leq \frac{1}{d}\sum_{j=1}^d
\sfd_{std}^q(\lambda_{j}(t+h, \ell),
\lambda_{j}(t, \ell)).
\eeq 
Denote now by $\sigma:\CP^1\setminus\{[0,1]\}\to \C$ the stereographic projection, $\sigma([z_0, z_1])=\frac{z_1}{z_0}$, and by $\widetilde{\lambda}_k(t, \ell):=\sigma(\lambda_k(t, \ell)).$ Then $\widetilde\lambda_1(t, \ell), \ldots, \widetilde\lambda_d(t, \ell)$ are continuous selections of the zeroes of the polynomial $\widetilde{b}_{t, \ell}$ given by
\beq \widetilde{b}_{t, \ell}(z):=b_{t, \ell}(1, z)=z^d+\sum_{k=1}^{d}b_k(t, \ell)z_1^{d-k}.
\eeq 
Since there exists $C_1>0$ such that $\mathrm{dist}_{\CP_1}(x,y)\leq  C_1\cdot \mathrm{dist}_{\C}(\sigma(x), \sigma(y))$ for every $x, y\in \CP^1\setminus \{[0,1]\}$, then for some $C_2>0$
\beq 
\sfd_{std}(\lambda_{j}(t+h, \ell) , \lambda_{j}(t, \ell))\leq C_2\cdot |\widetilde\lambda_{j}(t+h, \ell)-
\widetilde\lambda_{j}(t, \ell)|_{(\C, g_\mathrm{std})}.\eeq
We use now 
\cite[Theorem 1]{Sobolev}, which 
guarantees that each $\widetilde{\lambda}_j(\cdot, \ell)\in AC^q(I, \C)$ and that
\beq 
\|\partial_t\widetilde{\lambda}_{j}(\cdot, \ell)\|_{L^q}=\left(\int_{I}|\partial_t\widetilde\lambda_{j}(t, \ell)|^q_{(\C, g_\mathrm{std})}\dd t\right)^{1/q}\leq C(d, q)\max_{1\leq j\leq d}\|b_j(\cdot, \ell)\|_{C^{d-1, 1}}^{1/j}.
\eeq
Since $a_0(t,\ell) \neq 0$ for all 
$t \in I$ and $\ell \in U_2(\ell_0)$, we deduce that  
$$
\max_{1\leq j\leq d}\|b_j(\cdot, \ell)\|_{C^{d-1, 1}}^{1/j} \leq  
C(\gamma,\ell_0,q) < \infty,
$$
for all $t \in I$ and $\ell \in U_2(\ell_0)$.
Hence for each $\ell \in U_2(\ell_0)$:
\begin{align}
\int_{[0,1-h]} \left(\frac{W_q(\mu_{t+h,\ell}, \mu_{t,\ell})}{h}\right)^q\,dt
\leq &~
\frac{1}{d}
\sum_{j=1}^d \int_{[0,1-h]}
\left(\frac{\sfd_{std}(\lambda_{j}(t+h, \ell),
\lambda_{j}(t, \ell))}{h}\right)^q\, dt \\
\leq &~
\frac{1}{d}
\sum_{j=1}^d \int_{[0,1-h]}
\left(\frac{|\widetilde\lambda_{j}(t+h, \ell) -
\widetilde\lambda_{j}(t, \ell)|}{h}\right)^q\, dt \\
\leq &~
\frac{1}{d}
\sum_{j=1}^d \int_{[0,1-h]}
\left(\frac{1}{h}\int_{[t,t+h]} 
|\partial_t \widetilde\lambda_{j}(s,\ell)|^q ds\right)\, dt  \\
=&~
\frac{1}{d}
\sum_{j=1}^d 
\int_{[h,1]} |\partial_t \widetilde\lambda_{j}(s,\ell)|^q \,ds \\
\leq  &~ C(\gamma,\ell_0,q) 
\end{align}
where the estimate is uniform for 
$\ell \in U_2(\ell_0)$.

By compactness, we can cover $\mathbb{G}(1, n)$ with finitely many open sets of the form $U(q,\gamma, \ell_1), \ldots, U(q,\gamma, \ell_r)$ and, setting $c(q, \gamma):=\max_k C(\gamma, \ell_k,q)$ we have:
\begin{align}
\int_{[0,1-h]}
\left(
\frac{W_{q}(\mu_{t+h},\mu_{t})}{h}
\right)^{q} \,dt
&~ 
= 
\int_{[0,1-h]}
\int_{\mathbb{G}(1,n)} \left(\frac{W_{q}(\mu_{t+h,l}, \mu_{t,l})}{h} \right)^{q}
\,\mathrm{vol}_{\mathbb{G}(1,n)}(\dd \ell) \, dt
 \\
&~ \leq  \int_{\mathbb{G}(1,n)} C(\gamma,\ell,q) \,
\,\mathrm{vol}_{\mathbb{G}(1,n)}(\dd \ell) < C, 
\end{align}
for any $h < 1$ 
 and 
$C$ independent of $h$ and $t$.
This estimate is known to be sufficient 
to prove the claim $\mu\circ \gamma\in AC^{q}(I,\mathscr{P}_q(\CP^n))$, 
see for instance 
\cite[Lemma 1]{Lisini06}.
\end{proof}

It remains to prove the Lemma used in the proof.
\begin{lemma}\label{lemma:nhood}Let $p_t:I\to \C[z_0, \ldots, z_n]_{(d)}$ be a curve satisfying the assumptions of \cref{T:Parusinski}. For every $\ell_0\in \mathbb{G}(1,n)$  denote by $L_0\simeq \C^{2}\subset \C^{n+1}$ the complex vector space such that $\mathrm{P}(L_0)=\ell_0$. There exists a neighborhood $U_1(\ell_0)$ of $L_0$ in $G(2, n+1)=\mathbb{G}(1,n)$ and smooth maps $e_0, e_1:U(\ell_0)\to \C^{n+1}$ such that: 
\begin{enumerate}
\item[(i)] for every $L\in U_1(\ell_0)$, where $\mathrm{P}(L)=\ell,$ the set $\{e_0(\ell), e_1(\ell)\}$ is a Hermitian orthonormal basis of $L$; 
\item[(ii)] for every $t\in I$ the point $e_1(\ell_0)$ \emph{is not} a zero of the polynomial $p_t$. 
\end{enumerate}
\end{lemma}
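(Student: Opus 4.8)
The plan is to construct the frame $(e_0,e_1)$ by first fixing a convenient orthonormal basis of $\C^{n+1}$ adapted to $\ell_0$, and then letting the frame vary by orthogonalizing a smoothly-varying basis of the $2$-plane $L$ near $L_0$. Concretely, pick a Hermitian orthonormal basis $u_0,\dots,u_n$ of $\C^{n+1}$ so that $L_0=\mathrm{span}_\C\{u_0,u_1\}$. On the standard affine chart of the Grassmannian $G(2,n+1)$ around $L_0$ — the set of $2$-planes that are graphs of linear maps $\C^2=\mathrm{span}\{u_0,u_1\}\to\mathrm{span}\{u_2,\dots,u_n\}$ — a plane $L$ is spanned by $v_0(L):=u_0+\sum_{j\ge 2}A_{j0}u_j$ and $v_1(L):=u_1+\sum_{j\ge 2}A_{j1}u_j$, with $A$ ranging over a neighborhood of $0$ in $\C^{(n-1)\times 2}$; these depend smoothly (indeed holomorphically) on $L$ and are linearly independent. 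Applying Gram–Schmidt to the ordered pair $(v_1(L),v_0(L))$ — note the order, so that $e_1$ is obtained first — produces smooth maps $e_1,e_0:U_1(\ell_0)\to\C^{n+1}$ with $\{e_0(\ell),e_1(\ell)\}$ a Hermitian orthonormal basis of $L$; this is statement (i). Since Gram–Schmidt only involves the operations $v\mapsto v/\|v\|$ and subtracting Hermitian projections, and $\|v_1(L)\|\ge 1>0$ throughout, the smoothness is immediate, and after possibly shrinking $U_1(\ell_0)$ it is genuinely a neighborhood.

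The only remaining point is (ii): we must be able to choose the original basis so that $e_1(\ell_0)=v_1(L_0)/\|v_1(L_0)\|=u_1$ is not a zero of $p_t$ for any $t\in I$. This is where the hypothesis $d>2n-3$ and $p_t\notin F_{n,d}$ enters, exactly as in the discussion preceding \cref{T:Parusinski}: for each $t\in I$, the hypothesis $p_t\notin F_{n,d}$ together with $d>2n-3$ guarantees that $Z(p_t)$ contains \emph{no} line. A fortiori, $Z(p_t)$ cannot contain the line $\ell_0$ itself, so $p_t|_{\ell_0}\not\equiv 0$; thus the set $Z(p_t)\cap\ell_0$ is a finite set of points in $\ell_0$ (at most $d$ of them). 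We want a point of $\ell_0$ avoiding $\bigcup_{t\in I}(Z(p_t)\cap\ell_0)$. Since $I$ is compact and the coefficients of $p_t|_{\ell_0}$ depend continuously on $t$, the set $\{(t,z)\in I\times\ell_0 : p_t(z)=0\}$ is closed; its image under the projection to $\ell_0$ is compact, but more importantly it has empty interior in $\ell_0$. Indeed, by a Baire/measure argument: each fiber $Z(p_t)\cap\ell_0$ is finite, so $\{(t,z): p_t(z)=0\}$ has $2$-dimensional (real) Lebesgue measure zero in $I\times\ell_0$ by Fubini, hence its projection to $\ell_0$ — while possibly not measure zero in general — can be avoided after noting the following stronger fact: for each fixed $z\in\ell_0$, the set $\{t: p_t(z)=0\}$ is the zero set of a continuous function; choose instead a nonzero point $w\in L_0$ such that $p_t(w)\ne 0$ for all $t$, which exists because $\{w\in L_0\setminus\{0\}: \exists t,\ p_t(w)=0\}$ projects to a subset of $\ell_0$ that is a countable (in fact, locally finite in $t$) union of finite sets once we restrict $t$ to a dense sequence, hence is not all of $\ell_0$.

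Having found such a $w$, we simply rechoose the adapted basis: complete $w/\|w\|$ to a Hermitian orthonormal basis $u_1':=w/\|w\|$, pick $u_0'\in L_0$ orthonormal to $u_1'$, and extend to $u_2',\dots,u_n'$; running the construction above with this basis gives $e_1(\ell_0)=u_1'=w/\|w\|$, and since $p_t$ is homogeneous, $p_t(e_1(\ell_0))=\|w\|^{-d}p_t(w)\ne 0$ for all $t\in I$. This yields (ii).

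The step I expect to require the most care is the density/avoidance argument for (ii): one must make precise that the union over the \emph{uncountable} parameter set $t\in I$ of the finite zero sets $Z(p_t)\cap\ell_0$ does not cover $\ell_0$. The clean way is to use compactness of $I$ and the $\mathscr{C}^0$-dependence of $p_t|_{\ell_0}$ on $t$ together with a Sard/Fubini-type estimate: the real-analytic (in fact polynomial-in-$z$, continuous-in-$t$) variety $\{(t,z)\in I\times\ell_0: p_t(z)=0\}$ has all fibers over $t$ finite and nonempty-complement, so its projection to $\ell_0$ has empty interior — because if it had interior, by Fubini over $t$ we would get a fiber $Z(p_t)\cap\ell_0$ of positive measure, contradicting finiteness. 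Hence its complement in $\ell_0$ is dense (indeed of full measure), and any point $w$ in that complement works. Everything else is a routine, if slightly tedious, Gram–Schmidt computation, and smoothness follows because every operation involved is smooth where denominators stay bounded away from zero, which holds on a small enough $U_1(\ell_0)$.
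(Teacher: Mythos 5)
Your construction of the moving frame, part (i), is fine and is essentially the paper's: the paper fixes an orthonormal pair $e_0(\ell_0),e_1(\ell_0)$ of $L_0$ and, for nearby planes $L$, Gram--Schmidts the projections $\mathrm{proj}_L e_0(\ell_0),\mathrm{proj}_L e_1(\ell_0)$, while you use the graph chart of the Grassmannian; both are routine. The substance of \cref{lemma:nhood} is part (ii): producing one point of $\ell_0$ avoiding $\bigcup_{t\in I}\bigl(Z(p_t)\cap\ell_0\bigr)$. Both of your justifications for this step are incorrect. The ``restrict $t$ to a dense sequence'' argument fails because the union over a dense sequence of times is a proper subset of the union over all $t\in I$; its closure does contain the full union (roots vary continuously), but the closure of a countable set can be all of $\ell_0$. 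The Fubini argument is a non sequitur: Fubini does show that $E=\{(t,z):p_t(z)=0\}$ is null in $I\times\ell_0$ because its fibers over $t$ are finite, but the projection of a null set onto $\ell_0$ can have nonempty interior --- the graph of a space--filling curve $t\mapsto f(t)$ has singleton fibers over $t$, is null in the product, yet projects onto a square. This is not a fixable technicality in your framework: for a merely \emph{continuous} family the conclusion is genuinely false (take $n=1$, $d=1$, $p_t(z_0,z_1)=z_1-f(t)z_0$ with $f$ space--filling; then every point of an open subset of $\ell_0=\CP^1$ is a zero of some $p_t$). Since your argument uses only continuity in $t$ and finiteness of the fibers $Z(p_t)\cap\ell_0$, it cannot close.

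What is needed --- and what the paper's proof supplies --- is a bound on the size of the \emph{total} incidence set $U=\{(z,t)\in\ell_0\times I:\,p_t(z)=0\}$, not merely of its fibers: the paper bounds the Hausdorff dimension of $U$ by $1$, so that its image under the (smooth, hence locally Lipschitz) projection to $\ell_0$ also has dimension at most $1$ and cannot exhaust the real two--dimensional $\ell_0$. Any honest version of this count must exploit the regularity of $t\mapsto p_t$, since by the counterexample above the roots of a merely continuous family can sweep out a set of positive measure; concretely, one needs that the roots of $p_t|_{\ell_0}$ admit selections tracing out one--dimensional (e.g.\ rectifiable) curves in $\ell_0$, which is exactly where the $\mathscr{C}^{d-1,1}$ hypothesis enters (for instance via a local dehomogenization in $t$ and the one--variable result of \cite{Sobolev}). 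Your proposal never invokes this regularity at the avoidance step, and this is the missing idea.
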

\begin{proof} Consider the set
$$U:=\{(z, t)\in \ell_0\times I\,|\, p_t(z)=0\}\subset L_0\times I,$$
together with the restrictions $\pi_1:U\to \ell_0$ and $\pi_2:U\to I$  of the projections on the two factors.
Since $p_t\notin {\Delta}_{n,d}^\mathrm{Fano}$, the hypothesis $d>2n-3$ imply that there is no line on $Z(p_t)$ and in particular for every $t\in I$ the polynomial $p_t|_{L_0}$ is not identically zero. For every $t\in I$, the set $\pi_2^{-1}(t)$ consists of finitely many points, therefore it has Hausdorff dimension zero; consequently $U$ has Hausdorff dimension at most $1$ in $\ell_0\times I\simeq \CP^1\times I$. Since $\pi_1$ is a smooth map, then also $\pi_1(U)$ has Hausdorff dimension at most $1$. In particular there exists a point $x\in \ell_0\setminus \pi_1(U).$ This point $x$ is the projectivization of some complex one--dimensional vector space $V\subset L_0$ and we can therefore pick a norm one vector $e_1(\ell_0)\in V.$ We set $e_0(\ell_0)\in L_0$ to be any norm--one vector in the Hermitian orthogonal complement of $e_1(\ell_0)$. There exists now a neighborhood $U(\ell_0)$ such that for every line $\ell=\mathrm{P}(L)\in U(\ell_0)$, the orthogonal projections $\{\mathrm{proj}_Le_0(\ell_0), \mathrm{proj}_Le_1(\ell_0)\}$ form a basis for $L$. Applying the Gram--Schmidt procedure to this basis yields the desired functions $e_0(\cdot), e_1(\cdot):U(\ell_0)\to \C^{n+1}$. \end{proof}
In the way of proving 
\cref{T:Parusinski} 
we have also proved the following
upper bound on the metric speed 
via integral geometry: 
using the notation of the above proof, for every curve $\gamma \in C^{d-1,1}(I, \mathbb{C}[z_{0},\dots,z_{n}]_{(d)})$, and for every $1 \leq q < d/(d-1)$, we have
\beq
|\dot{\mu_t}|_q^q\leq  \int_{\mathbb{G}(1,n)} 
|\dot{\mu_{t,\ell}}|_q^q
\,\mathrm{vol}_{\mathbb{G}(1,n)}(\dd \ell),
\eeq
for almost all $t \in [0,1]$; 
here $|\dot{\mu_t}|_q$ denotes the metric speed of $\mu_t$ in the $q$-Wasserstein space.

We now address the compatibility 
of \cref{T:Parusinski} with the sharp result obtained in \cite{Sobolev} 
by showing that, 
in the case $n = 1$,
\cref{T:Parusinski} recovers
\cite{Sobolev}.

\subsection{Rational maps}In this section we prove a further generalization of \cite[Theorem 1]{Sobolev}. The main assumption of \cref{T:Parusinski} was the condition that the curve $p_t$ \emph{does not} hit the Fano discriminant $F_{n,d}$; under this assumption, if $p_t\in \mathscr{C}^{d-1, 1}$, then $\mu_t\in AC^q(I, \mathscr{P}_q(\CP^n))$ for all $1\leq q<d/(d-1).$ We can remove the condition $p_t\notin  {F_{n,d}}$, asking for more regularity on the curve $p_t$ at the price of losing some regularity on the curve of measures $\mu_t$.

\begin{theorem}\label{thm:rational}For every $n\in \mathbb{N}$ there exists $e(n)>0$ such that for every curve $p_t \in \mathscr{C}^{k,1}(I, \mathbb{C}[z_{0},\dots,z_{n}]_{(d)})$,  with $k\geq e(n) d-1$,
and for every $1 \leq q < e(n)d/(e(n)d-1)$ the curve
$\mu_t:=\mu(p_t) \in AC^{q}(I,\mathscr{P}_q(\CP^n)).$
\end{theorem}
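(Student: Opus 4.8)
The plan is to transcribe the proof of \cref{T:Parusinski}, replacing the family of lines through a hypersurface by the $U(n+1)$-orbit of a single rational curve of high degree, chosen so that no member of the orbit is contained in any $Z(p_t)$. The price of passing from degree $1$ to degree $e(n)$ is exactly that one must raise the regularity assumption on $p_t$ (because the restriction of $p_t$ to such a curve becomes a binary form of degree $e(n)d$) and one loses accordingly on the integrability exponent of $\dot\mu_t$.

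First I fix the constant and the curve. By \cref{thm:riedl} (the Lehmann--Riedl--Tanimoto result, whose proof rests on bounding the Fujita invariants of the hypersurfaces involved) there are an integer $e=e(n)$ and a degree-$e$ rational curve $C_0\subset\CP^n$, with a fixed parametrization $\nu_0\colon\CP^1\to C_0$ given by binary forms of degree $e$ without common zero, such that $g\,C_0\not\subset Z(p_t)$ for every $g\in U(n+1)$ and every $t\in I$. Write $\nu_g:=g\circ\nu_0$, so that $p\circ\nu_g\in\C[z_0,z_1]_{(ed)}$ and $Z(p\circ\nu_g)=\nu_g^{-1}\bigl(Z(p)\cap gC_0\bigr)$. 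The second ingredient is the \emph{rational coupling formula} of \cref{P:rationalcoupling}: for every $p\in P_{n,d}$ one has $\mu(p)=\int_{U(n+1)}(\nu_g)_{\#}\,\mu\bigl(p\circ\nu_g\bigr)\,dg$, where $dg$ is the normalized Haar measure and $\mu(\cdot)$ on the right is the one-variable construction of \cref{corollary:CP1}. This is the analogue of \cref{thm:ext}/\cref{lemma:dis} with lines replaced by the orbit of $C_0$; it is well posed because $p\circ\nu_g\not\equiv0$ for all $g$ by the non-containment property, and its proof is a kinematic-formula computation in the homogeneous space $\CP^n=U(n+1)/U(n)$ for the complementary-dimensional complex submanifolds $Z(p)$ and $gC_0$.

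With these in hand the argument runs as for \cref{T:Parusinski}. Fix $q$ with $1\le q<ed/(ed-1)$ and $p_t\in\mathscr{C}^{k,1}(I,\C[z_0,\dots,z_n]_{(d)})$ with $k\ge ed-1$. Around any $g_0\in U(n+1)$, arguing as in \cref{lemma:nhood} — the set $\{(x,t)\in g_0C_0\times I:p_t(x)=0\}$ has fibres over $I$ of cardinality $\le ed$, hence Hausdorff dimension $\le1$, so its projection to the real surface $g_0C_0$ misses a point $x^\ast$ — one chooses an affine chart on $\CP^1$ whose point at infinity is $\nu_0^{-1}(g_0^{-1}x^\ast)$; dividing $p_t\circ\nu_g$ by its (then nonvanishing) leading coefficient turns it, for $g$ in a neighbourhood $U(g_0)$ and all $t\in I$, into a monic binary form of degree $ed$ whose coefficients are $\mathscr{C}^{ed-1,1}$ functions of $t$ with norms bounded uniformly in $(g,t)\in\clos(U(g_0))\times I$ (composition with $\nu_g$ is linear in $p$ and smooth in $g$, and these sets are compact). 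Passing to $\C$ by stereographic projection and applying \cite[Theorem 1]{Sobolev} bounds, uniformly in $g\in U(g_0)$, the $L^q(I)$-norms of the derivatives of continuous selections $\widetilde\lambda_j(\cdot,g)$ of the roots — i.e. of the points of $gC_0\cap Z(p_t)$ read in the chart. Covering $U(n+1)$ by finitely many such $U(g_1),\dots,U(g_m)$, and forming the plans $\xi_{t,h}:=\int_{U(n+1)}(\nu_g\times\nu_g)_{\#}\xi_{t,h,g}\,dg\in\Pi(\mu_t,\mu_{t+h})$ from the rational coupling formula (with $\xi_{t,h,g}$ optimal between $\mu(p_t\circ\nu_g)$ and $\mu(p_{t+h}\circ\nu_g)$), one gets, since the $\nu_g$ are uniformly Lipschitz,
\[
\int_0^{1-h}\left(\frac{W_q(\mu_{t+h},\mu_t)}{h}\right)^{\!q}dt\;\le\;C\int_{U(n+1)}\int_0^{1-h}\left(\frac{W_q(\mu_{t+h,g},\mu_{t,g})}{h}\right)^{\!q}dt\,dg\;\le\;C',
\]
with $\mu_{t,g}:=\mu(p_t\circ\nu_g)\in\mathscr{P}(\CP^1)$ and $C,C'$ independent of $h$; by \cite[Lemma 1]{Lisini06} this yields $\mu_t=\mu(p_t)\in AC^q(I,\mathscr{P}_q(\CP^n))$.

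The main obstacle I expect is the rational coupling formula \cref{P:rationalcoupling}. For lines \cref{lemma:dis} follows from the classical kinematic formula in $\CP^n$ with its homogeneous, $p$-independent weight; for the orbit of a degree-$e$ rational curve one must verify that the Jacobian of the incidence correspondence $\{(x,g):x\in gC_0\}$ is constant (this should follow from $U(n+1)$-equivariance, but needs to be carried out) and that the $U(n+1)$-average of the fibrewise restrictions reproduces $\mu(p)$ with a normalization independent of $p$ — this is precisely where the geometry of $C_0$ and the value of $e(n)$ enter. The other non-elementary input, \cref{thm:riedl}, is used as a black box; everything else is a faithful adaptation of the argument already carried out for \cref{T:Parusinski} (and, as there, when $n=1$ one has $e(1)=1$ and the statement reduces to \cite[Theorem 1]{Sobolev}).
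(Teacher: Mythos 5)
Your proposal is correct and follows the paper's own proof in all essentials: \cref{thm:riedl} and \cref{P:goodcurve} produce a degree-$e(n)$ rational embedding $\nu$ whose $U(n+1)$-orbit avoids containment in every $Z(p_t)$; \cref{P:rationalcoupling} disintegrates $\mu(p)$ along this orbit; and the localization-in-$g$, dehomogenization, and appeal to \cite[Theorem 1]{Sobolev} for degree-$ed$ binary forms, followed by the gluing of optimal plans and \cite[Lemma 1]{Lisini06}, are exactly the steps carried out there. The only thing you flag as a potential gap — whether the rational coupling formula really holds with a $p$-independent normalization — is already settled in the paper's proof of \cref{P:rationalcoupling}: it is the same Howard kinematic formula \eqref{eq:igfc} used in \cref{lemma:dis}, applied with $B=\nu(\CP^1)$, and the normalization $1/(d\cdot e)$ comes solely from $\mathrm{vol}(\nu(\CP^1))/\mathrm{vol}(\CP^1)=e$; the geometry of $\nu$ and the constant $e(n)$ enter only in guaranteeing the existence of a suitable $\nu$ via \cref{thm:riedl} and \cref{P:goodcurve}, not in the weight of the coupling formula.
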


The main idea for the proof of \cref{thm:rational} is to replace the use of lines, in the integral geometry approach, with the use of rational maps.

\begin{definition}For every $n, e\in \mathbb{N}$ we denote by $R_e(\CP^n)\subset \CP^{(n+1)(e+1)-1}$ the set of $(n+1)$--tuples $[q_0, \ldots, q_n]$ where $q_0, \ldots, q_n\in H_{1,e}$ are polynomials with no common zeroes. (The set $R_e(\CP^n)$ is a Zariski open set.) We call elements of $R_e(\CP^n)$ \emph{rational maps}, since each of them  defines a map $\nu:\CP^1\to \CP^n$ by
$$
\nu(w):=[q_0(w), \ldots, q_n(w)].
$$
\end{definition}
If $Z\subset \CP^n$ is a hypersurface, we denote by $R_e(Z)\subset R_e(\CP^n)$ the set of rational maps such that $\nu(\CP^1)\subset Z$.
Notice that, given a polynomial $p\in H_{n,d}$ and a rational map $\nu:\CP^1\to \CP^n$, the composition $p\circ \nu$ defines an element of $H_{1, de}$ (up to multiples) and $\nu(\CP^1)\subset Z(p)$ if and only if $p\circ \nu\equiv 0.$

We can generalize the construction from \cref{sec:familiesgeneral} and recover the map $\mu:P_{n,d}\to \mathscr{P}(\CP^n)$ integrating over rational maps of high degree (instead of lines, which are images of rational maps of degree one).
\begin{proposition}
\label{P:rationalcoupling}
Let $\nu\in R_e(\CP^n)$ such that $\nu:\CP^1\to \CP^n$ is an embedding.  Then, for every $p\in P_{n,d}$ and $f\in \mathscr{C}^0(\CP^n)$ we have
\beq\label{eq:rationalslice}\int_{\CP^n}f \dd \mu(p)=\frac{1}{d\cdot e}\int_{U(n+1)}\sum_{p(g(\nu(w)))=0}m_{w}(p\circ g\circ \nu)f(g(\nu(w))) \dd g.\eeq
\end{proposition}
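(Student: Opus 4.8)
The plan is to derive \eqref{eq:rationalslice} by combining the integral geometry formula \eqref{eq:ext} from \cref{thm:ext} with a second application of the kinematic argument, now replacing the fixed line $\ell_0$ by the image of the rational curve $\nu$. Since $\nu:\CP^1\to\CP^n$ is an embedding of degree $e$, its image $C:=\nu(\CP^1)$ is a (generically smooth) rational curve in $\CP^n$, and the key point is that for a generic $g\in U(n+1)$ the translated curve $gC$ meets $Z(p)$ transversally in finitely many points, each counted with the appropriate multiplicity. First I would observe that the pullback $p\circ g\circ\nu$ is a homogeneous polynomial in one variable of degree $de$ (or identically zero, which happens only for $g$ in a measure-zero set, since $C$ is not contained in any hypersurface of degree $<\infty$ generically — more precisely the set of $g$ with $gC\subset Z(p)$ is a proper algebraic subset of $U(n+1)$ because $R_e(Z(p))$ is a proper subvariety whenever $Z(p)\neq\CP^n$). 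For such $g$, the zeros of $p\circ g\circ\nu$ on $\CP^1$ correspond, via $\nu$ and the multiplicity-preserving property of composition, to the points of $gC\cap Z(p)$ counted with the correct local intersection multiplicity.

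The core computation is a kinematic/coarea argument entirely parallel to the proof of \cref{lemma:dis}. I would consider the measure $\nu_p\in\mathscr{P}(\CP^n)$ defined by the right-hand side of \eqref{eq:rationalslice}, i.e.
\beq
\int_{\CP^n} f\,\dd\nu_p := \frac{1}{de}\int_{U(n+1)}\left(\sum_{p(g(\nu(w)))=0} m_w(p\circ g\circ\nu)\, f(g(\nu(w)))\right)\dd g,
\eeq
and show $\nu_p=\mu(p)$. By \cref{propo:compactness} it suffices to check this when $p$ is irreducible (the general case follows by the factorization argument of \cref{lemma:factorize}, since composition with $\nu$ distributes over products and almost every $g$ avoids the pairwise intersections of the components), and then by \cref{lemma:irreducible} it suffices to compare $\nu_p$ with $\tfrac{1}{d\cdot\mathrm{vol}(\CP^{n-1})}\mathrm{vol}_{Z(p)^{\mathrm{reg}}}$. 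For generic $g$, all the zeros $w$ of $p\circ g\circ\nu$ are simple (this is a Bertini-type / transversality statement: for $p$ irreducible the generic translate $gC$ meets $Z(p)^{\mathrm{reg}}$ transversally and misses $Z(p)^{\mathrm{sing}}$, since the latter has codimension $\geq 2$ and an incidence-variety dimension count shows the bad $g$ form a proper subvariety), so the multiplicities drop out and the inner sum becomes $\sum_{z\in gC\cap Z(p)} f(z)$. Now I apply the kinematic formula \eqref{eq:igfc} of \cite[Example 3.12 (b)]{Howard} — or rather its proof — with $A$ an open subset of $Z(p)^{\mathrm{reg}}$ and $B=C$ the fixed curve of degree $e$: this yields
\beq
\int_{U(n+1)} \#(A\cap gC)\,\dd g = \frac{\mathrm{vol}_{Z(p)}(A)}{\mathrm{vol}_{\CP^{n-1}}(\CP^{n-1})}\cdot\frac{\mathrm{vol}_C(C)}{\mathrm{vol}_{\CP^1}(\CP^1)} = \frac{\mathrm{vol}_{Z(p)}(A)}{\mathrm{vol}_{\CP^{n-1}}(\CP^{n-1})}\cdot e,
\eeq
using that a degree-$e$ rational curve has volume $e\cdot\mathrm{vol}(\CP^1)$ (the complex analogue of Wirtinger's inequality, as in \cite{GH}). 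Then the Riesz representation argument, verbatim as in \cref{lemma:dis}, upgrades this from characteristic functions to general continuous $f$ and identifies $\nu_p$ with the normalized volume measure, giving \eqref{eq:rationalslice}.

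The main obstacle I anticipate is the careful justification of two genericity statements: first, that for almost every $g\in U(n+1)$ the curve $gC$ is not contained in $Z(p)$ (so that $p\circ g\circ\nu\not\equiv 0$ and the count is finite) — this requires knowing that $R_e(Z(p))$ is a proper subvariety of $R_e(\CP^n)$, equivalently that not every degree-$e$ rational curve lies on $Z(p)$, which is clear when $Z(p)\neq\CP^n$; and second, that for almost every $g$ the intersection $gC\cap Z(p)$ is transversal and avoids $Z(p)^{\mathrm{sing}}$, so the zeros of the pullback are simple. Both are standard transversality/Bertini arguments applied to the $U(n+1)$-action — which acts transitively enough on $\CP^n$ for the relevant incidence varieties to have the expected dimension — but one must phrase them in the real-semialgebraic language (as elsewhere in the paper, cf.\ \cref{lemma:irreducible}) to conclude that the exceptional set of $g$ is closed of measure zero, hence negligible in the Haar integral. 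A secondary but purely bookkeeping point is checking that the local multiplicity $m_w(p\circ g\circ\nu)$ of a zero $w$ of the one-variable pullback agrees with the intersection multiplicity of $gC$ with $Z(p)$ at $g(\nu(w))$ (needed for the reduction to the irreducible/transversal case and for \cref{lemma:factorize}-type distributivity); this follows from the basic properties of multiplicities of finite maps in \cite[Chapter 5.2]{GH}, exactly as in \cref{sec:familieszero}.
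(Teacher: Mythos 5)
Your proposal is correct, but it takes a genuinely different route from the paper's for the reduction to a tractable special case. You both begin from the same kinematic formula \eqref{eq:igfc} with $B=\nu(\CP^1)$ (yielding the factor $e$ from $\mathrm{vol}(\nu(\CP^1))/\mathrm{vol}(\CP^1)=e$) and the Riesz representation argument copied from \cref{lemma:dis}. The paper, however, establishes \eqref{eq:rationalslice} only for $p\in P_{n,d}\setminus\Delta_{n,d}$, where $Z(p)$ is a smooth compact hypersurface and generic transversality makes all multiplicities $1$, and then extends to arbitrary $p$ by an analytic argument: it observes that the right-hand side of \eqref{eq:rationalslice} is continuous in $p$ (by Dominated Convergence, exactly as in the proof of \cref{thm:ext}), and concludes by density of $P_{n,d}\setminus\Delta_{n,d}$. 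You instead establish the identity for $p$ \emph{irreducible} — using \cref{lemma:irreducible} as the target, so the relevant submanifold is $Z(p)^{\mathrm{reg}}$ (smooth but possibly non-compact, with codimension-$\geq 2$ singular locus) — and extend to general $p$ \emph{algebraically}, via the factorization/additivity mechanism of \cref{lemma:factorize}, correctly noting that $p\circ g\circ\nu=\prod_j(p_j\circ g\circ\nu)^{m_j}$ so multiplicities of the pullback distribute: $m_w(p\circ g\circ\nu)=\sum_j m_j\,m_w(p_j\circ g\circ\nu)$. Both reductions work; yours re-uses the algebraic machinery of \cref{lemma:irreducible,lemma:factorize} and avoids re-verifying continuity of the new formula, while the paper's re-uses the analytic machinery of \cref{thm:ext} and avoids the bookkeeping of multiplicity-additivity. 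One cosmetic slip: your opening citation of \cref{propo:compactness} as the reason the reduction to irreducible $p$ suffices is a misattribution — that proposition states injectivity of $\mu$; the correct ingredient is \cref{lemma:factorize}, which you do cite immediately after.
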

\begin{proof} Observe first that, if $Z\subset \CP^n$ is a smooth complex hypersurface and $U\subseteq Z$ is an open set, the integral geometry formula \eqref{eq:igfc} applied with the choice $A=U$ and $B=\nu(\CP^1)$ gives
$$\int_{U(n+1)}\#U\cap g\nu(\CP^1)\dd g=\frac{\mathrm{vol}(U)}{\mathrm{vol}(\CP^{n-1})}\frac{\mathrm{vol}(\nu(\CP^1))}{\mathrm{vol}(\CP^1)}.$$
Moreover, letting $U=Z=\CP^{n-1}$, we see that $\frac{\mathrm{vol}(\nu(\CP^1))}{\mathrm{vol}(\CP^1)}=e$.

Still letting $Z\subset \CP^n$ be smooth, arguing as in \cref{lemma:dis} and denoting by $\lambda:\mathscr{C}^0(Z)\to\R$ the linear functional
$$\lambda(f):=\int_{U(n+1)}\frac{1}{d\cdot e}\sum_{z\in Z\cap \nu(\CP^1)}f(z)\dd g,$$
we see that $\lambda\geq 0$ and we find a unique measure $\sigma$ on $Z$ such that $\lambda(f)=\int_Z f(z) \sigma (\dd z).$ Approximating the characteristic function of an open set $U$ by continuous functions, as in \cref{lemma:dis}, we see that $\sigma=\mathrm{vol}_Z$.

Observe now that, if $p\in P_{n,d}\setminus \Delta_{n,d}$, then for almost every $g\in U(n+1)$ the set $\nu(\CP^1)$ and $Z(p)$ intersect transversally and for every point of intersection $z=\nu(w)\in \nu(\CP^1)\cap Z$ we have $m_w(p\circ g\circ \nu)=1$. This proves the identity \eqref{eq:rationalslice} for $p\in P_{n,d}\setminus\Delta_{n,d}$. Finally, as in the proof of \cref{thm:ext}, we see that \eqref{eq:rationalslice} is continuous on $P_{n,d}$ and, since $P_{n,d}\setminus \Delta_{n,d}$ is dense in it, then \eqref{eq:rationalslice}
agrees with \eqref{eq:ext}.
\end{proof}

We will need the following recent result of B. Lehmann, E. Riedl and S. Tanimoto, privately communicated to the authors. Roughly speaking, the result says that, given positive integers $n$ and $T$, there exists $\xi(n, T)>0$ such that for any family $X\subset R_{e}(\CP^n)$ with $e \geq \xi$ and with $\dim_\C(X)\geq(e+1)(n+1)-T$, we have 
$$\bigcup_{\nu\in X}\nu(\CP^1)=\CP^n.$$ In particular such family of curves cannot lie on a single hypersurface. 
\begin{proposition}[Lehmann--Riedl--Tanimoto]\label{thm:riedl}For every $n, T\in \N$ there exists $\xi(n, T)>0$ such that for every $e\geq \xi(n, T)$ and for every $p\in P_{n,d}$ we have:
$$\dim_\C(R_e(Z(p))\leq (n+1)(e+1)-T.$$ 
\end{proposition}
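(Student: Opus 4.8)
The plan is to reduce the statement to a dimension count on the space of rational curves through a fixed point, combined with the boundedness of Fujita invariants of Fano-type varieties. First I would recall the relevant parametrization: let $\mathrm{Rat}_e(X)$ denote the space of degree-$e$ rational maps $\CP^1\to X$ whose image lies in the hypersurface $X=Z(p)\subset \CP^n$; this sits inside $R_e(\CP^n)$ as a closed subvariety. For a general point $x\in \CP^n$ one has $\dim R_e(\CP^n) = (n+1)(e+1)-1$ and the locus of maps through $x$ has codimension $n$; so the quantitative claim $\dim_\C R_e(Z(p)) \le (n+1)(e+1)-T$ is equivalent to saying that through a fixed general point of $\CP^n$ there is no $(T-n)$-dimensional family of degree-$e$ rational curves contained in $Z(p)$. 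The idea is that if such a large family existed, sweeping out by the curves and their $U(n+1)$-translates would force the union $\bigcup_\nu \nu(\CP^1)$ to cover $\CP^n$, contradicting that they all lie in the proper subvariety $Z(p)$ — but turning this heuristic into a uniform bound in $e$ is exactly the content of the Lehmann--Riedl--Tanimoto theorem.

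The key analytic input I would invoke is the theory of Fujita invariants (a-invariants and b-invariants in the sense of the Manin-type conjectures): for a smooth Fano or more generally klt weak Fano variety, and for the family of rational curves of anticanonical degree $\le$ a fixed bound, the dimension of the moduli is controlled linearly. Concretely, for $Y$ a projective variety with a big and nef divisor, the spaces $\overline{M}_{0,0}(Y,\beta)$ parametrizing genus-zero stable maps of class $\beta$ have expected dimension $-K_Y\cdot \beta + \dim Y - 3$, and the result of Lehmann--Riedl--Tanimoto asserts that the \emph{actual} dimension exceeds the expected one only along loci governed by subvarieties of higher Fujita invariant, and for $e$ large these contribute at most a bounded correction. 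Applied here with $Y=\CP^n$ (where $-K_{\CP^n}=(n+1)H$, so curves of degree $e$ have anticanonical degree $(n+1)e$), one gets that a family of degree-$e$ curves confined to a fixed hypersurface $Z(p)$ of degree $d$ is forced, for $e \ge \xi(n,T)$, to have dimension at most $(n+1)(e+1)-T$, because otherwise $Z(p)$ itself would have to have small Fujita invariant relative to $\CP^n$, which a proper subvariety cannot.

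The steps in order: (1) set up the incidence correspondence and reduce to a bound on curves through a general point; (2) stratify by the possible images — either the rational curves dominate $\CP^n$ under the group action (impossible if confined to $Z(p)$) or they sweep out a proper subvariety $W\subseteq Z(p)$; (3) for each such $W$ apply the Lehmann--Riedl--Tanimoto estimate on the dimension of the space of degree-$e$ curves in $W$, whose leading term $(e+1)\cdot(\text{something}\le n+1)$ plus a bounded error yields the claim once $e$ is large enough that the error is dominated by $T$; (4) take $\xi(n,T)$ to be the maximum of the finitely many thresholds arising from the possible strata, using Noetherianity to see there are finitely many relevant Fujita-invariant loci. The main obstacle is step (3): the honest content is precisely the (at the time unpublished) theorem of Lehmann--Riedl--Tanimoto, so in the paper this proposition is simply cited as \cref{thm:riedl} and not reproved; what I can supply self-containedly is only the reduction in steps (1)--(2) and the packaging in step (4), while the hard geometric heart — uniform control of the dimension of rational curve spaces in terms of Fujita invariants, independent of the degree $e$ — must be taken as input. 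I would therefore present the ``proof'' essentially as an attribution, noting that the statement follows from the cited private communication applied to $\CP^n$, and emphasize in step (1) the translation between ``$\bigcup_\nu \nu(\CP^1)=\CP^n$'' and the dimension inequality so that the reader sees why the two formulations are equivalent.
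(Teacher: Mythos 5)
The paper gives no proof of this proposition: it is stated as an attribution to a result of Lehmann, Riedl and Tanimoto that was privately communicated to the authors, and the surrounding text only supplies an informal restatement (``any family $X\subset R_e(\CP^n)$ with $\dim_\C X\geq(e+1)(n+1)-T$ must sweep out all of $\CP^n$''). You identify this correctly, and your decision to present the ``proof'' as an attribution together with the translation between the sweeping formulation and the dimension bound matches exactly what the paper does. In that sense your proposal is in agreement with the paper, and there is no proof to compare against.

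Two small inaccuracies in your contextualizing remarks are worth flagging, since they could mislead a reader who takes the heuristics literally. First, your reformulation ``through a fixed general point of $\CP^n$ there is no $(T-n)$-dimensional family of degree-$e$ rational curves contained in $Z(p)$'' is not quite an equivalent statement: curves contained in $Z(p)$ never pass through a general point of $\CP^n$, so the incidence condition you are using must be taken at a general point of the locus actually swept by the curves, not of $\CP^n$; phrased this way the reduction requires care about whether the family dominates $Z(p)$, which is precisely the kind of stratification the cited theorem handles. Second, the $U(n+1)$-translates are not part of the content of \cref{thm:riedl} itself; they enter only afterwards, in \cref{P:goodcurve}, where one uses the dimension bound to find a rational curve none of whose $U(n+1)$-translates lies in any $Z(p_t)$. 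The cited result is the purely algebro-geometric statement that a high-dimensional family of degree-$e$ rational curves must sweep out $\CP^n$; the unitary group plays no role there.
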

In the special case $T(n):=\lfloor\frac{(n+1)^2}{2}\rfloor+1$, using \cref{thm:riedl}, we get the number
\beq \label{eq:riedl}e(n):=\xi( n, T(n)).\eeq
The next proposition ensures that, given a curve $p_t$ of polynomials, we can find a ``good'' rational map to perform integral geometry on the family $\{Z(p_t)\}_{t\in I}.$

\begin{proposition}\label{P:goodcurve}
Let $p_t:I\to P_{n,d}$ be a $\mathscr{C}^k$ curve, with $k\geq 1$ and $e(n)>0$ be given by \eqref{eq:riedl}. For every $e\geq e(n)$ there exists $\nu\in R_e(\CP^n)$ such that
\begin{enumerate}
\item for every $t\in I$ and for every $g\in U(n+1)$ the image of $g\circ \nu$ \emph{is not} contained in $Z(p_t);$
\item $\nu:\CP^1\to \CP^n$ is an embedding.
\end{enumerate}
\end{proposition}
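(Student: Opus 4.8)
The plan is to produce the rational curve $\nu$ by a dimension count, using the Lehmann--Riedl--Tanimoto bound \cref{thm:riedl} together with the choice $T(n) = \lfloor (n+1)^2/2\rfloor + 1$ from \eqref{eq:riedl}. First I would set up the relevant incidence variety. Fix $e\geq e(n)$ and consider inside $R_e(\CP^n)$ (which has dimension $(n+1)(e+1)-1$) the locus
\beq
\mathcal{B}:=\Big\{\nu\in R_e(\CP^n)\,\Big|\,\exists\,t\in I,\ \exists\, g\in U(n+1),\ g\circ\nu(\CP^1)\subseteq Z(p_t)\Big\}.
\eeq
The goal of part (1) is to show $\mathcal{B}\neq R_e(\CP^n)$, and in fact that its complement is dense. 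To estimate the size of $\mathcal{B}$, note that $g\circ\nu(\CP^1)\subseteq Z(p_t)$ is equivalent to $\nu\in R_e(Z(p_t\circ g))$, where $p_t\circ g$ is again a degree-$d$ polynomial (precomposition by a unitary change of coordinates). By \cref{thm:riedl} with the chosen value $T(n)$, for each fixed $t$ and $g$ we have $\dim_\C R_e(Z(p_t\circ g))\leq (n+1)(e+1)-T(n)$. Now the family of pairs $(t,g)$ is parametrised by $I\times U(n+1)$, which has \emph{real} dimension $1 + (n+1)^2$. The crucial bookkeeping step is to combine these: $\mathcal{B}$ is the image, under projection to $R_e(\CP^n)$, of the real-analytic set
\beq
\widetilde{\mathcal{B}}:=\{(\nu,t,g)\in R_e(\CP^n)\times I\times U(n+1)\,|\,\nu\in R_e(Z(p_t\circ g))\},
\eeq
whose fibers over $I\times U(n+1)$ have real dimension at most $2\big((n+1)(e+1)-T(n)\big)$, hence
\beq
\dim_{\R}\widetilde{\mathcal{B}}\leq 2\big((n+1)(e+1)-T(n)\big)+1+(n+1)^2.
\eeq
Since $2T(n) = 2\lfloor(n+1)^2/2\rfloor+2 \geq (n+1)^2+1$, this gives $\dim_\R\widetilde{\mathcal B}\leq 2(n+1)(e+1)-2T(n)+1+(n+1)^2 \leq 2(n+1)(e+1)-2 < 2\big((n+1)(e+1)-1\big)=\dim_\R R_e(\CP^n)$. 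Therefore $\mathcal{B}$ is contained in a proper closed subset of positive codimension in $R_e(\CP^n)$, so its complement is open and dense.

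It remains to secure part (2) simultaneously: I would argue that the set of $\nu\in R_e(\CP^n)$ which fail to be embeddings $\CP^1\to\CP^n$ is also contained in a proper Zariski-closed subset of $R_e(\CP^n)$. This is classical for $e$ large (e.g.\ $e\geq e(n)$, enlarging $e(n)$ if necessary): for $n\geq 2$ a generic tuple of degree-$e$ binary forms with no common root defines an embedding of $\CP^1$ (smoothness of the image and injectivity are each open dense conditions cut out by the non-vanishing of appropriate resultants/discriminants), and for $n=1$ one takes $\nu$ to be an automorphism composed with a Veronese-type map, or simply restricts attention to the case $n\geq 2$ where curves genuinely move — in the body of the paper \cref{P:rationalcoupling} is only invoked for such $\nu$. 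Intersecting the dense open set $R_e(\CP^n)\setminus\mathcal{B}$ with the dense open set of embeddings yields a nonempty open set of $\nu$ satisfying both (1) and (2), and any element of it does the job.

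The main obstacle I anticipate is the careful dimension accounting in the middle step: one must be scrupulous about real versus complex dimensions (the parameter $g$ ranges over the \emph{real} group $U(n+1)$, while $R_e(Z(p_t\circ g))$ is a \emph{complex} analytic set), and one must verify that the inequality $2T(n)\geq (n+1)^2+1$ is exactly what makes the count close. The choice $T(n)=\lfloor(n+1)^2/2\rfloor+1$ in \eqref{eq:riedl} is calibrated precisely so that $2T(n)$ just exceeds $(n+1)^2$, leaving one unit of slack to beat the dimension of $R_e(\CP^n)$; any weaker $T$ would not suffice. A secondary technical point is ensuring $\widetilde{\mathcal{B}}$ is genuinely a real-analytic (or semialgebraic) set so that the image $\mathcal{B}$ has the claimed dimension bound — this follows because the condition $p_t\circ g\circ\nu\equiv 0$ is, after clearing denominators, the vanishing of finitely many functions depending real-analytically on $(\nu,t,g)$, and one invokes the standard fact that the dimension of a projection does not exceed that of the source.
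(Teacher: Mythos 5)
Your approach is essentially the same as the paper's: a dimension count on an incidence variety built from the Lehmann--Riedl--Tanimoto bound \cref{thm:riedl}, projected to $R_e(\CP^n)$. The only cosmetic difference is that you keep the unitary parameter $g$ explicit in $\widetilde{\mathcal B}\subset R_e(\CP^n)\times I\times U(n+1)$, where the paper slices the analogous set $Y\subset I\times R_e(\CP^n)$ over $t$ and bounds each fiber as a semialgebraic image of $U(n+1)\times R_e(Z(p_t))$; the resulting bound is the same.

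That said, the arithmetic in your key display does not close. Substituting $2T(n)\geq(n+1)^2+1$ into $\dim_\R\widetilde{\mathcal B}\leq 2\bigl((n+1)(e+1)-T(n)\bigr)+1+(n+1)^2$ gives only $\dim_\R\widetilde{\mathcal B}\leq 2(n+1)(e+1)$, \emph{not} $\leq 2(n+1)(e+1)-2$ as you assert; this exceeds $\dim_\R R_e(\CP^n)=2(n+1)(e+1)-2$ by two, and in addition your final inequality $2(n+1)(e+1)-2<2\bigl((n+1)(e+1)-1\bigr)$ compares two equal quantities. For the count to close one needs $2T(n)\geq(n+1)^2+3$, i.e.\ $T(n)\geq\lfloor(n+1)^2/2\rfloor+2$, strictly larger than the value $T(n)=\lfloor(n+1)^2/2\rfloor+1$ fixed in \eqref{eq:riedl}. (The paper's own inequality $\dim\pi_1^{-1}(t)\leq\dim_\R R_e(\CP^n)-2$ requires $T\geq (n+1)^2/2+2$ and has the same shortfall, so you have reproduced a discrepancy rather than introduced one; still, your remark that $T(n)$ ``is calibrated precisely'' with ``one unit of slack'' is not accurate -- there is no slack but a deficit, though the fix of enlarging $T(n)$ and hence $e(n)$ is harmless.) A second gap, again shared with the paper's proof, is the claim that embeddings are dense in $R_e(\CP^n)$: for $n=2$ and $e\geq 3$ there are \emph{no} embeddings in $R_e(\CP^2)$ at all, since a smooth plane curve of degree $e\geq 3$ has positive genus and hence cannot be the image of $\CP^1$. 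Enlarging $e(n)$ cannot repair part (2) when $n=2$; one would have to replace ``embedding'' by ``immersion'' and adapt \cref{P:rationalcoupling} accordingly.
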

\begin{proof}Denote by $Y\subset I\times R_e(\CP^n)$ the set
\begin{align}Y&=\left\{(t, \nu)\,\bigg|\, \exists g\in U(n+1),\, g\nu(\CP^1)\subset Z(p_t)\right\}\\
&=\left\{(t, \nu)\,\bigg|\, \nu\in \bigcup_{g\in U(n+1)} g R_e(Z(p_t))\right\}.
\end{align}
Let also $\pi_1:Y\to I$ and $\pi_2:Y\to R_e(\CP^n)$ denote the projections on the two factors. Notice that every element $\nu\notin \pi_2(Y)$ satisfies condition (1) of the statement. We will  first prove that there is plenty of such elements.

For every $t\in I$, the set $\pi_1^{-1}(t)$ is the continuous semialgebraic image of the semialgebraic set $U(n+1)\times R_e(Z(p_t))$ under the map $(g, \nu)\mapsto g\nu$. In particular
$$\dim(\pi_1^{-1}(t))\leq \dim(U(n+1))+2\dim_\C(R_e(Z(p_t)))\leq \dim_\R(R_e(\CP^n))-2.$$
This implies that the Hausdorff dimension of $Y$ is at most $\dim_\R(R_e(\CP^n))-1$ and, since $\pi_2$ is a smooth map, the Hausdorff dimension of $\pi_2(Y)$ is also at most $\dim_\R(R_e(\CP^n))-1$. This means that $\pi_2(Y)$ is a semialgebraic set of codimension at least one, in particular its complement $A_1$ contains an open, dense, semialgebraic set all of whose elements satisfy condition (1) of the statement.

The set $A_2$ of $\nu\in R_{e}(\CP^n)$ satisfying condition $(2)$ of the statement is open and dense. Therefore it is enough to pick $\nu\in A_1\cap A_2\neq \emptyset$ to conclude the proof.
\end{proof}
We are now ready to prove \cref{thm:rational}.

\begin{proof}[Proof of \cref{thm:rational}]

We are now in position to revisit the 
proof of \cref{T:Parusinski}.

Consider 
$p_t \in \mathscr{C}^{k,1}(I, \mathbb{C}[z_{0},\dots,z_{n}]_{(d)})$
for some natural number $k$. 
By \cref{thm:riedl}
and \cref{P:goodcurve}, letting $e(n)$ be defined by  \eqref{eq:riedl},
for every $k \geq  e(n)$
we have the existence of a 
$\nu\in R_e(\CP^n)$ such that
$\nu:\CP^1\to \CP^n$ is an embedding and 
 for every $t\in I$ and for every $g\in U(n+1)$ the image of $g\circ \nu$ is not contained in $Z(p_t)$.
In particular,  
$p_{t,g}: = p_t\circ g \circ \nu \in H_{1,de}$, up to multiplies.  
We can therefore define 
$$
\mu_{t,g} : =(g \circ \nu)_\sharp( \mu(p_{t,g})) \in \mathscr{P}(\CP^n).
$$
Notice indeed that 
$\mu(p_{t,g}) \in \mathscr{P}(\CP^1)$
and $(g \circ \nu) : 
\CP^1 \to \CP^n$.
Then  
\cref{P:rationalcoupling} 
can be restated by saying 
that 
for any measurable function $f$
\begin{align}
\int_{\CP^n}f(z)\,\mu_t (\dd z)
=&~  
\frac{1}{de}\int_{U(n+1)}
\int_{\CP^1}f(g(\nu(z)))
\left(\sum_{w \in Z(p_{t,g})}
m_w(p_{t,g}) \delta_w \right)(\dd z) \, \dd g \\
= &~
\int_{U(n+1)}
\int_{\CP^1}f(g(\nu(z)))
\mu(p_{t,g})(\dd z) \, \dd g \\ 
= &~
\int_{U(n+1)}
\int_{\CP^n}f(z)
(g\circ \nu )_{\sharp}\mu(p_{t,g})(\dd z) \, \dd g, 
\end{align}
giving the concise identity: 
\begin{equation}\label{E:disintegration}
\mu_t = \int_{U(n+1)}
\mu_{t,g} \, \dd g.
\end{equation}
Reasoning as in the proof of \cref{T:Parusinski} 
where we constructed a coupling between 
$\mu_t$ and $\mu_{t+h}$
by gluing the couplings 
line by line, 
one obtains that 
for any $q > 1$
$$
|\dot{\mu_t}|_q^q
\leq  
\int_{U(n+1)}
|\dot \mu_{t,g}|_q^q \,\dd g.
$$
Next we write 
$$
p_t(g(\nu))(z_0,z_1) = 
\sum_{k=0}^{de} a_k(t,g)
z_0^kz_1^{de - k} \in \C[z_0,z_1]_{(de)};
$$
the functions $t \mapsto 
a_k(t,g)$ are $\mathscr{C}^{k-1,1}(I;\C)$
and 
since the image of $g\circ \nu$ is not contained in $Z(p_t)$, 
$p_t(g(\nu))$ is not identically zero. 
Hence up to pre-composing 
$\nu$ with $h \in U(2)$, 
we can assume that 
$$
a_0(t,g) = 
p_t (g(\nu))(0,1) \neq 0.
$$
We can now proceed a in
the proof of \cref{T:Parusinski}
by dehomogenizing the polynomial 
$p_{t,g}$ 
and invoking 
\cite[Theorem 1]{Sobolev}
to deduce that,
for any 
$1 \leq q < e(n)d/(e(n)d-1)$,
the curve $\mu(p_{t,g})
\in AC^q(I;
\mathscr{P}_q(\CP^1)$
with the 
$L^q$-norm in time of the
$q$-metric speed 
bounded 
locally uniformly in $g \in U(n+1)$.

In order 
to deduce the same regularity for 
$\mu_{t,g}$ 
we reason as follows.
By the isometric injection of 
each (metric) space $X$ into 
the 
$q$-Wasserstein space over itself,
the push-forward of the smooth map
$g\circ \nu : \CP^1 \to \CP^n$
induces a Lipschitz map 
between $\mu(P_{1,ed})$
and $\mathscr{P}(\CP^n)$; 
in particular 
$$
|\dot \mu_{t,g}|_q
\leq  C(\nu)
|\dot \mu(p_{t,g})|_q, 
$$
where $C(\nu)$ 
only depends on the Lipschitz constant of $\nu$;
hence 
the proof is completed.
\end{proof}

This regularity 
is sufficient to deduce the compactness of the geodesic space 
$(P_{n,d},W_q^{\textrm{in}})$ 
for $q$ 
in the range of 
\cref{thm:rational}.

\begin{remark}
In the range of 
\cref{thm:rational} we will have $q \leq  2$. 
Hence by Jensen inequality for any couple of probability measures $W_q(\mu_0,\mu_1) \leq  W_2(\mu_0,\mu_1)$ and therefore 
also the metric speeds will have the same ordering: if 
$\mu_t \in AC^2(I;\mathscr{P}_2(\CP^n))$ 
then 
$\mu_t \in AC^q(I;\mathscr{P}_q(\CP^n))$ 
and $|\dot \mu_t|_q \leq  |\dot \mu_t|_2$.
Hence \cref{thm:finite} implies that 
$W_q^\textrm{in}$ is finite 
over $P_{n,d}$. 
Moreover all the conclusions of 
\cref{thm:propertyw} will remain valid for 
$(P_{n,d},W_q^{\textrm{in}})$.
\end{remark}

\begin{theorem}\label{thm:compactq}
Let $e(n)\in \N$ be given by \eqref{eq:riedl}. Then, for every $1 \leq q < e(n)d/(e(n)d-1)$, 
the complete and separable geodesic space $(P_{n,d}, W_q^\textrm{in})$ is compact.
\end{theorem}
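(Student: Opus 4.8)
Theorem \ref{thm:compactq} asserts compactness of $(P_{n,d}, W_q^{\textrm{in}})$ for $q$ in the indicated range. I would prove it by the same strategy used for $q=2$ in \cref{thm:Lipschitz} and \cref{thm:propertyw}: reduce compactness to the continuity of the identity map $\mathrm{id}:(P_{n,d}, \sfd_{\mathrm{FS}}) \to (P_{n,d}, W_q^{\textrm{in}})$, and establish this continuity at every point using the quantitative regularity supplied by \cref{thm:rational}.

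\textbf{Step 1: Reduce to continuity of the identity map.} Since $P_{n,d}$ is compact in the Fubini--Study topology (indeed $P_{n,d}\simeq \CP^N$), and since, arguing exactly as in the $q=2$ case (via \cref{thm:finite} together with the remark preceding the statement ensuring $W_q^{\textrm{in}}$ is finite and $(P_{n,d},W_q^{\textrm{in}})$ is a complete separable geodesic space), it suffices to show that $\mathrm{id}:(P_{n,d},\sfd_{\mathrm{FS}})\to (P_{n,d},W_q^{\textrm{in}})$ is continuous; a continuous bijection from a compact space is a homeomorphism, hence $(P_{n,d},W_q^{\textrm{in}})$ is compact. Equivalently, I must show: if $\sfd_{\mathrm{FS}}(p_k,p_\infty)\to 0$ then $W_q^{\textrm{in}}(p_k,p_\infty)\to 0$.

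\textbf{Step 2: Build short admissible curves via \cref{thm:rational}.} Fix $p_\infty$. The point is that, given $p_k\to p_\infty$ in $\sfd_{\mathrm{FS}}$, I can connect them by an explicit curve whose $W_q^{\textrm{in}}$-length is controlled. Take the Fubini--Study geodesic $\gamma_k:[0,1]\to P_{n,d}$ from $p_k$ to $p_\infty$, parametrized proportionally to arclength, so $\|\dot\gamma_k\|_{\mathrm{FS}}\equiv \sfd_{\mathrm{FS}}(p_k,p_\infty)\to 0$. Now reparametrize time on a domain of length $\delta_k:=\sfd_{\mathrm{FS}}(p_k,p_\infty)^{1/(2k+2)}$ (or any such vanishing scale), slowing the curve so as to make it $\mathscr{C}^{k,1}$ with $k\geq e(n)d-1$, which is possible because the $\mathscr{C}^{k,1}$ norm of a smooth curve scales favorably under slowing-down reparametrization. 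By \cref{thm:rational}, $\mu_t^{(k)}:=\mu(\gamma_k(t))\in AC^q$, and the energy-type bound in the proof of \cref{thm:rational} — namely $|\dot\mu_t^{(k)}|_q^q \leq \int_{U(n+1)}|\dot\mu_{t,g}^{(k)}|_q^q\,\dd g$ with the inner term bounded via \cite[Theorem 1]{Sobolev} in terms of the $\mathscr{C}^{k,1}$-norms of the one-variable coefficients, hence in terms of $\|\dot\gamma_k\|_{\mathscr{C}^{k,1}}^{q/(de)}$ — gives $\int_0^{1}|\dot\mu_t^{(k)}|_q^q\,\dd t \to 0$ as $k\to\infty$, using that $\|\dot\gamma_k\|_{\mathscr{C}^{k,1}}\to 0$ after the slow-down and the exponent $1/(de)$ is positive. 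Therefore $W_q^{\textrm{in}}(p_k,p_\infty)^q \leq \int_0^1 |\dot\mu_t^{(k)}|_q^q\,\dd t \to 0$.

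\textbf{The main obstacle} is the bookkeeping in Step 2: making the \emph{uniform} constant from \cref{thm:rational} and \cite[Theorem 1]{Sobolev} degenerate to zero as $p_k\to p_\infty$. The constant $C(\gamma,\ell_0,q)$ there depends on $a_0(t,\cdot)$ staying away from zero, which requires the rational curve $\nu$ (and its $U(n+1)$-translates) to avoid $Z(p_t)$ for \emph{all} $t$ along $\gamma_k$; since the $\gamma_k$ all converge to $p_\infty$, one can fix a single $\nu$ that works uniformly for all large $k$ by a compactness/openness argument in the Grassmannian-of-rational-curves (as in \cref{P:goodcurve}), so the denominators are uniformly bounded below. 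Granting that, the coefficient functions $b_j(\cdot,g)$ have $\mathscr{C}^{k,1}$-norms controlled by those of $p_t\circ g\circ\nu$, which in turn are $O(\|\dot\gamma_k\|_{\mathscr{C}^{k,1}})\to 0$, so the bound genuinely tends to zero. Once continuity of the identity map is in hand, compactness is immediate, completing the proof.
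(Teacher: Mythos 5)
Your Step 1 is fine, and the student/paper agree that the problem reduces to continuity of $\mathrm{id}:(P_{n,d},\sfd_{\mathrm{FS}})\to(P_{n,d},W_q^{\textrm{in}})$. But Step 2 has a genuine gap, precisely at the point you flag and attempt to patch.

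The energy bound you extract from the proof of \cref{thm:rational} together with \cite[Theorem 1]{Sobolev} is of the form $\|\partial_t\widetilde\lambda_j(\cdot,g)\|_{L^q}\leq C(d,q)\max_j\|b_j(\cdot,g)\|_{\mathscr{C}^{k,1}}^{1/j}$, and the $\mathscr{C}^{k,1}$-norm of $b_j(\cdot,g)$ contains the $\sup_t|b_j(t,g)|$ term. As $\gamma_k\to p_\infty$ in $\sfd_{\mathrm{FS}}$ this sup-norm does \emph{not} tend to zero: it converges to the corresponding (generically nonzero) normalized coefficient of $p_\infty\circ g\circ\nu$. Only the derivative part of $\|b_j(\cdot,g)\|_{\mathscr{C}^{k,1}}$ is $O(\|\dot\gamma_k\|)$. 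So the claim that $\|b_j(\cdot,g)\|_{\mathscr{C}^{k,1}}=O(\|\dot\gamma_k\|_{\mathscr{C}^{k,1}})\to 0$ is false, and with it the assertion that the Parusinski--Rainer bound on the energy ``genuinely tends to zero.'' (Concretely, for $p_\infty=z^d-1$ and $p_k=z^d-1-\epsilon_k$ with $\epsilon_k\to 0$, the bound $\max_j\|a_j\|_{\mathscr{C}^{d-1,1}}^{1/j}$ stays bounded away from zero.) Slowing down the parametrization doesn't help either, since $W_q^{\textrm{in}}$ is parametrization-invariant and the $\sup$-norm term is unaffected by reparametrization. Your diagnosis that the rational curve $\nu$ can be chosen uniformly for a convergent family is correct, but that addresses the wrong constant: the difficulty is not $a_0$ staying away from zero, it is the $\mathscr{C}^{k,1}$-norms of the $b_j$ not becoming small.

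The paper avoids per-sequence uniform bounds entirely. It invokes the Kriegl--Michor ``special curve lemma'' (\cite[Lemma 12.2]{KrieglMichor}): after passing to a subsequence, there is a \emph{single} smooth curve $\gamma$ with $\gamma(0)=p$, $\gamma(t_k)=p_k$, $t_k\to 0$. Then \cref{thm:rational} is applied once to $\gamma$, yielding $\mu\circ\gamma\in AC^q(I;\mathscr{P}_q(\CP^n))$ with finite (not small) energy, and one concludes via $W_q^{\textrm{in}}(p,p_k)\leq\bigl(\int_0^{t_k}|\dot\mu_s|_q^q\,\dd s\bigr)^{1/q}\to 0$ by absolute continuity of the Lebesgue integral. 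Finite energy on one fixed curve suffices; no quantitative decay of the constant is required. To repair your argument you would either need a refined version of \cite[Theorem 1]{Sobolev} in which the right-hand side involves only derivatives of the coefficients (which is not what is cited and is not obvious), or switch to the Kriegl--Michor device as the paper does.
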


\begin{proof}
Consider a sequence $p_k \in P_{n,d}$. 
Then there exists a subsequence, 
that we omit in the notation, 
and $p \in P_{n,d}$ 
such that $\sfd_{\textrm{FS}}(p_k,p) \to 0$
as $k \to \infty$.
Then, by \cite[Lemma 12.2]{KrieglMichor}, there exists a smooth curve 
$\gamma : I \to P_{n,d}$
and $t_k \in I$ such that 
for each $k\in \N$
$\gamma(t_k) = p_k$ and $t_k \to 0$ as $k\to \infty$.
Then by \cref{thm:rational} 
the curve $\mu_t : = \mu(\gamma)$
is in 
$AC^q(I;\mathscr{P}_q(\CP^n))$
for 
$1 \leq q < e(n)d/(e(n)d-1)$.
Hence 
$$
W_q^\textrm{in}(p_0,p_k) = 
W_q^\textrm{in}(\mu_0,\mu_{t_k})
\leq  \left(\int_0^{t_k}|\dot \mu_s|_q^q \,\dd s\right)^{1/q},
$$
proving that $W_q^\textrm{in}(p_0,p_k)\to 0$ 
as $k\to \infty$ and therefore the claim.
\end{proof}

\section{A Wasserstein approach to  condition numbers}\label{sec:condition}
In this section we adopt the approach of Wasserstein metric geometry for the study of condition numbers of polynomial systems.
Let us recall the standard framework from \cite{burgisser}.

Given a list of degrees $\underline{d}=(d_1, \ldots, d_n)\in \N^n$ we define $P_{n, \underline{d}}:=P_{n,d_1}\times \cdots \times P_{n, d_n}$ and consider the \emph{solution variety}
$$V:=\{((p_1, \ldots, p_n), [z])\in P_{n,\underline{d}}\times\CP^n\,|\, p_1(z)=\cdots =p_n(z)=0\},$$
together with the two projections $\pi_1:V\to P_{n, \underline{d}}$ and $\pi_2:V\to \CP^n$. We denote by $\Sigma'\subset V$ the set of critical points of $\pi_1.$ 

The \emph{discriminant} $\Delta_{n, \underline{d}}\subset P_{n, \underline{d}}$ is defined by
$$\Delta_{n,\underline{d}}:=\{(p_1, \ldots, p_n) \in P_{n, \underline{d}}\,|\,\textrm{$p_1(z)=\cdots=p_n(z)=0$ is not regular on $\C^{n+1}\setminus \{0\}$}\}. $$
Given $[w]\in \CP^n$ we also consider $\Delta_{n,\underline{d}, [w]}\subset \Delta_{n, \underline{d}}$ defined by
$$\Delta_{n,\underline{d}, [w]}:=\{(p_1, \ldots, p_n) \in P_{n, \underline{d}}\,|\,\textrm{$p_1(z)=\cdots=p_n(z)=0$ is not regular at $w$}\}.$$

The \emph{normalized condition number} $\nu_{\mathrm{norm}}:V\setminus \Sigma'\to [1, \infty)$ is defined by
$$
\nu_{\mathrm{norm}}(p, [w]):=\frac{1}{\sfd_{\mathrm{BW}}(p, \Delta_{n, \underline{d}, [w]})},$$
where $\sfd_{\mathrm{BW}}$ denotes the metric induced by the Bombieri--Weyl structure, see \cref{sec:BW}.
The normalized condition number can be used to turn $V\setminus \Sigma'$ into a \emph{Lipschitz} Riemannian manifold as follows. First, we denote by $g_V$ the Riemannian structure on $V\subset \mathrm{P}(H_{n, \underline{d}})\times \CP^n$ induced by the product Riemannian structure $g_{\mathrm{BW}}\times g_{\mathrm{FS}}.$ Then, for every absolutely continuous curve $\gamma_t=(p_t, z_t):I\to V\setminus \Sigma'$ we define its \emph{condition length} by
$$L_{\mathrm{cond}}(\gamma_t):=\int_{I}\nu_{\mathrm{norm}}(p_t, z_t)\cdot \|\dot\gamma_t\|_V\dd t.$$
Taking the infimum of the length over all absolutely continuous curves joining two points endows $V\setminus \Sigma'$ with a metric structure, called the \emph{condition metric}. We call a curve $\gamma_t$ in $V\setminus \Sigma'$ a condition geodesic if it minimizes the condition length between any two of its points.
Denoting by $D:=\max\{d_1, \ldots, d_n\}$, in \cite{CB6} it is shown that $O ( d^{ 3 / 2} L_\mathrm{cond}(\gamma))$ Newton steps are sufficient to continue the zero $z_0$ from $p_0$ to $p_1$ along $\gamma_t$. 

In \cite{BDM1, BDM2} it is asked whether the function $t\mapsto \log(\nu_{\mathrm{norm}}(\gamma_t))$ is convex along a condition geodesic. An affirmative answer to this question would imply that, for any condition geodesic $\gamma_t$, one has  $L_\mathrm{cond}(\gamma_t)\leq L_V(\gamma_t)\max\{\nu_{\mathrm{norm}}(p_0, z_0), \nu_{\mathrm{norm}}(p_1 , z_1)\},$ where $L_V(\gamma_t)$ is the length of $\gamma_t$ in the Riemannian metric of $V$ defined above. This problem is also formulated as an open problem in \cite[Problem 14]{burgisser}.

Here we propose a solution to this problem in the case $n=1$, replacing $\gamma_t$ with a lift of a Wasserstein geodesic. We expect a similar result to be true in the case $n>1$, but this will be the subject of future work.

\begin{theorem}\label{thm:P14}For every $d\in \N$ there exist $\beta_1, \beta_2, \beta_3, \beta_4>0$ such that, if $p_t:I\to P_{1,d}$ is a $W_2$--geodesic between $p_0, p_1\in P_{1,d}\setminus \Delta_{1,d}$ then the following is true:
\begin{enumerate}
\item for every $t\in I$ 
$$
\mathrm{dist}(p_t, \Delta_{1,d})^2\geq \beta_1\min\{\mathrm{dist}(p_0, \Delta_{1,d}), \mathrm{dist}(p_1, \Delta_{1,d})\}^{\beta_2}.
$$
In particular, $P_{1,d}\setminus \Delta_{1,d}$ is $W_2$--geodesically convex;
\item if $\gamma:I\to V\setminus \Sigma'$ is a lift of $p_t$, i.e.  $\gamma(t)=(p_t, z_t)$, then:
$$
L_\mathrm{cond}(\gamma)\leq \beta_3 \cdot L_{V}(\gamma)\cdot \max\{\nu_{\mathrm{norm}}(p_0, z_0), \nu_{\mathrm{norm}}( p_1, z_1)\}^{\beta_4}.
$$

\end{enumerate}
\end{theorem}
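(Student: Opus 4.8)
The plan is to deduce everything from the structure of $W_2$--geodesics in $\mathscr{P}_2(\CP^1)$ restricted to $\mathrm{SP}^d(\CP^1)\simeq P_{1,d}$, together with a Lojasiewicz inequality relating algebraic distances to the discriminant. Recall from \cref{example:one} that along a $W_2$--geodesic $\mu(p_t)$ the roots move along geodesic segments according to the optimal matching, so after fixing continuous selections $z_1(t),\dots,z_d(t)$ of the roots (counted with multiplicity), each $z_j$ traces a constant--speed geodesic in $\CP^1$. The key dynamical fact, which is the heart of part (1), is a \emph{quantitative separation of roots}: if $p_0,p_1$ have all simple roots, then at every intermediate time the roots $z_j(t)$ stay pairwise separated by an amount controlled by their separation at the endpoints. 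This is precisely the Monge--Mather shortening principle applied in the present setting (this is the content of \cref{propo:quasiconc}, which we are allowed to invoke); geometrically, if two roots came too close at an interior time, one could shorten the transport cost by re--pairing, contradicting optimality.

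First I would make the translation between $\mathrm{dist}(p,\Delta_{1,d})$ and a concrete quantity involving the roots. Since $\Delta_{1,d}$ is a proper algebraic subset of the compact space $P_{1,d}$ and the function $p\mapsto \min_{i\neq j}\sfd_{\mathrm{FS}}(z_i(p),z_j(p))$ (the minimal root separation) is continuous, semialgebraic, and vanishes exactly on $\Delta_{1,d}$, Lojasiewicz's inequality gives constants $c,\alpha>0$ with
\beq
\mathrm{dist}_{\mathrm{BW}}(p,\Delta_{1,d})^{\alpha}\leq c\cdot \min_{i\neq j}\sfd_{\mathrm{FS}}(z_i(p),z_j(p))\leq c'\cdot \mathrm{dist}_{\mathrm{BW}}(p,\Delta_{1,d}),
\eeq
the right inequality being elementary. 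Thus, up to powers and constants, controlling the distance to the discriminant is the same as controlling the minimal root separation. Then part (1) follows by combining this equivalence at the endpoints and at time $t$ with the quantitative separation estimate from \cref{propo:quasiconc}: the separation at time $t$ is bounded below by (a constant times a power of) the minimum of the separations at $t=0$ and $t=1$, and re--expressing both sides via the Lojasiewicz equivalence yields \eqref{eq:MMs} with suitable $\beta_1,\beta_2$. Geodesic convexity of $P_{1,d}\setminus\Delta_{1,d}$ is then immediate: if $p_0,p_1\notin\Delta_{1,d}$, the right--hand side of \eqref{eq:MMs} is strictly positive, so $p_t\notin\Delta_{1,d}$ for all $t$.

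For part (2), I would first recall that in the case $n=1$ the Bombieri--Weyl distance to the ``pointed discriminant'' $\Delta_z$ and hence the normalized condition number $\nu_{\mathrm{norm}}(p,z)$ are, by a classical computation (see \cite{burgisser}), comparable to $\|\nabla^{\C}p(z)\|^{-1}$ after normalization, or equivalently to the reciprocal of the distance from $z$ to the other roots of $p$; in any case $\nu_{\mathrm{norm}}(p_t,z_t)$ is bounded above by a constant times a negative power of the minimal root separation of $p_t$ near $z_t$. Along the lift $\gamma(t)=(p_t,z_t)$ of a $W_2$--geodesic, apply part (1) to bound $\nu_{\mathrm{norm}}(p_t,z_t)$ pointwise by $\beta_3'\max\{\nu_{\mathrm{norm}}(p_0,z_0),\nu_{\mathrm{norm}}(p_1,z_1)\}^{\beta_4}$, uniformly in $t$. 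Then
\beq
L_{\mathrm{cond}}(\gamma)=\int_I \nu_{\mathrm{norm}}(p_t,z_t)\,\|\dot\gamma_t\|_V\,\dd t\leq \Big(\sup_{t\in I}\nu_{\mathrm{norm}}(p_t,z_t)\Big)\int_I\|\dot\gamma_t\|_V\,\dd t = \Big(\sup_{t}\nu_{\mathrm{norm}}\Big)\cdot L_V(\gamma),
\eeq
and substituting the supremum bound gives exactly the stated inequality.

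The main obstacle I anticipate is establishing the quantitative root--separation estimate \cref{propo:quasiconc} with explicit control along the whole interior of the geodesic, rather than just at a single interior point; this requires genuinely using the Monge--Mather shortening machinery (or a direct argument via optimality of the matching and convexity of $\sfd_{\mathrm{FS}}^2$ along $\CP^1$--geodesics) and care near the ``poles'' of $\CP^1$ where stereographic coordinates degenerate. A secondary technical point is matching the exponents: the powers $\beta_2,\beta_4$ come from Lojasiewicz exponents which we do not control explicitly, hence the statement only claims their existence; we expect $\beta_2=\beta_4=1$, i.e. a clean linear bound, but proving this optimal exponent would require a sharper, non--Lojasiewicz argument and is not attempted here.
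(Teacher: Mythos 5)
Your overall strategy for part (1) is the same as the paper's: the Monge--Mather/optimal-matching estimate gives quasi-concavity of the squared minimal root separation along the geodesic (this is exactly what \cref{propo:quasiconc} provides for the function $\alpha_4(p)=\min_{i\neq j}\sfd_{\mathrm{FS}}(z_i,z_j)^2$), and a two-sided Lojasiewicz comparison between root separation and $\mathrm{dist}(\cdot,\Delta_{1,d})$ converts this into \eqref{eq:MMs}. One small correction there: the inequality $\min_{i\neq j}\sfd_{\mathrm{FS}}(z_i,z_j)\leq c'\,\mathrm{dist}_{\mathrm{BW}}(p,\Delta_{1,d})$ is \emph{not} elementary and is in fact false without a power (for $z^2-\epsilon$ the separation is $\sim\sqrt{\epsilon}$ while the distance to the discriminant is $\sim\epsilon$); you need Lojasiewicz in both directions, each with its own exponent, as in \cref{lemma:compare}. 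This does not damage part (1), since the theorem only asserts the existence of some $\beta_1,\beta_2$.

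Part (2), however, has a genuine gap: you cannot deduce the pointwise bound on $\nu_{\mathrm{norm}}(p_t,z_t)$ from part (1). Part (1) controls the \emph{global} distance $\mathrm{dist}(p_t,\Delta_{1,d})$, whereas $\nu_{\mathrm{norm}}(p,z)=\sfd_{\mathrm{BW}}(p,\Delta_z)^{-1}$ involves the \emph{pointed} discriminant. Since $\Delta_z\subset\Delta_{1,d}$, the chain does start correctly with $\nu_{\mathrm{norm}}(p_t,z_t)\leq\mathrm{dist}(p_t,\Delta_{1,d})^{-1}$, but to land on $\max\{\nu_{\mathrm{norm}}(p_0,z_0),\nu_{\mathrm{norm}}(p_1,z_1)\}^{\beta_4}$ you would then need $\mathrm{dist}(p_i,\Delta_{1,d})^{-1}\leq C\,\nu_{\mathrm{norm}}(p_i,z_i)^{c}$ at the endpoints, i.e.\ $\sfd_{\mathrm{BW}}(p_i,\Delta_{z_i})^{c}\leq C\,\mathrm{dist}(p_i,\Delta_{1,d})$, and this fails: take $p_i$ with two roots colliding far away from $z_i$, so that $\mathrm{dist}(p_i,\Delta_{1,d})\to 0$ while $\nu_{\mathrm{norm}}(p_i,z_i)$ stays bounded. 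Your route then only yields an unbounded upper bound on $\sup_t\nu_{\mathrm{norm}}(p_t,z_t)$, not the claimed one with constants depending only on $d$. The repair is precisely why the paper introduces the pointed quantity $\alpha_2(p,w)=\min_{i\neq j}\bigl(\sfd_{\mathrm{FS}}(w,z_i)^2+\sfd_{\mathrm{FS}}(w,z_j)^2\bigr)$ alongside the global $\alpha_4$: one applies the quasi-concavity of \cref{propo:quasiconc} directly to $\alpha_2$ along the lift $(p_t,z_t)$, uses convexity of $x\mapsto 1/x$ to bound $\alpha_2(p_t,z_t)^{-1}$ by the sum of the endpoint values, and then compares $\alpha_2$ with $\alpha_1=\nu_{\mathrm{norm}}^{-1}$ via Lojasiewicz on the compact space $P_{1,d}\times\CP^1$ (they have the same zero set there). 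You in fact mention the right comparison at the start of your part (2) paragraph; the error is only in routing the argument through part (1) instead of through the pointed version of the quasi-concavity. With that substitution, the rest of your argument (pulling the supremum out of the condition-length integral) is exactly the paper's conclusion.
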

Before proving the theorem, we need to introduce a couple more ingredients. For the ease of notation, we simply denote by $p\in P_{1,d}$ the class of a polynomial and by $w\in \CP^1$ the class of a point (i.e. we omit the square brackets in the notation). Consider the following two functions $\alpha_1, \alpha_2:P_{1,d}\times \CP^1\to [0, \infty)$
$$
\alpha_1(p, w):=\nu_{\mathrm{norm}}(p, w)^{-1}\quad \textrm{and}\quad \alpha_2(p,w):=\min_{p(z_i)=p(z_j)=0, z_i\neq z_j}\sfd_{\mathrm{FS}}(w, z_i)^2+\sfd_{\mathrm{FS}}(w, z_j)^2.$$
Using the semialgebraic homeomorphism $P_{1,d}\simeq \mathrm{SP}^{d}(\CP^1)$, it is easy to see that $\alpha_2$ (which a priori is defined only $P_{1, d}\setminus \Delta_{1,d}$) is actually continuous on $P_{1,d}$. 

It will be also convenient to consider the
following  functions 
$\alpha_3, \alpha_4:P_{1,d} \to [0, \infty)$
$$
\alpha_3(p):=\mathrm{dist}(p_t, \Delta_{1,d})^2
\quad \textrm{and}\quad \alpha_4(p):=\min_{p(z_i)=p(z_j)=0, z_i\neq z_j}\sfd_{\mathrm{FS}}(z_i, z_j)^2.
$$
Functions 
$\alpha_3$ and $\alpha_4$ are the analogous of 
$\alpha_1$ and $\alpha_2$ where we do not fix a specified point $w \in \CP^1$.
As above, 
$\alpha_4$ (which a priori is defined only $P_{1, d}\setminus \Delta_{1,d}$) is actually continuous on $P_{1,d}$.

The key tools for the proof of \cref{thm:P14} are \cref{lemma:compare} and \cref{propo:quasiconc} below. 
The first one relates the function $\alpha_1$ to $\alpha_2$ and, in the same spirit, 
the function $\alpha_3$ to $\alpha_4$.
The second one studies concavity properties of $\alpha_2$ and $\alpha_4$ -- this is where optimal transport plays the key role giving the main estimate.

\begin{lemma}\label{lemma:compare}
There exists $c_1, c_2, c_3, c_4>0$ such that
$$
\alpha_2^{c_2}\leq c_1 \alpha_1\quad \textrm{and}\quad \alpha_1^{c_3}\leq c_4\alpha_2.
$$
Analogously, 
there exists $c_5, c_6, c_7, c_8>0$ such that
$$
\alpha_4^{c_6}\leq c_5 \alpha_3\quad \textrm{and}\quad \alpha_3^{c_7}\leq c_8\alpha_4.
$$
\end{lemma}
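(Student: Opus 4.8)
The statement to prove, \cref{lemma:compare}, asserts pairs of Łojasiewicz-type inequalities between the semialgebraic functions $\alpha_1,\alpha_2,\alpha_3,\alpha_4$ on the compact space $P_{1,d}$ (resp. $P_{1,d}\times\CP^1$).

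\medskip

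The plan is to apply the \L ojasiewicz inequality in its standard form: if $f,g$ are continuous semialgebraic functions on a compact semialgebraic set $K$ with $\{f=0\}\subseteq\{g=0\}$, then there exist constants $c,N>0$ with $|g|^N\le c|f|$ on $K$. So the whole proof reduces to two verifications: (i) that $\alpha_1,\alpha_2,\alpha_3,\alpha_4$ are all continuous and semialgebraic on the relevant compact space, and (ii) that their zero sets coincide in the appropriate pairs, namely $\{\alpha_1=0\}=\{\alpha_2=0\}$ and $\{\alpha_3=0\}=\{\alpha_4=0\}$. Granting these, applying \L ojasiewicz twice in each direction yields the four inequalities $\alpha_2^{c_2}\le c_1\alpha_1$, $\alpha_1^{c_3}\le c_4\alpha_2$, $\alpha_4^{c_6}\le c_5\alpha_3$, $\alpha_3^{c_7}\le c_8\alpha_4$.

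\medskip

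First I would settle the semialgebraicity and continuity. Via the semialgebraic homeomorphism $P_{1,d}\simeq \mathrm{SP}^d(\CP^1)$ (the $d$-th symmetric product), a polynomial $p$ is encoded by its multiset of roots $\{z_1,\dots,z_d\}\subset\CP^1$, and all four functions are built from the semialgebraic data $\sfd_{\mathrm{FS}}$, $\sfd_{\mathrm{BW}}$, finite minima, and squaring; hence their graphs are semialgebraic. For continuity: $\alpha_1(p,w)=\nu_{\mathrm{norm}}(p,w)^{-1}=\sfd_{\mathrm{BW}}(p,\Delta_{1,d,w})$ and $\alpha_3(p)=\sfd_{\mathrm{BW}}(p,\Delta_{1,d})^2$ are distances to closed sets, hence continuous on all of $P_{1,d}\times\CP^1$ resp.\ $P_{1,d}$; the continuity of $\alpha_2,\alpha_4$ (a priori only defined off $\Delta_{1,d}$) up to the discriminant is exactly the remark already made in the excerpt using the symmetric-product picture — as two roots collide the pairwise-distance minimum tends to $0$, matching the value one assigns on $\Delta_{1,d}$ by continuity. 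I would just cite that remark.

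\medskip

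The only real content is the zero-set identifications, and this is the step I expect to be the main (mild) obstacle. For $\{\alpha_3=0\}=\{\alpha_4=0\}$: $\alpha_3(p)=0$ iff $p\in\Delta_{1,d}$ iff $p$ has a repeated root iff there exist $z_i\ne z_j$ — no, more carefully, iff the minimum over \emph{distinct} roots of $\sfd_{\mathrm{FS}}(z_i,z_j)^2$ degenerates to $0$ in the limit, which is precisely how the continuous extension $\alpha_4$ was defined, so $\alpha_4(p)=0$ iff $p\in\Delta_{1,d}$ as well; both zero sets equal $\Delta_{1,d}$. For $\{\alpha_1=0\}=\{\alpha_2=0\}$ in $P_{1,d}\times\CP^1$: $\alpha_1(p,w)=0$ iff $(p,w)\in\Delta_{1,d,w}$, i.e.\ $p$ is singular at $w$, i.e.\ $w$ is a multiple root of $p$; and $\alpha_2(p,w)=0$ iff (in the continuous extension) $w$ is a point at which two roots of $p$ collide, i.e.\ again $w$ is a multiple root. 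I would write this out via the symmetric-product coordinates to make the "continuous extension" statements precise, checking that $\alpha_2(p,w)=0$ forces $w$ to be a limit of two distinct roots approaching each other, hence $w\in\overline{\{z_i\}}$ with multiplicity $\ge 2$. Once the two zero-set equalities are in hand, the four \L ojasiewicz applications finish the proof; I would close by remarking the constants depend only on $d$ since $P_{1,d}$ does.
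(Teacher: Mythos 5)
Your proof is correct and takes the same route as the paper: observe that the $\alpha_i$ are continuous and semialgebraic on the compact space $P_{1,d}\times\CP^1$ (resp.\ $P_{1,d}$), that $\alpha_1,\alpha_2$ (resp.\ $\alpha_3,\alpha_4$) share a common zero set, and conclude by \L ojasiewicz. The paper states these facts without spelling out the zero-set identifications; you have filled in exactly that detail, which is the correct and only non-formal ingredient.
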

\begin{proof}All the functions 
$\alpha_1, \alpha_2, \alpha_3, \alpha_4$ are continuous and semialgebraic. 
Moreover, $\alpha_1$ and $\alpha_2$ have the same zero sets, and are defined on the compact set $P_{1,d}\times \CP^1.$ Therefore the first result follows from Lojasiewicz's inequality.
Analogously, 
$\alpha_3$ and $\alpha_4$ have the same zero sets, and are defined on the compact set $P_{1,d}$ hence 
Lojasiewicz's inequality proves the second claim.
\end{proof}

\begin{proposition}\label{propo:quasiconc}The functions $\alpha_2$ and $\alpha_4$ are ``quasi--concave along $W_2$--geodesics'', i.e. given  a curve $\gamma_t=(p_t, z_t):I\to V$ with $p_t:I\to P_{1,d}$ a $W_2$--geodesic, then for every $t\in [0,1]$ 
$$
\alpha_2(p_t, z_t)\geq (1-t)^2\alpha_2(p_0, z_0)+t^2\alpha_2(p_1, z_1),
$$
and 
$$
\alpha_4(p_t)\geq (1-t)^2\alpha_4(p_0)+t^2\alpha_4(p_1).
$$
\end{proposition}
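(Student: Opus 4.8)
The plan is to exploit the explicit structure of $W_2$--geodesics in $\mathscr P_2(\CP^1)$ restricted to $P_{1,d}\simeq\mathrm{SP}^d(\CP^1)$, reducing everything to a two--particle estimate on $\CP^1$. Write $p_0=[x_1,\dots,x_d]$ and $p_1=[y_1,\dots,y_d]$ and, after relabelling, assume the identity matching $x_i\mapsto y_i$ realizes $W_2(\mu(p_0),\mu(p_1))^2=\frac1d\sum_i\sfd_{\mathrm{FS}}(x_i,y_i)^2$. Then a $W_2$--geodesic $p_t$ is of the form $p_t=[\gamma_1(t),\dots,\gamma_d(t)]$ with each $\gamma_i\colon[0,1]\to\CP^1$ a constant--speed minimizing geodesic from $x_i$ to $y_i$, and the matching is $\sfd_{\mathrm{FS}}^2$--cyclically monotone; in particular, restricting to transpositions, for every $i\ne j$
\[
\sfd_{\mathrm{FS}}(x_i,y_i)^2+\sfd_{\mathrm{FS}}(x_j,y_j)^2\le\sfd_{\mathrm{FS}}(x_i,y_j)^2+\sfd_{\mathrm{FS}}(x_j,y_i)^2 .
\]

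The core of the argument is the following two--particle lemma on $\CP^1$, which I would state and prove separately: if $\gamma,\delta\colon[0,1]\to\CP^1$ are constant--speed minimizing geodesics satisfying $\sfd_{\mathrm{FS}}(\gamma_0,\gamma_1)^2+\sfd_{\mathrm{FS}}(\delta_0,\delta_1)^2\le\sfd_{\mathrm{FS}}(\gamma_0,\delta_1)^2+\sfd_{\mathrm{FS}}(\gamma_1,\delta_0)^2$, then for all $t\in[0,1]$
\[
\sfd_{\mathrm{FS}}(\gamma_t,\delta_t)^2\ \ge\ (1-t)^2\,\sfd_{\mathrm{FS}}(\gamma_0,\delta_0)^2+t^2\,\sfd_{\mathrm{FS}}(\gamma_1,\delta_1)^2 .
\]
Since both sides scale quadratically under rescaling of the metric, I would first reduce to the round unit sphere $S^2\subset\R^3$ (to which $(\CP^1,\sfd_{\mathrm{FS}})$ is isometric up to a constant factor), writing points as unit vectors, $\cos\sfd(\gamma_t,\delta_t)=\langle\gamma_t,\delta_t\rangle_{\R^3}$ and $\gamma_t=\frac{\sin((1-t)L)}{\sin L}\gamma_0+\frac{\sin(tL)}{\sin L}\gamma_1$ with $L=\sfd(\gamma_0,\gamma_1)$, similarly for $\delta$; the inequality then reduces, via the spherical law of cosines, to an elementary trigonometric inequality in which the monotonicity hypothesis plays exactly the role that $\langle\gamma_0-\delta_0,\gamma_1-\delta_1\rangle\ge0$ plays in the Euclidean identity $|(1-t)u+tv|^2=(1-t)^2|u|^2+t^2|v|^2+2t(1-t)\langle u,v\rangle$. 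Carrying this out — in particular handling $\arccos$ and the degenerate cases (a constant geodesic, or endpoints at maximal distance where the minimizing geodesic is not unique) — is the main obstacle; alternatively this estimate can be extracted from the Monge--Mather shortening material of \cite{villani:oldandnew}, which is the abstract reason it holds.

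Granting the lemma, the conclusion follows quickly. Because $p_0,p_1\notin\Delta_{1,d}$, the $x_i$ are pairwise distinct and the $y_i$ are pairwise distinct, so $\gamma_i\not\equiv\gamma_j$ for $i\ne j$; applying the lemma to the pair $(\gamma_i,\gamma_j)$ (legitimate by the pairwise monotonicity above) gives $\sfd_{\mathrm{FS}}(\gamma_i(t),\gamma_j(t))^2\ge(1-t)^2\sfd_{\mathrm{FS}}(x_i,x_j)^2+t^2\sfd_{\mathrm{FS}}(y_i,y_j)^2>0$ for every $t\in[0,1]$, i.e.\ no collisions occur. Hence the roots of $p_t$ are exactly the $d$ distinct points $\gamma_1(t),\dots,\gamma_d(t)$, any continuous root selection $z_t$ equals a fixed $\gamma_k(t)$, and
\[
\alpha_4(p_t)=\min_{i\ne j}\sfd_{\mathrm{FS}}(\gamma_i(t),\gamma_j(t))^2,\qquad
\alpha_2(p_t,z_t)=\min_{i\ne j}\Big(\sfd_{\mathrm{FS}}(\gamma_k(t),\gamma_i(t))^2+\sfd_{\mathrm{FS}}(\gamma_k(t),\gamma_j(t))^2\Big),
\]
the minima being over a fixed index set. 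Applying the lemma to each pair $(\gamma_i,\gamma_j)$ — and, for $\alpha_2$, to $(\gamma_k,\gamma_i)$ and $(\gamma_k,\gamma_j)$ and summing — bounds each function appearing under the minimum from below by $(1-t)^2(\cdot)_0+t^2(\cdot)_1$; combining this with the trivial inequality $\min_\ell\big((1-t)^2a_\ell+t^2b_\ell\big)\ge(1-t)^2\min_\ell a_\ell+t^2\min_\ell b_\ell$ yields $\alpha_4(p_t)\ge(1-t)^2\alpha_4(p_0)+t^2\alpha_4(p_1)$ and $\alpha_2(p_t,z_t)\ge(1-t)^2\alpha_2(p_0,z_0)+t^2\alpha_2(p_1,z_1)$, which is the assertion.
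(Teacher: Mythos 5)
Your proposal is correct and follows essentially the same path as the paper: reduce to the two--particle estimate $\sfd_{\mathrm{FS}}(\gamma_t,\delta_t)^2\geq(1-t)^2\sfd_{\mathrm{FS}}(\gamma_0,\delta_0)^2+t^2\sfd_{\mathrm{FS}}(\gamma_1,\delta_1)^2$ for cyclically monotone pairs, then chain it over the two roots achieving the minimum. The paper obtains this two--particle estimate directly by citation to \cite[Eq.~(8.45)]{villani:oldandnew} and \cite[Remark~2.1]{figallijuillet}, which is exactly the fallback you also name; your proposed self-contained spherical-trigonometry derivation is a reasonable alternative but, as you acknowledge, you do not carry it out, so as written you are relying on the same external ingredient the paper uses. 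The extra bookkeeping you supply (passing to the symmetric product, the non-collision consequence, the consistent indexing of roots along the geodesic) is slightly more explicit than the paper's terse argument and is a genuine clarification rather than a deviation; the final step $\min_\ell((1-t)^2a_\ell+t^2b_\ell)\geq(1-t)^2\min_\ell a_\ell+t^2\min_\ell b_\ell$ is exactly what the paper does implicitly.
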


\begin{proof}For every $t\in I$, let $z_i(t)$ be a zero of $p_t$. 
Since $z_t$ is also a zero of $p_t$
and $p_t$ is a $W_2$-geodesic, the optimality 
of the coupling gives the following estimate
 (see \cite[Eq. (8.45)]{villani:oldandnew} and \cite[Remark 2.1]{figallijuillet})
$$
\sfd_{\mathrm{FS}}(z_t, z_i(t))^2\geq (1-t)^2\sfd_{\mathrm{FS}}(z_0, z_i(0))^2+t^2\sfd_{\mathrm{FS}}(z_1,z_i(1))^2.
$$
For every $t\in I$, let now $z_i(t), z_j(t)$ be two zeros of $p_t$ such that
$$\alpha_2(p_t, z_t)=\sfd_{\mathrm{FS}}(z_t, z_i(t))^2+\sfd_{\mathrm{FS}}(z_t, z_j(t))^2.$$
Then
\begin{align}\alpha_2(p_t, z_t)&=\sfd_{\mathrm{FS}}(z_t, z_i(t))^2+\sfd_{\mathrm{FS}}(z_t, z_j(t))^2\\
&\geq (1-t)^2\left(\sfd_{\mathrm{FS}}(z_0, z_i(0))^2+\sfd_{\mathrm{FS}}(z_0, z_j(0))^2\right)+t^2\left(\sfd_{\mathrm{FS}}(z_1,z_i(1))^2+\sfd_{\mathrm{FS}}(z_0, z_j(0))^2\right)\\
&\geq (1-t)^2\min_{i\neq j}\left(\sfd_{\mathrm{FS}}(z_0, z_i(0))^2+\sfd_{\mathrm{FS}}(z_0, z_j(0))^2\right)\\
&\quad+t^2\min_{i\neq j}\left(\sfd_{\mathrm{FS}}(z_1,z_i(1))^2+\sfd_{\mathrm{FS}}(z_0, z_j(0))^2\right)\\
&=(1-t)^2\alpha_1(p_0, z_0)+t^2\alpha_2(p_1, z_1).
\end{align}
The same argument works also for $\alpha_4$.
\end{proof}

\begin{proof}[Proof of \cref{thm:P14}]
For the first part of the statement follows by direct application of 
\cref{lemma:compare} and
\cref{propo:quasiconc}:
\begin{align}
(\alpha_3(p_t) c_5)^{1/c_6}
&\geq  \alpha_4(p_t) \\
& \geq (1-t)^2\alpha_4(p_0)+t^2\alpha_4(p_1) \\
&\geq   \frac{1}{c_8}\left((1-t)^2\alpha_3(p_0)^{c_7}+t^2\alpha_3(p_1)^{c_7}\right).
\end{align}

For the second part of the statement, we observe that, by \cref{lemma:compare}, 
$$
\nu_{\mathrm{norm}}(p_t, z_t)=\alpha_1(p_t, z_t)^{-1}\leq c_1\alpha_2(p_t, z_t)^{-c_2}.
$$
Using \cref{propo:quasiconc}, and the convexity of $x\mapsto 1/x$, we get
$$\alpha_2(p_t, z_t)^{-1}\leq \left(\alpha_2(p_0, z_0)^{-1}+\alpha_2(p_1, z_1)^{-1}\right),$$
which in particular gives
$$\nu_{\mathrm{norm}}(p_t, z_t)\leq c_1\left(\alpha_2(p_0, z_0)^{-1}+\alpha_2(p_1, z_1)^{-1}\right)^{c_2}.$$
Using again \cref{lemma:compare}, we get
\begin{align} c_1\left(\alpha_2(p_0, z_0)^{-1}+\alpha_2(p_1, z_1)^{-1}\right)^{c_2}&\leq c_1\left(c_4\alpha_1(p_0, z_0)^{-c_3}+c_4\alpha_1(p_1, z_1)^{-c_3}\right)^{c_2}\\&\leq (c_1c_4^{c_2})\max\{\alpha_1(p_0, z_0)^{-1}, \alpha_1(p_1, z_1)^{-1}\}^{c_3c_2}\\
&=\beta_1\max\{\nu_{\mathrm{norm}}(p_0, z_0), \nu_{\mathrm{norm}}(p_1, z_1)\}^{\beta_2}.\end{align}
This proves that along a $W_2$--geodesic:
$$ \nu_{\mathrm{norm}}(p_t, z_t)\leq \beta_1\max\{\nu_{\mathrm{norm}}(p_0, z_0), \nu_{\mathrm{norm}}(p_1, z_1)\}^{\beta_2}$$
and therefore,
$$L_{\mathrm{cond}}(\gamma_t)=\int_{I}\nu_{\mathrm{norm}}(p_t, z_t)\cdot \|\dot\gamma_t\|_V\dd t\leq \beta_1\max\{\nu_{\mathrm{norm}}(p_0, z_0), \nu_{\mathrm{norm}}(p_1, z_1)\}^{\beta_2}\int_{I} \|\dot\gamma_t\|_V\dd t.$$
\end{proof}

\begin{remark}
Two comments are in order. 
The first is that 
from the proof of the first part claim of 
\cref{thm:P14} it follows a slightly stronger 
statement 
than the convexity of $P_{1,d}\setminus \Delta_{1,d}$.
Indeed the quasi-concavity of $\alpha_4$ yields
that if $p_t$ is a $W_2$-geodesic with 
$p_0$ or $p_1$ not in $\Delta_{1,d}$ then 
$p_t \notin \Delta_{1,d}$ for all $t \in (0,1)$.

The second one is that quasi-concavity implies 
the following inequality that we might call
quasi-log-concavity: from
$$
\alpha(p_t)
\geq (1-t)^2\alpha(p_0)+t^2\alpha(p_1),
$$
one deduces from the concavity of the log
\begin{align}
\log(\alpha(p_t)) \geq  (1-t) \log(\alpha(p_0))   
+ t \log(\alpha(p_1)) + (1-t)\log(1-t) + t\log(t).
\end{align}
Applying this inequality to $\alpha_2$ and invoking 
\cref{lemma:compare} one obtains that there exists 
universal constant $\widetilde c_1, \widetilde c_2, \widetilde c_3$, such that 
\begin{align}
\widetilde c_1 \log(\nu_{norm}(\gamma_t)) 
\leq  ~& 
\widetilde c_2 (1-t)  \log(\nu_{norm}(\gamma_0))   
+ \widetilde c_3 t  \log(\nu_{norm}(\gamma_1))  \\
&+ (1-t)\log\left(\frac{1}{1-t}\right) + t\log\left(\frac{1}{t}\right),
\end{align}
for any 
$\gamma_t=(p_t, z_t):I\to V\setminus \Sigma'$ 
lift of a $W_2$-geodesic $p_t : I \to P_{1,d}$.
\end{remark}

\addtocontents{toc}{\protect\setcounter{tocdepth}{1}}
\renewcommand{\thesection}{A}
\section{Appendix - Projective geometry} \label{Appendix A} % use *-form to suppress numbering

\setcounter{teorema}{0}
\setcounter{equation}{0}  % reset counter
\setcounter{subsection}{0}
\subsection{The Fubini--Study structure}
In this section we recall the Fubini--Study construction, which is a procedure to endow the projective space $\CP^n=\mathrm{P}(\C^{n+1})$ with a Riemannian Hermitian structure, once a Hermitian product is chosen on $\C^{n+1}$. We refer the reader to \cite{GH} for more details.

The construction works for any Hermitian product $h$  on $\C^{n+1}$, but here we consider simply the standard one $h_{\mathrm{std}}(v,w)=\langle v, w\rangle:=\sum v_j\overline w_j$. Denote by $S^{2n+1}\hookrightarrow \C^{n+1}$ the unit sphere with respect to the induced scalar product and by $\pi:S^{2n+1}\to \CP^n$ the quotient map. Given $b\in S^{2n+1}$ we can write:
\beq T_bS^{2n+1}=b^\perp\oplus\mathrm{span}_{\R}\{i b\},\eeq
where this decomposition is orthogonal with respect to the scalar product induced by the Hermitian structure. 

Recall also that given $[b]\in \CP^n$ we have 
\beq T_{[b]}\CP^n\simeq \mathrm{Hom}(\mathrm{span}_\C\{b\}, \C^{n+1}/\mathrm{span}_{\C}\{b\})\simeq b^\perp.\eeq
In particular, picking the representative $b\in S^{2n+1}$, for two tangent vectors $v_1, v_2\in T_{[b]}\CP^n\simeq b^\perp$ we can define
\beq h_{\mathrm{FS}, [b]}(v_1, v_2):=\langle v_1, v_2\rangle. \eeq 
This defines a Hermitian structure on each tangent space to $\CP^n$, which we call the \emph{Fubini--Study} structure (hence the subscript ``FS'' in the notation $h_{\mathrm{FS}}$). The real part of $h_{\mathrm{FS}}$ is a Riemannian metric $g_{\mathrm{FS}}:=\mathrm{Re}(h_{\mathrm{FS}})$ that we call the \emph{Fubini--Study metric} and its imaginary part $\omega_{\mathrm{FS}}:=\mathrm{Im}(h_{\mathrm{FS}})$ is a K\"ahler form that we call the \emph{Fubini--Study K\"ahler form}. With this definition the map $\pi:S^{2n+1}\to \CP^n$ becomes a Riemannian submersion. Moreover, for every $R\in \mathrm{U}(n+1)$ the map $[v]\mapsto [Rv]$ is an isometry of $(\CP^n, h_{\mathrm{FS}})$.

Let now $[b]\in \CP^n$ with $b\in S^{2n+1}$. We define a parametrization $\phi_b:\C^{n}\to U_b$ (i.e. the inverse of a chart) of a neighborhood $U_b\subset \CP^n$ of $[b]$. First, let $U_b:=\{[z]\in \CP^n\,|\, \langle z, b\rangle\neq 0\}$ (i.e. the complement of the hyperplane $b^{\perp}$). Let also $R\in \mathrm{U}(n+1)$ be such that $Re_0=b$ and define 
\beq\label{eq:parab} \phi_b(v):=\left[R\left(\begin{matrix}1\\v\end{matrix}\right)\right].\eeq
Notice that $\phi_b(0)=[b]$. A useful property of this map is that $D_0\psi_b$ gives an isometry 
\beq\label{eq:isob} D_0\phi_b:(\C^n, h_{\mathrm{std}})\stackrel{\simeq}{\longrightarrow}(T_{[b]}\CP^n, h_{\mathrm{FS}, [b]}).\eeq
\subsection{Vector bundles}In this section we explain why homogeneous polynomials are sections of an appropriate vector bundle (and not functions) on the projective space. Let us recall some basics notions first.

Recall that if $\pi:L\to M$  is a rank--$k$ vector bundle on $M$, then there exists an open cover $\{U_{i}\}_{i\in I}$ for $M$ and, for every $i\in I$, a smooth map $\psi_i:L|_{U_i}=\pi^{-1}(U_i)\to U_{i}\times \R^k$ such that
\begin{enumerate}
\item $\pi=p_1\circ \psi_i$, where $p_1:U_i\times \R^k$ denotes the projection on the first factor;
\item for every $i,j\in I$ such that $U_i\cap U_j\neq \emptyset$, and for every $(x, v)\in (U_i\cap U_j)\times \R^k$ we have 
\beq \psi_i\circ\psi_j^{-1}(x,v)=(x, g_{ij}(x)v),\eeq
where $g_{ij}(x)\in \mathrm{GL}(\R^k).$
\end{enumerate}
The family $\{(U_i, \psi_i)\}_{i\in I}$ is called a \emph{vector bundle atlas} and the family $\{g_{ij}\}_{i,j\in I}$ is called the \emph{cocycle} of the vector bundle atlas. The cocycle satisfies $g_{ij}g_{jk}=g_{ik}$ (on the common domains of definition) and $g_{ii}\equiv \mathbf{1}.$ It turns out that in order to construct a vector bundle it is enough to assign its cocycle. A \emph{complex} rank--$k$ vector bundle is obtained by requiring, in the above construction, that $g_{ij}(x)\in \mathrm{GL}(\C^k)$.

We denote by $\Gamma(L, U)$ the space of sections of $L$ over $U\subseteq M$. Let now $\sigma\in \Gamma(L, M)$ be a global section of $L$ (a real or complex vector bundle) and $\{(U_i, \psi_i)\}_{i\in I}$ be a vector bundle atlas (real or complex). Then, for every $i\in I$, the map $\sigma_i:=\psi_i\circ\sigma|_{U_i}\in \Gamma(L, U_i)$, being a section, has the form
\beq \sigma_i(x)=(x, \widetilde{\sigma}_i(x)),\eeq
for some map $\widetilde{\sigma}_i:U_i\to \R^k$. By definition of the cocyle, the family of maps $\{\widetilde{\sigma}_i\}_{i\in I}$ satisfies
\beq\label{transitionsection} \widetilde{\sigma}_i(x)=g_{ij}(x)\widetilde{\sigma}_{j}(x)\quad \forall x\in U_i\cap U_j.\eeq

Viceversa, given a family of smooth functions $\{\widetilde{\sigma}_i\}_{i\in I}$ satisfying \cref{transitionsection}, there exists a section $\sigma:M\to L$ such that $\psi_i(\sigma(x))=(x, \widetilde{\sigma}_{i}(x))$ for all $i\in I$ and $x\in U_i$.

\subsection{Homogeneous polynomials as sections of a vector bundle}\label{sec:homosec}Let us move now to homogeneous polynomials. Consider the open cover $\{U_i\}_{i=0, \ldots, n}$ of $\CP^n$ given by $U_i:=\{[z_0, \ldots, z_n]\,|\, z_i\neq 0\}$. Given a homogeneous polynomial $p\in \C[z_0, \ldots, z_n]_{(d)}$, we can define the zero set 
\beq 
Z(p):=\{[z_0, \ldots, z_n]\,|\, p(z)=0\} \subset \CP^n.
\eeq 
This makes sense because $p$ vanishes at $z\in \C^{n+1}\setminus \{0\}$ if and only if it vanishes at $\lambda z$, for every $\lambda\neq 0$, while the value of $p$ at a point $[z]\in \CP^n$ is not well defined. On the other hand, for every $i=0, \ldots, n$ we can consider the function $\widetilde{\sigma}_{p,i}:U_i\to \C$ given by
\beq \label{eq:transC}\widetilde{\sigma}_{p,i}([z]):=p(z_0/z_i, \ldots, z_n/z_i).\eeq
Notice that $Z(p)\cap U_i=\widetilde{\sigma}_{p,i}^{-1}(0)$. Moreover, for every $i\neq j$ and $[z]\in U_i\cap U_j$ we have
\beq \widetilde{\sigma}_{p,i}([z])=(z_j/z_i)^d\widetilde{\sigma}_{p,j}([z]).\eeq
Now, observe that the family of functions $\{g_{ij}([z]):=(z_j/z_i)^d\}_{ij},$ which are defined on $U_i\cap U_j$ and take values in $\mathrm{GL}(\C)=\C^*$, satisfy the cocyle condition. Therefore, by what we explained above, they define a complex line bundle that algebraic geometers denote by $\mathcal{O}_{\CP^n}(d)$; we will use the notation $L_{n,d}$ for this bundle. Because of \eqref{eq:transC}, the family of functions $\{\widetilde{\sigma}_{p,i}\}_{i=0, \ldots, n}$ glue together to a section of $L_{n,d}$ which we denote by $\sigma_p$. 

In this way we have constructed a linear map $\sigma_{(\cdot)}:\C[z_0, \ldots, z_n]_{(d)}\to \Gamma(L_{n,d}, \CP^n)$ that associates to a polynomial $p$ the section $\sigma_p$ which in coordinates is written as in \eqref{eq:transC}. One can show that this map is injective and that its image coincides with the set of global holomorphic sections of $L_{n,d}$. Moreover, denoting by $A\hookrightarrow L$ the zero section, we have $\sigma_p^{-1}(A)=Z(p)$. 

In particular, for every $i=0, \ldots, n$ the polynomial $z_i^d\in \C[z_0, \ldots, z_n]_{(d)}$ corresponds to a section $\sigma_{z_i^d}$ that, by construction \eqref{eq:transC} satisfies:
\beq\label{eq:trivib} \widetilde{\sigma}_{z_i^d, i}([z_0, \ldots, z_n])=1.\eeq
Notice that, using the standard Hermitian structure, we can write $z_i^d=\langle z, e_i\rangle^d$, where $e_i=(0, \ldots,0,1,0 \ldots, 0)\in \C^{n+1}$ (the ``$1$`` is in position $i$). In other words, $\sigma_{z_i^d}$ is a section of $L_{n,d}$ that does not vanish on a neighborhood $U_i=\{z_i\neq 0\}$ of $[e_i]$. We can build similar sections for every other point $[b]\in \CP^n$ as follows. Let $R\in \mathrm{GL}(n+1, \C)$ be such that $Re_0=b$ and consider the polynomial $p_{b}(z):=\langle z, b\rangle^d$. Since $p_b(b)=\|b\|^2\neq 0$, the section 
\beq\label{eq:secb}\sigma_{b}:=\sigma_{p_b}\eeq does not vanish at $[b]$ and, consequently, it does not vanish on the neighborhood $U_{b}:=\{\langle z, b\rangle\neq 0\}$ of $[b]$. We can use this section to produce a trivialization of $L_{n,d}$ on $U_{b}$, defining $\psi_{b}: L_{n,d}|_{U_b}\to U_b\times \C$ by
\beq \psi_b(w):=\left(\pi(w), \frac{w}{\sigma_{b}(\pi(w))}\right).\eeq
Observe that, given any other section $\sigma=\sigma_p\in \Gamma(L_{n,d}, U_b)$, with $p\in \C[z_0, \ldots, z_n]_{(d)}$, we have:
\beq\label{eq:sectionincoord} \psi_b(\sigma([z]))=\left([z], \frac{\sigma_{p}([z])}{\sigma_{b}([z])}\right)=\left([z], \frac{p(z)}{\langle z, b\rangle^d}\right).\eeq
\subsection{The Bombieri--Weyl hermitian structure}\label{sec:BW}
Consider now the \emph{Bombieri--Weyl} hermitian product on $\C[z_0, \ldots, z_n]_{(d)}$, defined by:
\beq \langle p_1, p_2\rangle_{\mathrm{BW}}:=\frac{1}{\pi^{n+1}}\int_{\C^{n+1}}p_1(z)\overline{p_2(z)}e^{-\|z\|^2}\mathrm{d} z,\eeq
where $\mathrm{d}z:=(i/2)^{n+1}\mathrm{d}z_0\mathrm{d}\overline{z_0}\cdots \mathrm{d}z_n\mathrm{d}\overline{z_n}$ is the Lebesgue measure.
It is not difficult to show that the Bombieri--Weyl hermitian structure is invariant under the action of $U(n+1)$ by change of variables. Moreover, since this action is irreducible (over $\C$), Schur's Lemma implies that this is the unique hermitian structure with this property (up to multiples). A hermitian orhtonormal basis for the Bombieri--Weyl product is given by
\beq \label{eq:BWb}\left\{\left(\frac{d!}{\alpha_0!\cdots \alpha_n!}\right)^{\frac{1}{2}}z_0^{\alpha_0}\cdots z_n^{\alpha_n}\right\}_{\{|\alpha|=d\}}.\eeq
If we apply now the Fubini--Study construction above to the space $\mathrm{P}(\C[z_0, \ldots, z_n]_{(d)})$ we obtain a hermitian structure on it, which we still call the Bombieri--Weyl structure. The Unitary group $U(n+1)$ acts on the projective space $\mathrm{P}(\C[z_0, \ldots, z_n]_{(d)})$ by change of variables and the action is by isometries.

\addtocontents{toc}{\protect\setcounter{tocdepth}{2}}

\bibliographystyle{acm}
\bibliography{literature.bib}

\end{document}